\definecolor{myblue}{RGB}{61, 61, 130}
\newtheorem{Theorem}{Theorem}
\newtheorem{Conditions}{Conditions}
\newtheorem{Corollary}{Corollary}[Theorem]
\newtheorem{Lemma}[Theorem]{Lemma}
\newtheorem{Fact}[Theorem]{Fact}
\providecommand{\keywords}[1]{\textbf{\textit{Index terms---}} #1}
\title{Exponential bounds for inhomogeneous random graphs in a Gaussian case}
\author{Othmane SAFSAFI \footremember{LPSM}{Sorbonne Université}\\
       \\}
\begin{document}
\begin{titlepage}
    \centering
    \vfill
    {\bfseries\Large
       Exponential bounds for inhomogeneous random graphs in a Gaussian case\\
		\textit{}
 		
        \vskip2cm
		 Othmane SAFSAFI
		 \vskip1cm
		 \small{LPSM-Sorbonne Université}
	}

    \vskip2cm

\begin{abstract}

Rank-1 inhomogeneous random graphs are a natural generalization of Erd\H{o}s-Rényi random graphs. In this generalization each node is given a weight. Then the probability that an edge is present depends on the product of the weights of the nodes it is connecting. In this paper, we give precise and uniform exponential bounds on the size, weight and surplus of rank-1 inhomogeneous random graphs where the weight of the nodes behave like a random variable with finite third moments. We focus on the case where the mean degree of a random node is equal to $1$ (critical regime), or slightly larger than $1$ (barely supercritical regime). These bounds will be used in  follow up papers to study a general class of random minimum spanning trees. They are also of independent interest since they show that these inhomogeneous random graphs behave like Erd\H{o}s-Rényi random graphs even in a barely supercritical regime. The proof relies on novel concentration bounds for sampling without replacement and a careful study of the exploration process.
\end{abstract}

\keywords{Random, Graphs, Inhomogeneous, Networks}

\end{titlepage}
\section{Introduction}
\subsection{The model}
Consider $n \in \mathbb{N}$ vertices labeled $1,2,...,n$. For a vector of weights $\textbf{W}=(w_1,w_2,..w_n)$, where $0< w_n \leq w_{n-1} \leq ... \leq w_1$, we create the inhomogeneous random graph associated to $\textbf{W}$ and to $p \leq +\infty$ in the following way: \par 
Each potential edge $\{i,j\}$ is in the graph with probability $1-e^{-w_iw_jp}$ independently from everything else. This gives a random graph that we call the rank-1 inhomogeneous random graph associated to $\textbf{W}$ and $p \leq +\infty$. \par
One can couple the graphs for the different values of $p$ as follow: 
Let $K_n$ be the complete graph of size $n$. 
To every potential edge  $\{i,j\}$, associate independently the random capacity $E_{\{i,j\}}$ which is an exponential random variable of rate $w_iw_j$. The weights are then used to create a sequence of graphs. For each $p \in [0,+\infty]$ let $G(\textbf{W},p)$ be the graph on $\{1,2....,n\}$ containing the edges of weight at most $p$. So the edge set of $G(\textbf{W},p)$ is:
$$\left\{ \{i,j\} |  E_{\{i,j\}} \leq p \right\}.$$
Then  $(G(\textbf{W},p))_{p \in [0,+\infty]}$ is an increasing sequence of graphs for inclusion, and for each fixed value of $p$, this constructions matches the first one. We will use both construction 
interchangeably in this paper. 

\begin{figure}[!htbp]
\centering
\includegraphics[width=0.9\textwidth, height=0.4
\textheight]{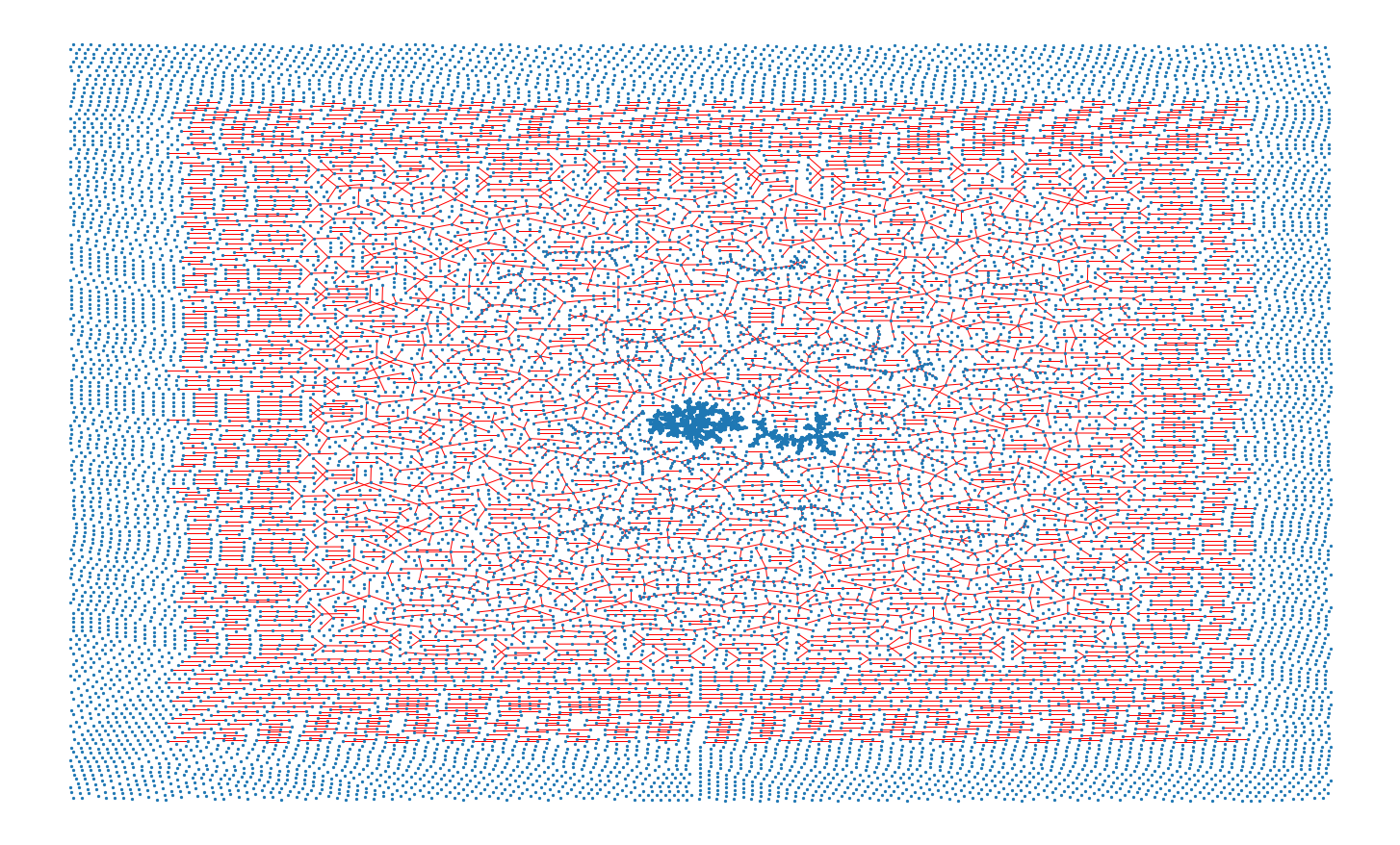}
\caption{An inhomogeneous random graph of size $n=20000$. The node weights are i.i.d with Pareto distribution of parameters $2/3,4$, and $p = \frac{5}{4n}$. These paremeters correspond to typical graphs that will be studied in this chapter.}
\label{figg1}
\end{figure}

\subsection{Definition of the exploration process} \label{2.1.2}
Before stating our main theorems, we define the exploration process of $G(\textbf{W},p)$ seen as a graph from the sequence $(G(\textbf{W},p))_{p \in [0,+\infty]}$ for a fixed $p$. All the results of this paper are proven by a careful study of this process. It is based on an "horizontal" exploration of the graph, called the breadth-first walk (BFW).  The BFW constructs the spanning forest of $G(\textbf{W},p)$, called the exploration forest. This is a forest consisting of  spanning trees of all the connected components of $G(\textbf{W},p)$, constructed in a particular way.\par 
For each potential edge $\{i,j\}$ recall the definition of  $E_{\{i,j\}}$ from the model presentation. The BFW operates by steps, define the following sets of vertices. A vertex is always in exactly one of those sets.
\begin{itemize}
    \item $(\mathcal{U}(i))_{n \geq i \geq 1}$ is the sequence of sets of undiscovered vertices at each step.
    \item $(\mathcal{D}(i))_{n \geq i \geq 1}$ is the sequence of sets of discovered but not yet explored vertices at each step.
    \item $(\mathcal{F}(i))_{n \geq i \geq 1}$ is the sequence of sets of explored vertices at each step.
\end{itemize}
First, choose a vertex  $i$  with probability: $$\mathbb{P}(v(1)=i) = \frac{w_i}{\ell_n},$$ 
and call it $v(1)$. Let $\mathcal{V}'$ be the set of all vertices labels, and $\mathcal{U}(1)=\mathcal{V}'\setminus \{v(1)\}$, $\mathcal{D}(1)=\{v(1)\}$. At step $2$, $v(1)$ is explored. It is thus not present  in $\mathcal{D}(2)$ and moved to $\mathcal{F}(2)$.  We call children of $v(1)$ the vertices $j$ that are unexplored at step $1$ and such that $E_{\{j,v(1)\}} \leq p $. Those children are moved to $\mathcal{D}(2)$ and become discovered but not yet explored.
Let $c(1)$ be the number of children of $v(1)$. Call them $(v(2),v(3),...,v(c(1)+1)$ in increasing order of their $E_{\{j,v(1)\}}$'s. For $i \geq 1$, denote the set $\{v(1),v(2),...,v(i)\}$ by $\mathcal{V}_i$. Hence, at step $2$ we have:
\begin{itemize}
    \item $\mathcal{U}(2) = \mathcal{V}\setminus \mathcal{V}_{c(1)+1}$.
    \item $\mathcal{D}(2) = \mathcal{V}_{c(1)+1} \setminus \mathcal{V}_{1}$.
    \item $\mathcal{F}(2) =  \mathcal{V}_{1}$.
\end{itemize}
Now, at step $3$, $v(2)$ becomes explored and  its children $\{v(c(1)+2),v(c(1)+3)...,v(c(1)+c(2)+3)\}$ become discovered but not yet explored. The BFW continues like this, node $v(i)$ becomes explored at step $i+1$, and its children are discovered at the same step. If the set of discovered nodes becomes empty at some step $i$, this means that the exploration of a connected component is finished. In that case, move on to the next step by choosing a vertex $j$ with probability proportional to its weight $w_j$ among the unexplored vertices and calling it $v(i)$ (like we did for $v(1)$) and exploring it. This construction ensures that a child has exactly one parent, since a child is always discovered while the process is exploring its parent. This ensures that we are constructing a forest. It is the exploration forest. We call the trees in that forest the exploration trees. By construction, exploration trees are spanning trees of the connected components of $G(\textbf{W},p)$. We say that a connected component is discovered at step $i$ if its first node discovered by the BFW is $v(i)$. Similarly, we say that a connected component is explored at step $i$ if its last node discovered by the BFW is $v(i-1)$.
Generally, let $c(i)$ be the number of children of the  node labeled $v(i)$. The exploration process associated to the BFW above is defined as follow for $n-1 \geq i \geq 0$:
\begin{equation*}
\begin{aligned}
    L'_0&=1,\\
    L'_{i+1}&= L'_i+c(i+1)-1.
\end{aligned}
\end{equation*}
The reflected exploration process is defined by
\begin{equation*}
\begin{aligned}
    L_0&=1,\\
    L_{i+1}&= \max(L_i+c(i+1)-1,1).
\end{aligned}
\end{equation*}
\par
\begin{figure}[!htbp]
\centering
\includegraphics[width=0.8\textwidth]{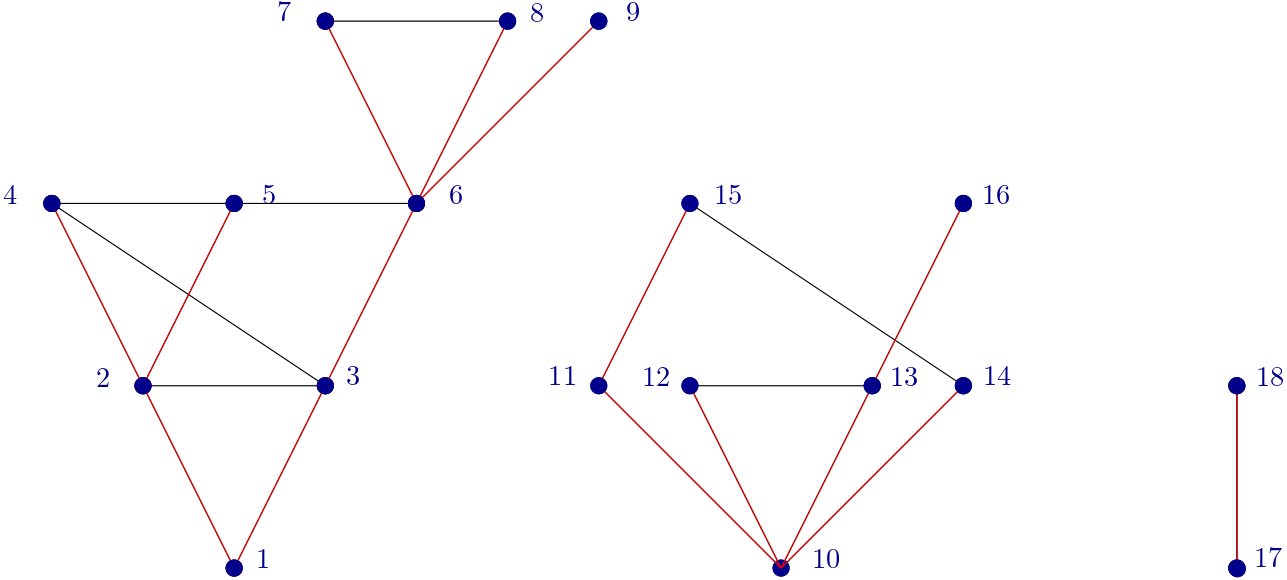}
\caption{Example of a graph with ordered nodes. The integers correspond to the order in the exploration process. The edges in red correspond to the edges of the exploration trees. The labels of the nodes are not represented.}
\label{fig1}
\end{figure}
\begin{figure}[!htbp]
\centering
\includegraphics[width=0.9\textwidth, height=0.3
\textheight]{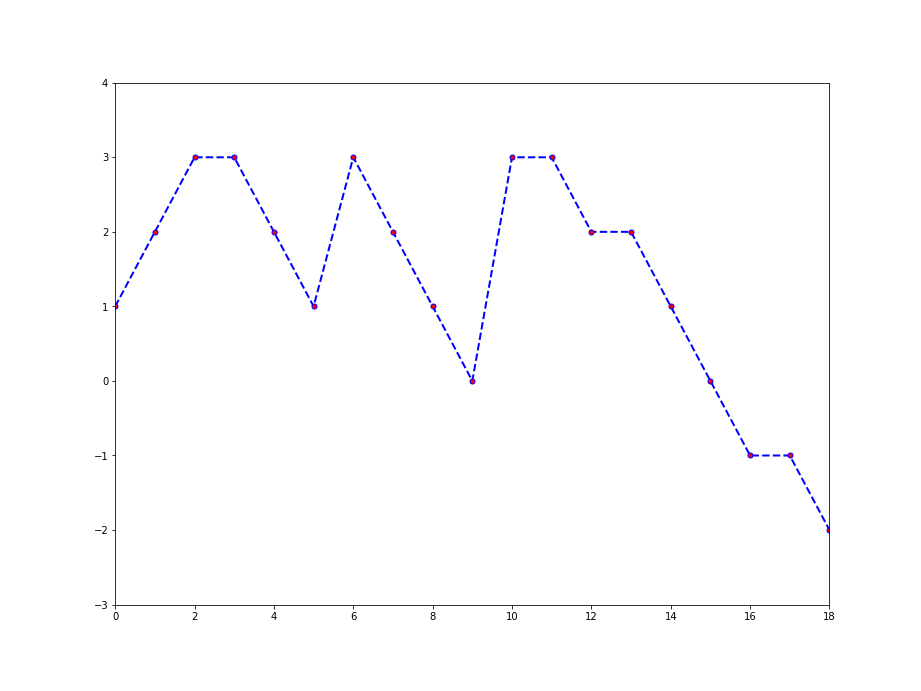}
\caption{The exploration process of the graph in Figure \ref{fig1}.}
\end{figure}
The increment of the process $L'$ at step $i$ is the number of nodes added to the set of discovered nodes in the BFW after exploring node $i$. This number is at least $-1$ if the node being explored has no children. The process $L'$ contains a lot of information  about $G(\textbf{W},p)$. For example, each time a connected component is explored $L'$ attains a new minimum. Using $L'$ transforms geometrical questions about the graph, such as "Is there a connected component of size proportional to $n$ ?" into questions regarding random walks such as "Is there an excursion of $L'$ above its past minimum of size proportional to $n$ ?". 
\par
Moreover, the order of appearance of the nodes in the exploration process corresponds to a size-biased sampling. Formally, we have for $i \in \{1,2,...,n-1\}$ and $j \in \{1,2,...,n\}$,
\begin{equation*}
\begin{aligned}
&\mathbb{P}\left(v(1)= j \right) = \frac{w_j}{\ell_n}.  \\
&\mathbb{P}\left(v(i+1)=j \quad | \mathcal{V}_i\right) = \frac{w_j\mathbbm{1}(j \not\in \mathcal{V}_i)}{\ell_n-\sum\limits_{k=1}^{i}w_{v(k)}}.
\end{aligned}
\end{equation*}
The proof of this fact uses only elementary results on exponential random variables. It is a widely known and used result (\cite{AL97}, \cite{SRJ10}, \cite{br20} ...). We sketch the proof here.
\begin{proof}
By construction:
$$
\mathbb{P}\left(v(1)= j \right) = \frac{w_j}{\ell_n}.
$$
Then for $v(2)$, if $c(1)$, the number of children of $v(1)$, is $0$ then, by definition, for any $j \geq 1$:
$$\mathbb{P}\left(v(2)=j | \mathcal{V}_1   , \, c(i) = 0\right) = \frac{w_j\mathbbm{1}(j \not\in \mathcal{V}_1)}{\ell_n-w_{v(1)}}.$$
Moreover if $c(1) \geq 1$, this means that there exists at least one $j \geq 1$ such that $j \neq v(1)$ and $E_{\{j,v(1)\}} \leq p $. By the absence of memory property of exponential random variables, for any $j \geq 1$:
\begin{equation*}
\begin{aligned}
&\mathbb{P}(v(2)=j  , \, c(1) \geq 1 |\mathcal{V}_{1}) \\
&= \mathbb{P}(v(2)=j |\mathcal{V}_{1}) -\mathbb{P}(v(2)=j  , \, c(1) = 0 |\mathcal{V}_{1}) \\
&= \mathbb{P}(\text{argmin}_{k \neq v(1)}(E_{\{k,v(1)\}})= j |\mathcal{V}_{1}) -\mathbb{P}(\text{argmin}_{k \neq v(1)}(E_{\{k,v(1)\}})= j|\mathcal{V}_{1})\mathbb{P}(c(1)=0|\mathcal{V}_{1}) \\
&=\mathbb{P}(\text{argmin}_{k \neq v(1)}(E_{\{k,v(1)\}})= j |\mathcal{V}_{1})\mathbb{P}(c(1)\geq 1|\mathcal{V}_{1}).
\end{aligned}
\end{equation*}
By well known properties of exponential random variables, since conditionally on $\mathcal{V}_1$ the $(E_{\{k,v(1)\}})_{k \neq v(1)}$'s are independent, we have:
$$\mathbb{P}(\text{argmin}_{k \neq v(1)}(E_{\{k,v(1)\}})= j |\mathcal{V}_{1})=\frac{w_j\mathbbm{1}(j \not\in \mathcal{V}_1)}{\ell_n-w_{v(1)}}.$$
This shows the statement for $v(2)$, and we can move to subsequent nodes by induction.
\end{proof}

\subsection{Conditions and main theorem}
The weights in  $\textbf{W}$ depend implicitly on $n$. We will assume the following conditions on $\textbf{W}$ in the entire paper.

\begin{Conditions} 
\label{cond_nodes}
There exists some positive random  variable $W$  such that: 
\begin{enumerate}[(i)]
\item The distribution of a  uniformly chosen weight $w_{X}$ converges weakly to $W$.
\item $\mathbb{E}[W^3] < \infty$.
\item $\mathbb{E}[W^2] = \mathbb{E}[W]$.
\item $\ell_n = \mathbb{E}[W]n + o(n^{2/3})$.
\item $\sum\limits_{k=1}^{n}w_k^2 = \mathbb{E}[W^2]n + o(n^{2/3})$.
\item $\sum\limits_{k=1}^{n}w_k^3 = \mathbb{E}[W^3]n + o(1)$. 
\item $\max_{i \leq n} w_i = o(n^{1/3})$. 
\end{enumerate}
\end{Conditions}

Conditions $i$,$ii$ and $iii$  ensure that the weak limit of $w_{v(1)}$ has a finite variance and mean $1$. Condition $iii$ can be ensured by changing the value of $p$. \par
Conditions $iv$,$v$ and $vi$ ensure that asymptotically the sum of the weights  behaves like the sum of independent identically distributed (i.i.d.) copies of $W$. Moreover, to further ease notations, as $n^{1/3} \geq w_1$, we will always use $n^{1/3}$ in our inequalities, even when $w_1$ would be sufficient.
An important case to keep in mind is when $(w_1,w_2,...,w_n)$ are realizations of random variables $(W_1,W_2,...,W_n)$ which are i.i.d. with distribution $W$. In that case Conditions $iv$,$v$ and $vi$ are consequences of Conditions $ii$ and $iii$ (see \cite{SRJ10} for a proof \footnote{\cite{SRJ10} shows that in that case the probability that the conditions hold tend to $1$ when $n$ tend to infinity. However, since we need concentration bounds, our weights need to verify these conditions deterministically.}).\par
We define the size of a connected component $\mathcal{C}$, with vertices set $V(\mathcal{C})$, of  $G(\textbf{W},p)$ as the number of vertices in $\mathcal{C}$. The distance between two vertices of $\mathcal{C}$ is the number of edges in the smallest (in number of edges) path between them. We also define the weight of $\mathcal{C}$ as:
$$
\sum_{i\in \mathcal{V}(C) } w_i.
$$
We call surplus (or excess) of $\mathcal{C}$ the number of edges that have to be removed from it in order to make it a tree. For instance, the surplus of a tree is $0$, and the surplus of a cycle is $1$. \par
Write $C = \frac{\mathbb{E}[W^3]}{\mathbb{E}[W]}$, and $p_{f_n} = \frac{\ell_n^{1/3}+f_n}{\ell_n^{4/3}}$. 
We can now state the main theorems of this paper. Of course, these theorems hold only under Conditions \ref{cond_nodes}.
\begin{Theorem}[\textbf{Size and weight of the giant component}]
\label{principal_1}
Let $1 \geq \epsilon'>0$. Then for $f_n = o(n^{1/3})$ large enough.  Consider the following event:  \\
 The largest connected component of $G(n,\textbf{W})$ has its size in the interval
$$
\left[\frac{2(1-\epsilon'/2)f_n\ell_n^{2/3}}{C}-\frac{\ell_n^{2/3}}{C},\frac{2(1+\epsilon'/2)f_n\ell_n^{2/3}}{C}\right],
$$
and its weight in the interval
$$
\left[\frac{2(1-\epsilon')f_n\ell_n^{2/3}}{C},\frac{2(1+\epsilon')f_n\ell_n^{2/3}}{C}\right],
$$
Then if Conditions \ref{cond_nodes} hold, there exists a positive constant $A > 0$ that only depend on the distribution of $W$, and such that the probability of this event not happening is at most:
$$A\exp\left(\frac{-f_n}{A}\right).$$
\end{Theorem}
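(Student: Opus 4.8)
The plan is to convert every statement about the largest connected component into a statement about the exploration walk $L'$ (equivalently its reflected version $L$) and then control $L'$ by combining a deterministic ``parabolic drift'' computation with exponential concentration. Recall from the construction that the connected components of $G(\mathbf{W},p)$ are in bijection with the excursions of $L'$ above its running minimum, that the size of a component is the length of the corresponding excursion, and that the weight of the component explored on a time interval $[a,b]$ is $S_b-S_a$, where $S_m:=\sum_{k=1}^m w_{v(k)}$ is the accumulated (size-biased) weight. So it is enough to prove that, except on an event of probability at most $A e^{-f_n/A}$, the walk $L'$ has (a) no excursion above its running minimum of length exceeding $\tfrac{2(1+\epsilon'/2)f_n\ell_n^{2/3}}{C}$; (b) at least one excursion of length at least $\tfrac{2(1-\epsilon'/2)f_n\ell_n^{2/3}}{C}-\tfrac{\ell_n^{2/3}}{C}$; and (c) such a long excursion is unique and, along it, the increments of $S$ average to $1+o(1)$, so that its weight lands in $\big[\tfrac{2(1-\epsilon')f_n\ell_n^{2/3}}{C},\tfrac{2(1+\epsilon')f_n\ell_n^{2/3}}{C}\big]$ while no other component has weight outside that window.

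The first step is to identify the conditional law of the increments. Given the explored set $\mathcal{V}_{i-1}$ (which by the size-biased rule also determines $v(i)$), $c(i)$ is a sum of conditionally independent Bernoulli variables with parameters $1-e^{-w_j w_{v(i)} p}$ over the currently undiscovered $j$. Expanding $1-e^{-x}=x-x^2/2+O(x^3)$ and using Conditions (iv)--(vii) to replace $\sum_j w_j$, $\sum_j w_j^2$, $\sum_j w_j^3$ by $\mathbb{E}[W]n,\mathbb{E}[W^2]n,\mathbb{E}[W^3]n$ up to $o(n^{2/3})$ (and using $w_{v(i)}\le n^{1/3}$ and the negligibility of the discovered-but-unexplored weight in this window), the drift of $L'$ accumulated over the exploration of a component started at time $a$, read at local time $t$, is a downward parabola in $t$ with linear coefficient $\tfrac{f_n}{\ell_n^{1/3}}$, peaking at height $\asymp \tfrac{f_n^2\ell_n^{1/3}}{C}$ near local time $\asymp\tfrac{f_n\ell_n^{2/3}}{C}$, and with nonzero root $\tfrac{2f_n\ell_n^{2/3}}{C}$ shifted downward by an $O(S_{a-1})$ amount reflecting the weight depleted before time $a$. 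I would separately record that the total weight explored before the first macroscopic excursion is $S_{a-1}=O(\ell_n^{2/3})$ except with probability $Ae^{-f_n/A}$: the number of fresh roots needed to land in a component of weight $\asymp f_n\ell_n^{2/3}/C$ is stochastically dominated by a geometric variable of parameter $\asymp f_n/\ell_n^{1/3}$, and the total size of the small components swept up before then is controlled exponentially; this is exactly what produces the additive slack $\tfrac{\ell_n^{2/3}}{C}$ in the size lower bound.

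The core is the concentration step. Each increment carries two independent sources of noise: the Bernoulli edges given $w_{v(i)}$, and the without-replacement, size-biased choice of $v(i)$ itself. For the first I would use a Bernstein/Chernoff bound for sums of independent Bernoulli variables (conditional variance $\asymp w_{v(i)}$, maximal jump $\le n^{1/3}=o(\ell_n^{1/3})$, so deviations of the relevant order $\epsilon' f_n\ell_n^{1/3}$ are well inside the sub-exponential regime once $f_n$ is large); for the second I would apply the exponential concentration inequality for sampling without replacement that the paper develops for this purpose, to $S_m-\mathbb{E}[S_m]$ and to $\sum_{k\le m} w_{v(k)}^2$. To obtain control simultaneously at all times $m\le \tfrac{3f_n\ell_n^{2/3}}{C}$ (all that matters, since everything happens in a window of size $o(n)$), I would upgrade these one-scale bounds to maximal inequalities via a Doob/reflection argument, thereby avoiding any union bound over $m$ (which would cost a polynomial in $n$). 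Plugging these into the parabola picture gives (a): on the good event $L'_m$ minus its running minimum stays below the parabola plus $o(f_n^2\ell_n^{1/3})$, hence returns below its starting level within $\tfrac{2(1+\epsilon'/2)f_n\ell_n^{2/3}}{C}$ steps, for every excursion. It gives (b): with the stated probability $L'$ climbs to height $\ge (1-\delta)\tfrac{f_n^2\ell_n^{1/3}}{C}$ while its running minimum stays $\ge -O(\ell_n^{1/3})$, and the negative drift on the descending branch then forces at least $\tfrac{2(1-\epsilon'/2)f_n\ell_n^{2/3}}{C}-\tfrac{\ell_n^{2/3}}{C}$ further steps before the walk returns to that level; by the same estimate all other excursions are $O(\ell_n^{2/3})$, so the long one is unique. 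Finally (c) is the without-replacement law of large numbers with exponential error for $\sum_{i=a}^{b} w_{v(i)}$: its mean is $(b-a)\tfrac{\mathbb{E}[W^2]}{\mathbb{E}[W]}(1+o(1))=(b-a)(1+o(1))$ by Condition (iii), and the deviation bound turns the size window into the slightly wider weight window.

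The main obstacle is the concentration step, and within it the without-replacement part. The vertex weights are sampled sequentially, without replacement, in size-biased order, and the summands range up to $n^{1/3}$, so a naive bounded-differences/Azuma estimate loses the exponential rate; one genuinely needs a Bernstein-type exponential inequality for such sums (the ``novel concentration bounds for sampling without replacement''), in a form robust enough to be combined with the edge-Bernoulli concentration and fed through a maximal inequality for the walk. Establishing that inequality and carrying out this combination cleanly, uniformly over the whole $o(n)$-length window, is the heart of the argument; the remainder is bookkeeping with Conditions \ref{cond_nodes} and the deterministic parabola, together with the (standard but slightly delicate) exponential tail bounds on the small components that supply the $O(\ell_n^{2/3})$ prehistory estimate.
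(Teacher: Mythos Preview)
Your plan matches the paper's: exploration process, parabolic drift, Bernstein-for-martingales on the Bernoulli part, the without-replacement concentration (rightly flagged as the crux), and maximal inequalities. Two places need work. The ``negligibility of the discovered-but-unexplored weight'' hides a circularity: the conditional drift of $L'$ at step $i$ depends on $L_{i-1}$, so the drift is not a fixed parabola until you have already bounded $\sup L$. The paper breaks the loop by sandwiching: the process $L^0$ (queue pretended empty) is a circularity-free upper bound on $L'$; one uses it to prove $\sup_{i\le t_3} L_i \le h := 10f^2\ell_n^{1/3}/C$ first (Theorem \ref{sup_l}), and only then compares the true compensator $\tilde{L}$ to $\tilde{L}^h$ for the lower bound. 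Your end claim is correct---the correction is $O(f^3)=o(f^2\ell_n^{1/3})$---but it must be bootstrapped this way, not assumed.

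The more substantive gap is your parenthetical ``all that matters, since everything happens in a window of size $o(n)$''. To show the \emph{largest} component has size $\le U$ you must rule out an excursion of length $>U$ starting at \emph{any} $a\in[0,n]$, not just $a\le 3f\ell_n^{2/3}/C$. The paper's concentration theorems (e.g.\ Theorem \ref{inf_s_0_3}) require $m=o(n)$, and in fact Conditions \ref{cond_2} force $m=O(f\ell_n^{2/3})$ once the deviation scale is $y\asymp f^2\ell_n^{1/3}$. For the tail $a\gg f\ell_n^{2/3}$ the paper needs a whole second toolkit (Section 5): a bound on the largest remaining weight after time $t^i_0$ (Lemma \ref{weight_tail}), concentration redone with those smaller increments (Theorem \ref{concentration_L}), and, for $a$ of order $n$ where even Lemma \ref{esperance} no longer gives the drift, a coupling via the stochastic monotonicity of the size-biased law (Lemma \ref{decreasing}, Theorem \ref{concentration_3}). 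A Doob/reflection maximal inequality alone will not carry you past the $O(f\ell_n^{2/3})$ window; the ``same estimate'' you invoke for the other excursions is precisely what stops working there.
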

\begin{Theorem}[\textbf{The excess of the giant component}]
\label{principal_2}
Let $Exc$ be the excess of the largest connected component of $G(n,\textbf{W})$. Then if Conditions \ref{cond_nodes} hold, there exists a positive constant $A >0$ that only depends on the distribution of $W$ such that:
\begin{equation*}
\mathbb{P}(Exc \geq Af_n^3) \leq A\exp\left(\frac{-f_n}{A}\right).
\end{equation*}
\end{Theorem}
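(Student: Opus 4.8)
### Proof proposal for Theorem \ref{principal_2}

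The plan is to control the excess of the giant component by relating it to the number of ``surplus edges'' discovered during the breadth-first walk, and then to estimate that number via the area swept out by the exploration process above its running minimum. Recall that when the BFW explores a vertex $v(i)$, the potential edges it tests are those joining $v(i)$ to the currently discovered-but-unexplored vertices (the ``stack'', whose size is $L_{i-1}$ in the reflected process); every such edge that is present but whose other endpoint is \emph{not} a fresh child is precisely a surplus edge of the component being explored. Conditionally on the past, each such potential surplus edge is present with probability at most $1-e^{-w_{v(i)} w_{v(j)} p} \le w_{v(i)} w_{v(j)} p$. So, writing $\mathrm{Exc}$ as a sum over steps $i$ of a Binomial-type count with mean at most $w_{v(i)} p \sum_{j \in \mathrm{stack}} w_{v(j)}$, and using that all weights are $\le n^{1/3}$ together with $p \asymp \ell_n^{-4/3}$, the conditional mean of the total surplus produced during the exploration of the giant is bounded by a constant times $n^{1/3} \cdot \ell_n^{-4/3} \cdot \ell_n^{1/3} \cdot S$, where $S$ is the total ``weighted area'' $\sum_{i} \big(\sum_{j \in \mathrm{stack at step } i} w_{v(j)}\big)$ accumulated while the giant is being explored.

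The key quantitative input is therefore a bound on this area $S$. By Theorem \ref{principal_1}, with probability at least $1 - A\exp(-f_n/A)$ the giant has weight at most $\tfrac{2(1+\epsilon')f_n\ell_n^{2/3}}{C}$, i.e.\ it is explored during an excursion of the (weighted) exploration process of length $O(f_n \ell_n^{2/3})$ steps; and on that same good event one should be able to show — reusing the excursion-level estimates that already went into the proof of Theorem \ref{principal_1} — that the height of the exploration process stays $O(f_n \ell_n^{1/3})$ throughout that excursion, so that the stack weight at each of the $O(f_n \ell_n^{2/3})$ steps is $O(f_n \ell_n^{1/3})$. Multiplying, $S = O(f_n^2 \ell_n)$ on the good event, whence the conditional mean of $\mathrm{Exc}$ is $O\big( n^{1/3} \ell_n^{-4/3} \cdot f_n^2 \ell_n \big) = O(f_n^2)$ — which is even a bit better than the $f_n^3$ in the statement, giving room for the concentration step. (The extra factor of $f_n$ is the safety margin: we only need a crude tail bound, not a sharp constant.)

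Given the bound $\mathbb{E}[\mathrm{Exc} \mid \mathcal{F}] = O(f_n^2)$ on the good event $\mathcal{G}$, I would finish with a conditional concentration argument: conditionally on the exploration up to the relevant excursion, $\mathrm{Exc}$ restricted to that excursion is stochastically dominated by a sum of independent Bernoulli variables (the surplus tests are conditionally independent given the sequence of explored vertices and the identities of the children), with total mean $O(f_n^2)$; a Chernoff bound then gives $\mathbb{P}(\mathrm{Exc} \ge A f_n^3 \mid \mathcal{G}) \le \exp(-c f_n^3 / f_n^2) = \exp(-c f_n)$ once $f_n$ is large enough that $A f_n^3$ exceeds, say, twice the conditional mean. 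Combining with $\mathbb{P}(\mathcal{G}^c) \le A\exp(-f_n/A)$ from Theorem \ref{principal_1} and adjusting the constant $A$ yields the claimed bound $\mathbb{P}(\mathrm{Exc} \ge A f_n^3) \le A\exp(-f_n/A)$.

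The main obstacle I anticipate is not the surplus-counting bookkeeping but establishing rigorously, with exponential tails, that on the good event the exploration process for the giant really does have height $O(f_n \ell_n^{1/3})$ over the whole $O(f_n \ell_n^{2/3})$-step excursion — i.e.\ that there is no atypically tall spike. This requires the uniform-over-the-excursion control of the walk that presumably underlies Theorem \ref{principal_1}, and care is needed because the increments are neither stationary nor independent (they depend on which vertices, hence which weights, have already been removed); the novel concentration bounds for sampling without replacement advertised in the abstract are exactly what should make this height bound go through, and I would invoke them here. A secondary technical point is handling the remaining small components: one must argue that no \emph{non}-giant component contributes a large surplus either, but since all other excursions are short (again by the structure behind Theorem \ref{principal_1}) the same per-excursion estimate applies and a union bound over the $O(n)$ steps, combined with the exponential tail, absorbs this.
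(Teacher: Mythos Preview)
Your overall strategy --- express the surplus as a sum of conditionally independent Bernoulli indicators over ``stack edges'', bound the conditional mean via (area of the excursion) $\times p$, then Chernoff --- is exactly the paper's approach in Theorem~\ref{exc_big}. But two quantitative inputs are wrong, and one of them breaks the argument.

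First, the height of the exploration process over the giant excursion is \emph{not} $O(f\ell_n^{1/3})$. The drift of $L^0$ is essentially the parabola $i\mapsto i\big(f\ell_n^{-1/3}-\tfrac{Ci}{2\ell_n}\big)$, whose maximum (attained near $i=f\ell_n^{2/3}/C$) is of order $f^2\ell_n^{1/3}$, not $f\ell_n^{1/3}$; this is the content of Theorem~\ref{sup_l}. So the stack size, and hence the stack weight, is $\Theta(f^2\ell_n^{1/3})$, and the correct area bound is $S=O(f^3\ell_n)$, not $O(f^2\ell_n)$.

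Second, and more seriously, you cannot afford the crude bound $w_{v(i)}\le n^{1/3}$ on the vertex being explored. With the correct $p_f\asymp \ell_n^{-1}$ (not $\ell_n^{-4/3}$; recall $p_f=(\ell_n^{1/3}+f)\ell_n^{-4/3}\sim\ell_n^{-1}$) and the correct $S=O(f^3\ell_n)$, your bound gives a conditional mean of order $n^{1/3}\cdot \ell_n^{-1}\cdot f^3\ell_n = n^{1/3}f^3$, which is far larger than $f^3$ and makes the Chernoff step vacuous. Your displayed computation only ``works'' because the two errors ($p$ off by $\ell_n^{-1/3}$, height off by $f^{-1}$) happen to compensate the $n^{1/3}$ loss. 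The paper avoids this loss by \emph{not} bounding $w_{v(i)}$ pointwise: it controls $\sum_{i\le m}\sum_{j=i+1}^{i+R} w_{v(i)}w_{v(j)}p_f$ directly via Cauchy--Schwarz on blocks of length $R$ and a concentration bound for $\sum w_{v(i)}^2$ (using Theorem~\ref{replacement} and Lemma~\ref{mean_pow_weights_2}), obtaining the sharp bound $O(mRp_f)=O(f^3)$. Once you replace the crude $n^{1/3}$ bound by this step, and use the correct height $R=O(f^2\ell_n^{1/3})$, your sketch coincides with the paper's proof.
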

\begin{Theorem}[\textbf{The sizes and weights of the small components}]
\label{principal_3}
Let  $1 >\epsilon'>0$ then for $f_n = o(n^{1/3})$ large enough, for any $1 \geq \epsilon > 0$ Consider the following events: \begin{itemize}
    \item All the connected components discovered before the largest connected component in the exploration process of $G(n,\textbf{W})$ have size smaller than
$$
\frac{\ell_n^{2/3}}{f_n^{1-\epsilon}},
$$
and weight smaller than
$$
\frac{(1+\epsilon')\ell_n^{2/3}}{f_n^{1-\epsilon}}.
$$
\item All the connected components discovered after the largest connected component in the exploration process of $G(n,\textbf{W})$ have size smaller than
$$
\frac{\ell_n^{2/3}}{f_n},
$$
and weight smaller than
$$
\frac{(1+\epsilon')\ell_n^{2/3}}{f_n}.
$$
\end{itemize}
Then if Conditions \ref{cond_nodes} hold, there exists a positive constant $A >0$ that only depends on the distribution of $W$ such that the probability of one of those events not happening is at most:
$$
A\left(\exp\left(\frac{-f_n^{\epsilon}}{A}\right)+\exp\left(\frac{-\sqrt{f_n}}{A}\right)+\exp\left(\frac{-n^{1/8}}{A}\right)\right).
$$
\end{Theorem}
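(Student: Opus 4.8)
\section*{Proof proposal for Theorem~\ref{principal_3}}

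The plan is to derive all statements from the breadth-first walk. Running the BFW until every vertex is explored, each connected component of $G(\textbf{W},p_{f_n})$ corresponds to a maximal stretch on which $L'$ stays strictly above its running minimum: the size of the component is the length of that excursion and its weight is $\sum_k w_{v(k)}$ taken over the steps of the excursion. I would first write down the one-step law: conditionally on $\mathcal{V}_i$, the vertex $v(i+1)$ is size-biased among the undiscovered vertices and, given $v(i+1)$, the increment $c(i+1)$ is a sum of independent Bernoulli variables with parameters $1-e^{-w_jw_{v(i+1)}p_{f_n}}$ over undiscovered $j$. Expanding $1-e^{-x}=x+O(x^2)$ and averaging over the size-biased choice gives $\mathbb{E}[c(i+1)\mid\mathcal{V}_i]=p_{f_n}Q_i+O(p_{f_n}^2Q_i)$ with $Q_i=\sum_{j\text{ undiscovered}}w_j^2$; since exploring $i$ size-biased vertices removes a $w^2$-mass of about $Ci$ (a size-biased copy of $W$ has second moment $\mathbb{E}[W^3]/\mathbb{E}[W]=C$), one gets $Q_i\approx\ell_n-Ci$ and hence the drift of $L'$ at step $i$ equals $f_n\ell_n^{-1/3}-Ci/\ell_n$ up to lower order terms. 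This drift is positive while $i\lesssim f_n\ell_n^{2/3}/C$ and negative afterwards, which is exactly what produces the giant of Theorem~\ref{principal_1} and what makes the before/after distinction in the statement natural.

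The second step makes this quantitative. Using the concentration inequalities for sampling without replacement announced in the introduction, together with Conditions~(iv)--(vii), I would show that outside an event of probability $A\exp(-n^{1/8}/A)$ the following holds simultaneously for all $i\le n$: $S_i=\sum_{k\le i}w_{v(k)}$ is within $o(n^{2/3})$ of $i$, $Q_i$ is within $o(n^{2/3})$ of $\ell_n-Ci$, the conditional drift of $L'$ lies between $f_n\ell_n^{-1/3}-c_1 i/\ell_n-o(\ell_n^{-1/3})$ and $f_n\ell_n^{-1/3}-c_2 i/\ell_n+o(\ell_n^{-1/3})$ for constants $c_1\ge c_2>0$, and the conditional variance of the increment is $\Theta(1)$. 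On this good event the exploration walk is sandwiched between two genuine random walks with explicit, slowly space-dependent drifts, so that classical one-dimensional hitting-time and excursion estimates become available. The same event also controls weights in terms of sizes: a size-biased vertex has mean weight $1$ (Condition~(iii)) and bounded variance, and the weights are $o(n^{1/3})$ with $\sum_k w_k^3=O(n)$, so a one-sided Bernstein bound shows that every excursion of length $m$ has weight at most $(1+\epsilon')m$ outside an event whose probability is of the order of the error terms in the statement; this is what links the two displayed bounds inside each bullet.

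For the small components I treat the two sides separately. A component appearing before the giant that has size in $[\ell_n^{2/3}/f_n^{1-\epsilon},\,(1-\epsilon')2f_n\ell_n^{2/3}/C]$ corresponds to an excursion of that length that returns to its running minimum while the drift is still positive; on the good event the drift accumulated over such an excursion is at least of order $f_n\ell_n^{-1/3}$ times its length, so returning to the minimum forces a downward martingale fluctuation of the same order, and a one-sided Bernstein bound (legitimate because the increments are bounded below by $-1$) makes this have probability at most $\exp(-f_n^{\epsilon}/A)$ after a union bound over the at most $n$ possible starting steps; combined with Theorem~\ref{principal_1} (and the same type of excursion estimate in the near-critical window) this shows the giant is the unique component of size exceeding $\ell_n^{2/3}/f_n^{1-\epsilon}$, which is the first bullet. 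A component appearing after the giant lives where the drift has become negative of order $f_n\ell_n^{-1/3}$ and stays negative, so the corresponding excursion is extremely unlikely to even reach length $\ell_n^{2/3}/f_n$: the negative drift makes the hitting time of the running minimum concentrated on a much shorter scale, and the one-sided Bernstein bound together with a union bound over starting steps gives probability at most $A\exp(-\sqrt{f_n}/A)$, which is the second bullet. Passing from sizes to weights via the second step and collecting the three error contributions yields the stated bound.

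The step I expect to be the main obstacle is the pre-giant analysis. There $L'$ has positive drift, so the familiar ``negative drift kills excursions quickly'' mechanism is unavailable; instead one must exploit the dichotomy intrinsic to a positive-drift walk --- an excursion either dies almost immediately or escapes to macroscopic size and becomes the giant --- so that a pre-giant component of intermediate size is a genuine large-deviation event, and one must do this uniformly over all excursions preceding the giant while ensuring that the $o(n^{2/3})$ errors in the environment, summed over the $\Theta(\ell_n^{2/3})$ steps that precede the giant, never overwhelm the $f_n\ell_n^{-1/3}$-scale drift being tracked. Controlling weights, not merely sizes, uniformly over all of these components is a further but largely routine layer of technical work.
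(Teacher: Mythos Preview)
Your general framework --- analyze $L'$ via its drift $f_n\ell_n^{-1/3} - Ci/\ell_n$ and control fluctuations by Freedman/Bernstein --- matches the paper, and your pre-giant worry is slightly misplaced: the paper handles that side simply by showing that $L$ does not visit $0$ on the whole interval $[\ell_n^{2/3}/(f_n^{1-\epsilon}C),\,2(1-\epsilon')f_n\ell_n^{2/3}/C]$ (Theorem~\ref{lower_bound_1}), which forces the giant to start before the left endpoint and makes every earlier component automatically small --- no union over excursion starting points is needed.

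The genuine gap is in your post-giant analysis, and it is the maximum-increment term in Bernstein. To exclude a post-giant excursion of length $m=\ell_n^{2/3}/f_n$ you must rule out an \emph{upward} martingale fluctuation of order $y\asymp\ell_n^{1/3}$ (the accumulated negative drift over $m$ steps). Freedman gives $\exp\bigl(-y^2/(yM+V)\bigr)$ with $M$ the maximal increment. Your one-sided remark ``increments $\geq -1$'' is the wrong side here: increments of $L'$ can be as large as the degree of the node being explored, which a priori is only $o(n^{1/3})$. With $M\asymp n^{1/3}$ one gets $yM\asymp \ell_n^{2/3}$, which swamps both $y^2$ and $V=O(m)$, and the bound collapses to $\exp(-O(1))$. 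The paper's key extra ingredient (Lemma~\ref{weight_tail}) is that, with probability $1-A\exp(-i\sqrt{f_n}/A)$, every weight discovered after time $\approx i^2f_n\ell_n^{2/3}$ is at most $\ell_n^{1/3}/(i\sqrt{f_n})$; this forces $M\le \ell_n^{1/3}/\sqrt{f_n}$ right after the giant, turns $yM$ into $\ell_n^{2/3}/\sqrt{f_n}$, and produces exactly the $\exp(-\sqrt{f_n}/A)$ you assert but do not derive.

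The same device also repairs your union bound. Summing a fixed $\exp(-\sqrt{f_n}/A)$ over ``at most $n$ starting steps'' destroys the conclusion unless $\sqrt{f_n}\gg\log n$, which is not assumed. The paper instead partitions the tail into nested blocks $[t^i_k,t^i_{k+1})$ of length $\ell_n^{2/3}/(i^2f_n)$ and shows (Theorem~\ref{localize}) that $L$ visits $0$ in each one; the per-block bound improves geometrically in $i$, because both the drift and the bound on $M$ do, so the sum over all $i,k$ converges. Finally, once the exploration reaches order $\ell_n^{11/12}$ your approximation $Q_i\approx \ell_n-Ci$ itself fails, and the paper must switch to a separate coupling/stochastic-domination argument (Theorem~\ref{concentration_3}); this is where the $\exp(-n^{1/8}/A)$ term actually comes from, not from a global ``good environment'' event as in your second step.
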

\begin{Theorem}[\textbf{The excess of the small components}]
\label{principal_4}
Let $\textnormal{Exc}_0$ be the the sum of the excesses of the connected components discovered before the largest connected component in the exploration process of $G(n,\textbf{W})$. And let $\textnormal{Exc}_1$ be the maximal excess of the connected component discovered after the largest connected component. \par 
Then if Conditions \ref{cond_nodes} hold, there exists a positive constant $A >0$ that only depends on the distribution of $W$ such that, for any $1 \geq \epsilon > 0$:
\begin{equation*}
\mathbb{P}\left(\textnormal{Exc}_0 \geq Af_n^{\epsilon}\right) \leq A\exp\left(\frac{-f_n^{\epsilon/2}}{A}\right),
\end{equation*}
and 
$$
\mathbb{P}\left(\textnormal{Exc}_1 \geq Af_n^{\epsilon}\right) \leq A\left(\exp\left(\frac{-f_n^{\epsilon}\ln(\sqrt{f_n})}{A}\right)+\exp\left(\frac{-\sqrt{f_n}}{A}\right)+\exp\left(\frac{-n^{1/8}}{A}\right)\right).
$$
\end{Theorem}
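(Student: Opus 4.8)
The argument rests on a combinatorial reading of the surplus through the breadth-first walk. When $v(i)$ is explored, the edges of $G(\textbf{W},p)$ incident to $v(i)$ that are \emph{not} in the exploration forest are exactly those joining $v(i)$ to a vertex that is already discovered but not yet explored, i.e. to a vertex of $\mathcal{D}(i)$; every edge of a component is revealed at such a step, so the surplus of a component equals the total number of these ``back edges'' over the steps that explore its vertices, and $\textnormal{Exc}_0$ (resp. $\textnormal{Exc}_1$) is this quantity summed over the steps before the giant (resp. maximised over the excursions after the giant). By the memorylessness of the capacities $E_{\{i,j\}}$, conditionally on the exploration forest and the exploration order up to step $i$ — which determine $v(i)$ and $\mathcal{D}(i)$ but not the back edges themselves — the back edges created at step $i$ form a family of independent Bernoulli variables, one per $v(j)\in\mathcal{D}(i)$, of parameter $1-e^{-w_{v(i)}w_{v(j)}p}\le p\,w_{v(i)}w_{v(j)}$. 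Writing $A_i=\sum_{v(j)\in\mathcal{D}(i)}w_{v(j)}$ for the weight of the active set, the step-$i$ contribution thus has conditional mean at most $p\,w_{v(i)}A_i$, and any consecutive block of such contributions, being a sum of independent Bernoullis of total mean $\mu$, satisfies the factorial-moment tail bound $\mathbb{P}(\,\cdot\ge k\mid\text{past})\le\mu^k/k!$.

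For $\textnormal{Exc}_0$ I would condition on the event of Theorem \ref{principal_3} that every component discovered before the giant has weight at most $(1+\epsilon')\ell_n^{2/3}/f_n^{1-\epsilon}$, together with the event — supplied by the fluctuation analysis of the reflected process $L$ underlying that theorem — that before the giant the active weight stays of the (critical) order $\ell_n^{1/3}/f_n^{(1-\epsilon)/2}$ and the total weight explored before the giant is at most $\ell_n^{2/3}\,\mathrm{poly}(f_n)$ with a small enough power. On this event $\mu_0:=p\sum_{i\text{ before giant}}w_{v(i)}A_i$ is at most a small power of $f_n$, of order $f_n^{\epsilon/2}$; feeding $\mu_0$ and $k=Af_n^{\epsilon}$ into the factorial-moment bound makes the concentration step contribute a term far smaller than $\exp(-f_n^{\epsilon/2}/A)$, so the answer is dominated by the probability that the conditioning event fails, which is of the stated order.

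For $\textnormal{Exc}_1$ I would again condition on Theorem \ref{principal_3} (now: every component discovered after the giant has weight at most $(1+\epsilon')\ell_n^{2/3}/f_n$) and, in addition, on the event that every excursion of $L$ occurring after the giant has height of order at most $\ell_n^{1/3}/\sqrt{f_n}$ — an input of the same exploration-process toolkit, whose failure is absorbed by the terms $A(\exp(-\sqrt{f_n}/A)+\exp(-n^{1/8}/A))$. On this event, for each post-giant component $\mathcal{C}$ one has $\mathbb{E}[\mathrm{surplus}(\mathcal{C})\mid\text{forest}]\le p\,W(\mathcal{C})\max_iA_i=O(f_n^{-3/2})$, hence $\mathbb{P}(\mathrm{surplus}(\mathcal{C})\ge k)\le(cf_n^{-3/2})^k/k!$. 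Taking a union bound over the post-giant components, dyadically grouped by weight (at most $\ell_n 2^{-j}$ of weight $\asymp 2^j$, each with conditional surplus mean $\lesssim p\,2^{3j/2}$), and choosing $k=Af_n^{\epsilon}$, the dominant dyadic scale produces a bound of the form $\mathrm{poly}(\ell_n)\exp(-cf_n^{\epsilon}\ln f_n)$; the polynomial prefactor is swallowed by the exponent, leaving the term $\exp(-f_n^{\epsilon}\ln\sqrt{f_n}/A)$, and the remaining two terms are inherited from the conditioning.

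The genuine difficulty is not in the surplus estimates above, which are elementary once the exploration is in place, but in the two ingredients I have treated as given: a sharp enough bound on the weight explored before the giant (and on the active weight there) for $\textnormal{Exc}_0$, and a sharp enough bound on the maximal excursion height after the giant for $\textnormal{Exc}_1$. These are precisely the barely-supercritical fluctuation estimates for $L$ developed earlier in the paper; pinning down their exponents tightly enough that the induced conditional means are genuinely small powers of $f_n$ (rather than, say, $\ell_n^{1/3}$) is the crux of the matter, and it is also what forces the $\sqrt{f_n}$ and $n^{1/8}$ error terms to appear.
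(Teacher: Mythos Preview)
Your overall strategy --- bound the height of the exploration process, read off a small conditional mean for the surplus, and then apply a factorial-moment/Bernstein tail bound --- is exactly the paper's strategy (Theorems~\ref{small_exc_1} and~\ref{small_exc_2}). But the quantitative inputs you plug in are off in both halves, and in the second half this makes the union bound fail.

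\medskip
\textbf{For $\mathrm{Exc}_0$.} Your claim that the active weight before the giant stays of order $\ell_n^{1/3}/f_n^{(1-\epsilon)/2}$ is incorrect: that is the pure-fluctuation scale $\sqrt{m}$, but in the pre-giant window the drift of $L^0$ is \emph{positive}, of order $f\ell_n^{-1/3}$, and over a stretch of length $m\approx\ell_n^{2/3}/f^{1-\bar\epsilon}$ it contributes $f^{\bar\epsilon}\ell_n^{1/3}$, which dominates. The paper (proof of Theorem~\ref{small_exc_1}) therefore takes $R=2f^{\bar\epsilon}\ell_n^{1/3}/C$ as the height bound. With this correction one gets $\mu_0\lesssim mRp_f\sim f^{2\bar\epsilon-1}$, so the Bernstein/factorial step is indeed negligible and the whole estimate is governed by the probability that the location of the giant and the height bound fail, i.e.\ $A\exp(-f^{\bar\epsilon}/A)$ from Theorem~\ref{lower_bound_1}. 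Choosing $\bar\epsilon=\epsilon/2$ is precisely what produces the stated exponent $f^{\epsilon/2}$. (The paper also has to control the quadratic form $\sum w_{v(i)}w_{v(j)}p_f$ that enters Bernstein; this forces an extra boosting parameter $\lambda$ and a two-parameter optimisation that you do not mention, but the bottleneck is the same.) So your plan for $\mathrm{Exc}_0$ is salvageable once the height is corrected; as written, your numbers and the origin of the $\epsilon/2$ are wrong.

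\medskip
\textbf{For $\mathrm{Exc}_1$.} Here there is a genuine gap. Your dyadic union bound over ``at most $\ell_n 2^{-j}$ components of weight $\asymp 2^j$'' yields a bound of the shape $\ell_n^{1/3}\exp(-cf^{\epsilon}\ln f)$; when $f$ is a large constant (which the theorem must cover) the prefactor $\ell_n^{1/3}$ is \emph{not} swallowed by the exponent, and the bound diverges. The paper avoids this by a different bookkeeping: it subdivides the tail into the intervals $[t^i_k,t^i_{k+1})$, and crucially invokes Lemma~\ref{weight_tail} --- after time $t^i_0$ no weight exceeds $\ell_n^{1/3}/(i\sqrt f)$ --- so that in the $i$-th block the conditional surplus mean is $\lesssim 1/(i\sqrt f)$ and the per-interval tail bound is $\exp\bigl(-f^{\epsilon}\ln(i\sqrt f)/A\bigr)$. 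This decay in $i$ is what makes the union bound over all (up to order $n$) intervals close; the far tail is then handled by $\tilde i\sqrt f\gtrsim n^{1/8}$, which is exactly where the $\exp(-n^{1/8}/A)$ term comes from. Your uniform height claim $\ell_n^{1/3}/\sqrt f$ (sharper than the $\ell_n^{1/3}$ of Corollary~\ref{corr36}) and your weight-dyadic counting do not capture this decay.
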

These theorems give precise bounds on the size, weight and excess of not only the largest connected component but also the other small connected components of the graph $G(n,\textbf{W})$ in the barely supercritical regime, and in the critical regime when $f_n$ is a large enough constant. As a direct corollary of those theorems, we also obtain convergence results when $f_n \rightarrow +\infty$ (see Corollary \ref{conv_cor}). 
Statements concerning the largest connected component and the connected components discovered before it are proven in Section $4$. While statements concerning the connected components discovered after the largest one are proven in Section $5$. Moreover, at the cost of heavier notations, Theorem \ref{localize} provides a more precise statement than the one we presented in Theorem \ref{principal_3}. \par
\textbf{Notation}: In the remainder of the paper we drop the $n$ from $f_n$. $f$ will always be the critical parameter. Moreover we will always assume $f = o(n^{1/3})$ and $f \geq F$, where $F >0$ is a constant independent of $n$ which is large enough for all our theorems to hold. Similarly the variables $m=m_n$, $l=l_n$, $h=h_n$ and $y=y_n$ will always depend on $n$. The letters $A,A',A''...$ will be used for large positive constants that may only depend on the distribution of $W$.

\subsection{Motivation and previous work}
If $w_ i= 1$ for all $i$, then the edge capacities $(E_{\{i,j\}})$ are i.i.d.. In that case $G(\textbf{W},p)$ is an Erd\H{o}s-Rényi random graph. This is why the  rank-1 inhomogeneous random graph model is a natural generalization of Erd\H{o}s-Rényi random graphs. There are several variations of inhomogeneuous random graphs. The original inhomogeneous graph model was introduced by Aldous in his pioneer work on the multiplicative coalescent (\cite{AL97}), in this article he proved convergence of the component weights to a suitable limit. Then this model was further studied in \cite{AL98}.  The model we study here is closely related to the so called Norros-Reittu model (\cite{no06}). The difference between their model and ours being that their model allows for multi-edges. This, however, has no incidence on our proofs. And everything we show here still holds for their model.  Other models of inhomogeneuous random graphs include the Britton-Deijfen-Martin-L{\"o}f (Section $3$ in \cite{LNB06}) model, where edge $\{i,j\}$ is present with probability:
$$
\frac{w_iw_j}{n+w_iw_j}.
$$
And the Chung-Lu model (chapter $5$, Section $3$ in \cite{CL06} ) , where edge $\{i,j\}$ is present with probability:
$$
\frac{w_iw_j}{\ell_n}.
$$
This definition supposes that $\max_{i,j}(w_iw_j) \leq \ell_n$.
we could have chosen some other representation of the edge probabilities. However, under our conditions and regime, all the results that we will prove are also true for those models. Generally, it is easy to see that all the theorems we prove here under Conditions \ref{cond_nodes} will still hold for any of the models above. 
The choice of $p_{f} = \frac{\ell_n^{1/3}+f}{\ell_n^{4/3}}$, with $f = o(n^{1/3})$ is motivated by the phase transition that appears in the following theorem (proved in \cite{BSO07}). \par
\begin{Theorem}
Take $G(\textbf{W},\frac{c}{\ell_n})$ and  suppose that Conditions \ref{cond_nodes} are verified, then the following results hold with high probability \footnote{We say that a sequence of events $E_n$ holds with high probability if $\lim_{n \rightarrow \infty}\mathbb{P}(E_n) =1$}:
\begin{itemize}
\item \textbf{Subcritical regime}  If $c < 1$ then the largest connected component is of size $o(n)$.
\item \textbf{Supercritical regime}  If $c > 1$ then the largest connected component is of size $\Theta(n)$ and for any $i >1$ the $i$-th largest connected component is of size $o(n)$.
\item \textbf{Critical regime}  If $c = 1$ then for any $i \geq 1$ the $i$-th largest connected component is of size $\Theta(n^{2/3})$.
\end{itemize}
\end{Theorem}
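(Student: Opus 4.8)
The plan is to read off all three statements from the exploration process $L'$ of Section~\ref{2.1.2}, using that the size of a connected component equals the length of the corresponding excursion of $L'$ away from its running minimum, and that the choice $p=c/\ell_n$ with $\ell_n=\sum_k w_k$ makes the increments explicit. Conditionally on the first $i$ steps, the number of children $c(i+1)$ of $v(i+1)$ is a sum of independent Bernoulli variables with parameters $1-e^{-w_jw_{v(i+1)}p}$ over the undiscovered $j$'s; since $w_jw_{v(i+1)}p=O(w_1^2/n)=o(n^{-1/3})$ uniformly by Condition~\ref{cond_nodes}(vii), this sum is stochastically sandwiched between Poisson variables of parameter $(1\pm o(1))\,c\,w_{v(i+1)}(1-S_i/\ell_n)$, where $S_i=\sum_{k\le i}w_{v(k)}$. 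Moreover the vertices appear in size-biased order, so the encountered weights are, to leading order, i.i.d.\ copies $\hat W_k$ of the size-biased variable $\hat W$ (with $\mathbb{E}[\hat W^j]=\mathbb{E}[W^{j+1}]/\mathbb{E}[W]$), and by Condition~\ref{cond_nodes}(iii) one has $\mathbb{E}[\hat W]=\mathbb{E}[W^2]/\mathbb{E}[W]=1$. Hence, as long as $S_i=o(\ell_n)$, the process $L'$ behaves like the random walk $1+\sum_k(\xi_k-1)$ with $\xi_k\sim\mathrm{Poisson}(c\hat W_k)$, whose drift is exactly $c-1$; this is what produces the trichotomy, and note $\mathbb{E}[\xi_k^2]<\infty$ because $\mathbb{E}[\hat W^2]=\mathbb{E}[W^3]/\mathbb{E}[W]<\infty$ by Condition~\ref{cond_nodes}(ii).

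\textbf{Subcritical regime $c<1$.} Here the depletion factor $1-S_i/\ell_n\le 1$ only helps, so $L'$ is stochastically dominated by the above walk with strictly negative drift $\delta:=1-c$. For a fixed vertex $v$, exploring its component shows $\mathbb{P}(|\mathcal{C}(v)|\ge m)\le\mathbb{P}\big(\min_{j\le m}\sum_{k\le j}(\xi_k-1)\ge 0\big)\le\mathbb{P}\big(\sum_{k\le m}(\xi_k-1)\ge 0\big)$, and since the $\xi_k-1$ have mean $-\delta$ and finite variance, a Fuk--Nagaev (one-big-jump) estimate gives $\mathbb{P}\big(\sum_{k\le m}(\xi_k-1)\ge 0\big)=O(1/m)$. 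Therefore $\mathbb{E}\big[\#\{v:|\mathcal{C}(v)|\ge\epsilon n\}\big]=\sum_v\mathbb{P}(|\mathcal{C}(v)|\ge\epsilon n)=O(1/\epsilon)$, and since the existence of a component of size $\ge\epsilon n$ forces $\#\{v:|\mathcal{C}(v)|\ge\epsilon n\}\ge\epsilon n$, Markov's inequality yields $\mathbb{P}(\text{largest component}\ge\epsilon n)=O(1/(\epsilon^2 n))\to 0$ for every $\epsilon>0$.

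\textbf{Supercritical regime $c>1$.} Now one tracks the fluid limit: $n^{-1}L'_{\lfloor tn\rfloor}$ converges uniformly on compacts to the deterministic solution $g$ of the ODE obtained from the mean increment $c\,\hat w(t)(1-s(t))-1$, where $S_{\lfloor tn\rfloor}/\ell_n\to s(t)$ and $\hat w(t)$ is the mean residual size-biased weight. Because $c>1$, $g$ is strictly positive on an interval $(0,\rho)$ with $\rho>0$ and first returns to $0$ at $\rho$, where $\rho$ equals the asymptotic fraction of vertices in the surviving part of the limiting branching process; concentration of $L'$ around $g$ (e.g.\ Azuma on the Doob martingale of the step-by-step revealed information, whose increments are $O(w_1)=o(n^{1/3})$, giving $o(n)$ deviations) then produces an excursion of length $\rho n+o(n)$, i.e.\ a component of size $\Theta(n)$. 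For uniqueness and the smallness of the remaining components one invokes the duality principle: conditionally on the giant, the graph on the remaining vertices (with their reduced weights) is an inhomogeneous random graph with effective parameter strictly below $1$, so the subcritical estimate of the previous paragraph applies and forces every other component to be $o(n)$.

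\textbf{Critical regime $c=1$, and the main difficulty.} When $c=1$ the drift of the comparison walk vanishes and the depletion term $-S_i/\ell_n\approx -i/n$ becomes the leading correction, so the natural scale is $i=tn^{2/3}$: one shows $n^{-1/3}L'_{\lfloor tn^{2/3}\rfloor}$ converges to a Brownian motion with parabolic drift $B_t-t^2/2$ (the quadratic drift coming precisely from $S_i\approx i$, $\ell_n\approx n$ and the unit variance of the Poisson steps), whose excursions above the running minimum have lengths of order $1$ on this scale, i.e.\ $\Theta(n^{2/3})$; combined with tightness and Aldous's description of the critical multiplicative coalescent, this gives that the $i$-th longest excursion, hence the $i$-th largest component, is $\Theta(n^{2/3})$ for every fixed $i\ge 1$. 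This critical case is the main obstacle: unlike the one-sided stochastic domination used in the other two regimes, here one needs two-sided control — the quadratic drift must be tracked with $o(n^{2/3})$ precision, which is exactly what Conditions~\ref{cond_nodes}(iv)--(vi) provide, and the martingale part must be shown to converge to Brownian motion with no single dominating jump, which uses $\mathbb{E}[W^3]<\infty$ together with $\max_i w_i=o(n^{1/3})$ from Conditions~\ref{cond_nodes}(ii) and (vii); obtaining matching upper and lower bounds on excursion lengths in this parabolic-drift environment, uniformly over the top $i$ components, is the delicate point.
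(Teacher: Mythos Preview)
The paper does not prove this theorem at all: it is quoted as background and attributed to \cite{BSO07}, with no argument given. So there is no ``paper's own proof'' to compare your attempt against; the paper's entire treatment of this statement is the citation.

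Your sketch is a reasonable outline of the standard routes to this result (branching-process/random-walk domination for $c\neq 1$, Aldous's Brownian-with-parabolic-drift limit for $c=1$), but since you asked for a comparison, note a few imprecisions you would have to fix if you actually wanted to carry this out. In the subcritical case, invoking Fuk--Nagaev for an $O(1/m)$ bound is overkill and slightly off-target: Chebyshev already gives $\mathbb{P}\big(\sum_{k\le m}(\xi_k-1)\ge 0\big)=O(1/m)$, and in fact the bound is exponential in $m$ by Cram\'er, which makes the union bound trivial. In the supercritical case, the increments of $L'$ are not deterministically $O(w_1)$ (the number of children is random), so Azuma needs either a truncation step or replacement by a Bernstein-type martingale inequality; this is exactly what the paper does later in its own context via Fact~\ref{fact1} and the $\bar L'$ truncation. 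In the critical case your limit $B_t-t^2/2$ has the wrong constants for the inhomogeneous model: the quadratic drift should involve $C=\mathbb{E}[W^3]/\mathbb{E}[W]$ and the diffusion coefficient is not $1$ in general --- compare Corollary~\ref{S_h}, where the drift of the exploration process picks up precisely this $C$. None of this invalidates the architecture of your argument, but it does mean the sketch is not yet a proof.
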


From this theorem it appears that there is a phase transition at $c=1$. Just as in the Erd\H{o}s-Rényi model,  the right scale to look at the phase transition is for $c_n = 1 + \frac{\lambda}{\ell_n^{1/3}}$, with $\lambda >0$ a constant. Which explains our choice of $p_{f}$. This is the so called critical window. In Theorems \ref{principal_1}, \ref{principal_2}, \ref{principal_3}, and \ref{principal_4} we look at $c \sim 1$ and $f$ that is either a large constant, or that goes to infinity but stays $o(n^{1/3})$. The latter is what we call the barely supercritical regime.   \par

Plenty of work was done on $G(\textbf{W},\lambda)$ with $\lambda$ constant. The most recent and comprehensive one being in \citet*{br18} and \cite{br20}. Aldous was the first to study the closely related multiplicative coalescent in \cite{AL97}. In \cite{SRJ10} it is shown, under Conditions \ref{cond_nodes},  that the sequence of sizes of the connected components, properly rescaled, converges to a random vector. In \cite{SS17} this result is further extended, under stronger conditions than Conditions \ref{cond_nodes}, by showing that the sequence of connected components of the whole graph, seen as metric spaces, when properly rescaled, converge to a limit sequence of compact metric spaces. Moreover, under Conditions \ref{cond_nodes}, up to a multiplicative constant, this limit object has the distribution of the scaling limit of  Erd\H{o}s-Rényi random graphs (presented in \cite{ALBC12}). This shows that there is an invariance principle, although we have a generalization of Erd\H{o}s-Rényi random graphs the limit objects are just rescaled versions of one another. \par
However, unlike the Erd\H{o}s-Rényi case (see \citet*{LNB09}), there is no uniform study when $f$ moves through the critical window. For instance, there are no known concentration results  that depend on $f$ for the size of the largest component of rank-1 inhomogeneous random graphs. Moreover, there are no known concentration results for the barely supercritical regime. These are the cases that we treat in this paper. \par
This study has other implications for another object. For $n \in \mathbb{N}$, assign i.i.d., uniform random variables on $(0,1)$, that we call weights, to the edges of a complete graph of size $n$. Then the random minimum spanning tree (random MST) is the (almost surely unique) connected subgraph with $n$ vertices that minimizes the sum of the weights. It is a tree. In the Article by \cite{LNCG13} it is proven that when rescaling the distances by $n^{-1/3}$, the random MST converges to a compact tree-like metric space. The proof in \cite{LNCG13} relies heavily on a uniform study of the critical Erd\H{o}s-Rényi graph through the critical window and in the barely supercritical regime (done before in Article \cite{LNB09}). \par 
In order to do the same for the rank-1 inhomogeneous random graphs, instead of putting i.i.d. weights on a complete graph, put capacity $E_{\{i,j\}}$ on edge $\{i,j\}$ and construct the minimum spanning tree for those capacities. Call such a tree the inhomogeneous random MST. Clearly, this tree can be coupled with rank-1 inhomogeneous random graphs in the same fashion as in \cite{LNCG13}. One can ask  whether that tree, when properly rescaled, also converges to a continuous random tree-like metric space. And if the answer is yes,  will this metric space be a rescaled version of the scaling limit of the random MST in \cite{LNCG13}? A positive answer would show that there is still an invariance principle for those trees. \par
We intend on answering these questions in  follow up papers, and the bounds we prove in this paper will be crucial in our future proofs. \par
The biggest difficulty in proving our theorems is that the weight discovered at step $i$ of the exploration process depend on the weights discovered before it. Those weights appear in a size-biased fashion. This is why we show new concentration inequalities for size-biased sampling without replacement. We also make use of the note \cite{AYS16} in order to estimate the deviations of the sum of variables sampled without replacement.  Another difficulty is that we cannot rely on known results (for example results in Article \cite{LT90}) that were proved for Erd\H{o}s-Rényi graphs.  Everything has to be done separately for inhomogeneuous random graphs.\par

6There are other interesting problems that require more work. For instance there is the case of power law distributions for the node weights. Conditions \ref{cond_nodes} ensure that a uniform node weight behaves like a random variable with finite third moment. One can change those conditions, and allow the variable to follow a power law distribution of parameter $\tau > 3$. If $\tau > 4$, then we are in the case of finite third moments treated here. However, when $\tau \leq 4$, we expect the results to be vastly different. Informal arguments show that  in that case the scaling limit of the minimum spanning tree should be mutually singular with the scaling limit of random MST. This intuition is due to the appearance of Levy trees when studying those graphs (see \citet*{RSJ18} for further discussion of this model). \par
Finally another totally different set of questions regard biased sampling without replacement. Let $n \geq 1$ be an integer and $(a_1,a_2,...a_n)$ be decreasing real number. Moreover let $(p_1,p_2,...,p_n)$ be positive real numbers such that:
$$
\sum_{k=1}^n p_i = 1.
$$
Let $(V(1),V(2),...,V(n))$ be a vector random variables that correspond to indices sampled without replacement in the following way, for any $ i \in \{1,2,...,n-1\}$ and $j \in \{1,2,...,n-1\}$:
\begin{equation*}
\begin{aligned}
&\mathbb{P}\left(V(1)= j \right) = p_j,  \\
&\mathbb{P}\left(V(i+1)=j \quad | (V(1),V(2)...,V(i))\right) = \frac{p_j\mathbbm{1}(V(j) \not\in (V(1),...,V(i)))}{\sum\limits_{k=1}^{n}p_{k}-\sum\limits_{k=1}^{i}p_{V(k)}}.
\end{aligned}
\end{equation*}
Consider also $(J(1),J(2),...,J(n))$ that is a vector of independent random variables with the same distribution as $V(1)$. The $J(i)$'s correspond to indices sampled with replacement. Remark that size-biased sampling is a special case of biased sampling. While working on this paper two questions arose regarding these two samplings. First, under which set of conditions do we have the following inequality for any $n \geq m \geq l$ and real number $x \geq 0$:
$$
\mathbb{P}\left(\middle|\sum_{k=l}^ma_{V(i)}-\mathbb{E}[a_{V(i)}]\middle| \geq x\right) \leq \mathbb{P}\left(\middle|\sum_{k=l}^ma_{J(i)}-\mathbb{E}[a_{J(i)}]\middle| \geq x\right).
$$
This inequality means that biased sampling without replacement is more concentrated around its mean than biased sampling with replacement. The main idea behind this conjecture is that sampling without replacement tends to auto-concentrate itself around its mean. For instance, if for some $i \geq 1$, $V(i) = j$ and $a_{j}$ is very large, then we will not draw the same index $j$ in subsequent rounds. But in biased sampling with replacement, the same "bad" event can keep happening. \par
We were not able to find any trivial counter example to this inequality, so it could be true that it holds without any further assumptions. If not, then under which set of assumptions does it hold ? With such an inequality it would be easy to answer the question regarding inhomogeneous random graphs with power law distribution presented in the paragraph above. \par
Another question is for the ordered case. Suppose now that $p_1 \geq p_2 \geq ... \geq p_n$. This means that larger $a_i$'s  have larger probabilities of being drawn first. This is again a general case of size-biased sampling. Is it true then that for any $n-1 \geq m \geq 1$, and real numbers $(x_1,x_2,x_3,...x_n)$
$$
\mathbb{P}\left(a_{V(1)}\geq x_1  , \, a_{V(2)}\geq x_2, ...   , \, a_{V(m)}\geq x_m\right) \geq \mathbb{P}\left(a_{V(2)}\geq x_1  , \, a_{V(3)}\geq x_2, ...   , \, a_{V(m+1)}\geq x_m\right),
$$
and also
$$
\mathbb{P}\left(a_{J(1)}\geq x_1  , \, a_{J(2)}\geq x_2, ...   , \, a_{J(m)}\geq x_m\right) \geq \mathbb{P}\left(a_{V(1)}\geq x_1  , \, a_{V(2)}\geq x_2, ...   , \, a_{V(m)}\geq x_m\right).
$$
In Lemma \ref{decreasing}, we prove those inequalities for $m=1$. With some more work, we can prove them for $m=2$ also. We conjecture that they are in fact true for all $1 \leq m \leq n-1$. 
\section{Bounding the weights}
A well known fact is that the sum of weights sampled uniformly without replacement verifies slightly better Chernoff concentration inequalities as  the sum of weights sampled uniformly with replacement (See \cite{S74}). No such general result is available for size-biased sampling. \par
In this section we will always assume that Conditions \ref{cond_nodes} are verified. We will
prove concentration bounds for the weights sampled in size-biased order and without replacement under some conditions. 
\subsection{First concentration result and the mean}
The following theorem, from Article \cite{AYS16}, is a first important step in comparing the sum of the $(w_{v(i)})_i$'s with the sum of i.i.d. copies of a random variable.
\begin{Theorem}
\label{replacement}
Let $0 < l \leq m \leq n$ be two integers, and $J(1),J(2)...,J(n)$ be i.i.d. random variables with the  distribution of $v(l)$, then for any convex function $g$:
$$
\mathbb{E}\left[g\left(\sum_{i=l}^mw_{v(i)}\right)  \right] \leq \mathbb{E}\left[ g\left(\sum_{i=l}^mw_{J(i)} \right)   \right].
$$
\end{Theorem}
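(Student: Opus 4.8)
Read the claim as the statement that the block sum of the size-biased exploration sequence is dominated, in the (increasing) convex order, by the sum of $m-l+1$ independent copies of $w_{v(l)}$. I would prove it by induction on the block length $r:=m-l+1$. The base case $r=1$ is the trivial equality $w_{v(l)}\overset{d}{=}w_{J(l)}$. For the step I would condition on $\mathcal V_{l}$ (the first $l$ exploration steps): by the size-biased identity established in Section~\ref{2.1.2}, given $\mathcal V_l$ the continuation $(v(l+1),\dots,v(m))$ is distributed as the first $r-1$ steps of a fresh size-biased exploration without replacement run on the depleted weight vector $(w_k)_{k\notin\mathcal V_l}$. Writing $\sum_{i=l}^m w_{v(i)}=w_{v(l)}+\sum_{i=l+1}^m w_{v(i)}$ with $w_{v(l)}$ being $\mathcal V_l$-measurable, and applying the inductive hypothesis on that depleted vector to the convex (and, if $g$ is, increasing) function $x\mapsto g(w_{v(l)}+x)$, one obtains
\[
\mathbb E\Big[g\Big(\textstyle\sum_{i=l}^m w_{v(i)}\Big)\Big]\ \le\ \mathbb E\Big[g\Big(w_{v(l)}+\textstyle\sum_{i=1}^{r-1}Y_i\Big)\Big],
\]
where, conditionally on $\mathcal V_l$, the $Y_i$ are i.i.d.\ size-biased picks from $(w_k)_{k\notin\mathcal V_l}$. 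It then remains to compare this ``one size-biased deletion, then i.i.d.'' sum with $\mathbb E[g(\sum_{i=0}^{r-1}Y_i')]$, the $Y_i'$ being i.i.d.\ copies of $w_{v(l)}$.

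That last comparison is the crux and is exactly what \cite{AYS16} supplies. When $l=1$ it is a clean finite-population inequality: writing $\mu$ for the law of $w_{v(1)}$ and $\nu_j$ for $\mu$ with its atom at $w_j$ deleted and renormalised, it reads
\[
\sum_{j=1}^n \frac{w_j}{\ell_n}\,\mathbb E_{\nu_j}\big[g(w_j+\Sigma)\big]\ \le\ \sum_{j=1}^n \frac{w_j}{\ell_n}\,\mathbb E_{\mu}\big[g(w_j+\Sigma)\big],
\]
$\Sigma$ being an independent sum of $r-1$ i.i.d.\ draws from the measure in the subscript, and the right-hand side is $\mathbb E[g(\sum_{i=1}^{r}w_{J(i)})]$ after re-inserting the first coordinate; for general $l$ one additionally invokes the stochastic monotonicity $w_{v(1)}\ge_{\mathrm{st}}w_{v(2)}\ge_{\mathrm{st}}\cdots$ (the $m=1$ case of Lemma~\ref{decreasing}) to see that the first pick of a block is the correct i.i.d.\ surrogate. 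To reprove the displayed inequality I would expand both sides, use the symmetry $g(w_j+w_k)=g(w_k+w_j)$ to reduce it to a weighted comparison among the numbers $g(w_j+w_k)$, and close it by Jensen after a monotonicity step, exploiting that the atom being deleted is precisely the one carrying the extra weight $w_j/\ell_n$, so that ``on average a heavy point is removed''.

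The main obstacle is exactly this single-step estimate: it has no coupling proof and is false term by term in $j$ (deleting a light atom makes $\nu_j$ stochastically \emph{larger} than $\mu$, so individual summands can have the wrong sign), and only the size-bias weighting $w_j/\ell_n$ rescues the sum, forcing a genuinely global convexity argument. Everything else is bookkeeping, for which a convenient device is the competing-clocks representation implicit in the model: taking $\eta_j=\xi_j/w_j$ with $\xi_j$ i.i.d.\ $\mathrm{Exp}(1)$, the sequence $(v(1),\dots,v(n))$ is the order in which the $\eta_j$ are revealed, which makes transparent both the size-biased Markov property used above and the monotonicity of $(w_{v(i)})_i$; it is also the natural tool if one only wants the exponential-moment consequence $\mathbb E[e^{\theta\sum_{i=l}^m w_{v(i)}}]\le(\mathbb E[e^{\theta w_{v(l)}}])^{\,m-l+1}$ for $\theta>0$, which is what the Chernoff arguments later in this section actually use.
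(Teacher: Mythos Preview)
The paper does not prove Theorem~\ref{replacement}; it quotes it from \cite{AYS16}. Your parenthetical ``(increasing)'' is essential and worth flagging explicitly: as literally stated the inequality is false for general convex $g$, since the two sums have different means (take $n=2$, $w_1=2$, $w_2=1$, $l=1$, $m=2$: the without-replacement sum equals $3$ a.s.\ while the i.i.d.\ sum has mean $10/3$, so $g(x)=-x$ reverses the inequality). The paper only ever invokes the result with $g=\exp$, so this is a harmless misstatement rather than an error in the applications.

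The substantive gap is your crux step. The induction on $r=m-l+1$ is formally correct but does no work: for $r=2$ the hypothesis at $r=1$ is vacuous, and your displayed crux inequality specialises to exactly $\mathbb E[g(w_{v(1)}+w_{v(2)})]\le\mathbb E[g(w_{J(1)}+w_{J(2)})]$, i.e.\ the theorem itself at level~$2$. So the induction cannot start without an independent proof of the crux, and for $r\ge 3$ the crux is a new family of inequalities (one per $r$) that you only sketch. ``Expand, symmetrize in $j,k$, close by Jensen after a monotonicity step'' is not a proof: the termwise comparison is false (as you say), and an inner induction on $s=r-1$ does not close either, because after peeling one coordinate the remaining $s-1$ draws sit under $\nu_j^{\otimes(s-1)}$ on the left but $\mu^{\otimes(s-1)}$ on the right, so nothing decouples. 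You end up deferring the content back to \cite{AYS16} for precisely the step that carries it.

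The proof in \cite{AYS16} is structurally different: it realises $(v(1),\dots,v(n))$ inside the i.i.d.\ stream $J_1,J_2,\dots$ as the subsequence of first occurrences (your competing-clocks remark is the same coupling) and proves the comparison directly on that coupling, treating all $m$ at once rather than inducting on block length. The coupling is where the proof lives; the inductive wrapper you propose does not replace it.
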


Generally, concentration bounds that use Chernoff's inequality are based on the fact that: 
$$
\mathbb{E}\left[ \exp{\left(\sum_{i=l}^mw_{J(i)} \right)} \right] = \mathbb{E}\left[ \exp{\left(w_{J(1)} \right)}  \right]^m.
$$
Hence, taking $g$ to be the exponential function in Theorem \ref{replacement} shows a Chernoff type inequality. This means that upper bounds that use Chernoff's inequality (first used in \cite{B24}) and which  hold for size-biased sampling with replacement are still true for size-biased sampling without replacement. This fact will be used later in the proofs. This is true in particular for Bernstein's inequality (\cite{B24}) which stems from Chernoff's bound. \par
The following lemmas give an estimation of the mean of $w_{v(i)}$. This first Lemma is already shown in one of the proofs that appear in \cite{SRJ10}, we prove it here again for clarity.
\begin{Lemma}
\label{sum_weights}
Suppose that Conditions \ref{cond_nodes} hold. Then for any $0 < l = o(n)$, and $i \in \{1,2,3\}$ we have:
$$\sum_{k=1}^lw_k^i=o(n).$$
\end{Lemma}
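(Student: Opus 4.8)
The plan is to prove the three bounds $\sum_{k=1}^l w_k^i = o(n)$ for $i \in \{1,2,3\}$ directly from Conditions \ref{cond_nodes}, using monotonicity of the weights. The key observation is that the weights are ordered, $w_1 \geq w_2 \geq \dots \geq w_n > 0$, so the partial sums $\sum_{k=1}^l w_k^i$ collect the $l$ \emph{largest} terms among $w_1^i, \dots, w_n^i$. First I would handle $i=3$: by Condition \ref{cond_nodes}(vi) we have $\sum_{k=1}^n w_k^3 = \mathbb{E}[W^3] n + o(1)$, and since every term $w_k^3$ is nonnegative, $\sum_{k=1}^l w_k^3 \leq \sum_{k=1}^n w_k^3 = O(n)$. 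This already gives an $O(n)$ bound but not $o(n)$; to upgrade it, I would split the sum at a slowly growing threshold and exploit that only finitely many indices can carry a non-negligible fraction of the third-moment mass.

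More precisely, the main step is the following. Fix $\eta > 0$. Because $\mathbb{E}[W^3] < \infty$ and the empirical distribution of $(w_k)$ converges weakly to $W$ (Conditions \ref{cond_nodes}(i),(ii),(vi)), the truncated sums are controlled: there exists a constant $M = M(\eta)$ such that $\frac{1}{n}\sum_{k \,:\, w_k > M} w_k^i \leq \eta$ for $i \in \{1,2,3\}$ and all large $n$ — this is exactly uniform integrability of $w_X^i$ under the empirical measure, which follows from convergence of the $i$-th moments to $\mathbb{E}[W^i]$ together with weak convergence (a standard fact; one can also get it directly from Condition \ref{cond_nodes}(vi) by a Markov-type argument, since $\#\{k : w_k > M\} \leq \sum_k w_k^3 / M^3 = O(n/M^3)$ and on that set $w_k^i \leq w_k^3 / M^{3-i}$). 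Now, because the weights are decreasing, the set $\{k : w_k > M\}$ is an initial segment $\{1, \dots, N_M\}$ with $N_M = O(n/M^3)$, and on the complementary indices $w_k \leq M$. Therefore
\begin{equation*}
\sum_{k=1}^l w_k^i \;=\; \sum_{\substack{k \leq l \\ w_k > M}} w_k^i \;+\; \sum_{\substack{k \leq l \\ w_k \leq M}} w_k^i \;\leq\; \sum_{k : w_k > M} w_k^i \;+\; l \, M^i \;\leq\; \eta n + l M^i.
\end{equation*}
Since $l = o(n)$ and $M = M(\eta)$ is fixed, the second term is $o(n)$, so $\limsup_n \frac{1}{n}\sum_{k=1}^l w_k^i \leq \eta$. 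As $\eta > 0$ was arbitrary, $\sum_{k=1}^l w_k^i = o(n)$, which is the claim.

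The step I expect to be the main (minor) obstacle is making the uniform integrability argument clean without invoking more than Conditions \ref{cond_nodes} allow: I want a deterministic statement, so I would derive the bound $\#\{k : w_k > M\} \leq M^{-3}\sum_{k=1}^n w_k^3$ and the tail bound $\sum_{k : w_k > M} w_k^i \leq M^{i-3}\sum_{k : w_k > M} w_k^3 \leq M^{i-3}\bigl(\sum_{k=1}^n w_k^3\bigr)$ purely from nonnegativity and the ordering, then use Condition \ref{cond_nodes}(vi) to bound $\sum_{k=1}^n w_k^3 \leq (\mathbb{E}[W^3]+1)n$ for $n$ large. Choosing $M$ with $M^{i-3}(\mathbb{E}[W^3]+1) < \eta$ (possible since $i - 3 < 0$ for $i \in \{1,2,3\}$ — note for $i=3$ the bound $l M^0 = l = o(n)$ alone suffices and no truncation is needed, so really only $i \in \{1,2\}$ require the argument) closes the estimate. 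Everything else is bookkeeping with the $o(n)$ hypothesis on $l$.
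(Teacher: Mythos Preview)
Your approach is essentially the paper's: truncate at a level $M$ (the paper writes $K$), bound the large-weight contribution via uniform integrability of $w_X^3$ under the empirical law (weak convergence together with convergence of third moments, Conditions~\ref{cond_nodes}(i) and (vi)), and bound the small-weight contribution by $lM^i = o(n)$. The paper carries out $i=3$ and says the other cases are similar; your ``Route B'' (the Markov-type bound $\sum_{k:w_k>M} w_k^i \le M^{i-3}\sum_k w_k^3$) is a tidy way to get $i\in\{1,2\}$ directly from Condition~\ref{cond_nodes}(vi) without invoking weak convergence.

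There is, however, a real slip in your last paragraph. The parenthetical ``for $i=3$ the bound $lM^0 = l = o(n)$ alone suffices and no truncation is needed'' is not correct. For $i=3$ your Route~B yields only the trivial $\sum_{k:w_k>M} w_k^3 \le \sum_k w_k^3 = O(n)$, which does not give the $\eta n$ tail bound; and there is no way to conclude $\sum_{k=1}^l w_k^3 = o(n)$ from $l=o(n)$ alone (the first $l$ weights are the largest, and $l\cdot\max_k w_k^3$ can be as big as $l\cdot o(n)$, not $o(n)$). The case $i=3$ genuinely requires the uniform-integrability input --- your Route~A --- and that is exactly what the paper uses. So your proof is fine provided you keep Route~A for $i=3$ and treat Route~B only as a shortcut for $i\in\{1,2\}$; the claim that $i=3$ can be handled without it should be dropped.
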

\begin{proof}
We do the proof for $i=3$, the other cases can be proved similarly or deduced easily from this case. Recall that the weights $(w_1,w_2,...w_n)$ are taken in decreasing order.
For any $K > 0$:
\begin{equation}
\begin{aligned}
\label{partial_sum}
    \sum_{k =1}^{l} \frac{w_k^3}{\ell_n} &\leq  \sum_{k =1}^{l} \frac{w_k^3\mathbbm{1}(w_k\leq K)}{\ell_n} + \sum_{k =1}^{n} \frac{w_k^3\mathbbm{1}(w_k > K)}{\ell_n} \\
    &\leq \frac{lK^3}{\ell_n} + \sum_{k =1}^{n} \frac{w_k^3\mathbbm{1}(w_k > K)}{\ell_n}.
\end{aligned}
\end{equation}
By the weak convergence in Conditions \ref{cond_nodes}:
\begin{equation*}
\lim_{n \rightarrow \infty}\left(\sum_{k =1}^{n} \frac{w_k^3\mathbbm{1}(w_k \leq K)}{n}\right) = \mathbb{E}[W^3\mathbbm{1}(W \leq K)],
\end{equation*}
and by the fact that: 
$$\sum_{k=1}^nw_k^3 = \mathbb{E}[W^3]n + o(n),$$ 
it follows that:
\begin{equation*}
\begin{aligned}
\lim_{n \rightarrow \infty}\left(\sum_{k =1}^{n} \frac{w_k^3\mathbbm{1}(w_k > K)}{\ell_n}\right) &= \frac{1}{\mathbb{E}[W]}\left(\mathbb{E}[W^3]- \mathbb{E}[W^3\mathbbm{1}(W \leq K)]\right) \\
&= \frac{\mathbb{E}[W^3\mathbbm{1}(W > K)]}{\mathbb{E}[W]}.
\end{aligned}
\end{equation*}
Since $ \mathbb{E}[W^3] < \infty$:
$$\lim_{K \rightarrow \infty}\left(\lim_{n \rightarrow \infty}\left(\sum_{k =1}^{n} \frac{w_k^3\mathbbm{1}(w_k > K)}{\ell_n}\right)\right) = 0.$$
 Together with the fact that and $l = o(n)$, letting $n$ go to infinity then $K$ go to infinity in Equation \eqref{partial_sum} yields:
\begin{equation}
    \label{partial_sum_2}
    \sum_{k =1}^{l} \frac{w_k^3}{\ell_n} = o(1).
\end{equation}
\end{proof}

\begin{Lemma}
\label{mean_pow_weights_2}
Suppose that Conditions \ref{cond_nodes} hold. Recall that $C = \frac{\mathbb{E}[W^3]}{\mathbb{E}[W]}$.
For any $l=o(n)$:
$$\mathbb{E}[w^{2}_{v(l)}] = C+o(1).$$
\end{Lemma}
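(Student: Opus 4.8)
The plan is to condition on the first $l-1$ discovered vertices and reduce the statement to a deterministic estimate, using the size-biased transition probabilities from Section \ref{2.1.2} together with Lemma \ref{sum_weights}.

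First I would condition on $\mathcal{V}_{l-1} = \{v(1),\dots,v(l-1)\}$ and compute, using the size-biased law recalled in Section \ref{2.1.2} (which has the same form whether or not a connected component is finished before step $l$),
$$\mathbb{E}\!\left[w_{v(l)}^2 \,\middle|\, \mathcal{V}_{l-1}\right] = \sum_{j \notin \mathcal{V}_{l-1}} w_j^2\cdot \frac{w_j}{\ell_n - \sum_{k=1}^{l-1} w_{v(k)}} = \frac{\sum_{j=1}^n w_j^3 - \sum_{k=1}^{l-1} w_{v(k)}^3}{\ell_n - \sum_{k=1}^{l-1} w_{v(k)}}.$$
For $l=1$ the conditioning is vacuous and this is simply $\big(\sum_j w_j^3\big)/\ell_n$, which already equals $C + o(1)$ by Conditions \ref{cond_nodes}(iv) and (vi).

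Next I would control the correction terms in the numerator and denominator uniformly over the realization of $\mathcal{V}_{l-1}$. Since the weights are arranged in decreasing order, any $l-1$ distinct indices satisfy $\sum_{k=1}^{l-1} w_{v(k)}^3 \le \sum_{k=1}^{l-1} w_k^3$ and $\sum_{k=1}^{l-1} w_{v(k)} \le \sum_{k=1}^{l-1} w_k$; as $l-1 = o(n)$, Lemma \ref{sum_weights} (with $i=3$ and $i=1$) shows both of these upper bounds are $o(n)$, and these $o(n)$ bounds are deterministic, not depending on $\mathcal{V}_{l-1}$. Combining with Condition \ref{cond_nodes}(vi), $\sum_j w_j^3 = \mathbb{E}[W^3]n + o(1)$, and Condition \ref{cond_nodes}(iv), $\ell_n = \mathbb{E}[W]n + o(n^{2/3})$, the numerator equals $\mathbb{E}[W^3]n + o(n)$ and the denominator equals $\mathbb{E}[W]n + o(n)$ (in particular positive for $n$ large), so $\mathbb{E}[w_{v(l)}^2 \mid \mathcal{V}_{l-1}] = \mathbb{E}[W^3]/\mathbb{E}[W] + o(1) = C + o(1)$ with an $o(1)$ uniform in the conditioning. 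Taking expectations over $\mathcal{V}_{l-1}$ then gives $\mathbb{E}[w_{v(l)}^2] = C + o(1)$.

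The computation is routine; the only point that needs care is verifying that the $o(1)$ is genuinely uniform, i.e. that the random errors coming from the already-explored vertices are dominated by the deterministic partial sums $\sum_{k \le l-1} w_k^i$. This is precisely where the decreasing ordering of the weights and Lemma \ref{sum_weights} enter, and I do not expect any obstacle beyond this bookkeeping.
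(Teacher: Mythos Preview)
Your proof is correct and follows essentially the same route as the paper: both condition (implicitly or explicitly) on $\mathcal{V}_{l-1}$, write the conditional second moment as a ratio of cubic and linear weight sums, and invoke Lemma \ref{sum_weights} together with the decreasing ordering to show the corrections from the already-sampled indices are $o(n)$ deterministically. The only cosmetic difference is that the paper factors the denominator as $\ell_n(1+o(1))$ before splitting the numerator, whereas you treat numerator and denominator separately; the content is identical.
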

\begin{proof}
We have using Lemma \ref{sum_weights}:
\begin{equation*}
\sum_{k \in \mathcal{V}_{l-1}}\frac{w_k}{\ell_n}  \leq \sum_{k =1}^{l-1} \frac{w_k}{\ell_n} = o(1).
\end{equation*}
Hence:
\begin{equation}
\begin{aligned}
\label{eq_3}
\mathbb{E}[w^2_{v(l)}] &= \mathbb{E}\left[\sum_{k \not\in \mathcal{V}_{l-1}}\frac{w_k^3}{\ell_n-\sum_{k' \in \mathcal{V}_{l-1}}w_k'} \right] \\
&= \mathbb{E}\left[\sum_{k \not\in \mathcal{V}_{l-1}}\frac{w_k^3}{\ell_n} \right](1+o(1)) \\
&= C(1+o(1))-\mathbb{E}\left[\sum_{k \in \mathcal{V}_{i-1}}\frac{w_k^3}{\ell_n} \right](1+o(1))+o(1).
\end{aligned}
\end{equation}
In order to finish the proof we use Lemma \ref{sum_weights} again:
\begin{equation}
\label{eq_4}
\mathbb{E}\left[\sum_{k \in \mathcal{V}_{l-1}}\frac{w_k^3}{\ell_n} \right] \leq \sum_{k =1}^{l-1} \frac{w_k^3}{\ell_n}=o(1).
\end{equation}
From Equations \eqref{eq_3} and \eqref{eq_4} we obtain:
\begin{equation}
    \label{grd}
    \mathbb{E}(w^2_{v(l)})  = C+o(1),
\end{equation}
which finishes the proof.
\end{proof}

\begin{Lemma}
\label{mean_weights}
Suppose that Conditions \ref{cond_nodes} hold. Let $l=o(n)$, we have:
$$\mathbb{E}[w_{v(l)}] = 1+o(1).$$
\end{Lemma}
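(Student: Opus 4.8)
The plan is to mimic the proof of Lemma~\ref{mean_pow_weights_2}, lowering every power of the weights by one. Using the size-biased sampling identity and conditioning on the set $\mathcal{V}_{l-1}$ of the first $l-1$ explored vertices, I would first write
$$
\mathbb{E}[w_{v(l)}] = \mathbb{E}\left[\sum_{k \not\in \mathcal{V}_{l-1}} w_k \cdot \frac{w_k}{\ell_n - \sum_{k' \in \mathcal{V}_{l-1}} w_{k'}}\right] = \mathbb{E}\left[\frac{\sum_{k \not\in \mathcal{V}_{l-1}} w_k^2}{\ell_n - \sum_{k' \in \mathcal{V}_{l-1}} w_{k'}}\right].
$$

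Next I would control the denominator: since the weights are decreasing, $\sum_{k' \in \mathcal{V}_{l-1}} w_{k'} \leq \sum_{k=1}^{l-1} w_k = o(n)$ by Lemma~\ref{sum_weights} (with $i=1$), so the denominator equals $\ell_n(1 + o(1))$ with a \emph{deterministic} error, and combining with Condition~$(iv)$ it is $\mathbb{E}[W]\,n\,(1+o(1))$. For the numerator I would write $\sum_{k \not\in \mathcal{V}_{l-1}} w_k^2 = \sum_{k=1}^n w_k^2 - \sum_{k \in \mathcal{V}_{l-1}} w_k^2$, apply Condition~$(v)$ to the first sum and Lemma~\ref{sum_weights} (with $i=2$, using $\sum_{k \in \mathcal{V}_{l-1}} w_k^2 \le \sum_{k=1}^{l-1} w_k^2$ since squares of the weights are also decreasing) to the second, obtaining $\sum_{k \not\in \mathcal{V}_{l-1}} w_k^2 = \mathbb{E}[W^2]\,n + o(n)$, again with a deterministic bound on the error.

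Plugging these in — which is legitimate precisely because all the $o(n)$'s are uniform, deterministic bounds, so the expectation of the ratio is squeezed between the ratios of the corresponding deterministic bounds — gives
$$
\mathbb{E}[w_{v(l)}] = \frac{\mathbb{E}[W^2]\,n + o(n)}{\mathbb{E}[W]\,n + o(n)} = \frac{\mathbb{E}[W^2]}{\mathbb{E}[W]} + o(1),
$$
and Condition~$(iii)$, $\mathbb{E}[W^2] = \mathbb{E}[W]$, finishes the proof.

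Since the argument is essentially a one-power-down copy of Lemma~\ref{mean_pow_weights_2}, I do not expect a genuine obstacle; the only point that needs care is making sure the random quantities in the numerator and denominator are trapped between deterministic $o(n)$ bounds (rather than merely $o(n)$ in probability), so that passing to the expectation of the ratio is harmless.
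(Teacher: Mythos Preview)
Your proposal is correct and follows essentially the same approach as the paper: the paper's proof also invokes Lemma~\ref{mean_pow_weights_2} verbatim with powers lowered by one, handling the denominator via $\sum_{k\in\mathcal V_{l-1}} w_k/\ell_n\le \sum_{k=1}^{l-1} w_k/\ell_n=o(1)$ and the numerator via $\sum_{k\in\mathcal V_{l-1}} w_k^2/\ell_n=o(1)$, then concluding with Condition~(iii). Your explicit remark that all the $o(n)$ errors are deterministic (coming from the monotone ordering of the $w_k$) is exactly the justification needed and matches the paper's implicit reasoning.
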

\begin{proof}
As in the proof of Lemma \ref{mean_pow_weights_2} we have:
\begin{equation*}
\begin{aligned}
\label{mean_2}
\mathbb{E}(w_{v(l)}) &=  \frac{\mathbb{E}[W^2]}{\mathbb{E}[W]}(1+o(1)) -\mathbb{E}\left[\sum_{k \in \mathcal{V}_{l-1}}\frac{w_k^2}{\ell_n}  \right](1+o(1))\\
&= \frac{\mathbb{E}[W^2]}{\mathbb{E}[W]}(1 + o(1)).
\end{aligned}
\end{equation*} Recalling that $\frac{\mathbb{E}[W^2]}{\mathbb{E}[W]} = 1$ ends the proof.
\end{proof}
By the same argument we also have:
\begin{Lemma}
\label{mean_corr}
Suppose that Conditions \ref{cond_nodes} hold. Let $l=o(n)$. For any $0 < i < l$ we have:
$$\mathbb{E}(w_{v(i)}w_{v(l)}) = 1+o(1).$$
\end{Lemma}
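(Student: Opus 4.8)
The plan is to mimic the proof of Lemma~\ref{mean_weights}, conditioning on the ordered set $\mathcal{V}_{l-1}$ of the first $l-1$ explored vertices. The key observation is that, since $0<i<l$, the vertex $v(i)$ belongs to $\mathcal{V}_{l-1}$, so $w_{v(i)}$ is $\sigma(\mathcal{V}_{l-1})$-measurable, and the size-biased sampling rule for $v(l)$ gives
\begin{equation*}
\mathbb{E}\!\left(w_{v(i)}w_{v(l)}\mid\mathcal{V}_{l-1}\right)=w_{v(i)}\sum_{k\notin\mathcal{V}_{l-1}}\frac{w_k^2}{\ell_n-\sum_{k'\in\mathcal{V}_{l-1}}w_{k'}}.
\end{equation*}
Taking expectations reduces the claim to estimating the right-hand side.

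For that I would replace the random quantities by their deterministic proxies, exactly as in Lemmas~\ref{mean_pow_weights_2} and~\ref{mean_weights}. By Lemma~\ref{sum_weights} (with $i=1$), $\sum_{k'\in\mathcal{V}_{l-1}}w_{k'}\le\sum_{k=1}^{l-1}w_k=o(n)$ deterministically, so $\ell_n-\sum_{k'\in\mathcal{V}_{l-1}}w_{k'}=\ell_n(1+o(1))$ uniformly; and since $\sum_{k=1}^n w_k^2=\mathbb{E}[W^2]n+o(n^{2/3})$ by Condition~(v) while $\sum_{k\in\mathcal{V}_{l-1}}w_k^2\le\sum_{k=1}^{l-1}w_k^2=o(n)$ by Lemma~\ref{sum_weights} (with $i=2$), one gets $\sum_{k\notin\mathcal{V}_{l-1}}\frac{w_k^2}{\ell_n}=\frac{\mathbb{E}[W^2]}{\mathbb{E}[W]}+o(1)=1+o(1)$, again with a deterministic $o(1)$ by Conditions~(iii)--(iv). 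Plugging these in and using $\mathbb{E}[w_{v(i)}]=O(1)$ to absorb the errors yields $\mathbb{E}(w_{v(i)}w_{v(l)})=\mathbb{E}[w_{v(i)}]\,(1+o(1))$. Finally, $i<l=o(n)$ forces $i=o(n)$, so Lemma~\ref{mean_weights} gives $\mathbb{E}[w_{v(i)}]=1+o(1)$ (uniformly over $0<i<l$, since in the proof of that lemma every error term is controlled by $\sum_{k=1}^{l-1}w_k^2/\ell_n=o(1)$), and the result follows.

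The only delicate point — hence the step I would write out most carefully — is that $w_{v(i)}$ can be as large as $w_1=o(n^{1/3})$, so the argument must never incur a \emph{random} multiplicative error. It works because each of the correction factors above is bounded by a single deterministic null sequence, which can therefore be pulled outside the expectation, leaving only $\mathbb{E}[w_{v(i)}\cdot O(1)]$ times an $o(1)$ to bound. Making the uniformity in $0<i<l$ explicit is the other thing worth spelling out.
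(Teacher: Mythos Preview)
Your proof is correct and follows essentially the same approach as the paper: condition on $\mathcal{V}_{l-1}$, write $\mathbb{E}(w_{v(i)}w_{v(l)})=\mathbb{E}\bigl[w_{v(i)}\sum_{k\notin\mathcal{V}_{l-1}}w_k^2/(\ell_n-\sum_{k'\in\mathcal{V}_{l-1}}w_{k'})\bigr]$, then use Lemma~\ref{sum_weights} to replace the inner sum by $1+o(1)$ with a deterministic error and finish via Lemma~\ref{mean_weights}. Your version is in fact more carefully written than the paper's, which compresses the last two steps into a single line; your explicit observation that the $o(1)$ corrections are deterministic (so they may be pulled outside the expectation despite $w_{v(i)}$ being unbounded) is exactly the point that makes the argument work.
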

\begin{proof}
We have using Lemma \ref{sum_weights}:
\begin{equation*}
\begin{aligned}
\mathbb{E}(w_{v(i)}w_{v(l)}) &=  \mathbb{E}\left[w_{v(i)}\sum_{k \not\in \mathcal{V}_{l-1}}\frac{w_k^2}{\ell_n-\sum_{k' \in \mathcal{V}_{l-1}}w_k'} \right] \\
&= \mathbb{E}\left[w_{v(i)}\sum_{k \not\in \mathcal{V}_{l-1}}\frac{w_k^2}{\ell_n} \right](1+o(1)) \\
&= 1+o(1),
\end{aligned}
\end{equation*}
which ends the proof.
\end{proof}
Thanks to these lemmas, we obtain a more precise estimation of the mean of $w_{v(l)}$.
\begin{Lemma}
\label{esperance}
Suppose that Conditions \ref{cond_nodes} hold. For any $l = o(n)$, we have :
\begin{equation*}
\begin{aligned}
\label{Esperance}
\mathbb{E}[w_{v(l)}] = 1+\frac{l}{\ell_n}\left(1-C\right) + o\left(\frac{l+n^{2/3}}{n}\right).
\end{aligned}
\end{equation*}
\end{Lemma}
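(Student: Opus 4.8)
The plan is to start from the exact size-biased conditional identity
$$\mathbb{E}[w_{v(l)}\mid\mathcal{V}_{l-1}] = \frac{1}{\ell_n - S_{l-1}}\sum_{k\notin\mathcal{V}_{l-1}}w_k^2,\qquad S_{l-1}:=\sum_{i=1}^{l-1}w_{v(i)},\quad T_{l-1}:=\sum_{i=1}^{l-1}w_{v(i)}^2,$$
and to compute $\mathbb{E}[w_{v(l)}]-1$ by first handling the numerator. Writing $\sum_{k\notin\mathcal{V}_{l-1}}w_k^2=\sum_{k=1}^nw_k^2-T_{l-1}$ and using Conditions $(iv),(v)$ together with the crucial cancellation $(iii)$, $\mathbb{E}[W^2]=\mathbb{E}[W]$, one has $\delta_n:=\sum_{k=1}^nw_k^2-\ell_n=(\mathbb{E}[W^2]-\mathbb{E}[W])n+o(n^{2/3})=o(n^{2/3})$, so that $\sum_{k\notin\mathcal{V}_{l-1}}w_k^2-(\ell_n-S_{l-1})=(S_{l-1}-T_{l-1})+\delta_n$ and hence $\mathbb{E}[w_{v(l)}]-1=\mathbb{E}\big[((S_{l-1}-T_{l-1})+\delta_n)/(\ell_n-S_{l-1})\big]$. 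I would then split $(\ell_n-S_{l-1})^{-1}=\ell_n^{-1}+S_{l-1}\ell_n^{-1}(\ell_n-S_{l-1})^{-1}$, producing a main term $\ell_n^{-1}(\mathbb{E}[S_{l-1}-T_{l-1}]+\delta_n)$ and a remainder.

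For the main term I would use $\mathbb{E}[S_{l-1}-T_{l-1}]=\sum_{i=1}^{l-1}\big(\mathbb{E}[w_{v(i)}]-\mathbb{E}[w_{v(i)}^2]\big)$ and invoke Lemmas \ref{mean_weights} and \ref{mean_pow_weights_2} to say each summand equals $1-C+o(1)$. The point requiring care is that this $o(1)$ must be \emph{uniform in $i\le l$}: inspecting those proofs, the error is controlled by $\sum_{k=1}^{i-1}w_k^j/\ell_n$ for $j\in\{1,2,3\}$, which are nondecreasing in $i$ and are $o(1)$ at $i=l$ by Lemma \ref{sum_weights} (this is exactly \eqref{partial_sum_2} and its analogues for $j=1,2$). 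Consequently $\mathbb{E}[S_{l-1}-T_{l-1}]=(l-1)(1-C)+o(l)$, and since $\ell_n=\mathbb{E}[W]n(1+o(1))$ so that $\ell_n^{-1}=O(1/n)$, the main term equals $\frac{l}{\ell_n}(1-C)$ up to $O(1/n)+o(l/n)+\ell_n^{-1}\delta_n$, and $\ell_n^{-1}\delta_n=o(n^{-1/3})$; all of these are $o\!\left(\frac{l+n^{2/3}}{n}\right)$.

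It then remains to bound the remainder $\mathbb{E}\big[S_{l-1}\ell_n^{-1}(\ell_n-S_{l-1})^{-1}((S_{l-1}-T_{l-1})+\delta_n)\big]$. Here the idea is to use deterministic bounds everywhere except for one factor $S_{l-1}$: by Lemma \ref{sum_weights}, $S_{l-1}\le\sum_{k\le l-1}w_k$ and $T_{l-1}\le\sum_{k\le l-1}w_k^2$ are $o(n)$ deterministically, and $S_{l-1}/\ell_n\le\beta_{l-1}:=\sum_{k\le l-1}w_k/\ell_n=o(1)$. Therefore $|(S_{l-1}-T_{l-1})+\delta_n|=o(n)$ and $(\ell_n-S_{l-1})^{-1}\le\ell_n^{-1}(1-\beta_{l-1})^{-1}$, so the remainder is bounded in absolute value by $o(n)\,\ell_n^{-2}(1-o(1))^{-1}\,\mathbb{E}[S_{l-1}]$. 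Since $\mathbb{E}[S_{l-1}]=\sum_{i=1}^{l-1}\mathbb{E}[w_{v(i)}]=(l-1)(1+o(1))=O(l)$ by the same uniform form of Lemma \ref{mean_weights}, the remainder is $o(n)\cdot O(n^{-2})\cdot O(l)=o(l/n)$. Adding the main term and the remainder gives $\mathbb{E}[w_{v(l)}]=1+\frac{l}{\ell_n}(1-C)+o(n^{-1/3})+o(l/n)=1+\frac{l}{\ell_n}(1-C)+o\!\left(\frac{l+n^{2/3}}{n}\right)$.

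The step I expect to be the real obstacle is the error bookkeeping rather than any single estimate: a naive expansion of $(\ell_n-S_{l-1})^{-1}$ loses the factor $\asymp l/n$ and only yields $\mathbb{E}[w_{v(l)}]=1+\frac{l}{\ell_n}(1-C)+o(1)$. To reach the sharper $o\!\left(\frac{l+n^{2/3}}{n}\right)$ one must simultaneously exploit the cancellation $\mathbb{E}[W^2]=\mathbb{E}[W]$ (so that $\sum w_k^2-\ell_n$ is only $o(n^{2/3})$ and not $\Theta(n)$), replace expectations by the deterministic partial-sum bounds of Lemma \ref{sum_weights} wherever a crude bound suffices, and keep exactly one factor $S_{l-1}/\ell_n$ in the denominator correction so that its expectation contributes the needed $O(l/n)$ rather than merely $o(1)$. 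Making explicit the uniformity in $i\le l$ of the estimates of Lemmas \ref{mean_weights} and \ref{mean_pow_weights_2} is the other point that needs to be stated carefully.
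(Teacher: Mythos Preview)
Your proof is correct and follows essentially the same approach as the paper's: expand the denominator $\ell_n-S_{l-1}$, then use Lemmas \ref{sum_weights}, \ref{mean_pow_weights_2}, \ref{mean_weights} together with Conditions $(iii)$--$(v)$ to evaluate the main term and bound the remainder. Your organization is slightly cleaner---subtracting $1$ first isolates the small numerator $(S_{l-1}-T_{l-1})+\delta_n$, and your exact identity $(\ell_n-S)^{-1}=\ell_n^{-1}+S\,\ell_n^{-1}(\ell_n-S)^{-1}$ replaces the paper's first-order Taylor expansion---but the two arguments are otherwise the same.
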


\begin{proof}
By definition:
$$
\mathbb{E}[w_{v(l)}] = \mathbb{E}\left[\sum_{i \not\in \mathcal{V}_{l-1}} \frac{w_i^2}{\ell_n- \sum \limits_{i' \in \mathcal{V}_{l-1}}w_{i'}}\right].
$$

Moreover, by Lemma \ref{sum_weights}:
\begin{equation*}
\begin{aligned}
\label{first_approx}
\mathbb{E}\left[w_{v(l)}\right] &= \mathbb{E}\left[\sum_{i \not\in \mathcal{V}_{l-1}} \frac{w_i^2}{\ell_n\left(1-\frac{\sum_{i' \in \mathcal{V}_{l-1}}w_{i'}}{\ell_n}\right)}  \right]  \\
&=  \mathbb{E}\left[\sum_{i \not\in \mathcal{V}_{l-1}} \frac{w_i^2}{\ell_n}\left(1+\frac{\sum_{i' \in \mathcal{V}_{l-1}}w_{i'}}{\ell_n}\right) \right] + o\left(\frac{l}{n}\right).
\end{aligned}
\end{equation*}
By Lemmas \ref{sum_weights}, \ref{mean_pow_weights_2} and \ref{mean_weights}  it follows that:
\begin{equation*}
\begin{aligned}
\mathbb{E}\left[w_{v(l)}\right] &= \mathbb{E}\left[\sum_{i \not\in \mathcal{V}_{l-1}} \frac{w_i^2}{\ell_n}\left(1+\frac{\sum_{i' \in \mathcal{V}_{l-1}}w_{i'}}{\ell_n}\right) \right] + o\left(\frac{l}{n}\right) \\
&= \frac{\sum_{i = 1}^nw_i^2}{\ell_n} + \mathbb{E}\left[\frac{(\sum_{i'\in \mathcal{V}_{l-1}}w_{i'})(\sum_{i = 1}^nw_i^2)}{\ell_n^2}  \right] - \mathbb{E}\left[\frac{\sum_{i\in \mathcal{V}_{l-1}}w_i^2}{\ell_n} \right] \\
&- \mathbb{E}\left[\frac{\left(\sum_{i\in \mathcal{V}_{l-1}}w_i^2\right)\left(\sum_{i'\in \mathcal{V}_{l-1}}w_{i'}\right)}{\ell_n^2} \right] +  o\left(\frac{l}{n}\right) \\
&= \frac{\sum_{i = 1}^nw_i^2}{\ell_n} + \mathbb{E}\left[\frac{(\sum_{i'\in \mathcal{V}_{l-1}}w_{i'})(\sum_{i = 1}^nw_i^2)}{\ell_n^2}  \right] - \mathbb{E}\left[\frac{\sum_{i\in \mathcal{V}_{l-1}}w_i^2}{\ell_n} \right] \\
&- o\left(\mathbb{E}\left[\frac{\sum_{i\in \mathcal{V}_{l-1}}w_i^2}{\ell_n} \right]\right) +o\left(\frac{l}{n}\right) \\
&= 1 + \mathbb{E}\left[\frac{(\sum_{i'\in \mathcal{V}_{l-1}}w_{i'})(\sum_{i = 1}^nw_i^2)}{\ell_n^2}  \right] - \mathbb{E}\left[\frac{\sum_{i\in \mathcal{V}_{l-1}}w_i^2}{\ell_n} \right] + o\left(\frac{l+n^{2/3}}{n}\right). \\
&= 1 +\frac{l}{\ell_n}\left(1-C\right)+o\left(\frac{l+n^{2/3}}{n}\right).
\end{aligned}
\end{equation*}
\end{proof}
 Observe that with the assumption that $\mathbb{E}[W^2]=\mathbb{E}[W]$, the Cauchy-Schwarz inequality implies that:
 $$1-C = \left(1-\frac{\mathbb{E}[W^3]}{\mathbb{E}[W]}\right) \leq 0,$$
 so asymptotically $\mathbb{E}(w_{v(i)})$ decreases with $i$. Lemma \ref{decreasing} shows that in fact, it decreases all the time.
\begin{figure}[!htbp]
\centering
\includegraphics[width=1\textwidth]{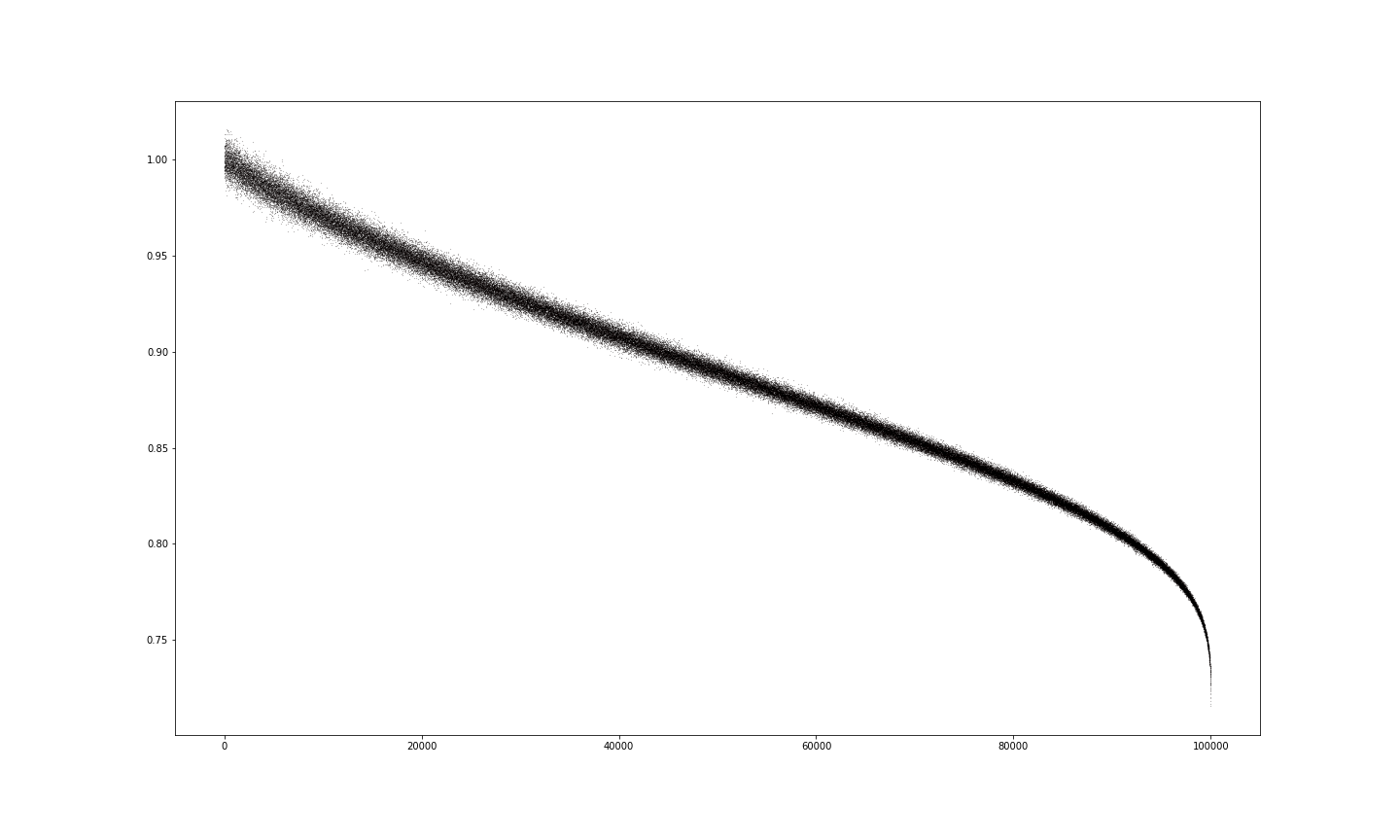}
\caption{A simulation of the values of the $\mathbb{E}[w_{v(i)}]$'s for $n \geq i \geq 1$. This simulation is done on $n = 100000$ weights verifying Conditions \ref{cond_nodes} by doing $m = 10000$ rounds of biased sampling without replacement and averaging the result. }
\end{figure}
\subsection{A more precise concentration inequality}
In order to obtain concentration inequalities for size-biased sampling without replacement, we will use a randomization trick. The main idea here is that taking weights without replacement is the same as putting exponential "clocks" on each weight and taking a weight when its clock rings. \par
More precisely let $(T_i)_{i \leq n}$ be a sequence of independent exponential random variables with respective rates $(w_i/\ell_n)_{i\leq n}$. Define the following quantities for $x \geq 0$: 
$$
N(x) = \sum_{k=1}^n\mathbbm{1}(T_k \leq x),
$$
$$
X(x) = \sum_{k=1}^nw_k\mathbbm{1}(T_k \leq x).
$$
By basic properties of exponential random variables, $(v'(1),v'(2),...,v'(n))$, the distinct random indices of the $T_i$'s taken in increasing order, i.e:
$$T_{v'(1)} \leq T_{v'(2)} \leq ... \leq T_{v'(n)},$$
are distributed as a size-biased sample taken without replacement. \par
Moreover the following equality holds : 
$$
X(x) = \sum_{k=1}^{n}w_{v'(k)}\mathbbm{1}(N(x) \geq k).
$$

Since $N(x)$ and $X(x)$ are sums of independent random variables, we can apply Bernstein's inequality (\cite{B24}) in order to obtain the following lemma. We let $w_{v(0)} = 0$.
\begin{Lemma}
\label{bernstinou}
Suppose that Conditions \ref{cond_nodes} hold. For any $x \geq 0$ and $t \geq 0$, the following holds: 
$$
\mathbb{P}(|X(x) - \mathbb{E}[X(x)]| \geq t) \leq 2\exp\left({\frac{-t^2}{2(tn^{1/3}+x)}}\right),
$$
and
$$
\mathbb{P}(|N(x) - \mathbb{E}[N(x)]| \geq t) \leq 2\exp\left({\frac{-t^2}{2(t+x)}}\right).
$$
\end{Lemma}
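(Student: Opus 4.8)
The plan is to apply Bernstein's inequality to the two sums $X(x)=\sum_{k=1}^n w_k\mathbbm{1}(T_k\le x)$ and $N(x)=\sum_{k=1}^n \mathbbm{1}(T_k\le x)$, each of which is a sum of $n$ independent (but not identically distributed) bounded random variables. Recall Bernstein's inequality: if $Y_1,\dots,Y_n$ are independent, $|Y_k-\mathbb{E}[Y_k]|\le M$ almost surely, and $\sigma^2=\sum_k \mathrm{Var}(Y_k)$, then $\mathbb{P}(|\sum_k Y_k-\mathbb{E}[\sum_k Y_k]|\ge t)\le 2\exp(-t^2/(2(\sigma^2+Mt/3)))$. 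So the whole proof reduces to producing, for each of the two sums, a clean bound on the almost-sure range $M$ of a single summand and on the variance proxy $\sigma^2$, and then checking that these bounds are dominated by the expressions $tn^{1/3}+x$ and $t+x$ appearing in the statement (up to the harmless constant, the factor $1/3$ being absorbed).

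For $N(x)$: each summand $\mathbbm{1}(T_k\le x)$ is a Bernoulli variable with parameter $q_k=1-e^{-w_k x/\ell_n}\le w_k x/\ell_n$, so $|\mathbbm{1}(T_k\le x)-q_k|\le 1$, giving $M=1$, and $\mathrm{Var}(\mathbbm{1}(T_k\le x))=q_k(1-q_k)\le q_k\le w_k x/\ell_n$. Summing, $\sigma^2\le \frac{x}{\ell_n}\sum_{k=1}^n w_k = x$. Bernstein then yields $\mathbb{P}(|N(x)-\mathbb{E}[N(x)]|\ge t)\le 2\exp(-t^2/(2(x+t/3)))\le 2\exp(-t^2/(2(x+t)))$, which is exactly the claimed second inequality. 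For $X(x)$: each summand $w_k\mathbbm{1}(T_k\le x)$ has range bounded by $w_k\le w_1\le n^{1/3}$ (using Condition (vii) and the convention $w_1\le n^{1/3}$ adopted in the paper), so $M=n^{1/3}$; its variance is $w_k^2 q_k(1-q_k)\le w_k^2 q_k\le w_k^3 x/\ell_n$, and summing gives $\sigma^2\le \frac{x}{\ell_n}\sum_{k=1}^n w_k^3$. By Condition (vi), $\sum_{k=1}^n w_k^3 = \mathbb{E}[W^3]n+o(1)\le A'\ell_n$ for a suitable constant (since $\ell_n\sim\mathbb{E}[W]n$), so $\sigma^2\le A' x$; absorbing $A'$ and the $1/3$ into a constant and then, if necessary, rescaling $t$, Bernstein gives $\mathbb{P}(|X(x)-\mathbb{E}[X(x)]|\ge t)\le 2\exp(-t^2/(2(n^{1/3}t+x)))$ after noting $\sigma^2+M t/3\le A'x+n^{1/3}t/3\lesssim n^{1/3}t+x$. (Strictly, to get the displayed constant exactly one checks $A'x + n^{1/3}t/3$ is at most $n^{1/3}t + x$ whenever $A'\le 1$; otherwise one either states the inequality with an implicit constant or notes that $X(x)/A'$ satisfies it — this is the kind of bookkeeping the paper's convention of generic constants $A,A',\dots$ is designed to absorb.)

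The main obstacle — really the only non-mechanical point — is keeping the variance bound for $X(x)$ genuinely linear in $x$ with a constant depending only on the law of $W$: this is where Condition (vi) (finite third moment, i.e. $\sum w_k^3=\Theta(n)$) is essential, since a naive bound $\sigma^2\le w_1^2\sum q_k\le w_1^2 x\le n^{2/3}x$ would be far too weak and would destroy the scaling. Once that is in hand, everything else is a direct invocation of Bernstein's inequality to independent bounded summands, and the stated form follows by absorbing constants. I would therefore structure the write-up as: (1) state Bernstein's inequality in the form above; (2) identify the summands of $N(x)$, bound $M$ and $\sigma^2$, conclude; (3) do the same for $X(x)$, invoking Conditions (vi) and (vii), and conclude.
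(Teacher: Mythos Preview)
Your proposal is correct and is exactly the argument the paper has in mind: the paper's own justification is simply ``since $N(x)$ and $X(x)$ are sums of independent random variables, we can apply Bernstein's inequality,'' and you have supplied the bookkeeping (range bound $M\le 1$ resp.\ $M\le n^{1/3}$ via Condition~(vii); variance bound $\sigma^2\le x$ resp.\ $\sigma^2\le \tfrac{x}{\ell_n}\sum_k w_k^3\le A'x$ via Condition~(vi)). Your parenthetical remark about the constant in the $X(x)$ bound is well taken --- as stated with the exact factor $2$ the inequality really requires $\sum_k w_k^3\le \ell_n$, which need not hold; the paper's convention of absorbing such multiplicative constants into $A,A',\dots$ in all downstream uses is what makes this harmless.
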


The following conditions will always be verified in this section. They give a regime where our concentration bounds hold. 
\begin{Conditions}
\label{cond_2}
We say that $(a(n),b(n))$  verifies Conditions \ref{cond_2} if for all $n$ large enough:
$$
 \exp\left(\frac{-b(n)^2}{\bar{A}(b(n)n^{1/3}+a(n))}\right) < 1/4,
$$
$$
\lim_{n \rightarrow \infty} a(n) = \lim_{n \rightarrow \infty} b(n) = +\infty
$$
$$
a(n)=o(n),
$$
$$
b(n) = O(a(n)),
$$
$$
a(n) =  O\left(b(n)\ell_n^{1/3}\right),
$$
and:
$$
(a(n))^2 = O\left(b(n)\ell_n\right),
$$
where $\bar{A} > 0$ is independent of $n$ and larger than all the other constants $A,A',A'' ...$ that appear in this paper.
\end{Conditions}
The condition $b(n)=O(a(n))$ is not necessary, but it makes some computations easier and will be true in all the practical cases in this paper. Moreover, notice that if $(a(n),b(n))$ verifies Conditions \ref{cond_2} then for any $A > 0$ the couple $(a(n),Ab(n))$ will also verify those conditions. 
We want to prove that there exists an $A >0$ such that if $(m,y)$ verify Conditions \ref{cond_2} then: 
$$
\mathbb{P}\left[\sup_{i \leq m}  \left | \sum_{k=1}^i w_{v(k)} - \mathbb{E}\left[\sum_{k=1}^i w_{v(k)}\right] \right | \geq y\right] \leq  A\exp\left(\frac{-y^2}{A(yn^{1/3}+m)}\right).
$$
In order to do so, we will use the fact that if $N(u_n) \geq m$ for some $u_n > 0$ then: 
\begin{equation*}
\sup_{i \leq m} \left|\sum_{k=1}^i w_{v'(i)} - \mathbb{E}\left[\sum_{k=1}^i w_{v'(i)}\right]\right| \leq \sup_{x \leq u_n} \left|X(x) - \sum_{k=1}^{N(x)}\mathbb{E}\left[     w_{v'(i)}\right]\right|.
\end{equation*}
Then we will show concentration of the right-hand side of the above inequality. The following fact will be used through this whole section.
For any $x \geq 0$:
\begin{equation}
\label{expx}
x \geq 1-e^{-x} \geq x-\frac{x^2}{2}.
\end{equation}

We start by showing the following lemma:
\begin{Lemma}
\label{eqfff}
Suppose that Conditions \ref{cond_nodes} hold. Let $(a(n),b(n))$ verify Conditions \ref{cond_2}. Then there exists a constant $A>0$ such that for all $n$ large enough:
\begin{equation*}
\mathbb{P}\left[  \sup_{x \leq a(n)} \mathbb{E}\left[X(x)\right] - \sum_{k=1}^{N(x)}\mathbb{E}\left[     w_{v(i)}\right] \geq b(n) \right] \leq \mathbb{P}\left[\inf_{x \leq a(n)}N(x)-\mathbb{E}[N(x)] \leq \frac{-b(n)}{A}+1\right],
\end{equation*}
and: 
\begin{equation*}
\mathbb{P}\left[ \inf_{x \leq a(n)} \mathbb{E}\left[X(x)\right] - \sum_{k=1}^{N(x)}\mathbb{E}\left[     w_{v(i)}\right] \leq -b(n) \right] \leq \mathbb{P}\left[\sup_{x \leq a(n)} N(x)-\mathbb{E}[N(x)] \geq \frac{b(n)}{A}-1\right],
\end{equation*}
and the same inequalities hold without the $\sup$ and $\inf$.
\end{Lemma}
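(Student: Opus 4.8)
The plan is to relate the quantity $\mathbb{E}[X(x)] - \sum_{k=1}^{N(x)}\mathbb{E}[w_{v(i)}]$ directly to the fluctuation of $N(x)$ around its mean, using the near-constancy of $\mathbb{E}[w_{v(k)}]$ for $k = o(n)$ established in Lemmas~\ref{mean_weights} and~\ref{esperance}. First I would record the deterministic identity coming from the randomization construction: since $\mathbb{E}[X(x)]$ is a sum over $k$ of $w_k(1-e^{-w_kx/\ell_n})$, and since $\mathbb{E}\bigl[\sum_{k=1}^{N(x)}w_{v(i)}\bigr]$ can be written (by Theorem~\ref{replacement}-style exchangeability arguments, or directly) in terms of the same quantity, I would identify $\mathbb{E}[X(x)]$ with $\sum_{k=1}^{\infty}\mathbb{E}[w_{v(k)}]\,\mathbb{P}(N(x)\geq k)$, i.e. $\mathbb{E}[X(x)] = \mathbb{E}\bigl[\sum_{k=1}^{N(x)}\mathbb{E}[w_{v(k)}]\bigr]$ where the inner $\mathbb{E}$ denotes the deterministic sequence $k\mapsto\mathbb{E}[w_{v(k)}]$. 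This is the crucial structural fact making the whole randomization trick work.

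Granting that identity, the difference $\Delta(x) := \mathbb{E}[X(x)] - \sum_{k=1}^{N(x)}\mathbb{E}[w_{v(k)}]$ equals $\sum_{k=N(x)+1}^{\widehat N(x)}\pm\mathbb{E}[w_{v(k)}]$ where $\widehat N(x)$ is a deterministic proxy for $\mathbb{E}[N(x)]$ — more precisely $\Delta(x)$ is a signed sum of at most $|N(x)-\mathbb{E}[N(x)]| + 1$ terms, each of which is a value $\mathbb{E}[w_{v(k)}]$ with $k \leq a(n)+\text{(fluctuation)}$. Here I would invoke Conditions~\ref{cond_2}: because $a(n)=o(n)$ and the fluctuations we care about are of smaller order (controlled via Lemma~\ref{bernstinou}), all the relevant indices $k$ are $o(n)$, so Lemma~\ref{mean_weights} gives $\mathbb{E}[w_{v(k)}] = 1+o(1)$, hence each term lies in $[1/A', A']$ for a suitable constant $A'$ and all $n$ large. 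Therefore $|\Delta(x)| \leq A'\bigl(|N(x)-\mathbb{E}[N(x)]| + 1\bigr)$ pointwise in $x$, and taking suprema/infima over $x \leq a(n)$ on both sides and then translating the event $\{\sup_x \Delta(x) \geq b(n)\}$ into $\{A'(\sup_x|N(x)-\mathbb{E}[N(x)]|+1) \geq b(n)\}$, i.e. $\{\sup_x (N(x)-\mathbb{E}[N(x)]) \geq b(n)/A' - 1\}$ (and symmetrically for the infimum), yields exactly the claimed inequalities with $A = A'$. Dropping the $\sup$ and $\inf$ is then immediate since the pointwise bound holds for each fixed $x$.

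The main obstacle I anticipate is the bookkeeping in the first step: one must be careful that the index range appearing in $\Delta(x)$ really does stay $o(n)$ with the right uniformity, since $\Delta(x)$ involves the \emph{random} quantity $N(x)$, not just its mean. The clean way to handle this is to absorb the extremely unlikely event that $N(x)$ exceeds, say, $2a(n)$ somewhere on $x\le a(n)$ into the right-hand-side probability — the bound $|\Delta(x)| \le A'(|N(x)-\mathbb{E}[N(x)]|+1)$ degrades gracefully there because the number of summands is itself controlled by $|N(x)-\mathbb{E}[N(x)]|$, so no separate error term is even needed; one just needs $\mathbb{E}[N(x)] \le a(n)$ (which follows from $N(x)$ being stochastically at most the number of clocks, combined with $\mathbb{E}[N(a(n))] = \sum_k (1-e^{-w_k a(n)/\ell_n}) \le a(n)$ by~\eqref{expx}) to guarantee that on the event $N(x)-\mathbb{E}[N(x)]$ is small, the indices are $o(n)$. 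A secondary point to get right is the off-by-one constant ``$+1$'' in the statement, which comes precisely from the single extra term $\mathbb{E}[w_{v(N(x)+1)}]$ or the ceiling in counting summands; tracking it carefully is what produces the $\pm 1$ rather than a pure multiple of $b(n)/A$.
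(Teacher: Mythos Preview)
Your ``crucial structural fact'' --- the identity $\mathbb{E}[X(x)] = \sum_{k\ge 1}\mathbb{E}[w_{v(k)}]\,\mathbb{P}(N(x)\ge k)$ --- is false. It would require $w_{v'(k)}$ and $\mathbbm{1}(N(x)\ge k)=\mathbbm{1}(T_{(k)}\le x)$ to be uncorrelated, but they are not: the time $T_{(k)}$ of the $k$-th arrival depends on which items arrived before (through the residual rate), and this is correlated with the identity of the $k$-th arrival. A two-point check already breaks it: with $w_1=1$, $w_2=2$ and rates $1,2$, one gets $\mathbb{E}[X(x)]=3-e^{-x}-2e^{-2x}$ whereas $\sum_k\mathbb{E}[w_{v(k)}]\mathbb{P}(N(x)\ge k)=3-\tfrac{4}{3}e^{-x}-\tfrac{4}{3}e^{-2x}-\tfrac{1}{3}e^{-3x}$. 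Since the whole second paragraph (writing $\Delta(x)$ as a signed sum of $|N(x)-\mathbb{E}[N(x)]|+1$ terms) hinges on that identity, the argument collapses at this point.

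The paper proceeds without any such identity. It estimates the two pieces of $\Delta(x)$ \emph{separately} and shows both are close to $x$: first $\mathbb{E}[X(x)]=\sum_k w_k(1-e^{-w_kx/\ell_n})\le x(1+o(n^{-1/3}))$ directly from~\eqref{expx} and Conditions~\ref{cond_nodes}; second, setting $u=\lceil\mathbb{E}[N(x)]-b'(n)\rceil$, it uses Lemma~\ref{esperance} to compute $\sum_{k=1}^{u}\mathbb{E}[w_{v(k)}]=u+\tfrac{u^2}{2\ell_n}(1-C)+o(\cdots)$ and shows this is at least $x-A'''b'(n)$. The point you miss even at the heuristic level is that matching these two expressions to within $O(b'(n))$ is \emph{not} automatic from $\mathbb{E}[w_{v(k)}]=1+o(1)$: one must control the second-order gaps $\mathbb{E}[N(x)]-x\approx -x^2/(2\ell_n)$ and $\sum_{k\le u}\mathbb{E}[w_{v(k)}]-u\approx u^2(1-C)/(2\ell_n)$, and this is exactly where the hypothesis $a(n)^2=O(b(n)\ell_n)$ from Conditions~\ref{cond_2} enters. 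Once both pieces are shown to equal $x$ up to $O(b'(n))$, the implication $\{\Delta(x)\ge 2A'''b'(n)\}\Rightarrow\{N(x)<u\}$ follows by monotonicity of $m\mapsto\sum_{k\le m}\mathbb{E}[w_{v(k)}]$, and taking $b(n)=2A'''b'(n)$ gives the lemma.
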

\begin{proof}
Let $x \leq a(n)$. By Equation \eqref{expx} and Conditions \ref{cond_nodes}:
\begin{equation}
\begin{aligned}
\label{eq126}
\mathbb{E}[X(x)] &= \sum_{k=1}^{n}w_k\mathbb{P}(T_k \leq x) \\
&= \sum_{k=1}^{n}w_k\left(1-\exp\left(\frac{-w_kx}{\ell_n}\right)\right) \\
&\leq \sum_{k=1}^{n}\frac{w_k^2x}{\ell_n} \\
&= x(1+o(n^{-1/3})).
\end{aligned}
\end{equation}
For any $b'(n)$ such that $(a(n),b'(n))$ verify Conditions \ref{cond_2}, there exists $A' >0$ such that:
$$
x^2 \leq a(n)^2 \leq A'b'(n)\ell_n.
$$
Denote $\lceil\mathbb{E}[N(x)]-b'(n)\rceil$ by $u$. By Conditions \ref{cond_nodes} and Equation \eqref{expx} we obtain:
\begin{equation}
\begin{aligned}
\label{eq123}
u &\geq x-b'(n) - \sum_{k=1}^n\frac{w_k^2x^2}{2\ell_n^2} \\
&\geq x - b'(n) - \frac{x^2}{2\ell_n}+o\left(\frac{x^2}{\ell_n}\right) \\
&\geq x - b'(n) - \frac{A'b'(n)}{2}+o\left(\frac{x^2}{\ell_n}\right).
\end{aligned}
\end{equation}
Moreover by Condition \ref{cond_2}:
\begin{equation}
\begin{aligned}
\label{eq124}
u^2 &\leq (x+b'(n))^2 \\
&\leq 2x^2+2b'(n)^2 \\
&\leq 2A'\ell_nb'(n)+2b'(n)^2 \\
&\leq A''\ell_nb'(n),
\end{aligned}
\end{equation}
where $A'' > 0$ is some large constant. By
Equations \eqref{eq123}, \eqref{eq124}, Conditions \ref{cond_2} and Lemma \ref{esperance} we have:
\begin{equation}
\begin{aligned}
\label{eq125}
\sum_{k=1}^{u}\mathbb{E}\left[w_{v(i)}\right] &=  \sum_{k=1}^{u}\left(1 +\frac{k}{\ell_n}\left(1-C\right)\right)+o\left(\frac{u^2+un^{1/3}}{n}\right) \\
&= u + \frac{u^2}{2\ell_n}\left(1-C\right)+o\left(\frac{u^2+un^{1/3}}{n}\right) \\
&\geq x-A'''b'(n),
\end{aligned}
\end{equation}
where $A''' > 0$ is a large constant. 
 Inequalities \eqref{eq126} and \eqref{eq125} and Conditions \ref{cond_2} yield:
\begin{equation*}
\begin{aligned}
\label{eq142}
\mathbb{E}[X(x)]-\sum_{k=1}^{u}\mathbb{E}\left[w_{v(i)}\right] \leq A'''b'(n)+o(xn^{-1/3}).
\end{aligned}
\end{equation*}
And of course, since $\mathbb{E}\left[w_{v(i)}\right]$ is positive for all $i \leq n$,  the same inequality holds if we replace $u$ by $u' \geq u$. This show that:
\begin{equation*}
\begin{aligned}
\label{eqf}
\left( \mathbb{E}\left[X(x)\right] - \sum_{k=1}^{N(x)}\mathbb{E}\left[     w_{v(i)}\right] \geq 2A'''b'(n) \right) \Rightarrow \left(N(x) \leq \mathbb{E}[N(x)]-b'(n)+1\right)
\end{aligned}
\end{equation*}
Taking $b(n)=2A'''b'(n)$ proves the first inequality of the lemma, the second inequality is proved similarly.
\end{proof}
Similarly we have the following lemma for which we omit the proof
\begin{Lemma}
\label{eqfff1}
Suppose that Conditions \ref{cond_nodes} hold. Let $(a(n),b(n))$ verify Conditions \ref{cond_2}. Then there exists a constant $A>0$ such that for all $n$ large enough:
\begin{equation*}
\begin{aligned}
&\mathbb{P}\left[  \sup_{x \leq a(n)} \mathbb{E}\left[X(a(n))-X(x)\right] - \sum_{k=N(x)}^{N(a(n))}\mathbb{E}\left[     w_{v(i)}\right] \geq b(n) \right]  \\\leq &\mathbb{P}\left[\inf_{x \leq a(n)}N(a(n))-N(x)-\mathbb{E}[N(a(n))-N(x)] \leq \frac{-b(n)}{A}+1\right],
\end{aligned}
\end{equation*}
and: 
\begin{equation*}
\begin{aligned}
&\mathbb{P}\left[ \inf_{x \leq a(n)} \mathbb{E}\left[X(a(n))-X(x)\right] - \sum_{k=N(x)}^{N(a(n))}\mathbb{E}\left[     w_{v(i)}\right] \leq -b(n) \right] \\ \leq &\mathbb{P}\left[\sup_{x \leq a(n)} N(a(n))-N(x)-\mathbb{E}[N(a(n))-N(x)]\geq \frac{b(n)}{A}-1\right],
\end{aligned}
\end{equation*}
and the same inequalities hold without the $\sup$ and $\inf$.
\end{Lemma}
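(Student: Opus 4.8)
The plan is to mirror the proof of Lemma \ref{eqfff}, now carrying along the two endpoints $x$ and $a(n)$ at once. As there, it suffices to prove the pointwise implication
\[
\mathbb{E}\big[X(a(n)) - X(x)\big] - \sum_{k=N(x)}^{N(a(n))}\mathbb{E}[w_{v(i)}] \ge b(n)\ \Longrightarrow\ N(a(n)) - N(x) - \mathbb{E}\big[N(a(n)) - N(x)\big] \le -\tfrac{b(n)}{A} + 1
\]
for a fixed $x \le a(n)$; taking unions over $x \le a(n)$ then yields the inclusions with $\sup$ and $\inf$, and the second pair of inequalities follows by the symmetric argument. Throughout write $b'(n):=b(n)/A$, with $A>0$ a large constant depending only on the law of $W$, to be fixed at the end (large enough, in particular, that $b'(n)\le a(n)/2$). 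A first, purely deterministic step: since $\mathbb{E}[X(a(n)) - X(x)] - \mathbb{E}[N(a(n)) - N(x)] = \int_x^{a(n)}\ell_n^{-1}\sum_k(w_k^2-w_k)e^{-w_ks/\ell_n}\,ds$, the Conditions \ref{cond_nodes} bounds $\sum_k(w_k^2-w_k)=o(n^{2/3})$, $\sum_k w_k^3=O(n)$, the estimate $|1-e^{-u}|\le u$, and the Conditions \ref{cond_2} relations $a(n)^2=O(b'(n)\ell_n)$ and $a(n)=O(b'(n)n^{1/3})$ together give $|\mathbb{E}[X(a(n)) - X(x)] - \mathbb{E}[N(a(n)) - N(x)]|\le A'b'(n)$ uniformly in $x\le a(n)$; similarly $\mathbb{E}[X(a(n))]\le 2a(n)$.

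The heart of the matter is the case where $N(a(n))$ is of its typical order: fix a constant $K$ (depending only on the law of $W$; chosen below) and suppose $N(a(n))\le K\sqrt{b'(n)\ell_n}$. Then every index occurring in $\sum_{k=N(x)}^{N(a(n))}\mathbb{E}[w_{v(i)}]$ is $o(n)$, so Lemma \ref{esperance} applies and gives
\[
\sum_{k=N(x)}^{N(a(n))}\mathbb{E}[w_{v(i)}] \ \ge\ \big(N(a(n)) - N(x)\big) + \frac{(1-C)\big(N(a(n))^2 - N(x)^2\big)}{2\ell_n} - o(b'(n)),
\]
where $N(a(n))^2\le K^2b'(n)\ell_n$ forces the middle term to be $\ge -A''b'(n)$ (with $A''$ depending only on the law of $W$ and on $K$) and the $o$–term is genuinely $o(b'(n))$ because $b'(n)=\Omega(n^{1/3})$, itself a consequence of Conditions \ref{cond_2}. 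Combining this with the deterministic step yields
\[
\mathbb{E}[X(a(n)) - X(x)] - \sum_{k=N(x)}^{N(a(n))}\mathbb{E}[w_{v(i)}] \ \le\ \big(N(x) - \mathbb{E}[N(x)]\big) - \big(N(a(n)) - \mathbb{E}[N(a(n))]\big) + A'''b'(n),
\]
with $A'''$ depending only on the law of $W$ and on $K$. Choosing $A>A'''+2$, if the left side is $\ge b(n)=Ab'(n)$ then $\big(N(a(n)) - \mathbb{E}[N(a(n))]\big) - \big(N(x) - \mathbb{E}[N(x)]\big)\le -(A-A''')b'(n)\le -b'(n)+1$, which is exactly the claim with the same witness $x$. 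This part is a direct parallel of Lemma \ref{eqfff} (where the analogous "large index" worry was handled for free by monotonicity, because the lower summation index was pinned at $1$).

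The remaining case $N(a(n))>K\sqrt{b'(n)\ell_n}$ is the step I expect to be the real obstacle, since there Lemma \ref{esperance} no longer applies at the relevant indices and the quadratic correction above is not negligible. The key auxiliary fact is a uniform lower bound: because $i\mapsto\mathbb{E}[w_{v(i)}]$ is nonincreasing (Lemma \ref{decreasing}), $\mathbb{E}[w_{v(i)}]\ge 1/2$ for all $i\le\alpha_0 n$, where $\alpha_0>0$ depends only on the law of $W$ — this follows from $\mathbb{E}[w_{v(i)}]\ge\ell_n^{-1}\big(\sum_k w_k^2-\mathbb{E}[\sum_{k\in\mathcal V_{i-1}}w_k^2]\big)$, the crude bound $\mathbb{E}[\sum_{k\in\mathcal V_{i-1}}w_k^2]\le\sum_{k=1}^{i-1}w_k^2$, weak convergence of the weights and $\mathbb{E}[W^2]=\mathbb{E}[W]$. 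Hence, if $N(x)\le\tfrac12 K\sqrt{b'(n)\ell_n}$, the sum $\sum_{k=N(x)}^{N(a(n))}\mathbb{E}[w_{v(i)}]$ already contains at least $\tfrac12 K\sqrt{b'(n)\ell_n}$ terms of size $\ge 1/2$ (since $\min(N(a(n)),\alpha_0 n)\ge K\sqrt{b'(n)\ell_n}$ for $n$ large), so it is $\ge\tfrac14 K\sqrt{b'(n)\ell_n}$; choosing $K$ large enough relative to the implicit constant in $a(n)=O(\sqrt{b'(n)\ell_n})$ makes this exceed $\mathbb{E}[X(a(n))-X(x)]\le 2a(n)$, so the left‑hand quantity is negative — no such $x$ can witness the left event. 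Otherwise $N(x)>\tfrac12 K\sqrt{b'(n)\ell_n}$, whence $N(x)-\mathbb{E}[N(x)]$ is of order $\sqrt{b'(n)\ell_n}\gg b'(n)$; one then checks, splitting once more according to whether $N(a(n))\le\alpha_0 n$ (where the lower bound on the tail sum just used still controls $\mathbb{E}[X(a(n))-X(x)]-\sum_{k=N(x)}^{N(a(n))}\mathbb{E}[w_{v(i)}]$, together with $\mathbb{E}[X(a(n))-X(x)]=\mathbb{E}[N(a(n))-N(x)]+O(b'(n))$ from the first step) or $N(a(n))>\alpha_0 n$ (a super‑large deviation of $N(a(n))$, pointwise dealt with via the Bernstein estimate of Lemma \ref{bernstinou}), that in every sub‑case either the left quantity is $<b(n)$ or $\sup_{x'\le a(n)}\big(N(x')-\mathbb{E}[N(x')]\big)-\big(N(a(n))-\mathbb{E}[N(a(n))]\big)\ge b'(n)-1$, which is precisely the right‑hand event. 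Fixing first $K$ and then $A$ finishes the proof.
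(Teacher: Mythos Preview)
The paper omits this proof entirely, saying only that it is ``similar'' to Lemma~\ref{eqfff}, so there is no detailed argument to compare against. Your typical-case analysis (when $N(a(n))\le K\sqrt{b'(n)\ell_n}$) does faithfully mirror the proof of Lemma~\ref{eqfff} and is correct: once all indices are $o(n)$ you may invoke Lemma~\ref{esperance} on the whole range, the quadratic correction $(1-C)(N(a(n))^2-N(x)^2)/2\ell_n$ is $O(b'(n))$, and the pointwise implication follows exactly as in the model proof. Your deterministic comparison $|\mathbb{E}[X(a(n))-X(x)]-\mathbb{E}[N(a(n))-N(x)]|=O(b'(n))$ and the observation $b'(n)=\Omega(n^{1/3})$ are also right.

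The gap is in the atypical branch. Your sub-case $N(x)\le\tfrac12 K\sqrt{b'(n)\ell_n}$ is fine (the sum has enough terms of size $\ge 1/2$ to dominate $\mathbb{E}[X(a(n))-X(x)]\le 2a(n)$, so the left quantity is negative). But the remaining sub-case --- both $N(x)$ and $N(a(n))$ large --- is not settled by what you wrote. The phrase ``pointwise dealt with via the Bernstein estimate of Lemma~\ref{bernstinou}'' is a \emph{probability} bound, not an \emph{event inclusion}, and the lemma as stated asserts an inequality between two probabilities with no additive error. Concretely, one can manufacture outcomes (of exponentially small but nonzero probability) in which $N(a(n))-N(x')-\mathbb{E}[N(a(n))-N(x')]\ge 0$ for every $x'\le a(n)$ (so the right-hand event fails), while the indices $k\in[N(x),N(a(n))]$ sit where $\mathbb{E}[w_{v(k)}]$ is bounded away from $1$ (this uses only that $W$ is nondegenerate, so $\mathbb{E}[W]<1$), making $\sum_k\mathbb{E}[w_{v(k)}]\le c\,\mathbb{E}[N(a(n))-N(x)]$ with $c<1$ and hence the left quantity of order $(1-c)a(n)$, which exceeds $b(n)$ whenever $a(n)\gg b(n)$. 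So the claimed event inclusion does not hold in general, and your ``one then checks\ldots'' does not (and cannot) close this.

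The clean repair is to weaken the conclusion to
\[
\mathbb{P}[\text{left event}]\ \le\ \mathbb{P}[\text{right event}]\ +\ \mathbb{P}\bigl(N(a(n))>K\sqrt{b(n)\ell_n/A}\bigr),
\]
the extra term being at most $\exp(-c\sqrt{b(n)\ell_n})$ by Lemma~\ref{bernstinou}. This is harmless in the only place the lemma is invoked (Equation~\eqref{eq34} in the proof of Theorem~\ref{concentration}): under Conditions~\ref{cond_2} one has $\sqrt{b(n)\ell_n}\gg b(n)^2/(b(n)n^{1/3}+a(n))$, so the extra term is absorbed into the final exponential bound. The paper's omitted ``similar'' proof almost certainly glosses over the same point.
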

These lemmas will allow us to prove the following concentration inequality. Recall that $m=m(n)$ and $y = y(n)$ depend implicitly on $n$.
\begin{Lemma}
\label{wut}
Suppose that Conditions \ref{cond_nodes} hold. There exist a constant $A >0$ such that 
if $(x_n,y)$  verifies Conditions \ref{cond_2}, then:
$$
\mathbb{P}\left(\left|X(x_n)-\sum_{k=1}^{N(x_n)}\mathbb{E}[w_{v(i)}]\right| \geq y \right) \leq A\exp\left(\frac{-y^2}{A(yn^{1/3}+x_n)}\right),
$$

\end{Lemma}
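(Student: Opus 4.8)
The plan is to decompose the centered quantity into two pieces, each of which is controlled by Bernstein's inequality (Lemma \ref{bernstinou}) --- one directly, the other via Lemma \ref{eqfff}. Write
$$
X(x_n) - \sum_{k=1}^{N(x_n)}\mathbb{E}[w_{v(i)}] = \Big(X(x_n) - \mathbb{E}[X(x_n)]\Big) + \Big(\mathbb{E}[X(x_n)] - \sum_{k=1}^{N(x_n)}\mathbb{E}[w_{v(i)}]\Big),
$$
so that by a union bound it suffices to bound the probability that each bracketed term exceeds $y/2$ in absolute value.

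For the first term, $X(x_n)$ is a sum of independent variables bounded by $\max_i w_i \le n^{1/3}$, so Lemma \ref{bernstinou} gives directly
$$
\mathbb{P}\big(|X(x_n) - \mathbb{E}[X(x_n)]| \ge y/2\big) \le 2\exp\left(\frac{-y^2/4}{2(yn^{1/3}/2 + x_n)}\right),
$$
which is of the desired form after enlarging $A$, since $yn^{1/3}/2 + x_n \le yn^{1/3}+x_n$. For the second term, I apply both inequalities of Lemma \ref{eqfff} (the versions without $\sup$/$\inf$) with $b(n) = y/2$; this is legitimate because $(x_n, y/2)$ still verifies Conditions \ref{cond_2} by the remark following that definition. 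This yields a constant $A' > 0$ with
$$
\mathbb{P}\left(\mathbb{E}[X(x_n)] - \sum_{k=1}^{N(x_n)}\mathbb{E}[w_{v(i)}] \ge y/2\right) \le \mathbb{P}\left(N(x_n) - \mathbb{E}[N(x_n)] \le \tfrac{-y}{2A'} + 1\right),
$$
and symmetrically $\mathbb{P}\big(\mathbb{E}[X(x_n)] - \sum_{k=1}^{N(x_n)}\mathbb{E}[w_{v(i)}] \le -y/2\big) \le \mathbb{P}\big(N(x_n) - \mathbb{E}[N(x_n)] \ge \tfrac{y}{2A'} - 1\big)$. Since $y \to \infty$ under Conditions \ref{cond_2}, for $n$ large the threshold $\tfrac{y}{2A'}-1$ is at least $\tfrac{y}{3A'}$, and the second Bernstein bound in Lemma \ref{bernstinou} gives
$$
\mathbb{P}\left(|N(x_n) - \mathbb{E}[N(x_n)]| \ge \tfrac{y}{3A'}\right) \le 2\exp\left(\frac{-y^2/(9A'^2)}{2(y/(3A') + x_n)}\right),
$$
which, using $y/(3A') + x_n \le yn^{1/3} + x_n$, is again of the required shape. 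Combining the three estimates with a union bound and enlarging $A$ to absorb all constants completes the argument.

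The main obstacle is purely bookkeeping: one must track how the constants produced by Lemma \ref{eqfff} compound with those from Bernstein's inequality, and check that the single exponent $\frac{-y^2}{A(yn^{1/3}+x_n)}$ dominates each individual exponent uniformly for all large $n$. This relies on the growth constraints of Conditions \ref{cond_2} --- in particular $y \to \infty$, so the $\pm 1$ terms are negligible, and $y = O(yn^{1/3})$ so the denominators are comparable. There is no deeper difficulty: the randomization trick has reduced everything to sums of independent random variables, and Lemma \ref{eqfff} has already handled the comparison between the random-index sum $\sum_{k=1}^{N(x_n)}\mathbb{E}[w_{v(i)}]$ and the deterministic quantity $\mathbb{E}[X(x_n)]$.
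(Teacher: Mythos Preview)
Your proof is correct and follows essentially the same approach as the paper's own proof: the same decomposition into $X(x_n)-\mathbb{E}[X(x_n)]$ and $\mathbb{E}[X(x_n)]-\sum_{k=1}^{N(x_n)}\mathbb{E}[w_{v(i)}]$, Bernstein (Lemma~\ref{bernstinou}) on the first piece, and Lemma~\ref{eqfff} followed by Bernstein on $N(x_n)$ for the second. Your write-up is actually more explicit than the paper's about the constant-tracking and the verification that $(x_n,y/2)$ still satisfies Conditions~\ref{cond_2}, which is helpful.
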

\begin{proof}
By the union bound: 
\begin{equation}
\begin{aligned}
\label{ters}
&\mathbb{P}\left[ \middle|X(x_n) - \sum_{k=1}^{N(x_n)}\mathbb{E}\left[     w'_{v(i)}\right]\middle| \geq y \right] \\ &\leq \left(\mathbb{P}\left[ \middle|X(x_n) - \mathbb{E}\left[X(x_n)\right]\middle| \geq \frac{y}{2}\right] 
+ \mathbb{P}\left[  \middle|\mathbb{E}\left[X(x_n)\right] - \sum_{k=1}^{N(x_n)}\mathbb{E}\left[     w'_{v(i)}\right]\middle| \geq \frac{y}{2}\right]\right).
\end{aligned}
\end{equation}
We bound separately each term of the right-hand side of Equation \eqref{ters}. 
Lemma \ref{bernstinou} states that:
\begin{equation}
\begin{aligned}
\label{f_part}
\mathbb{P}\left[\middle| X(x_n) - \mathbb{E}\left[X(x_n)\right]\middle| \geq \frac{y}{2}\right] &\leq 2\exp\left(\frac{-y^2}{8(yn^{1/3}+x_n)}\right).
\end{aligned}
\end{equation}

Using Equations  \eqref{f_part}, Lemma \ref{eqfff} on $\left(x_n,y/2\right)$ and Lemma \ref{bernstinou} to bound the second expression in the right-hand side of Equation \eqref{ters} expression in Equation \eqref{ters}  shows that:
$$
\mathbb{P}\left(\left|X(x_n)-\sum_{k=1}^{N(x_n)}\mathbb{E}[w_{v(i)}]\right| \geq y \right) \leq A'\exp\left(\frac{-y^2}{A'(yn^{1/3}+x_n)}\right),
$$
where $A' >0$ is a large constant.
\end{proof}
In order to prove concentration inequalities on the $N(t)$ and $X(t)$ for all $t$ in some interval, we use the chaining method. This method consists of crafty discretizations of the "time" parameter space in order to derive general bounds for all "times". The method is explained in chapter $13$ of \cite{BML13}. It is attributed to Kolmogorov, and it has been vastly used and improved by Dudley (\cite{D73}) and Talagrand (for instance \cite{T05}). 
\begin{Lemma}
\label{lema102}
Suppose that Conditions \ref{cond_nodes} hold. There exist a constant $A >0$ such that, for any $(m,y)$ that verify Conditions \ref{cond_2}:
$$
\mathbb{P}\left(\sup_{0 \leq t \leq m} \left(X(t) -  \mathbb{E}\left[X(t)\right]\right) \geq y\right) \leq A\exp\left(\frac{-y^2}{A(yn^{1/3}+m)}\right).
$$
\end{Lemma}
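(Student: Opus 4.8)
As announced before the statement, I would prove this by a dyadic chaining argument over the time interval $[0,m]$ (in the spirit of Chapter~$13$ of \cite{BML13}), using Bernstein's inequality as the per-scale input and exploiting that $X$ is nondecreasing in $t$, which makes the passage from a discrete net to the supremum essentially free on the upper side. The pointwise estimate of Lemma~\ref{bernstinou} is the germ, but it must be upgraded to increments and then summed over scales.

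\textbf{Step 1: increment estimate.} For $0\le a\le b\le m$ the quantity $X(b)-X(a)=\sum_{k=1}^{n}w_k\mathbbm{1}(a<T_k\le b)$ is a sum of independent variables bounded by $w_1\le n^{1/3}$, and by \eqref{expx} together with Conditions~\ref{cond_nodes} one has $\mathbb{E}[X(b)-X(a)]=\sum_k w_k(e^{-w_ka/\ell_n}-e^{-w_kb/\ell_n})\le (b-a)\ell_n^{-1}\sum_k w_k^2=(b-a)(1+o(n^{-1/3}))$, and the variance obeys the same bound. So exactly as in Lemma~\ref{bernstinou}, there is a constant $A_0$ with $\mathbb{P}(|X(b)-X(a)-\mathbb{E}[X(b)-X(a)]|\ge u)\le 2\exp(-u^2/(A_0(un^{1/3}+(b-a))))$ for $n$ large and all $u\ge 0$.

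\textbf{Step 2: reduction to a net and telescoping.} Fix $J=J(n)$, put $\delta=m2^{-J}$, and for $0\le j\le J$ let $G_j=\{im2^{-j}:0\le i\le 2^j\}$, $\pi_j(t)=\max\{g\in G_j:g\le t\}$, $\pi_j^+(t)=\min\{g\in G_j:g\ge t\}$. Since $X$ is nondecreasing, $X(t)\le X(\pi_J^+(t))$ for every $t$, and $\mathbb{E}[X(\pi_J^+(t))]-\mathbb{E}[X(t)]\le\delta(1+o(1))$ by Step~1; choosing $\delta\le 1$ this gives $\sup_{0\le t\le m}(X(t)-\mathbb{E}[X(t)])\le 2+\max_{s\in G_J}(X(s)-\mathbb{E}[X(s)])$ for $n$ large, with no residual term to estimate. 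Telescoping the binary expansion of $s\in G_J$ then yields $X(s)-\mathbb{E}[X(s)]=\sum_{j=1}^{J}\big((X(\pi_j(s))-X(\pi_{j-1}(s)))-\mathbb{E}[\cdots]\big)$, where at level $j$ the centred increment is either $0$ or one of the $2^{j-1}$ quantities $\Delta_{j,i}-\mathbb{E}\Delta_{j,i}$ with $\Delta_{j,i}=X((2i+1)m2^{-j})-X(2im2^{-j})$. Hence $\max_{s\in G_J}(X(s)-\mathbb{E}[X(s)])\le\sum_{j=1}^{J}M_j$, $M_j:=\max_{0\le i<2^{j-1}}|\Delta_{j,i}-\mathbb{E}\Delta_{j,i}|$, and for any budget $(y_j)$ with $\sum_j y_j\le y-2$, a union bound with Step~1 gives $\mathbb{P}(\sum_j M_j\ge y-2)\le\sum_{j=1}^{J}2^{\,j+1}\exp(-y_j^2/(A_0(y_jn^{1/3}+m2^{-j})))$.

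\textbf{Step 3: budget allocation, and the main obstacle.} It remains to pick $J$ and $(y_j)$ so the last sum is at most $A\exp(-y^2/(A(yn^{1/3}+m)))$, and this is the whole point where the delicate bookkeeping of Conditions~\ref{cond_2} is used: the first inequality there forces $y\ge c\,n^{1/3}$ and $y^2\ge c\,m$ with $c$ a large multiple of $\bar A$, while $a(n)^2=O(b(n)\ell_n)$ ties the time-scale $m$ to the variance-scale. One separates the coarse scales (where $m2^{-j}$ dominates the Bernstein denominator: these form the sub-Gaussian regime, and $y_j$ proportional to $\sqrt{m2^{-j}(j+1)}$ makes the $2^j$ factors harmless since $\sum_j y_j=O(\sqrt m)=o(y)$) from the fine scales (where $y_jn^{1/3}$ dominates: the sub-exponential regime, handled with geometrically decreasing $y_j$ comparable to $y$, of which only the first few matter because $y\gg n^{1/3}$); summing the resulting geometric-type series and enlarging $A$ closes the argument. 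The genuinely hard part is precisely this last calibration — controlling the entropy factors $2^j$ against the Bernstein exponents \emph{uniformly over all scales}, most notably the finest ones, where single jumps of size $n^{1/3}$ prevent the process from becoming smooth in the sub-exponential sense; it is here, and only here, that one needs the full force of Conditions~\ref{cond_2} rather than merely $y\to\infty$ and $a(n)=o(n)$.
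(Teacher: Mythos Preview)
Your Steps 1--2 are fine and match the paper, but Step 3 conceals a real gap. With a \emph{deterministic} dyadic grid, the Bernstein bound at every scale $j$ carries the same sub-exponential coefficient $n^{1/3}=\max_k w_k$, so to kill the entropy factor $2^{\,j}$ you are forced to take $y_j\gtrsim n^{1/3}(j+1)$ at each fine scale. Summing over $j\le J=\log_2(m/\delta)$ gives $\sum_j y_j\gtrsim n^{1/3}J^2$, and since $m/y=O(n^{1/3})$ under Conditions~\ref{cond_2} this is of order $n^{1/3}(\log n)^2$. But Conditions~\ref{cond_2} only guarantee $y\ge c\,n^{1/3}$ for a fixed (large) constant $c$; they do \emph{not} give $y\gtrsim n^{1/3}(\log n)^2$, so your total budget can be exhausted. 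Your suggestion of ``geometrically decreasing $y_j$'' makes this worse, not better: if $y_j\downarrow 0$ geometrically then $2^{\,j}\exp(-y_j/(A_0 n^{1/3}))\to\infty$, so the tail of the chaining sum diverges.

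The paper's proof uses precisely the extra idea you are missing: the chaining sets are taken to be $\Gamma_i=\{mk2^{-i}:0\le k<2^{\,i}\}\cup\{T_k:1\le k<2^{\,i}\}$, i.e.\ the dyadic grid \emph{together with} the $2^{\,i}-1$ smallest arrival times. Then between two consecutive points of $\Gamma_i$ no $T_k$ with $k<2^{\,i}$ can fall, so the increment $X(t)-X(f_i(t))$ involves only weights $w_k$ with $k\ge 2^{\,i}$. The Bernstein sub-exponential coefficient at scale $i$ is therefore $w_{2^{\,i}}$, not $n^{1/3}$; since $\sum_k w_k^3=O(n)$ forces $w_{2^{\,i}}\le (An)^{1/3}2^{-i/3}$, one gets $\sum_i (i+1)w_{2^{\,i}}=O(n^{1/3})$, and the budget $\sum_i\rho_i$ collapses to $O(\sqrt{mu}+un^{1/3})$. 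This random-grid device is what makes the argument close, and it does not rely on Conditions~\ref{cond_2} beyond ensuring $u>\ln 4$.
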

\begin{proof}
Recall that $w_1 \geq w_2 \geq w_3 \geq  ...$ and for $i \leq n$ write:
$$
X_i(t) =  \sum_{k=i+1}^nw_k\mathbbm{1}(T_k \leq t).
$$
By Bernstein's inequality and basic computations, for any $u > 0$ and $s < t$:
\begin{equation}
\label{dead1}
\mathbb{P}\left(|X_i(t)-X_i(s)-\mathbb{E}[X_i(t)-X_i(s)]| \geq \sqrt{2(t-s)\sum_{k=i+1}^n\frac{w_k^3}{\ell_n}u}+uw_{i+1}\right) \leq 2\exp(-u).
\end{equation}
For $ i \geq 0$ let:
$$
\Gamma_i=\left\{m\frac{k}{2^i}, 0 \leq k < 2^i\right\}\cup\left\{T_k, 1 \leq k < 2^i\right\}.
$$
Let $f_i : t \in [0,m] \mapsto \max\{z \in \Gamma_i, t > z\}$. We have, by definition of $f_i$ and $\Gamma_i$, for any $t \leq m$:
\begin{equation*}
\begin{aligned}
X(t)-X(f_i(t)) &=  \sum_{k=1}^nw_k\mathbbm{1}(f_i(t)< T_k \leq t) \\
&= \sum_{k=2^i}^nw_k\mathbbm{1}(f_i(t)< T_k \leq t) \\
&= X_{2^i-1}(t)-X_{2^i-1}(f_i(t)).
\end{aligned}
\end{equation*}
Since $f_i(t)$ is measurable with respect to the $(T_k)_{k <2^i}$'s. And conditionally on $f_i(t)$, $X(t)-X(f_i(t))$ is a sum of independent random variables. We can apply Bernstein's inequality to obtain similarily to Equation \eqref{dead1}:
\begin{equation}
\begin{aligned}
\label{dead2}
&\mathbb{P}\left(|X(t)-X(f_i(t))-\mathbb{E}[X(t)]-X(f_i(t))]| \geq \sqrt{2(t-f_i(t))\sum_{k=2^i}^n\frac{w_k^3}{\ell_n}u}+uw_{2^i}\right)\\ &\leq 2\exp(-u).
\end{aligned}
\end{equation}
 Let:
$$
\rho_i=\sqrt{3\frac{m}{2^i}C(u(i+1))}+u(i+1)w_{2^i}.
$$
Since $(t-f_i(t)) \leq \frac{m}{2^i}$ and $\sum_{k=1}^n\frac{w_k^3}{\ell_n}=C(1+o(1))$.
Inequality \eqref{dead2} with $u'=u(i+1)$ yields:
\begin{equation*}
\begin{aligned}
\label{dead3}
&\mathbb{P}\left(|X_i(f_i(t))-X_i(t)-\mathbb{E}[X_i(f_i(t))-X_i(s)]| \geq \rho_i\right) \leq 2\exp(-u(i+1)).
\end{aligned}
\end{equation*}
The classical chaining argument is that any $0 \leq t \leq m$ can be written as:
$$
t = \sum_{i=0}^{\infty}(f_{i+1}(t)-f_i(t)),
$$
This gives us by union bound, if we suppose that $u > \ell_n(4)$:
\begin{equation}
\begin{aligned}
\label{dead4}
&\mathbb{P}\left(\sup_{0 \leq t \leq m}|X(t)-\mathbb{E}[X(t)]| \geq \sum_{i=0}^{\infty}\rho_i\right) \\
&\leq \sum_{i=0}^{\infty}\sum_{t\in \Gamma_{i+1}}\mathbb{P}\left(|X_i(t)-X_i(f_i(t))-\mathbb{E}[X_i(t)-X_i(f_i(t))]| \geq \rho_i\right)\\ 
&\leq \sum_{i=0}^{\infty}\sum_{t\in \Gamma_{i+1}}2\exp(-u(i+1)) \\
&\leq \sum_{i=0}^{\infty}2^{i+3}\exp(-u(i+1))\\
&\leq \frac{8e^{-u}}{1-e^{-(u-\ell_n(2))}} \\
&\leq Ae^{-u},
\end{aligned}
\end{equation}
where $A > 0$ is some large constant and  with the convention that $w_k = 0$ if $k \geq n$. Now notice that as $\sum_{k=1}^nw_k^3 \leq An$ for some constant $A$, we have for any $i \geq 0$, $w_{2^i} \leq \frac{A^{1/3}n^{1/3}}{2^{i/3}}$. Hence:
\begin{equation}
\begin{aligned}
\label{dead5}
\sum_{i=1}^{\log(n)}(i+1)w_{2^i} &\leq \sum_{i=1}^{+\infty}\frac{A^{1/3}(i+1)n^{1/3}}{2^{i/3}} \\
&\leq A'n^{1/3},
\end{aligned}
\end{equation}
where $A' > 0$ is some large constant.
 With  Equation \eqref{dead5}, a simple computation shows that there exists $A  > 0$ such that:
$$
\sum_{i=0}^{\infty} \rho_i = A'\left(\sqrt{mu}+un^{1/3}\right),
$$
Replacing in Equation \eqref{dead4} give just another way of writing Bernstein's inequality, we finish by taking for instance:
$$u = \frac{y^2}{2A'^2(n^{1/3}y+m)},$$
which also ensures that $u > \ln(4)$ by Conditions \ref{cond_2}. 
\end{proof}
The following three lemmas have similar proofs, and their proofs are thus omitted.
\begin{Lemma}
\label{lema101}
Suppose that Conditions \ref{cond_nodes} hold. There exists $A > 0$ such that, for any $(m,y)$ that verifies Conditions \ref{cond_2}:
$$
\mathbb{P}\left(\sup_{0 \leq t \leq m} \left|N(m) -N(m-t) -  \mathbb{E}\left[N(m)-N(m-t)\right]\right| \geq y\right) \leq A\exp\left(\frac{-y^2}{A(y+m)}\right).
$$
\end{Lemma}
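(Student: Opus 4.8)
The plan is to reduce the two-parameter supremum to a one-parameter one and then control the fluctuations of the counting process $N$ by exponential (super)martingale inequalities. Writing $s=m-t$ and using
$$N(m)-N(m-t)-\mathbb{E}\bigl[N(m)-N(m-t)\bigr]=\bigl(N(m)-\mathbb{E}N(m)\bigr)-\bigl(N(s)-\mathbb{E}N(s)\bigr),$$
the supremum in question is at most $2\sup_{0\le s\le m}\bigl|N(s)-\mathbb{E}N(s)\bigr|$. Since $x-x^{2}/2\le 1-e^{-x}\le x$ and $\sum_k w_k^{2}=\ell_n(1+o(1))$ by Conditions~\ref{cond_nodes}, we have $0\le t-\mathbb{E}N(t)\le m^{2}/(2\ell_n)(1+o(1))$ uniformly on $[0,m]$, which is $O(y)$ under Conditions~\ref{cond_2}; since also $\mathbb{E}N(t)\le t$, it suffices (after adjusting constants) to bound the two one-sided quantities $\mathbb{P}(\sup_{t\le m}(N(t)-t)\ge y)$ and $\mathbb{P}(\sup_{t\le m}(t-N(t))\ge y)$.

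For the first, fix $\theta>0$ and let $M_t:=\exp\bigl(\theta N(t)-(e^{\theta}-1)t\bigr)$. Conditionally on $\mathcal{F}_t$ the increment $N(u)-N(t)$ is a sum of independent Bernoulli variables with parameters $1-e^{-w_k(u-t)/\ell_n}\le w_k(u-t)/\ell_n$ summing to at most $u-t$, so $\mathbb{E}[e^{\theta(N(u)-N(t))}\mid\mathcal{F}_t]\le e^{(e^{\theta}-1)(u-t)}$ and $M$ is a nonnegative supermartingale with $M_0=1$. As $N$ is nondecreasing, on the event that $N(t)-t\ge y$ for some $t\le m$ one has $M_t\ge e^{\theta y+m(\theta-e^{\theta}+1)}$, so Ville's maximal inequality gives
$$\mathbb{P}\Bigl(\sup_{0\le t\le m}\bigl(N(t)-t\bigr)\ge y\Bigr)\le\exp\Bigl(-\sup_{\theta>0}\bigl(\theta y-m(e^{\theta}-1-\theta)\bigr)\Bigr)=\exp\bigl(-m\,h(y/m)\bigr)\le\exp\!\left(\frac{-y^{2}}{2(m+y)}\right),$$
where $h(x)=(1+x)\ln(1+x)-x\ge x^{2}/(2(1+x))$.

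The lower tail is the main obstacle, since $N(t)-t$ jumps only upward and a one-sided bound cannot be read off the same supermartingale; I would handle it with the Doléans--Dade exponential supermartingale $\exp\bigl(-\theta N(t)+(1-e^{-\theta})A(t)\bigr)$ attached to the stochastic compensator $A(t)=\sum_k\frac{w_k}{\ell_n}(T_k\wedge t)$, whose intensity is $\lambda_s=\sum_{k:T_k>s}w_k/\ell_n\ge 1-\ell_n^{-1}\sum_{j\le m}w_j=1-o(1)$ by Lemma~\ref{sum_weights}. For $y\ge m/2$ the event $\{t-N(t)\ge y\}$ is a full large deviation and this bound, together with $A(\tau)\ge(1-o(1))\tau$, gives $\le e^{-cy}\le e^{-cy^{2}/(m+y)}$. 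For $y<m/2$ one first removes the event $\{N(m)>2m\}$, of probability $\le e^{-cm}\le e^{-cy^{2}/(m+y)}$ by the upper-tail bound; on its complement $\lambda_s\ge 1-\eta_n$ for all $s\le m$ with $\eta_n=o(1)$, hence $A(\tau)\ge(1-\eta_n)\tau$ at the first $\tau\le m$ with $t-N(t)\ge y$, and feeding this into the supermartingale and optimising over $\theta>0$ (legitimate since $y/((1-\eta_n)m)<1$) yields $\mathbb{P}(\sup_{t\le m}(t-N(t))\ge y)\le\exp\bigl(-(1-o(1))y^{2}/(2m)\bigr)$. Combining the four one-sided estimates proves the lemma; the one genuinely delicate spot is this lower estimate, where the naive alternative of discretising $[0,m]$ and unioning over $O(m/y)$ Bernstein estimates would leave an unabsorbed $m/y$ prefactor.

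A less computational route, matching the ``similar proofs, thus omitted'' remark, is to rerun the chaining argument of Lemma~\ref{lema102} with $N_i(t)=\sum_{k=i+1}^{n}\mathbbm{1}(T_k\le t)$ replacing $X_i(t)$: now one clock contributes $1$ rather than $w_k\le n^{1/3}$ to each increment, so Bernstein at level $i$ produces $\rho_i=\sqrt{3(m/2^{i})u(i+1)}+u(i+1)$, the chain stops at level $\lceil\log_2(n+1)\rceil$ because beyond it $\Gamma_i$ already contains all the clocks and $N(t)-N(f_i(t))\equiv0$, and summing the additive $u(i+1)$ terms costs only a polylogarithmic-in-$n$ factor that is absorbed under Conditions~\ref{cond_2} (whose first clause also guarantees $u=\Theta(y^{2}/(m+y))>\ln4$). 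In either approach the crux is the same — the unit-size, non-decaying increments of $N$ — and it is resolved by the compensator/Doléans--Dade device.
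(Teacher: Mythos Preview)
Your supermartingale argument for the upper tail of $N(t)-t$ is correct and is a nice alternative to the paper's (omitted) chaining. The lower tail via the Dol\'eans--Dade exponential, however, does not deliver what you claim. You bound the intensity by $\lambda_s\ge 1-\eta_n$ with $\eta_n:=\ell_n^{-1}\sum_{j=1}^m w_j=o(1)$ and then assert the outcome $\exp\bigl(-(1-o(1))y^2/(2m)\bigr)$. But optimising $\exp\bigl(-\theta N(\tau)+(1-e^{-\theta})A(\tau)\bigr)$ with $A(\tau)\ge(1-\eta_n)\tau$ actually gives $\exp\bigl(-(y-m\eta_n)^2/(2m)\bigr)$: the error enters as $m\eta_n$, not as $\eta_n$. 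Lemma~\ref{sum_weights} provides no rate for $\eta_n$, and Conditions~\ref{cond_2} permit $m$ close to $n$. Taking $m$ of order $n^{0.9}$ and $y$ of order $m^2/\ell_n$ (so of order $n^{0.8}$), which satisfies all of Conditions~\ref{cond_2}, together with a weight sequence for which $\sum_{j=1}^m w_j$ is of order $m^{2/3}n^{1/3}$ (allowed under Conditions~\ref{cond_nodes} when $W$ has a barely finite third moment), one gets $m\eta_n$ of order $n^{0.83}\gg y$, and your bound is vacuous. The split $y\gtrless m/2$ and the removal of $\{N(m)>2m\}$ do not touch this: the obstruction is $y$ versus $m\eta_n$, not $y$ versus $m$.

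Your chaining sketch has the analogous defect. You correctly note that each jump of $N$ contributes $1$ rather than $w_{2^i}$, but that is precisely why Lemma~\ref{lema102} does not transfer verbatim: there $\sum_i(i+1)w_{2^i}\le A'n^{1/3}$ because $w_{2^i}\lesssim(n/2^i)^{1/3}$ decays geometrically, whereas for $N$ one has $\sum_{i\le\log_2 n}(i+1)=\Theta\bigl((\log n)^2\bigr)$, so the chaining only yields $\exp\bigl(-y^2/(A(y(\log n)^2+m))\bigr)$. Conditions~\ref{cond_2} do not absorb this into a constant $A$: with $y$ comparable to $m$ the target exponent is of order $m$, while the chaining gives only order $m/(\log n)^2$. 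Either of your routes would prove the lemma with $y(\log n)^2+m$ (or $yn^{1/3}+m$) in the denominator --- and that weaker form is in fact all that is consumed in the proof of Theorem~\ref{concentration} --- but neither, as written, reaches the sharper $y+m$ stated here.
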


\begin{Lemma}
\label{lema100}
Suppose that Conditions \ref{cond_nodes} hold. There exists $A >0$ such that, for any $(m,y)$ that verify Conditions \ref{cond_2}:
$$
\mathbb{P}\left(\sup_{0 \leq t \leq m} \left|N(t) -  \mathbb{E}\left[N(t)\right]\right| \geq y\right| \leq A\exp\left(\frac{-y^2}{A(y+m)}\right)
$$
\end{Lemma}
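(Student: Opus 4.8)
\emph{Proof sketch.} The plan is to run the chaining argument of the proof of Lemma~\ref{lema102} essentially verbatim, with the weighted process $X(t)=\sum_{k=1}^n w_k\mathbbm 1(T_k\le t)$ replaced by the counting process $N(t)=\sum_{k=1}^n\mathbbm 1(T_k\le t)$. Two features make the passage clean and explain the shape of the bound. First, each summand $\mathbbm 1(T_k\le t)$ of $N$ is bounded by $1$, whereas the summands of $X$ are bounded only by $w_k\le n^{1/3}$; this is exactly what turns the $n^{1/3}$ in the denominator of Lemma~\ref{lema102} into a $1$, i.e.\ $A(yn^{1/3}+m)$ into $A(y+m)$. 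Second, over any window $(s,t]$ one has $\mathbb P(s<T_k\le t)=e^{-w_ks/\ell_n}-e^{-w_kt/\ell_n}\le w_k(t-s)/\ell_n$, so $N(t)-N(s)$ has variance at most $\sum_k\mathbb P(s<T_k\le t)\le(t-s)\sum_k w_k/\ell_n=t-s$, which plays the role of the quantity $(t-s)\sum_k w_k^3/\ell_n$ used for $X$ in \eqref{dead1}.

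I would keep exactly the objects of the proof of Lemma~\ref{lema102}: for $0\le i\le n$, $N_i(t)=\sum_{k=i+1}^n\mathbbm 1(T_k\le t)$; the dyadic meshes $\Gamma_i=\{mk/2^i:0\le k<2^i\}\cup\{T_k:1\le k<2^i\}$; and the projections $f_i(t)=\max\{z\in\Gamma_i:t>z\}$. Since $f_i(t)$ is $(T_k)_{k<2^i}$-measurable and, by construction of $\Gamma_i$, $N(t)-N(f_i(t))=N_{2^i-1}(t)-N_{2^i-1}(f_i(t))$, conditionally on the clocks of index $<2^i$ this increment is a sum of independent indicators, and Bernstein's inequality gives, as in \eqref{dead1}--\eqref{dead2}, for $u>0$ and $s=f_i(t)$ (where $t-s\le m/2^i$),
\begin{equation*}
\mathbb P\Bigl(\bigl|N(t)-N(s)-\mathbb E[N(t)-N(s)]\bigr|\ge\sqrt{2(t-s)\,u}+u\Bigr)\le 2e^{-u}.
\end{equation*}
A useful extra remark, which has no analogue for $X$, is that $\Gamma_i$ already contains every clock time as soon as $2^i>n$, so for $i\ge\lceil\log_2(n+1)\rceil$ we have $N(t)=N(f_i(t))$ almost surely: the chain $t=\sum_{i\ge0}(f_{i+1}(t)-f_i(t))$ carries only $O(\log n)$ nonzero $N$-increments.

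Then, exactly as in \eqref{dead4}, a union bound over $t\in\Gamma_{i+1}$ (at most $2^{i+2}$ points) and over the levels $i$, with a level-$i$ deviation budget chosen so that the doubly-indexed sum of probabilities is geometric, yields $\mathbb P\bigl(\sup_{0\le t\le m}|N(t)-\mathbb E[N(t)]|\ge\sum_i\rho_i\bigr)\le Ae^{-u}$ for $u>\ln4$, where $\rho_i$ is the Bernstein radius at level $i$. The ``$\sqrt{\,\cdot\,}$'' parts of the $\rho_i$ sum to $A\sqrt{mu}$ by geometric summability of $\sqrt{i+1}\,2^{-i/2}$, and the linear parts must be arranged to sum to $O(u)$; one then takes $u=y^2/(2A^2(y+m))$, which is $>\ln4$ since, by the first line of Conditions~\ref{cond_2} together with $\bar A\ge A$ and $yn^{1/3}+m\ge y+m$, one has $y^2/(\bar A(y+m))\ge y^2/(\bar A(yn^{1/3}+m))>\ln4$. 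This gives the announced $A\exp(-y^2/(A(y+m)))$, and the same computations cover Lemmas~\ref{lema101} and~\ref{lema100}.

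The step I expect to be the real obstacle is the bookkeeping of $\sum_i\rho_i$. In Lemma~\ref{lema102} the linear Bernstein term at level $i$ carries the factor $w_{2^i}\le A^{1/3}n^{1/3}2^{-i/3}$, whose geometric decay absorbs the $\sum_i(i+1)$ produced by the level-dependent deviation budget and leaves $\sum_i(i+1)w_{2^i}u=O(n^{1/3}u)$ (this is \eqref{dead5}). For $N$ that factor is only $1$, so a naive allocation would yield $O(u\log^2 n)$ rather than $O(u)$; bringing it down requires exploiting both that the chain terminates after $O(\log n)$ levels and a sharper, variance-aware (Bennett/Chernoff for indicator sums) control of the tail of the chain past level $\lceil\log_2 m\rceil$, where the conditional expected number of light-vertex clocks in a window of width $m/2^i\le1$ is at most $1$. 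Verifying this accounting, and that the resulting $u$ still meets $u>\ln4$ under Conditions~\ref{cond_2}, is where the little genuine work lies; everything else is a transcription of the proof of Lemma~\ref{lema102}.
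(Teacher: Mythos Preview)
Your approach is exactly what the paper has in mind---it says the proof is ``similar'' to that of Lemma~\ref{lema102} and omits it---and you have put your finger on the one place where ``similar'' hides real work. The decay $w_{2^i}\lesssim n^{1/3}2^{-i/3}$ is precisely what makes $\sum_i(i+1)w_{2^i}=O(n^{1/3})$ in~\eqref{dead5}; for $N$ the analogous factor is $1$, so a direct transcription yields $\sum_{i\le\log_2 n}(i+1)=O(\log^2 n)$ in the linear term, i.e.\ a bound of the shape $A\exp\bigl(-y^2/(A(y\log^2 n+m))\bigr)$ rather than the stated one. Your proposed remedy via Bennett and truncation at level $\lceil\log_2 m\rceil$ is the natural thing to try, but if you carry it through you will find it still leaves a logarithmic overhead: Bennett gains a factor $\ln(1+2^i\rho_i/m)\asymp i-\log_2 m$ in the exponent at level $i>\log_2 m$, so $\rho_i\asymp u(i{+}1)/(i-\log_2 m)$ and $\sum_i\rho_i$ still carries a factor of order $\log n$, not $O(1)$.

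Two remarks put this in perspective. First, for every use the paper makes of Lemma~\ref{lema100}---it enters only through Lemmas~\ref{eqfff} and~\ref{eqfff1} in the proof of Theorem~\ref{concentration}, see \eqref{eq3554}--\eqref{eq34}, and the conclusion there carries $yn^{1/3}$ in the denominator---a version with $y\,\mathrm{polylog}\,n$ in place of $y$ is equally good, so the wrinkle is cosmetic. Second, if you insist on the stated form, chaining is not the shortest route for $N$: write $N(t)-\mathbb E[N(t)]=M(t)+(A(t)-\mathbb E[A(t)])$ where $A(t)=\int_0^t\sum_k(w_k/\ell_n)\mathbbm 1(T_k>s)\,ds$ is the compensator. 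Then $M$ is a martingale with unit jumps and $A(m)\le m$ deterministically, so the exponential-martingale (Freedman) inequality gives $\mathbb P(\sup_{t\le m}|M(t)|\ge y)\le2\exp\bigl(-y^2/(2(y+m))\bigr)$ in one stroke; the remainder is $A(t)-\mathbb E[A(t)]=-\ell_n^{-1}\int_0^t(X(s)-\mathbb E[X(s)])\,ds$, an $O(m/\ell_n)$ multiple of $\sup_{s\le m}|X(s)-\mathbb E[X(s)]|$, which one controls via Lemma~\ref{lema102} (though this last step again needs a little care when $y^2/(y+m)$ is much larger than $n^{1/3}$). In short: your diagnosis of the obstacle is correct, your patch does not fully remove it, and the paper is slightly casual here---but nothing downstream is affected.
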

\begin{Lemma}
\label{lema103}
Suppose that Conditions \ref{cond_nodes} hold. There exists $A > 0$ such that, for any $(m,y)$ that verifies Conditions \ref{cond_2}:
$$
\mathbb{P}\left(\sup_{0 \leq t \leq m} \left|X(m) -X(m-t) -  \mathbb{E}\left[X(m)-X(m-t)\right]\right| \geq y\right) \leq A\exp\left(\frac{-y^2}{A(yn^{1/3}+m)}\right).
$$
\end{Lemma}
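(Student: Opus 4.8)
The statement is the time-reversed analogue of Lemma \ref{lema102} (just as Lemma \ref{lema101} is the time-reversed analogue of Lemma \ref{lema100}), and the plan is to run the same chaining argument with the time parameter reflected about $m$. Write $Z(s) = X(s) - \mathbb{E}[X(s)]$. For $0 \le s < t \le m$ the increment
$$\bigl(X(m-s)-X(m-t)\bigr) - \mathbb{E}\bigl[X(m-s)-X(m-t)\bigr] = \sum_{k=1}^n w_k\mathbbm{1}(m-t < T_k \le m-s) - \mathbb{E}[\,\cdot\,]$$
is a centered sum of independent random variables, so Bernstein's inequality applies to it exactly as in Equation \eqref{dead1}, with a.s.\ bound $w_{i+1}$ on the tail weights and variance proxy $(t-s)\sum_{k=i+1}^n w_k^3/\ell_n$.

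First I would set, for $i \ge 0$,
$$\Gamma_i = \left\{\tfrac{mk}{2^i} : 0 \le k < 2^i\right\} \cup \left\{m - T_k : 1 \le k < 2^i,\ T_k \le m\right\}, \qquad f_i(t) = \max\{z \in \Gamma_i : z < t\},$$
i.e.\ the discretization of Lemma \ref{lema102} with the clock times $T_k$ replaced by their reflections $m - T_k$. The role of these points is unchanged: for any $t \le m$ and any $k < 2^i$ one has $\mathbbm{1}(m-t < T_k \le m - f_i(t)) = 0$ — immediate if $T_k > m$, and otherwise because $m - T_k \in \Gamma_i$ forces $m - T_k \le f_i(t)$ as soon as $m - T_k < t$ — so that
$$X(m - f_i(t)) - X(m - t) = \sum_{k \ge 2^i} w_k\,\mathbbm{1}(m - t < T_k \le m - f_i(t)).$$
Since $f_i(t)$ is measurable with respect to $(T_k)_{k < 2^i}$, conditionally on these this is a sum of independent variables bounded a.s.\ by $w_{2^i}$, with conditional variance at most $\frac{m}{2^i}\bigl(\sum_k w_k^3/\ell_n\bigr) = \frac{m}{2^i}C(1+o(1))$ because $t - f_i(t) \le m/2^i$. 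Setting $\rho_i = \sqrt{3\tfrac{m}{2^i}C\,u(i+1)} + u(i+1)w_{2^i}$, Bernstein's inequality then bounds by $2\exp(-u(i+1))$ the probability that this increment deviates from its conditional mean by more than $\rho_i$, uniformly over $t$ in the discretization.

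From this point the argument is verbatim that of Lemma \ref{lema102}: writing $t = \sum_{i \ge 0}(f_{i+1}(t) - f_i(t))$, a union bound over $i$ and over the at most $2^{i+3}$ points of $\Gamma_{i+1}$ gives, for $u > \ln 4$,
$$\mathbb{P}\left(\sup_{0 \le t \le m}\bigl|X(m) - X(m-t) - \mathbb{E}[X(m)-X(m-t)]\bigr| \ge \sum_{i\ge 0}\rho_i\right) \le A e^{-u};$$
then $w_{2^i} \le A^{1/3}n^{1/3}2^{-i/3}$ (from $\sum_k w_k^3 \le An$) gives $\sum_{i\ge 0}\rho_i \le A'(\sqrt{mu} + un^{1/3})$ as in Equation \eqref{dead5}, and taking $u = \frac{y^2}{2A'^2(n^{1/3}y + m)}$, which is $> \ln 4$ by Conditions \ref{cond_2}, yields the claimed bound. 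One may also skip the re-chaining entirely: since $X(m) - X(m-t) - \mathbb{E}[X(m)-X(m-t)] = Z(m) - Z(m-t)$, we have $\sup_{0 \le t \le m}|Z(m) - Z(m-t)| \le 2\sup_{0\le s\le m}|Z(s)|$, and the two-sided bound actually proved inside Lemma \ref{lema102}, applied to the pair $(m, y/2)$ — still admissible by the rescaling remark after Conditions \ref{cond_2} — finishes immediately.

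The step I expect to require the most care is the first one: verifying that the reflected discretization $\Gamma_i$ still pins exactly the $2^i-1$ clocks of largest weight, so that every chaining increment involves only the tail weights $w_k$, $k \ge 2^i$, whose a.s.\ size $w_{2^i}$ decays like $2^{-i/3}$. That decay is what makes $\sum_i \rho_i$ finite and drives the whole estimate; once it is in place, the remaining computations are exactly those already performed for Lemma \ref{lema102}.
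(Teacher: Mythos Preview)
Your proposal is correct and matches the paper's intent: the paper omits the proof, stating only that it is similar to that of Lemma \ref{lema102}, and your reflected-chaining argument is precisely that adaptation. The alternative shortcut you offer at the end --- bounding $\sup_t |Z(m)-Z(m-t)| \le 2\sup_s |Z(s)|$ and invoking the two-sided bound already established inside the proof of Lemma \ref{lema102} with $(m,y/2)$ --- is also valid and in fact the cleanest route.
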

Now we can prove the concentration of the size-biased sum of weights sampled without replacement.
\begin{Theorem}
\label{concentration}
Suppose that Conditions \ref{cond_nodes} hold. There exists a constant $A >0$ that satisfies the following, for  $(m,y)$ that verifies Conditions \ref{cond_2}, we have:
$$
\mathbb{P}\left[ \sup_{0 \leq i \leq j \leq m}  \left | \sum_{k=i}^j w_{v(k)} - \mathbb{E}\left[\sum_{k=i}^j w_{v(k)}\right] \right | \geq y\right] \leq  A\exp\left(\frac{-y^2}{A(yn^{1/3}+m)}\right).
$$
\end{Theorem}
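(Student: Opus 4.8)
The plan is to exploit the exponential-clock representation introduced above. Writing $v'(1),\dots,v'(n)$ for the order in which the clocks $(T_k)$ ring, one has the almost sure identity $X(x)=\sum_{k=1}^{N(x)}w_{v'(k)}$, with $(v'(1),\dots,v'(n))$ distributed as the size-biased sample $(v(1),\dots,v(n))$, so that $\mathbb{E}[w_{v(k)}]=\mathbb{E}[w_{v'(k)}]$ for every $k$. The point is therefore to transfer the uniform concentration of $X(\cdot)$ and $N(\cdot)$ already proved in Lemmas \ref{lema102} and \ref{lema100} to the partial sums $\sum_{k\le i}w_{v(k)}$. First I would reduce to initial segments: since $\sum_{k=i}^{j}w_{v(k)}=\sum_{k=0}^{j}w_{v(k)}-\sum_{k=0}^{i-1}w_{v(k)}$ (with $w_{v(0)}=0$), the triangle inequality shows that it suffices to bound $\mathbb{P}\big(\sup_{0\le i\le m}|\sum_{k=0}^{i}w_{v(k)}-\mathbb{E}[\sum_{k=0}^{i}w_{v(k)}]|\ge y/2\big)$, and since $(m,y/2)$ still verifies Conditions \ref{cond_2} by the remark following them, I may as well prove such a bound with $y$ in place of $y/2$, absorbing the change into $A$.

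Next I would pick $u_n>0$ with $\mathbb{E}[N(u_n)]=m+\delta_n$ for a buffer $\delta_n$ of order $y+\sqrt{my}$; since $\mathbb{E}[N(x)]=x(1+o(n^{-1/3}))$ by \eqref{expx} and Conditions \ref{cond_nodes}, this forces $u_n=\Theta(m)$, and one checks that $(u_n,y)$ verifies Conditions \ref{cond_2} up to multiplicative constants. On the event $\{N(u_n)\ge m\}$ each of the first $m$ clocks rings at a time $x\le u_n$, where $N(x)=i$ and $X(x)=\sum_{k=1}^{i}w_{v(k)}$ for the corresponding $i$, so on that event
$$\sup_{0\le i\le m}\Big|\sum_{k=0}^{i}w_{v(k)}-\sum_{k=0}^{i}\mathbb{E}[w_{v(k)}]\Big|\le\sup_{0\le x\le u_n}\Big|X(x)-\sum_{k=1}^{N(x)}\mathbb{E}[w_{v(k)}]\Big|.$$
The complement $\{N(u_n)<m\}=\{N(u_n)\le\mathbb{E}[N(u_n)]-\delta_n\}$ has probability at most $A\exp(-y^2/(A(yn^{1/3}+m)))$ by Lemma \ref{bernstinou} (or Lemma \ref{lema100}), once $\delta_n$ is a large enough multiple of $y+\sqrt{my}$, because then $\delta_n^2/(\delta_n+u_n)\gtrsim y^2/(yn^{1/3}+m)$.

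It remains to control the right-hand side above. I would split $X(x)-\sum_{k=1}^{N(x)}\mathbb{E}[w_{v(k)}]=\big(X(x)-\mathbb{E}[X(x)]\big)+\big(\mathbb{E}[X(x)]-\sum_{k=1}^{N(x)}\mathbb{E}[w_{v(k)}]\big)$ and bound the supremum of each summand by $y/4$. For the first summand, $\sup_{x\le u_n}|X(x)-\mathbb{E}[X(x)]|\ge y/4$ is handled by Lemma \ref{lema102} applied with $u_n$ in place of $m$, together with the matching lower-tail estimate, which follows from the same chaining argument since Bernstein's inequality is two-sided. For the second summand, by Lemma \ref{eqfff} both the event $\sup_{x\le u_n}(\mathbb{E}[X(x)]-\sum_{k=1}^{N(x)}\mathbb{E}[w_{v(k)}])\ge y/4$ and its negative counterpart force $\sup_{x\le u_n}|N(x)-\mathbb{E}[N(x)]|\gtrsim y$, whose probability is controlled by Lemma \ref{lema100}. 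A union bound over these events, each of the form $A\exp(-y^2/(A(yn^{1/3}+u_n)))$, combined with $u_n=O(m)$ and $y\le yn^{1/3}$ (so that $u_n$ may be replaced by $m$ inside the exponent at the cost of a constant), then gives the statement.

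The main obstacle I anticipate is not conceptual but one of bookkeeping, concentrated in the middle step: the buffer $\delta_n$ must be calibrated so that $\mathbb{P}(N(u_n)<m)$ carries exactly the target exponent $y^2/(yn^{1/3}+m)$ while $(u_n,y)$ stays inside the regime of Conditions \ref{cond_2}. This forces one to balance the Bernstein exponent $\delta_n^2/(\delta_n+u_n)$ against $y^2/(yn^{1/3}+m)$ using the numerical relations among $m$, $y$ and $n$ imposed by Conditions \ref{cond_2}, and to rely on the sharp estimates of $\mathbb{E}[N(x)]$ and of $\sum_{k\le i}\mathbb{E}[w_{v(k)}]$ furnished by Lemma \ref{esperance}; these are exactly what make the passage between $\mathbb{E}[X(x)]$ and $\sum_{k=1}^{N(x)}\mathbb{E}[w_{v(k)}]$ lose only a lower-order amount, i.e. what powers Lemma \ref{eqfff}.
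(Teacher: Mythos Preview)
Your argument is correct and uses the same exponential-clock coupling as the paper, but the route is genuinely different at one point. You begin with the reduction
\[
\sup_{0\le i\le j\le m}\Big|\sum_{k=i}^{j}(w_{v(k)}-\mathbb{E}[w_{v(k)}])\Big|\ \le\ 2\sup_{0\le i\le m}\Big|\sum_{k=0}^{i}(w_{v(k)}-\mathbb{E}[w_{v(k)}])\Big|,
\]
so that only prefix sums need to be controlled; the paper does \emph{not} do this. Instead it attacks the double supremum $\sup_{i\le j}$ directly by a first-passage argument: it introduces the first pair $(x^*,z^*)$ at which $X(z)-X(x)-\sum_{k=N(x)}^{N(z)}\mathbb{E}[w_{v(k)}]$ is too negative and splits the event according to whether the ``endpoint'' deviation $X(4m+3y)-\sum_{k\le N(4m+3y)}\mathbb{E}[w_{v(k)}]$ is already bad (event $\mathcal{B}$) or one of the two boundary pieces $[0,x^*]$, $[z^*,4m+3y]$ misbehaves (event $\mathcal{D}$). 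Controlling $\mathcal{D}$ is why the paper needs the backward versions of the technical lemmas, namely Lemma~\ref{eqfff1} and Lemmas~\ref{lema101}, \ref{lema103}. Your prefix reduction sidesteps all of this: you only need Lemma~\ref{eqfff} (forward) together with the uniform bounds of Lemmas~\ref{lema102} and~\ref{lema100}, exactly as you write. The cost is a harmless factor of $2$ in $y$, absorbed into $A$; the gain is that the stopping pair $(x^*,z^*)$ and the four ``reversed'' lemmas become unnecessary. One small point: Lemma~\ref{lema102} is stated one-sided, but its proof (the chaining in \eqref{dead4}) is written with absolute values throughout, so the two-sided version you invoke is indeed available.
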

\begin{proof}
Let $l(m)$ be such that $\mathbb{E}[N(l(m))]=m$. If $E = \{N(3(l(m)+y)) \geq m\}$ holds, then: 
\begin{equation*}
\sup_{0 \leq i \leq j \leq m} \left|\sum_{k=i}^{j} w_{v'(k)} - \mathbb{E}\left[\sum_{k=i}^{j} w_{v'(k)}\right]\right| \leq \sup_{0 \leq x \leq z \leq 3(l(m)+y))} \left|X(z) - X(x) - \sum_{k=N(x)}^{N(z)}\mathbb{E}\left[     w_{v'(k)}\right]\right|.
\end{equation*}
We only bound:
$$
\mathbb{P}\left[\inf_{i \leq j \leq m} \; \sum_{k=i}^{j} w_{v(k)} - \mathbb{E}\left[\sum_{k=i}^{j} w_{v(k)}\right] \leq -y\right],
$$
as the argument for bounding the other part is the same.
By union bound with the event $E$:
\begin{equation}
\begin{aligned}
\label{toztoz}
&\mathbb{P}\left(\inf_{i \leq j \leq m} \; \sum_{k=i}^{j} w_{v'(k)} - \mathbb{E}\left[\sum_{k=i}^{j} w_{v'(k)}\right] \leq -y\right) \\
\leq &\mathbb{P}\left(E,\inf_{i \leq j \leq m} \; \sum_{k=i}^{j} w_{v'(k)} - \mathbb{E}\left[\sum_{k=i}^{j} w_{v'(k)}\right] \leq -y\right) + P(\bar{E})\\
\leq &\mathbb{P}\left(\inf_{0 \leq x \leq z \leq 3(l(m)+y))} X(z)-X(x)- \sum_{k=N(x)}^{N(z)}\mathbb{E}\left[     w_{v'(i)}\right] \leq -y\right)+P(\bar{E}).
\end{aligned}
\end{equation}
Note that by Conditions \ref{cond_nodes}, for $n$ large enough:
\begin{equation}
\begin{aligned}
\label{eq009}
\mathbb{E}\left[N\left(\frac{\ell_n}{9}\right)\right] &\geq \sum_{k=1}^n\left(\frac{w_i}{9}-\frac{w_i^2}{162}\right) \\
&\geq \frac{\ell_n}{11}(1+o(1)) \\
&\geq \frac{\ell_n}{12}.
\end{aligned}
\end{equation}
Since $(\mathbb{E}[N(x)])_{x\geq0}$ is an increasing function, by Equation \eqref{eq009}, $l(m) \leq \ell_n/9$. Hence, by Equation \eqref{expx}:
\begin{equation}
\begin{aligned}
\label{eq1456}
\mathbb{E}[N(l(m)] &= m \\
&\geq l(m)-\frac{\sum_{k=1}^{n}w_k^2l(m)^2}{2\ell_n^2} \\
&\geq l(m)-\frac{l(m)}{18}(1+o(1)) \\
&\geq \frac{8l(m)}{9}.
\end{aligned}
\end{equation}
By Lemma \ref{bernstinou} and Equation \eqref{eq1456}:
\begin{equation}
\label{first_half}
\mathbb{P}(\bar{E}) \leq A\exp\left(\frac{-y^2}{A(y+m)}\right),
\end{equation}
for some large constant $A >0$. 
Now we need to prove that: 
\begin{equation}
\label{eq357}
\mathbb{P}\left(\inf_{0 \leq x \leq z \leq 3(l(m)+y))} \; X(z) - X(x) - \sum_{k=N(x)}^{N(z)}\mathbb{E}\left[     w_{v(i)}\right] \leq -y\right) \leq A\exp\left(\frac{-y^2}{A(yn^{1/3}+x)}\right).\end{equation}
By equation \eqref{eq1456}:
\begin{equation}
\label{eq80}
3(l(m)+y) \leq 4m+3y.
\end{equation}
Let:
$$\mathcal{C}= \left\{ \inf_{0 \leq x \leq z \leq  4m+3y} \; X(z) - X(x) - \sum_{k=N(x)}^{N(z)}\mathbb{E}\left[     w_{v(i)}\right] \leq -y\right\},$$
and:
$$\mathcal{B} = \left\{X(4m+y) - \sum_{k=0}^{N(4m+3y)}\mathbb{E}\left[     w_{v(i)}\right] \leq -y/2\right\}.$$
Also, write 
$$(x^*,z^*) = \inf\left\{0 \leq x \leq z \leq 4m+3y : \; X(z) - X(x) - \sum_{k=N(x)}^{N(z)}\mathbb{E}\left[     w_{v(i)}\right] \leq -y \right\},$$
where the infimum is taken in lexicographical order. And, by convention, $\inf({\emptyset}) = (0,4m+3y)$. 
Let:
$$\mathcal{D} := \left\{X(x^*) - \sum_{k=1}^{N(x^*)}\mathbb{E}\left[     w_{v(k)}\right] \geq y/4\right\} \, \textnormal{or} \, \left\{X(4m+y) - X(z^*) - \sum_{k=N(z^*)}^{N(4m+3y)}\mathbb{E}\left[     w_{v(k)}\right] \geq y/4\right\}.$$ 
If $\mathcal{C}$ happens then  one of the events $\mathcal{B}$ or $\mathcal{D}$ happens.
By Lemma \ref{wut}:
\begin{equation}
\label{eq33}
\mathbb{P}(\mathcal{B}) \leq A\exp\left(\frac{-y^2}{A(yn^{1/3}+m)}\right).
\end{equation}
By Lemma \ref{eqfff} and union bound:
\begin{equation}
\begin{aligned}
\label{eq3554}
\mathbb{P}\left(X(x^*) - \sum_{k=1}^{N(x^*)}\mathbb{E}\left[     w_{v(k)}\right] \geq y/4\right)
\leq &\mathbb{P}\left(\sup_{t \leq 4m+3y}X(t) - \sum_{k=1}^{N(t)}\mathbb{E}\left[     w_{v(k)}\right] \geq y/4\right)\\
\leq &\mathbb{P}\left(\sup_{t \leq 4m+3y}X(t) - \mathbb{E}\left[     X(t)\right] \geq y/8\right) \\ +&\mathbb{P}\left(\sup_{t \leq 4m+3y}N(t) - \mathbb{E}\left[   N(t)\right] \geq \frac{y}{A}+1\right),
\end{aligned}
\end{equation}
where $ A >0$ is the positive constant that appears in Lemma \ref{eqfff}.
And by the same arguments, using Lemma \ref{eqfff1} gives:
\begin{equation}
\begin{aligned}
\label{eq34}
&\mathbb{P}\left(X(4m+3y) - X(z^*) - \sum_{k=N(z^*)}^{N(4m+3y)}\mathbb{E}\left[     w_{v(k)}\right] \geq y/4\right) \\
\leq &\mathbb{P}\left(\sup_{t \leq 4m+3y}X(4m+3y) - X(t) - \sum_{k=N(t)}^{N(4m+3y)}\mathbb{E}\left[     w_{v(k)}\right] \geq y/4\right)\\
\leq &\mathbb{P}\left(\sup_{t \leq 4m+3y}X(4m+3y) - X(t) - \mathbb{E}\left[     X(4m+3y) - X(t)\right] \geq y/8\right)\\ +&\mathbb{P}\left(\sup_{t \leq 4m+3y}N(4m+3y) - N(t) - \mathbb{E}\left[   N(4m+3y) - N(t)\right] \geq \frac{y}{A'}+1\right).
\end{aligned}
\end{equation}
The union bound using Inequality \eqref{eq3554} and \eqref{eq34} alongside Lemmas \ref{lema102},\ref{lema101}, \ref{lema100} and \ref{lema103} yields:
\begin{equation}
\begin{aligned}
\label{eq355}
\mathbb{P}(\mathcal{D})
\leq A''\exp\left(\frac{-y^2}{A''(yn^{1/3}+m)}\right)
\end{aligned}
\end{equation}
Hence, from Equations \eqref{eq33} and \eqref{eq355} we obtain:
\begin{equation}
\begin{aligned}
\label{second_half'}
\mathbb{P}(\mathcal{C}) &\leq \mathbb{P}(\mathcal{B}) + \mathbb{P}(\mathcal{D}) \\
&\leq  A'''\exp\left(\frac{-y^2}{A'''(yn^{1/3}+m)}\right).
\end{aligned}
\end{equation}
This proves Equation \eqref{eq357}. We can then bound Equation \eqref{toztoz} by using Equation \eqref{first_half} and Equation \eqref{second_half'} which finishes the proof.
\end{proof}

In the above theorems we started the sums from one for the sake of clarity. The following general theorem has a similar proof.

\begin{Theorem}
\label{concentration_final}
Suppose that Conditions \ref{cond_nodes} hold. There exists a constant $A >0$ such that, if $1 \leq l \leq m$ is such that $\left(\sqrt{m(m-l)},y\right)$ verify Conditions \ref{cond_2}, $m-l \rightarrow \infty$ and $y = O(m-l)$ then:
\begin{equation*}
\mathbb{P}\left[\sup_{l \leq i \leq j \leq m} \; \left | \sum_{k=i}^j w_{v(k)} - \mathbb{E}\left[\sum_{k=i}^j w_{v(k)}\right] \right | \geq y\right] \leq  A\exp\left(\frac{-y^2}{A(yn^{1/3}+(m-l))}\right).
\end{equation*}
\end{Theorem}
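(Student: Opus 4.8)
The plan is to reduce Theorem~\ref{concentration_final} to Theorem~\ref{concentration} by conditioning on the first $l-1$ explored vertices and applying Theorem~\ref{concentration} to the remaining ``sub-chain''. We may assume $l = o(n)$ (this holds in all our applications; otherwise the statement is proved exactly as Theorem~\ref{concentration}). Condition on $\mathcal{V}_{l-1}=\{v(1),\dots,v(l-1)\}$ and write $S_{l-1}=\sum_{k=1}^{l-1}w_{v(k)}$. Because the weights are labelled in decreasing order, Lemma~\ref{sum_weights} gives $\sum_{k\in\mathcal{V}_{l-1}}w_k^i\le\sum_{k=1}^{l-1}w_k^i=o(n)$ for $i\in\{1,2,3\}$ \emph{deterministically}, so in particular $\ell_n-S_{l-1}=\ell_n(1+o(1))$. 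Conditionally on $\mathcal{V}_{l-1}$, the sequence $(v(l),v(l+1),\dots)$ is a size-biased sample without replacement from the multiset $\{w_k:k\notin\mathcal{V}_{l-1}\}$ of total mass $\ell_n-S_{l-1}$, to which the whole machinery of Section~2 applies. The point of conditioning is that it makes the index $l$ an \emph{exact} starting point; a naive ``deterministic time-window'' argument fails because near time $\approx l$ the process $N$ behaves like a Poisson variable of mean $\approx l$, so for small $l$ one cannot pin down the discovery time of the $(l-1)$-st vertex with small error probability.

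The weights $\{w_k:k\notin\mathcal{V}_{l-1}\}$ with total mass $\ell_n-S_{l-1}=\ell_n(1+o(1))$ satisfy everything the proofs of Lemmas~\ref{bernstinou}--\ref{lema103} and Theorem~\ref{concentration} require: $\sum w_k^3\le An$, $\sum w_k^3/(\ell_n-S_{l-1})=C(1+o(1))$, $\max w_k=o(n^{1/3})$, and the conditional analogues of Lemmas~\ref{mean_weights} and~\ref{mean_pow_weights_2}, all of which hold because only $o(n)$ mass was removed. The constant factor $\gamma:=\sum_{k\notin\mathcal{V}_{l-1}}w_k^2/(\ell_n-S_{l-1})=1+o(1)$ that now multiplies the leading terms of both $\mathbb{E}[X(x)\mid\mathcal{V}_{l-1}]$ and $\sum\mathbb{E}[w_{v(\cdot)}\mid\mathcal{V}_{l-1}]$ appears consistently and cancels throughout, so the proof of Theorem~\ref{concentration} carries over verbatim with $m$ replaced by $m-l+1$. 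Since $m-l\le\sqrt{m(m-l)}$ and $(\sqrt{m(m-l)},y)$ verifies Conditions~\ref{cond_2}, so does $(m-l,y)$; hence there is a constant $A>0$ such that, for every realization of $\mathcal{V}_{l-1}$ (with $l=o(n)$),
\begin{equation*}
\mathbb{P}\!\left(\sup_{l\le i\le j\le m}\Bigl|\textstyle\sum_{k=i}^j w_{v(k)}-\mathbb{E}\bigl[\sum_{k=i}^j w_{v(k)}\mid\mathcal{V}_{l-1}\bigr]\Bigr|\ge\tfrac{y}{2}\;\Big|\;\mathcal{V}_{l-1}\right)\le A\exp\!\left(\frac{-y^2}{A(yn^{1/3}+(m-l))}\right).
\end{equation*}

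It remains to control the gap between the conditional and the unconditional mean, i.e. to show that outside an event of probability $\le A\exp(-y^2/(A(yn^{1/3}+(m-l))))$ one has $\sup_{l\le i\le j\le m}\bigl|\mathbb{E}[\sum_{k=i}^j w_{v(k)}\mid\mathcal{V}_{l-1}]-\mathbb{E}[\sum_{k=i}^j w_{v(k)}]\bigr|\le y/2$. Expanding the conditional expectation exactly as in the proof of Lemma~\ref{esperance} (for the sub-chain) and invoking Lemma~\ref{esperance} for the unconditional one, the difference equals, to leading order,
\begin{equation*}
\frac{j-i+1}{\ell_n}\Bigl[\bigl(S_{l-1}-\mathbb{E}[S_{l-1}]\bigr)-\bigl(\textstyle\sum_{k\in\mathcal{V}_{l-1}}w_k^2-\mathbb{E}[\sum_{k\in\mathcal{V}_{l-1}}w_k^2]\bigr)\Bigr]
\end{equation*}
plus purely deterministic remainders. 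Using $m=o(n)$ together with $(\sqrt{m(m-l)})^2=m(m-l)=O(y\ell_n)$ and $m-l=O(yn^{1/3})$ one checks these remainders are $O(y)$ — this is exactly where the choice $\sqrt{m(m-l)}$, rather than just $m-l$, in the hypothesis is used. For the random part it suffices to bound $|S_{l-1}-\mathbb{E}[S_{l-1}]|$ and $|\sum_{k\in\mathcal{V}_{l-1}}w_k^2-\mathbb{E}[\cdot]|$ at deviation $\asymp y\ell_n/(m-l)$: the first comes from Bernstein's inequality (Lemma~\ref{bernstinou}) applied to $X(x)=\sum_k w_k\mathbbm{1}(T_k\le x)$ at time $\asymp l$, and the second from the same randomization trick applied to $X^{(2)}(x)=\sum_k w_k^2\mathbbm{1}(T_k\le x)$, whose Bernstein variance proxy is $\lesssim l\,n^{2/3}$ since $\sum_k w_k^5\le(\max_k w_k)^2\sum_k w_k^3\le An^{5/3}$. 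A direct comparison of exponents (using $(m-l)l\le n^2$ and $m\le n$) shows both deviation probabilities are dominated by the target bound, and combining with the previous paragraph via the tower property finishes the proof.

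The conceptual step is the conditioning; the actual \emph{work} is the expectation-matching above, which amounts to bookkeeping how deleting the first $l-1$ — possibly atypically heavy — weights degrades the $o(n^{2/3})$ and $o(1)$ errors in Conditions~\ref{cond_nodes} down to $o(n)$, and verifying that the slack afforded by the hypothesis is enough. The cleanest way to organize this is to prove once a ``conditional Lemma~\ref{esperance}'' with explicit dependence on $S_{l-1}$ and $\sum_{k\in\mathcal{V}_{l-1}}w_k^2$, so that the only genuinely new estimate needed beyond Section~2 is the Bernstein bound for the squared-weight process $X^{(2)}$.
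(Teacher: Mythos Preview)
Your conditioning route is genuinely different from what the paper intends by ``similar proof''. The paper's implicit argument is a direct adaptation of Theorem~\ref{concentration}: keep the \emph{same} exponential clocks $(T_k)$, work on a time window $[x_0,x_1]$ with $\mathbb{E}[N(x_0)]\approx l$ and $\mathbb{E}[N(x_1)]\approx m$, and bound $X(z)-X(x)-\sum_{k=N(x)+1}^{N(z)}\mathbb{E}[w_{v(k)}]$ via shifted versions of Lemmas~\ref{eqfff}--\ref{lema103}. Because the centering $\mathbb{E}[w_{v(k)}]$ is the \emph{unconditional} mean throughout, the only new term compared with Theorem~\ref{concentration} is the quadratic correction $\tfrac{(N(z)^2-N(x)^2)(1-C)}{2\ell_n}$ coming from Lemma~\ref{esperance}, bounded by $\tfrac{m(m-l)|1-C|}{\ell_n}=O(y)$ --- and this is exactly where the hypothesis $(\sqrt{m(m-l)})^2=O(y\ell_n)$ from Conditions~\ref{cond_2} enters.

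Your first step (a conditional Theorem~\ref{concentration} on the sub-chain) is fine; the gap is in the mean-matching step. The leading random contribution to $\tilde\mu_k-\mu_k$ is $\tilde\gamma-1\approx(S_{l-1}-\sum_{k\in\mathcal{V}_{l-1}}w_k^2)/\ell_n$, so you need $\bigl|\sum_{k\in\mathcal{V}_{l-1}}w_k^2-\mathbb{E}[\cdot]\bigr|\lesssim y\ell_n/(m-l)$ with the target probability. But the Bernstein bound for $X^{(2)}$ has max-jump $\max_k w_k^2=o(n^{2/3})$, not $o(n^{1/3})$, giving exponent $\asymp t^2/\bigl(n^{2/3}(t+l)\bigr)$ at deviation $t$. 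Take $l=n^{1/2}$, $m-l=n^{7/10}$, $y=n^{1/2}$: then $(\sqrt{m(m-l)},y)\approx(n^{7/10},n^{1/2})$ satisfies all hypotheses of the theorem, the target exponent is $y^2/(yn^{1/3}+(m-l))\asymp n^{0.17}$, while $t\asymp y\ell_n/(m-l)\asymp n^{0.8}$ yields $X^{(2)}$-exponent only $n^{1.6}/n^{1.47}\asymp n^{0.13}$. No constant $A$ absorbs this discrepancy, and treating $\sum_{k\in\mathcal{V}_{l-1}} w_k(1-w_k)$ jointly does not help since its max-jump is again of order $n^{2/3}$. The conditioning introduces the factor $\tilde\gamma-1$ that the direct approach simply never sees; to salvage your route you would need a sharper bound on the squared-weight fluctuation than Bernstein provides, and none is available with the tools of Section~2.
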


\section{Bounds on the exploration process}
In this section we prove concentration inequalities for the exploration process and related processes. These various inequalities will be used in the following sections. Recall that $f = o(n)$ is the critical parameter and  $p_f = \frac{1}{\ell_n}+\frac{f}{\ell_n^{4/3}}$. In the rest of this section we consider the BFW of $G(\textbf{W},p_f)$.\par

For $0\leq i \leq n$ and $0\leq j\leq n$ define: 
$$Y(i,j) = \mathbbm{1}(\textnormal{There is an edge between nodes} \; i \;  \textnormal{and} \; j).$$
Then by definition of the BFW we have:
\begin{equation}
\begin{aligned}
&L_0=1,\\
&X_{i+1}= \sum_{j \notin \mathcal{V}(i+L_i)} Y(v(i+1),j) -1, \\
&L_{i+1}= \max(L_i+X_{i+1},1).
\end{aligned}
\end{equation}
Recall also that:
\begin{equation}
\begin{aligned}
&L'_0=1,\\
&L'_{i+1}= L'_i+X_{i+1}.
\end{aligned}
\end{equation}
When seen as processes of $i$, $L'$ is equal to $L$ until we finish discovering the first connected component. After that $L'=L-1$ until the second connected component is discovered, then $L'=L-2$ and so on. Generally $L'$ is equal to $L$ minus the number of connected components fully discovered. We say that the process $L$ visits $0$ in 
$i$ if $L'_i=\min_{j \leq i}L'_j$. \par
One of the difficulties in studying this process lies in the fact that $X_{i+1}$ depends on $L_i$. In the case of simple 
Erdős–Rényi random graphs, \cite{LNB09} use a different exploration process where the children of a node being explored are taken uniformly. This allows them to use a simpler and close enough process in order to circumvent this problem. If we want to do like them, in our case the naive way to define such a process would be as follows, for $h \geq 0$:
\begin{equation*}
\begin{aligned}
&L^h_0=1,\\
&X^h_{i+1}= \sum_{j \notin \mathcal{V}(i+1+h)} Y(v(i+1),j)-1, \\
&L^h_{i+1}= L^h_i+X^h_{i+1}.
\end{aligned}
\end{equation*}

\begin{figure}[!htbp]
\centering
\includegraphics[width=0.9\textwidth]{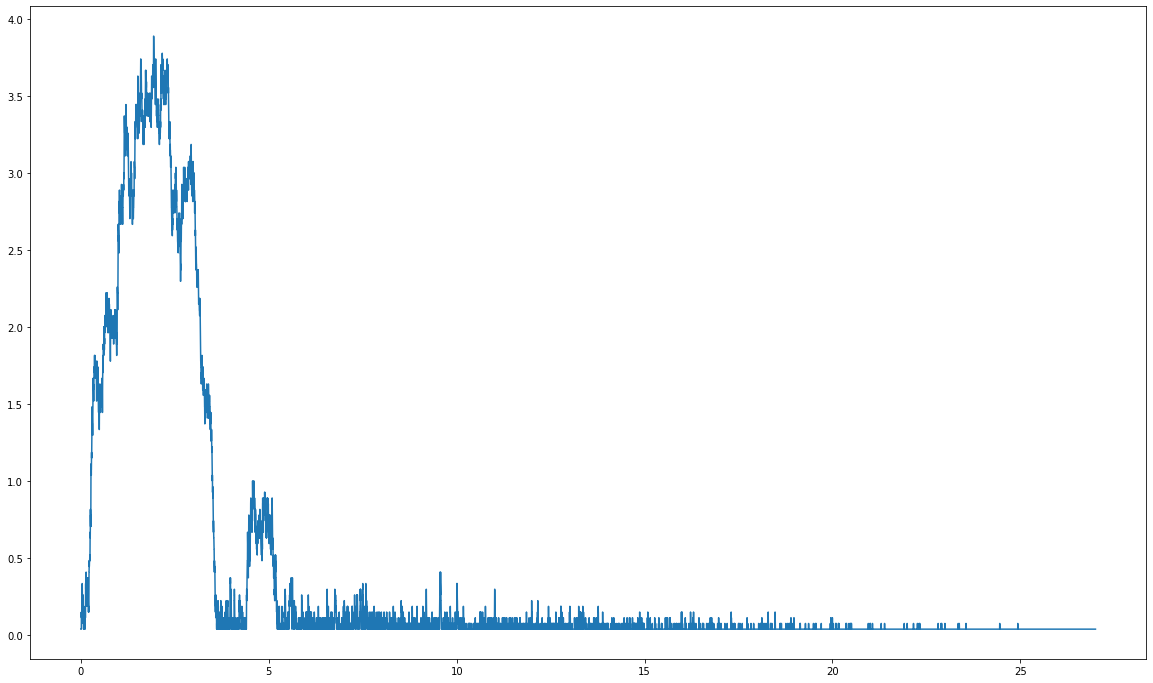}
\caption{The reflected exploration process of the graph in Figure \ref{figg1} with time rescaled by $20000^{2/3}$ and space is rescaled by $20000^{1/3}$.}
\end{figure}
In that case $L^0$ is always above $L'$ and in general  $L^h_i \leq L'_i$ as long as $L_i \leq h+1$. $L^0$ is used to bound $L'$ (and thus $L$) from above while $L^h$ for $h$ large enough would be used to bound it from below. 
However, in our case we sort the discovered children of a node by the weights of their edges. Hence, it is very likely that the indicator functions present in $L'_i$ but not in $L^h_i$ for $h > L_i$ will be equal to $1$ and hence $L^h_i$ would be too far away from $L'_i$. This is why we will use a martingale technique that we present now. \par
Note that for $i \geq 1$, $L_i$ is $\sigma(X_1,X_2,...,X_i)$ measurable. Let $(\mathcal{F}_i)_{i \geq 1}$ be the increasing sequence of $\sigma$-fields such that $\mathcal{F}_{i}$ is the $\sigma$-field generated by $\mathcal{V}(i+L_i)$ and the $(X_k)_{k \leq i}$'s, with the convention that $\mathcal{V}(k)=\mathcal{V}$ when $k \geq n$. Then for any $i\geq 1$, $X_i$ is measurable with respect to $\mathcal{F}_i$ and moreover we have:
$$
\mathbb{E}[X_i | \mathcal{F}_{i-1}] + 1 = \sum_{k > i + L_{i-1}-1}(1-e^{-w_{v(i)}w_{v(k)}p_f}).
$$
And we have the following fact
\begin{figure}[!htbp]
\centering
\includegraphics[width=0.9\textwidth, height=0.4
\textheight]{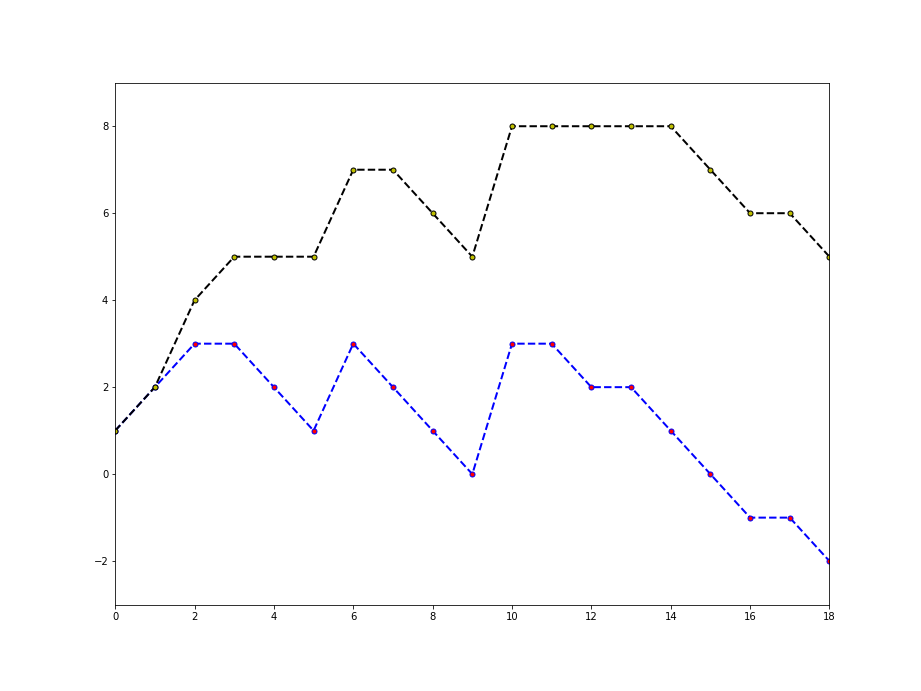}
\caption{In red with blue dashes, the exploration process of the graph in Figure \ref{fig1}. In yellow with black dashes, the process $L^0$ for the same graph. $L^0$ is always above $L'$.}
\end{figure}
\begin{Fact}
\label{fact1}
  Let 
  $$
  \tilde{L}_i= \sum_{k=0}^i\mathbb{E}[X_k | \mathcal{F}_{k-1}],
  $$
  with the convention that $X_0=1$. Then for any $l \geq 0$, the process $(L'(i)-L'(l)-(\tilde{L}_i-\tilde{L}_l))_{i \geq l}$ is a martingale with respect to $(\mathcal{F}_i)_{i \geq l}$. 
\end{Fact}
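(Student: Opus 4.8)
This is a Doob-type martingale-difference decomposition, so the plan is to rewrite the process as a sum of martingale increments and then verify the three standard requirements: adaptedness, integrability, and the vanishing of the conditional increments. First I would iterate the recursion $L'_{k}=L'_{k-1}+X_k$ together with $L'_0=1$ and the convention $X_0=1$ to obtain $L'_i=\sum_{k=0}^i X_k$ for every $i\ge 0$; then for $i\ge l$ one has
\[
L'_i-L'_l-(\tilde L_i-\tilde L_l)=\sum_{k=l+1}^{i}\bigl(X_k-\mathbb{E}[X_k\mid\mathcal{F}_{k-1}]\bigr)=:\sum_{k=l+1}^{i} D_k,
\]
so it suffices to show that $(D_k)_{k\ge l+1}$ is a martingale-difference sequence for $(\mathcal{F}_k)_{k\ge l}$.

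Next I would check adaptedness. From the discussion preceding the Fact, $X_k$ is $\mathcal{F}_k$-measurable — here $\mathcal{F}_k$ is generated by $\mathcal{V}(k+L_k)$ and $X_1,\dots,X_k$, with the convention $\mathcal{V}(m)=\mathcal{V}$ for $m\ge n$, which is exactly what makes $X_k$, and hence $\tilde L_k$, well defined for all $k$ — while $\mathbb{E}[X_k\mid\mathcal{F}_{k-1}]$ is $\mathcal{F}_{k-1}$-measurable, hence $\mathcal{F}_k$-measurable. Thus $D_k$ is $\mathcal{F}_k$-measurable and each partial sum $\sum_{k=l+1}^i D_k$ is $\mathcal{F}_i$-measurable. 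For integrability I would use the deterministic bounds $-1\le X_k\le n$, so that $X_k,\,D_k\in L^1$ and all the conditional expectations involved are well defined.

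Finally, for the martingale-difference property I would use linearity of conditional expectation together with the $\mathcal{F}_{k-1}$-measurability of $\mathbb{E}[X_k\mid\mathcal{F}_{k-1}]$ to get $\mathbb{E}[D_k\mid\mathcal{F}_{k-1}]=\mathbb{E}[X_k\mid\mathcal{F}_{k-1}]-\mathbb{E}[X_k\mid\mathcal{F}_{k-1}]=0$; summing, $\mathbb{E}\bigl[\sum_{k=l+1}^{i+1}D_k\,\big|\,\mathcal{F}_i\bigr]=\sum_{k=l+1}^{i}D_k$, which is precisely the claimed martingale property of $(L'_i-L'_l-(\tilde L_i-\tilde L_l))_{i\ge l}$.

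There is no real obstacle here: the only genuinely model-specific input is the $\mathcal{F}_k$-measurability of the increment $X_k$, which has already been recorded just before the Fact, and the remainder is bookkeeping about the conventions $X_0=1$, $L'_0=1$ and $\mathcal{V}(m)=\mathcal{V}$ for $m\ge n$. Equivalently, one may phrase the argument as: $\tilde L_i-\tilde L_l$ is $\mathcal{F}_{i-1}$-measurable (predictable) and is the compensator of the adapted, integrable process $(L'_i-L'_l)_{i\ge l}$, so their difference is a martingale by uniqueness of the Doob decomposition.
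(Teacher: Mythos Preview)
Your proposal is correct and is exactly the standard Doob decomposition argument one would expect; the paper itself states this as a ``Fact'' without supplying any proof, so your write-up simply fills in the routine verification (adaptedness, integrability via the deterministic bound on $X_k$, and $\mathbb{E}[D_k\mid\mathcal{F}_{k-1}]=0$) that the authors took for granted.
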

This fact allows us to use Bernstein's inequality for martingales (\cite{F75}). Then in order to bound $L'_i$ from below, we will use the fact that $(L'_i-\tilde{L}_i)_{i \geq 1}$ is a martingale, and for $i \geq 1$ as long as $L_i \leq h$ we have:
$$
\mathbb{E}[X_i | \mathcal{F}_{i-1}]+1 \geq \sum_{k > i + h}(1-e^{-w_{v(i)}w_{v(k)}p_f}).
$$
This is why we define the following process, for $i \geq 1$ and $h \geq 0$:
$$
\tilde{L}^h_m+m-1=\sum_{i=1}^m\sum_{k >i + h}(1-e^{-w_{v(i)}w_{v(k)}p_f}),
$$
then $\tilde{L}^h_m$ will be close to, and greater than $\tilde{L}_m$ as long as $h \geq L_i$ and $h$ is not too large.
A second important fact is that while constructing the exploration process, we never inspect the potential surplus edges, namely the $Y(v(i),v(j))$'s where $i \geq 1$ and $i+1 \leq j \leq i + L_i-1$. This means that:
\begin{Fact}
\label{fact2}
  Conditionally on $\mathcal{F}_n$, the $\sigma$-field generated by $\mathcal{V}$ and the $(X_k)_{k \leq n}$'s, the random variables 
  $$Y(v(i),v(j))_{1 \leq i, \, \leq j \leq i + L_i-1},$$ 
  are independent Bernoulli random variables of parameters 
  $$
  \left(1-e^{-w_{v(i)}w_{v(j)}p_f}\right)_{1 \leq i, \, i+1 \leq j \leq i + L_i-1}.
  $$
\end{Fact}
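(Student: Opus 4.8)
The plan is to exploit the capacity construction of Section~1.1, in which $G(\textbf{W},p_f)$ is read off from the independent family $(E_{\{a,b\}})$ of exponential capacities (with $E_{\{a,b\}}$ of rate $w_aw_b$) together with the auxiliary randomness $R$ the BFW uses to select successive roots. The BFW, viewed as a deterministic query algorithm run on the input $\bigl((E_{\{a,b\}}),R\bigr)$, never inspects the capacities of the surplus edges, and $\mathcal{F}_n$ is generated by the capacities it does inspect (plus $R$); since each surplus indicator $Y(v(i),v(j))$ is a measurable function of an untouched capacity, it keeps its law conditionally on $\mathcal{F}_n$.

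First I would identify the inspected capacities. When $v(i)$ is explored (at step $i+1$), the BFW reveals $E_{\{v(i),k\}}$ for exactly those $k$ that are still undiscovered at that instant, and from these it reads off the children of $v(i)$ together with their order, hence $X_{i+1}$ and the updated sets $\mathcal{U},\mathcal{D},\mathcal{F}$; closing a component and choosing the next root uses only $R$. Iterating, $\mathcal{F}_n=\sigma\bigl((v(1),\dots,v(n)),(X_1,\dots,X_n)\bigr)$ is generated by $R$ together with a \emph{random} sub-family $(E_e)_{e\in\mathcal I}$ of inspected edges. Since $\mathcal{F}_n$ is generated by a discrete object (a permutation of the labels together with integer increments), it suffices to argue on each atom: on the atom where the exploration output equals a fixed pair $(\sigma,x)$ the inspected set $\mathcal I$ is a deterministic set $\mathcal I(\sigma,x)$, and by the standard query-exposure argument this atom is $\sigma\bigl((E_e)_{e\in\mathcal I(\sigma,x)},R\bigr)$-measurable: if two inputs agree on $R$ and on the capacities indexed by $\mathcal I(\sigma,x)$ and one of them yields output $(\sigma,x)$, the algorithm makes the same queries and gets the same answers on the other, hence yields the same output.

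Second I would check that no surplus edge is ever inspected, whatever the outcome. Fix $i<j$ with $j\le i+L_i-1$, so that $v(j)$ is already discovered (and, since $j>i$, not yet explored) when $v(i)$ is explored; then $\{v(i),v(j)\}$ is not among the pairs inspected at step $i+1$. For $\{v(i),v(j)\}$ to be inspected at some other step $i'$ one needs $v(i')\in\{v(i),v(j)\}$, i.e.\ $i'\in\{i,j\}$; the case $i'=i$ is the one just handled, and in the case $i'=j$ the vertex $v(i)$ has already been explored (as $i<j$), hence is discovered, so $E_{\{v(j),v(i)\}}$ is not inspected at step $j+1$ either. Thus $\{v(i),v(j)\}\notin\mathcal I$ for every surplus pair, regardless of the exploration.

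Combining the two points: on the atom where the output equals $(\sigma,x)$ — a $\sigma\bigl((E_e)_{e\in\mathcal I(\sigma,x)},R\bigr)$-measurable event on which the surplus pairs are now deterministic — independence of the $E_e$'s and of $R$ shows that the family of capacities indexed by the surplus pairs is independent of that event; hence conditionally these capacities are still independent and exponential with the original rates $w_{v(i)}w_{v(j)}$, and applying $x\mapsto\mathbbm{1}(x\le p_f)$ gives that the $Y(v(i),v(j))$ with $i+1\le j\le i+L_i-1$ are conditionally independent Bernoulli with parameters $1-e^{-w_{v(i)}w_{v(j)}p_f}$. Since this holds on every atom of $\mathcal{F}_n$, it holds conditionally on $\mathcal{F}_n$. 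The only step requiring real care — everything else being bookkeeping about who is discovered when — is the query-exposure claim that the exploration output depends on the capacities only through those indexed by $\mathcal I(\sigma,x)$; once that is granted the conclusion is immediate.
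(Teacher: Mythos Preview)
Your argument is correct and is precisely the approach the paper has in mind: the paper's justification is the single sentence preceding the Fact, namely that the BFW never inspects the potential surplus edges, and you have simply made this rigorous via the standard query-exposure/deferred-decisions argument on the capacity construction. There is nothing to add; your identification of which capacities are inspected at each step, the verification that $\{v(i),v(j)\}$ with $i+1\le j\le i+L_i-1$ is never among them, and the atom-by-atom conditional independence are exactly what the one-line remark in the paper is abbreviating.
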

Moreover, for $h \geq 0$ and $ i \geq 0$ define 
$$\bar{L}'(k) = \sum_{i=0}^k X_{i}\mathbbm{1}(X_{i} \leq 2n^{1/3}),$$
 and if we write $d(i)$ for the degree of node $i$, then $d(i)$ is a sum of independent Bernoulli variables. Hence, when Conditions \ref{cond_nodes} hold, by the classical Bernstein inequality (\cite{B24}) we have: 
$$
\mathbb{P}(d(i) \geq w_i + n^{1/3}) \leq \exp\left(\frac{-(n^{1/3})^2}{2(n^{1/3}+w_i)}\right). 
$$
By using Conditions \ref{cond_nodes} we have for $n$ large enough:
\begin{equation}
\begin{aligned}
\mathbb{P}(\exists (k,h), \, \bar{L}'(k) \neq L'(k)) &\leq \sum_{i=1}^{n} \mathbb{P}(d(i) \geq w_i + n^{1/3}) \\
&\leq  \sum_{i=1}^{n} \exp\left(\frac{-(n^{1/3})^2}{2(n^{1/3}+w_i)}\right) \\
&\leq A\exp\left(\frac{-n^{1/3}}{A}\right),
\end{aligned}
\end{equation}
where $A$ is some large constant, this probability is smaller than the ones we will get in this section and the one following it. It is also clear that Fact \ref{fact1} also holds if we replace  $ L'(k)$ by $\bar{L}'(k)$ and $X_{i}$ by $ X_{i}\mathbbm{1}(X_{i} \leq 2n^{1/3})$. Hence, we will assume that the increments of $L'$ are smaller than $2n^{1/3}$. And we will assume the same of $L^0$. This will make computations lighter, as Bernstein's inequality requires a bound on the maximal increment of the process. We will not have to do a union bound in each calculation and consider the case where $\bar{L}'(k) \neq L'(k)$. This convention will be used up to Section $5$, after that we will use the fact that the increments of $L'(k)$ are even smaller when $k$ is large enough.

A direct corollary of Lemma \ref{esperance} is the following:
\begin{Corollary}
\label{S_h}
For all $m \geq l \geq 1$ such that $m = o(n)$, and $h=o(n)$:
$$
\mathbb{E}(\tilde{L}^h_m-\tilde{L}^h_{l-1}) = (m-l)\left(f\ell_n^{-1/3}-\frac{C(m+l)+2h}{2\ell_n}\right)+1+o\left(\frac{m^2-l^2+(m-l)(h+n^{2/3})}{n}\right).
$$
\end{Corollary}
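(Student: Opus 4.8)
The plan is to compute $\mathbb{E}[\tilde{L}^h_m-\tilde{L}^h_{l-1}]$ by expanding the definition and replacing each term $1-e^{-w_{v(i)}w_{v(k)}p_f}$ by its first-order approximation $w_{v(i)}w_{v(k)}p_f$, with error controlled by \eqref{expx} together with the moment bounds in Conditions \ref{cond_nodes} and Lemma \ref{sum_weights}. Concretely, I would first write
$$
\mathbb{E}\bigl[\tilde{L}^h_m-\tilde{L}^h_{l-1}\bigr]+ (m-l) = \sum_{i=l}^{m}\ \mathbb{E}\Bigl[\sum_{k>i+h}\bigl(1-e^{-w_{v(i)}w_{v(k)}p_f}\bigr)\Bigr],
$$
and then, using $x-\tfrac{x^2}{2}\le 1-e^{-x}\le x$, bound the quadratic correction $\sum_k \tfrac12 (w_{v(i)}w_{v(k)}p_f)^2$. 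Since $p_f = \ell_n^{-1}+f\ell_n^{-4/3}$ and $\sum_k w_k^3 = O(n)$, $\max_i w_i = o(n^{1/3})$, and $m,h=o(n)$, the total quadratic error over the double sum is $o((m-l)(m+h)/n \cdot \text{small})$, which I would check fits inside the claimed $o\bigl(\tfrac{m^2-l^2+(m-l)(h+n^{2/3})}{n}\bigr)$ remainder.

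For the main (linear) term, I would replace $\sum_{k>i+h}w_{v(k)}$ by $\ell_n - \sum_{k\le i+h}w_{v(k)}$ and take expectations. By Lemma \ref{esperance}, $\mathbb{E}[w_{v(i)}]=1+\tfrac{i}{\ell_n}(1-C)+o\bigl(\tfrac{i+n^{2/3}}{n}\bigr)$, so
$$
\mathbb{E}\Bigl[w_{v(i)}\sum_{k>i+h}w_{v(k)}\Bigr]\;p_f
= p_f\,\ell_n\,\mathbb{E}[w_{v(i)}] - p_f\,\mathbb{E}\Bigl[w_{v(i)}\sum_{k=1}^{i+h}w_{v(k)}\Bigr].
$$
The first piece contributes $p_f\ell_n\mathbb{E}[w_{v(i)}] = (\ell_n^{1/3}+f\ell_n^{-1/3})\cdot\ell_n^{-1/3}\cdot(1+\tfrac{i}{\ell_n}(1-C)+\cdots)$; expanding and summing $i$ from $l$ to $m$ produces the $(m-l)f\ell_n^{-1/3}$ term and, from the $\tfrac{i}{\ell_n}(1-C)$ factor times $p_f\ell_n^{2/3}\approx 1$, a term $\sum_{i=l}^m \tfrac{i}{\ell_n}(1-C) = \tfrac{(m-l)(m+l)}{2\ell_n}(1-C)+\cdots$, i.e. the $-\tfrac{(m-l)C(m+l)}{2\ell_n}$ contribution plus a $+\tfrac{(m-l)(m+l)}{2\ell_n}$ that must be tracked. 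The second piece, $p_f\mathbb{E}\bigl[w_{v(i)}\sum_{k=1}^{i+h}w_{v(k)}\bigr]$, is where the $-\tfrac{(m-l)\cdot 2h}{2\ell_n} = -\tfrac{(m-l)h}{\ell_n}$ term comes from: by Lemma \ref{mean_corr}, $\mathbb{E}[w_{v(i)}w_{v(k)}]=1+o(1)$ for $k<i$, so this sum is $(i+h)(1+o(1))$, and $p_f(i+h)\approx \ell_n^{-2/3}(i+h)$; summing over $i$ gives $\tfrac{1}{\ell_n}\sum_{i=l}^m(i+h) = \tfrac{(m-l)(m+l)}{2\ell_n}+\tfrac{(m-l)h}{\ell_n}+\cdots$, and the $\tfrac{(m-l)(m+l)}{2\ell_n}$ here cancels the leftover from the first piece, leaving exactly $-\tfrac{(m-l)(C(m+l)+2h)}{2\ell_n}$. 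The $+1$ in the statement is the leftover constant from the $i=0$/boundary convention $X_0=1$ tied to how $\tilde L^h$ is normalized; I would account for it by carefully writing $\tilde{L}^h_m + m - 1 = \sum_{i=1}^m(\cdots)$ and reading off the constant.

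The main obstacle I expect is bookkeeping the error terms uniformly: the remainder must be $o\bigl(\tfrac{m^2-l^2+(m-l)(h+n^{2/3})}{n}\bigr)$, so I need the $o(1)$'s in Lemmas \ref{esperance} and \ref{mean_corr} to aggregate correctly when multiplied by $p_f(i+h)$ and summed over $i\in[l,m]$ — in particular the $o\bigl(\tfrac{i+n^{2/3}}{n}\bigr)$ error in Lemma \ref{esperance}, after multiplication by $p_f\ell_n=\ell_n^{1/3}+o(\ell_n^{1/3})$ and summation, yields $o\bigl(\tfrac{(m-l)(m+n^{2/3})}{n}\bigr)$, which is absorbed. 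The delicate point is that Lemma \ref{mean_corr} is stated for fixed $i<l$ with an $o(1)$ that is not uniform in the pair; I would either invoke that the sums $\sum_{k=1}^{i+h}\mathbb{E}[w_{v(i)}w_{v(k)}]$ are controlled by re-running the computation in the proof of Lemma \ref{mean_corr} with explicit $o$-control (using $i+h=o(n)$), or bound the sum directly via $\mathbb{E}[w_{v(i)}\sum_{k\le i+h}w_{v(k)}]\le \mathbb{E}[w_{v(i)}(\sum_{k=1}^{i+h}w_k)]$ and Lemma \ref{sum_weights}. Everything else is routine Taylor expansion and summation of arithmetic progressions.
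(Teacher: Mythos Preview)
Your proposal is correct and follows essentially the same route as the paper: compute $\mathbb{E}[\tilde{X}^h_i]$ termwise by replacing $1-e^{-x}$ with $x$ (error controlled by \eqref{expx}), split $\sum_{k>i+h}w_{v(k)}=\ell_n-\sum_{k\le i+h}w_{v(k)}$, apply Lemma~\ref{esperance} to the first piece and Lemma~\ref{mean_corr} to the second, then sum the resulting per-step expression over $i\in[l,m]$. The paper arrives at exactly
\[
\mathbb{E}[\tilde{X}^h_i]=-1+\Bigl(1+\tfrac{i(1-C)}{\ell_n}+o\bigl(\tfrac{i+n^{2/3}}{n}\bigr)\Bigr)\Bigl(1+\tfrac{f}{\ell_n^{1/3}}\Bigr)(1+o(1))-\tfrac{i+h}{\ell_n}(1+o(1)),
\]
and then simply writes ``summing over $i$ ends the proof'', so your explicit tracking of the cancellation between the $(1-C)$ and the $(i+h)/\ell_n$ pieces is more detailed than what the paper records. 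A couple of arithmetic slips to watch when you write it up: $p_f\ell_n=1+f\ell_n^{-1/3}$ (not $p_f\ell_n^{2/3}\approx 1$), and $p_f(i+h)\approx (i+h)/\ell_n$ (not $\ell_n^{-2/3}(i+h)$); you use the correct expressions elsewhere, so these are just local typos.
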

\begin{proof}
For any $ l-1 \leq i \leq m$, let:
\begin{equation*}
\tilde{X}^h_{i+1}= \sum_{j \notin \mathcal{V}(i+1+h)}(1-e^{-w_{v(i+1)}w_{j}p_f})-1,
\end{equation*}
then:
\begin{equation*}
\tilde{L}^h_{i+1}= \tilde{L}^h_i+\tilde{X}^h_{i+1},
\end{equation*}
and:
\begin{equation}
\label{eq110}
\mathbb{E}[\tilde{X}^h_i]+1=\mathbb{E}\left[\sum_{j\geq i+1+h}1-\exp\left(-w_{v(i)}w_{v(j)}p_f\right)\right].
\end{equation}
By Conditions \ref{cond_nodes}, $w_{v(i)}w_{v(j)}p_f = o(1)$ deterministically for any $(i,j)$. The bounds giving $O$ and $o$ in the following expectations can thus be chosen to be deterministic. By Equation \eqref{expx} we have:
\begin{equation}
\begin{aligned}
\label{eq111}
\mathbb{E}[\tilde{X}^h_i]+1&= \mathbb{E}\left[\sum_{j\geq i+1+h}w_{v(i)}w_{v(j)}p_f(1+O(w_{v(i)}w_{v(j)}p_f))\right] \\
&= \mathbb{E}\left[w_{v(i)}\left(1+f\ell_n^{-1/3}+O\left(\sum_{j=1}^{n}w_{v(i)}w_{v(j)}^2p_f^2\right)\right)-\sum_{j< i+1+h}w_{v(i)}w_{v(j)}p_f(1+o(1))\right] \\
&= \mathbb{E}\left[w_{v(i)}\left(1+f\ell_n^{-1/3}+o\left(n^{-2/3}\right)\right)-\sum_{j< i+1+h}w_{v(i)}w_{v(j)}p_f(1+o(1))\right].
\end{aligned}
\end{equation}
We use Lemmas \ref{mean_pow_weights_2} and \ref{esperance} to do the proper replacements in Equation \eqref{eq111}:
\begin{equation*}
\begin{aligned}
\mathbb{E}[\tilde{X}^h_i]= &-1+\left(1+\frac{i(1-C)}{\ell_n}+o\left(\frac{i+n^{2/3}}{n}\right)\right)\left(1+\frac{f}{\ell_n^{1/3}}\right)(1+o(1))\\
&-\mathbb{E}\left[\sum_{j< i+1+h}w_{v(i)}w_{v(j)}p_f(1+o(1))\right].
\end{aligned}
\end{equation*}
Finally, Lemma \ref{mean_corr} yields:
\begin{equation*}
\begin{aligned}
\mathbb{E}[\tilde{X}^h_i]&= -1+\left(1+\frac{i(1-C)}{\ell_n}+o\left(\frac{i+n^{2/3}}{n}\right)\right)\left(1+\frac{f}{\ell_n^{1/3}}\right)(1+o(1))-\frac{i+h}{\ell_n}(1+o(1)).
\end{aligned}
\end{equation*}
Summing over $i$ ends the proof.
\end{proof}
We will first show concentration results for $\tilde{L}^h$ before moving to $L$. We start by stating a set of conditions that will ensure the theorems holds.
\begin{Conditions}
\label{cond_3}
We say that $(a(n),b(n),c(n),d(n))$ verifies Conditions \ref{cond_3} if:
$$a(n)+c(n)=o(n),$$ 
and:
$$\lim_{n}(a(n)-b(n)) = +\infty,$$
and
$$d(n) = O(a(n)-b(n)),
$$
and $$\left(\sqrt{(a(n)-b(n))(a(n)+c(n))},d(n)\right)$$ verify Conditions \ref{cond_2}.
\end{Conditions}
We start with the following technical lemma. Concentration follows here from the concentration of the ordered weights proved in the previous section.
\begin{Lemma}
\label{inf_s_0}
Suppose that Conditions \ref{cond_nodes} hold. There exists a constant $A >0$ such that, if $(m,l,h,y)$ verifies Conditions \ref{cond_3}, then the following is true:
$$\mathbb{P}\left(\sup_{l \leq i \leq j \leq m}\left(\middle|\tilde{L}^h_j-\tilde{L}^h_i -\mathbb{E}\left[\tilde{L}^h_j-\tilde{L}^h_i\right]\middle|\right) \geq y\right) \leq A\exp\left(\frac{-y^2}{A(yn^{1/3}+m-l)}\right),$$
\end{Lemma}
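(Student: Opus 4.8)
The plan is to reduce the statement to the concentration of partial sums of size-biased weights (Theorem~\ref{concentration_final}), plus an auxiliary application of Theorem~\ref{concentration} to discard several lower-order terms whose means are identified through Lemmas~\ref{mean_weights}, \ref{mean_pow_weights_2}, \ref{mean_corr} and \ref{esperance}. Throughout write $S_j:=\sum_{k=1}^j w_{v(k)}$. As in the proof of Corollary~\ref{S_h}, $\tilde L^h_j-\tilde L^h_i=\sum_{k=i+1}^j\tilde X^h_k$ with $\tilde X^h_k+1=\sum_{l>k+h}(1-e^{-w_{v(k)}w_{v(l)}p_f})$. Since $w_{v(k)}w_{v(l)}p_f=O(n^{-1/3})$ deterministically, \eqref{expx} bounds the error of replacing $1-e^{-x}$ by $x$ over all pairs by $\tfrac{p_f^2}{2}\bigl(\sum_{k\le m}w_k^2\bigr)\bigl(\sum_{l\le n}w_l^2\bigr)=O(p_f^2 n\cdot o(n))=o(1)$ uniformly in $i,j,h$ (using Lemma~\ref{sum_weights}), which is negligible for both the quantity and its mean. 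Hence it suffices to control $\sup_{l\le i\le j\le m}|Z_{i,j}-\mathbb E Z_{i,j}|$ where $Z_{i,j}:=p_f\sum_{k=i+1}^j w_{v(k)}(\ell_n-S_{k+h})$.

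\emph{Decomposition and leading term.} Using $\ell_n-S_{k+h}=(\ell_n-S_i)-(S_k-S_i)-(S_{k+h}-S_k)$ and $\sum_{k=i+1}^j w_{v(k)}(S_k-S_i)=\tfrac12[(S_j-S_i)^2+\sum_{k=i+1}^j w_{v(k)}^2]$, one gets
\[
Z_{i,j}=p_f\ell_n(S_j-S_i)-p_fS_i(S_j-S_i)-\tfrac{p_f}{2}(S_j-S_i)^2-\tfrac{p_f}{2}\sum_{k=i+1}^j w_{v(k)}^2-p_f\sum_{k=i+1}^j w_{v(k)}(S_{k+h}-S_k).
\]
The leading term equals $(1+f\ell_n^{-1/3})(S_j-S_i)=(1+o(1))(S_j-S_i)$. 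From Conditions~\ref{cond_3} (with $h\ge 0$) one checks that $(\sqrt{m(m-l)},y)$ verifies Conditions~\ref{cond_2}, that $m-l\to\infty$ and $y=O(m-l)$, so Theorem~\ref{concentration_final} gives $\mathbb P\bigl(\sup_{l\le i\le j\le m}|(S_j-S_i)-\mathbb E[S_j-S_i]|\ge y/2\bigr)\le A\exp\bigl(-y^2/(A(yn^{1/3}+m-l))\bigr)$, which is exactly the target bound; on that event $\mathcal E$ the centered leading term is $\le y/2$ for $n$ large, and $\mathcal E$ also forces $S_j-S_i=O(m-l)$ uniformly.

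\emph{The corrections.} The term $\tfrac{p_f}{2}\sum_{k=i+1}^j w_{v(k)}^2\le\tfrac{p_f}{2}\sum_{k\le m}w_k^2=o(1)$ is deterministically negligible. For the three bilinear terms, partial summation — after subtracting, \emph{inside} the sum, the deterministic mean $\mathbb E[S_{k+h}-S_k]=h(1+o(1))$ coming from Lemma~\ref{esperance} — reduces each to products of increments of $S$ and of their centered versions. Intersect $\mathcal E$ with $\mathcal E':=\{\sup_{t\le m+h}|S_t-\mathbb E[S_t]|\le Y\}$ for $Y$ of order $ny/(m-l)$; bookkeeping against Conditions~\ref{cond_2} — using $(m-l)(m+h)=O(y\ell_n)$, $m+h=o(n)$ and the fact, implied by these conditions, that $y\gtrsim n^{1/3}$ — shows $(m+h,Y)$ verifies Conditions~\ref{cond_2}, so Theorem~\ref{concentration} with $M=m+h$ gives $\mathbb P\bigl((\mathcal E')^c\bigr)\le A\exp\bigl(-y^2/(A(yn^{1/3}+m-l))\bigr)$. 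On $\mathcal E\cap\mathcal E'$, writing $AB-\mathbb E[AB]=\mathbb E A\,(B-\mathbb E B)+\mathbb E B\,(A-\mathbb E A)+(A-\mathbb E A)(B-\mathbb E B)-\mathrm{Cov}(A,B)$, using $\mathbb E S_i=O(m)$, $\mathbb E[S_j-S_i]=O(m-l)$, the bound $|S_{k+h}-S_k-\mathbb E[S_{k+h}-S_k]|\le 2Y$ on $\mathcal E'$, $p_f=O(\ell_n^{-1})$, and $\mathrm{Cov}(S_i,S_j-S_i)=O\bigl(\sqrt{\mathrm{Var}(S_i)\mathrm{Var}(S_j-S_i)}\bigr)=o(n)$ (variance bounds obtained by integrating the tail in Theorem~\ref{concentration}), each bilinear term differs from its mean by $o(y)+O(\varepsilon y)$, the genuinely random residue being $\le 2p_fY(S_j-S_i)=O(Y(m-l)/\ell_n)=O(\varepsilon y)$ with $\varepsilon$ tunable through the constant in $Y$. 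Taking $\varepsilon$ small and $n$ large, the corrections contribute $\le y/2$ on $\mathcal E\cap\mathcal E'$, so $\sup_{l\le i\le j\le m}|Z_{i,j}-\mathbb E Z_{i,j}|\le y$ there; the failure probability is at most $\mathbb P(\mathcal E^c)+\mathbb P\bigl((\mathcal E')^c\bigr)$, and adjusting the constant gives the lemma.

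\emph{Main obstacle.} The delicate point is the last part: $p_fS_i(S_j-S_i)$ and $p_f\sum_k w_{v(k)}(S_{k+h}-S_k)$ have size $O((m-l)(m+h)/\ell_n)=O(y)$, the same order as the whole fluctuation to be bounded, so no size estimate suffices and one must control their \emph{fluctuations after centering}. This forces the auxiliary event $\mathcal E'$ with the finely balanced threshold $Y\asymp ny/(m-l)$, and the verification that this single choice simultaneously makes the random residue $O(\varepsilon y)$ and keeps $\mathbb P\bigl((\mathcal E')^c\bigr)$ below the claimed bound requires the rigid relations $(m-l)(m+h)=O(y\ell_n)$, $m+h=o(n)$ and $y\gtrsim n^{1/3}$ hidden in Conditions~\ref{cond_2}–\ref{cond_3}, together with a case split into the regimes $yn^{1/3}\gtrless m-l$; one must also be careful to peel off the deterministic part $h(1+o(1))$ of $S_{k+h}-S_k$ before estimating, since that part dominates $S_{k+h}-S_k$ and would otherwise swamp the bound.
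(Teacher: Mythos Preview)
Your approach is essentially the paper's: linearize $1-e^{-x}$, isolate the leading term $p_f\ell_n(S_j-S_i)$ and control it via Theorem~\ref{concentration_final}, then handle the bilinear corrections by intersecting with an auxiliary event that pins down $S_t$ on $[1,m+h]$. The paper writes the split as $\sum_{k'>k+h}=\sum_{k'=1}^n-\sum_{k'\le k+h}$ and then expands $\sum_{k=i+1}^j w_{v(k)}\cdot\tfrac{k+h}{\ell_n}$ into three pieces, which is exactly your expansion of $S_{k+h}$ in disguise; the algebra differs only cosmetically.

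One small but concrete issue: your auxiliary threshold $Y\asymp ny/(m-l)$ is too large. Conditions~\ref{cond_2} demand $b(n)=O(a(n))$, i.e.\ $Y=O(m+h)$, and this can fail (take e.g.\ $m\asymp n^{1/2}$, $h\asymp n^{3/4}$, $y\asymp n^{1/3}$, where $Y\asymp n^{5/6}\gg m+h$), so Theorem~\ref{concentration} is not directly applicable at that scale. The paper instead takes $Y'\asymp y(m+h)/(m-l)$; then $Y'=O(m+h)$ follows from $y=O(m-l)$ (given in Conditions~\ref{cond_3}), the tail bound $\exp\bigl(-(Y')^2/(A(Y'n^{1/3}+m+h))\bigr)$ is still at most the target since $m+h\ge m-l$, and your residue estimate $p_fY'(S_j-S_i)=O(y(m+h)/\ell_n)=o(y)$ goes through unchanged. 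With that single recalibration your argument matches the paper's.
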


\begin{proof}
Let:
$$
D = \mathbb{P}\left(\sup_{l \leq i \leq j \leq m}\left(\middle|\tilde{L}^h_j-\tilde{L}^h_i -\mathbb{E}\left[\tilde{L}^h_j-\tilde{L}^h_i\right]\middle|\right) \geq y\right)
$$
Since $p_f \geq 1/n$ and $m-l = o(n)$. Conditions \ref{cond_nodes} and Equation \eqref{expx} yield:
\begin{equation*}
\begin{aligned}
&\sum_{k=i+1}^j\sum_{k' > k+h} \left(1-e^{-w_{v(k)}w_{v(k')}p_f} -\mathbb{E}\left[ 1-e^{-w_{v(k)}w_{v(k')}p_f}\right]\right) \\
&=\sum_{k=i+1}^j\left(\sum_{k' > k+h} w_{v(k)}w_{v(k')}p_f-\mathbb{E}\left[w_{v(k)}w_{v(k')}p_f\right]\right) + O(1).
\end{aligned}
\end{equation*}
Moreover, recall, by our conditions, that $y = y(n)$ and $\lim_{n \rightarrow \infty}y(n) = +\infty$. Since 
$$
\sum_{k' > k+h}w_{v(k)} w_{v(k')}p_f = \sum_{k'=1}^nw_{v(k)} w_{v(k')}p_f - \sum_{k' \leq k+h}w_{v(k)} w_{v(k')}p_f,
$$ 
we obtain by the union bound for $n$ large enough:
\begin{equation}
\begin{aligned}
\label{zhlh}
D &\leq \mathbb{P}\left(\sup_{l \leq i \leq j \leq m}\left|\sum_{k=i+1}^j\left(\sum_{k' > k+h}w_{v(k)} w_{v(k')}p_f\right)-\mathbb{E}\left[\sum_{k=i+1}^j\left(\sum_{k' > k+h} w_{v(k)}w_{v(k')}p_f\right)\right]\right| \geq y/2\right) \\
&\leq \mathbb{P}\left(\sup_{l \leq i \leq j \leq m}\left|\sum_{k=i+1}^j\left(\sum_{k' \leq k+h}w_{v(k)} w_{v(k')}p_f\right)-\mathbb{E}\left[\sum_{k=i+1}^j\left(\sum_{k' \leq k+h} w_{v(k)}w_{v(k')}p_f\right)\right]\right| \geq y/4\right)\\
&+\mathbb{P}\left(\sup_{l \leq i \leq j \leq m}\left|\sum_{k=i+1}^jw_{v(k)}-\mathbb{E}\left[\sum_{k=i+1}^jw_{v(k)}\right]\right| \geq \frac{y}{4\ell_np_f}\right).
\end{aligned}
\end{equation}
Since $\ell_np_f \leq 2$, by Conditions \ref{cond_3} we can apply Theorem \ref{concentration_final} with $(m,l,y)$ to obtain:
\begin{equation}
\label{zhlzhl}
\mathbb{P}\left(\sup_{l \leq i \leq j \leq m}\left|\sum_{k=i+1}^jw_{v(k)}-\mathbb{E}\left[\sum_{k=i+1}^jw_{v(k)}\right]\right| \geq \frac{y}{4\ell_np_f}\right) \leq A\exp\left(\frac{-y^2}{A(yn^{1/3}+m-l)}\right).
\end{equation}
By injecting Inequality \eqref{zhlzhl} in Inequality \eqref{zhlh}, bounding $D$ amounts to bounding:
$$
\mathbb{P}\left(\sup_{l \leq i \leq j \leq m}\left|\sum_{k=i+1}^j\left(\sum_{k' \leq k+h}w_{v(k)} w_{v(k')}p_f\right)-\mathbb{E}\left[\sum_{k=i+1}^j\left(\sum_{k' \leq k+h} w_{v(k)}w_{v(k')}p_f\right)\right]\right| \geq y/4\right).
$$
We focus on proving a one-sided version of this inequality, the other half of the inequality is proven similarly:
$$
\mathbb{P}\left(\sup_{l \leq i \leq j \leq m}\left(\sum_{k=i+1}^j\left(\sum_{k' \leq k+h}w_{v(k)} w_{v(k')}p_f\right)-\mathbb{E}\left[\sum_{k=i+1}^j\left(\sum_{k' \leq k+h} w_{v(k)}w_{v(k')}p_f\right)\right]\right) \geq y/4\right).
$$
By Lemmas \ref{mean_pow_weights_2} and \ref{mean_corr}, for any $l \leq i \leq j \leq m$: 
\begin{equation}
\label{eq1111}
\mathbb{E}\left[\sum_{k=i+1}^j\left(\sum_{k' \leq k+h}w_{v(k)} w_{v(k')}p_f\right)\right] = \frac{j^2-i^2+2(j-i)h}{2\ell_n}(1+o(1)).
\end{equation}
By a simple computation,  Conditions \ref{cond_3} imply that $\left(m+h,\frac{y(m+h)}{16(m-l)}\right)$ verify Conditions \ref{cond_2}. Using this with Theorem \ref{concentration_final} yields for $n$ large enough:
\begin{equation*}
\begin{aligned}
&\mathbb{P}\left(\sup_{l \leq k \leq m}  \left|\sum_{k' \leq k+h} w_{v(j)}-\mathbb{E}\left[\sum_{k' \leq k+h} w_{v(j)}\right]\right| \geq \frac{y}{16p_f(m-l)}\right)\\
&\leq \mathbb{P}\left(\sup_{1 \leq k \leq m+h}  \left|\sum_{k' = 1}^{k} w_{v(j)}-\mathbb{E}\left[\sum_{k' = 1}^k w_{v(j)}\right]\right| \geq \frac{y(m+h)}{16(m-l)}\right)\\
&\leq A\exp\left(\frac{-y^2(m+h)^2}{A(y(m+h)(m-l)n^{1/3}+(m+h)(m-l)^2}\right) \\
&\leq A\exp\left(\frac{-y^2}{A(yn^{1/3}+(m-l)}\right). \\
\end{aligned}
\end{equation*}
Hence, by the above inequality and Equation \eqref{eq1111} the union bound yields:
\begin{equation}
\begin{aligned}
\label{eq1112}
&\mathbb{P}\left(\sup_{l \leq i \leq j \leq m}\left(\sum_{k=i+1}^jw_{v(k)}\left(\sum_{k' \leq k+h} w_{v(k')}p_f\right)-\mathbb{E}\left[\sum_{k=i+1}^j\left(\sum_{k' \leq k+h} w_{v(k)}w_{v(k')}p_f\right)\right]\right) \geq y/4\right) \\
\leq &\mathbb{P}\left(\sup_{l \leq k \leq m}  \left|\sum_{k' \leq k+h} w_{v(k')}-\mathbb{E}\left[\sum_{k' \leq k+h} w_{v(k')}\right]\right| \geq \frac{y}{16p_f(m-l)}\right)\\
&+ \mathbb{P}\left(\sup_{l \leq i \leq j \leq m}\left(\sum_{k=i+1}^jw_{v(k)}\left(\frac{y}{16(m-l)}\right)\right) \geq y/8\right) \\
&+ \mathbb{P}\left(\sup_{l \leq i \leq j \leq m}\left(\sum_{k=i+1}^jw_{v(k)}\mathbb{E}\left[\sum_{k' \leq k+h} w_{v(k')}p_f\right]-\frac{j^2-i^2+2(j-i)h}{2\ell_n}(1+o(1))\right) \geq y/8\right) \\
\leq  &A\exp\left(\frac{-y^2}{A(yn^{1/3}+(m-l)}\right)+\mathbb{P}\left(\sup_{l \leq i \leq j \leq m}\left(\sum_{k=i+1}^jw_{v(k)}\left(\frac{y}{16(m-l)}\right)\right) \geq y/8\right) \\
&+ \mathbb{P}\left(\sup_{l \leq i \leq j \leq m}\left(\sum_{k=i+1}^jw_{v(k)}\mathbb{E}\left[\sum_{k' \leq k+h} w_{v(k')}p_f\right]-\frac{j^2-i^2+2(j-i)h}{2\ell_n}(1+o(1))\right) \geq y/8\right).
\end{aligned}
\end{equation}
By Lemma \ref{mean_weights}, for any $k \leq m$:
\begin{equation}
\mathbb{E}\left[\sum_{k' \leq k+h} w_{v(k')}p_f\right] = \frac{(k+h)(1+o(1))}{\ell_n}.
\end{equation}
Moreover, notice that for any $l \leq i \leq j \leq m$:
$$
\sum_{k=i+1}^jw_{v(k)}\frac{k+h}{\ell_n}=\frac{i}{\ell_n}\sum_{k=i+1}^jw_{v(k)}+\frac{h}{\ell_n}\sum_{k=i+1}^jw_{v(k)}+\left(\frac{1}{\ell_n}\right)\sum_{k=i+1}^j\sum_{k'=k}^jw_{v(k')}.
$$
By Conditions \ref{cond_3}, we have for any for any $l \leq i \leq j \leq m$: 
\begin{equation}
\begin{aligned}
\label{fact4}
 \frac{j^2-i^2+2(j-i)h}{2\ell_n} = O(y).
 \end{aligned}
\end{equation}
Moreover, by Conditions \ref{cond_3} $y \leq A(m-l)$, for some large constant $A > 0$. Hence, by the union bound, Equation \eqref{eq1112} becomes:
\begin{equation}
\begin{aligned}
\label{eq1122}
&\mathbb{P}\left(\sup_{l \leq i \leq j \leq m}\left(\sum_{k=i+1}^jw_{v(k)}\left(\sum_{k' \leq k+h} w_{v(k')}p_f\right)-\mathbb{E}\left[\sum_{k=i+1}^j\left(\sum_{k' \leq k+h} w_{v(k)}w_{v(k')}p_f\right)\right]\right) \geq y/4\right) \\
\leq &\mathbb{P}\left(\sup_{l \leq i \leq j \leq m}\left(\frac{i}{\ell_n}\sum_{k=i+1}^j(w_{v(k)}-\mathbb{E}[w_{v(k)}])(1+o(1))\right) \geq \frac{y}{48}\right) \\
&+ \mathbb{P}\left(\sup_{l \leq i \leq j \leq m}\left(\left(\frac{h}{\ell_n}\right)\sum_{k=i+1}^j(w_{v(k)}-\mathbb{E}[w_{v(k)}])(1+o(1))\right) \geq \frac{y}{48}\right) \\
&+ \mathbb{P}\left(\sup_{l \leq i \leq j \leq m}\left(\left(\frac{1}{\ell_n}\right)\sum_{k=i+1}^j\sum_{k'=k}^j(w_{v(k')}-\mathbb{E}[w_{v(k')}])(1+o(1))\right) \geq \frac{y}{48}\right) \\
&+ A\exp\left(\frac{-y^2}{A(yn^{1/3}+(m-l)}\right)+\mathbb{P}\left(\sup_{l \leq i \leq j \leq m}\left(\sum_{k=i+1}^jw_{v(k)}\right) \geq \frac{2y}{A}\right).
\end{aligned}
\end{equation}
Notice that we implicitly use Equation \eqref{fact4} in the above Inequality in order to make the $o$ factors match and at the cost of taking $y/48$. This is why we are able to write:
$$\mathbb{P}\left(\sup_{l \leq i \leq j \leq m}\left(\frac{i}{\ell_n}\sum_{k=i+1}^j(w_{v(k)}-\mathbb{E}[w_{v(k)}])(1+o(1))\right) \geq \frac{y}{48}\right),$$
instead of:
$$
\mathbb{P}\left(\sup_{l \leq i \leq j \leq m}\left(\frac{i}{\ell_n}\sum_{k=i+1}^j\left(w_{v(k)}(1+o(1))-\mathbb{E}[w_{v(k)}](1+o(1))\right)\right) \geq \frac{y}{24}\right).$$
By Conditions \ref{cond_3} we can apply Theorem \ref{concentration_final} with $(m-l,y/48)$ to obtain:
\begin{equation}
\label{zhlzhl1}
\mathbb{P}\left(\sup_{l \leq i \leq j \leq m}\left|\sum_{k=i+1}^jw_{v(k)}-\mathbb{E}\left[\sum_{k=i+1}^jw_{v(k)}\right]\right| \geq \frac{y}{48}\right) \leq A\exp\left(\frac{-y^2}{A(yn^{1/3}+m-l)}\right).
\end{equation}
We finish by noticing that the first three probabilities in  the right-hand side of Inequality \ref{eq1122} are all smaller than the left hand-side of \ref{zhlzhl1}.
\end{proof}
\begin{Theorem}
\label{inf_s_0_3}
Suppose that Conditions \ref{cond_nodes} hold. There exists a constant $A >0$ such that, if $(m,l,0,y)$ verifies Conditions \ref{cond_3}, then the following holds:
\begin{equation*}
\mathbb{P}\left(\sup_{l \leq u \leq w\leq m}\left|L^0_w-L^0_u-\mathbb{E}[L^0_w-L^0_u]\right| \geq y \right)\leq  A\exp\left(\frac{-y^2}{A(yn^{1/3}+m-l)}\right).
\end{equation*}
\end{Theorem}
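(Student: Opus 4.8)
The plan is to split $L^0$ into a martingale plus its predictable compensator $\tilde L^0$, control the compensator fluctuations with the already-established Lemma~\ref{inf_s_0} (taken with $h=0$), and control the martingale part with Bernstein's inequality for martingales.

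First I would set up the martingale exactly as in Fact~\ref{fact1}: keeping the truncation convention on the increments of $L^0$, the process $M_i := L^0_i - \tilde L^0_i$ is a martingale with respect to a filtration $(\mathcal G_i)$ for which, conditionally on $\mathcal G_i$, the quantity $X^0_{i+1}+1$ is a sum of independent Bernoulli variables with parameters $(1-e^{-w_{v(i+1)}w_j p_f})_{j\notin \mathcal V(i+1)}$. In particular $\tilde L^0_{i+1}-\tilde L^0_i = \mathbb E[X^0_{i+1}\mid \mathcal G_i]$, so $\mathbb E[L^0_w-L^0_u]=\mathbb E[\tilde L^0_w-\tilde L^0_u]$, and writing $L^0_w-L^0_u-\mathbb E[L^0_w-L^0_u] = (M_w-M_u) + (\tilde L^0_w - \tilde L^0_u - \mathbb E[\tilde L^0_w - \tilde L^0_u])$, a union bound reduces the theorem to bounding $\mathbb P(\sup_{l\le u\le w\le m}|M_w-M_u|\ge y/2)$ and $\mathbb P(\sup_{l\le u\le w\le m}|\tilde L^0_w-\tilde L^0_u-\mathbb E[\tilde L^0_w-\tilde L^0_u]|\ge y/2)$ separately by $A\exp(-y^2/(A(yn^{1/3}+m-l)))$. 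The compensator term is handled directly: $(m,l,0,y/2)$ still verifies Conditions~\ref{cond_3} (scaling the last coordinate by a positive constant preserves Conditions~\ref{cond_2}, by the remark following their statement), so Lemma~\ref{inf_s_0} applied to $(m,l,0,y/2)$ yields exactly this bound.

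\textbf{The martingale term.} Using $\sup_{l\le u\le w\le m}|M_w-M_u|\le 2\sup_{l\le w\le m}|M_w-M_l|$, I would apply Bernstein's inequality for martingales (\cite{F75}). The increments satisfy $|M_{i+1}-M_i|\le An^{1/3}$, since $X^0_{i+1}\le 2n^{1/3}$ by the truncation convention while $\mathbb E[X^0_{i+1}\mid\mathcal G_i]+1 = \sum_{j\notin\mathcal V(i+1)}(1-e^{-w_{v(i+1)}w_j p_f}) \le w_{v(i+1)}\ell_n p_f \le 2w_{v(i+1)} = o(n^{1/3})$. Since a sum of conditionally independent Bernoulli variables has conditional variance at most its conditional mean, $\mathrm{Var}(M_{i+1}-M_i\mid\mathcal G_i)\le 2w_{v(i+1)}$, so the predictable quadratic variation obeys $V:=\sum_{i=l}^{m-1}\mathrm{Var}(M_{i+1}-M_i\mid\mathcal G_i)\le 2\sum_{k=l+1}^{m}w_{v(k)}$. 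As $V$ is random, I would split on the event $G:=\{\sum_{k=l+1}^{m}w_{v(k)}\le \mathbb E[\sum_{k=l+1}^{m}w_{v(k)}]+y\}$. The hypotheses in Conditions~\ref{cond_3} for $(m,l,0,y)$ are precisely those required by Theorem~\ref{concentration_final} (applied with $i=l+1$, $j=m$), so $\mathbb P(G^c)\le A\exp(-y^2/(A(yn^{1/3}+m-l)))$, already of the target order; and on $G$, Lemma~\ref{esperance} (or Lemma~\ref{mean_weights}) gives $\mathbb E[\sum_{k=l+1}^m w_{v(k)}]=(m-l)(1+o(1))$, hence, using $y=O(m-l)$ from Conditions~\ref{cond_3}, $V\le A(m-l)$ on $G$. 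Bernstein's inequality for martingales, applied on $G$ with quadratic-variation bound $A(m-l)$ and increment bound $An^{1/3}$, then gives $\mathbb P(\sup_{l\le w\le m}|M_w-M_l|\ge y/4,\ G)\le 2\exp(-y^2/(A(yn^{1/3}+m-l)))$. Combining this with $\mathbb P(G^c)$ and the compensator bound, and adjusting $A$, completes the proof.

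\textbf{Main obstacle.} The delicate point is the random predictable quadratic variation of $M$: it must be replaced by a deterministic bound of order $m-l$, and the probability of the exceptional event on which this fails has to be controlled at a rate no worse than the target $\exp(-y^2/(A(yn^{1/3}+m-l)))$. The key is to measure $G$ as a deviation of $\sum_{k=l+1}^m w_{v(k)}$ from its mean by exactly $y$ — not by a crude fixed threshold — so that Theorem~\ref{concentration_final} returns precisely this rate; the crude estimate $\mathbb E[\sum_k w_{v(k)}]=O(m-l)$ together with $y=O(m-l)$ then absorbs the surplus term into $m-l$. A subsidiary bookkeeping point is checking that each auxiliary couple — $(m,l,0,y/2)$ for Lemma~\ref{inf_s_0}, and $(\sqrt{m(m-l)},y)$ for Theorem~\ref{concentration_final} — inherits Conditions~\ref{cond_2}/\ref{cond_3} from the hypotheses on $(m,l,0,y)$, which is immediate.
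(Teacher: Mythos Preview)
Your proof is correct and follows essentially the same route as the paper: decompose $L^0$ into a martingale plus its compensator $\tilde L^0$, handle the compensator via Lemma~\ref{inf_s_0} with $h=0$, and handle the martingale via Bernstein's inequality for martingales after controlling the (random) predictable quadratic variation on a high-probability event.

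The one noteworthy difference is in how the quadratic variation is bounded. You use $\mathrm{Var}(X^0_{i+1}\mid\mathcal G_i)\le \mathbb E[X^0_{i+1}+1\mid\mathcal G_i]\le 2w_{v(i+1)}$ (variance of a conditionally independent Bernoulli sum is at most its mean), which reduces everything to controlling $\sum_{k=l+1}^m w_{v(k)}$ alone via Theorem~\ref{concentration_final}. The paper instead bounds the conditional second moment by $w_{v(i)}+w_{v(i)}^2$, and is therefore obliged to also control $\sum_{k=l+1}^m w_{v(k)}^2$, which it does with a separate Bernstein step through Theorem~\ref{replacement} (Equation~\eqref{roko}). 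Your route is a genuine simplification: it eliminates that extra argument entirely while yielding the same final bound.
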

\begin{proof} 
For $i \geq 0$ let $\mathcal{F}^0_{i}$ be the sigma-field generated by $\mathcal{V}_{i+1}$ and the random variables $(X^0_k)_{k\leq i}$. Write:
$$D_1 = \mathbb{P}\left(\sup_{l \leq u \leq w\leq m}\left|L^0_w-L^0_u-\sum_{i=u+1}^w\mathbb{E}\left[X^0_i | \mathcal{F}^0_{i-1}\right]\right| \geq y/2\right),$$
and
$$D_2 = \mathbb{P}\left(\sup_{l \leq u \leq w\leq m}\left|\sum_{i=u+1}^w\mathbb{E}\left[X^0_i | \mathcal{F}^0_{i-1}\right]-\mathbb{E}[L^0_w-L^0_u]\right| \geq y/2\right).$$
Then, by the union bound:
\begin{equation*}
\begin{aligned}
\label{rororo11}
\mathbb{P}\left(\sup_{l \leq u \leq w\leq m}\left|L^0_w-L^0_u-\mathbb{E}[L^0_w-L^0_u]\right| \geq y \right)&\leq  D_1+D_2.
\end{aligned}
\end{equation*}
We start by bounding $D_1$. We have by the union bound:
\begin{equation*}
\begin{aligned}
D_1 \leq &\mathbb{P}\left(\sup_{l \leq u \leq m}\left|L^0_u-L^0_l-\sum_{i=l+1}^u\mathbb{E}\left[X^0_i | \mathcal{F}^0_{i-1}\right]\right| \geq y/4\right)\\
&+ \mathbb{P}\left(\sup_{l \leq w \leq m}\left|L^0_w-L^0_l-\sum_{i=l+1}^w\mathbb{E}\left[X^0_i | \mathcal{F}^0_{i-1}\right]\right| \geq y/4\right).
\end{aligned}
\end{equation*} 
Notice that 
$$
\left(L^0_w-L^0_l-\sum_{i=l+1}^w\mathbb{E}\left[X^0_i | \mathcal{F}^0_{i-1}\right]\right)_{w\geq l},
$$
is a martingale with respect to the $(\mathcal{F}^0_{i})_{i\geq l}$'s.
Moreover:
$$
 \mathbb{E}[(X^0_i)^2| \mathcal{F}^0_{i-1}]  = \sum_{k \not\in \mathcal{V}_i }\sum_{k' \not\in \mathcal{V}_i}\mathbb{E}[Y(v(i),k)Y(v(i),k')|\mathcal{F}^0_{i-1}] \leq w_{v(i)}+w_{v(i)}^2.
$$
Applying Theorem \ref{replacement} to the $(w_{v(i)}^2)$'s, let $J(1),J(2),...$ be i.i.d copies of $v(l+1)$.  We have by Lemma \ref{mean_pow_weights_2} the following Bernstein inequality:
\begin{equation}
\begin{aligned}
\label{roko}
&\mathbb{P}\left(\sum_{i=l+1}^m w_{v(i)}^2 \geq \sum_{i=l+1}^m 2\mathbb{E}[w_{v(i)}^2]+2yn^{1/3}\right) \\
&\leq \mathbb{E}\left[\mathbb{E}\left[\frac{\exp\left(\sum_{i=l+1}^m w_{v(i)}^2\right)}{\exp\left(\sum_{i=l+1}^m 2C\mathbb{E}[w_{v(i)}^2]+2yn^{1/3}\right)}\right]\right] \\
&\leq \mathbb{E}\left[\mathbb{E}\left[\frac{\exp\left(\sum_{i=l+1}^m w_{J(i)}^2\right)}{\exp\left(2C(m-l)(1+o(1))+2yn^{1/3}\right)}\right]\right] \\
&\leq \mathbb{E}\left[\exp\left(\frac{-\left|2C(m-l)(1+o(1))+2yn^{1/3}-\sum_{i=l+1}^m \mathbb{E}[w_{J(i)}^2]\right|_{+}^2}{\left(Ayn+A(m-l)n^{2/3}+\sum_{i=l+1}^m \mathbb{E}[w_{J(i)}^4]\right)}\right)\right] \\
&\leq \mathbb{E}\left[\exp\left(\frac{-\left|2C(m-l)(1+o(1))+2yn^{1/3}-\sum_{i=l+1}^m \mathbb{E}[w_{J(i)}^2]\right|_{+}^2}{An^{2/3}\left(yn^{1/3}+(m-l)+\sum_{i=l+1}^m \mathbb{E}[w_{J(i)}^2]\right)}\right)\right],
\end{aligned}
\end{equation}
where line $3$ of the equation is a Chernoff bound which yields Bernstein's inequality in line $4$ (as in the original proof of \cite{B24}), and we used the fact that, by Conditions \ref{cond_nodes}, we have $\mathbb{E}[w_{J(i)}^4] \leq n^{2/3}\mathbb{E}[w_{J(i)}^2]$.
Now notice that by definition, and by Lemma \ref{mean_pow_weights_2}, for any $i \geq l+1$:
\begin{equation}
\label{roko22}
\mathbb{E}[w_{J(i)}^2] =  C(1+o(1)).
\end{equation}
Since $(m,l,0,y)$ verifies Conditions \ref{cond_3}, we can apply Theorem \ref{concentration_final} on $(m,l,2y)$ to obtain:
\begin{equation}
\label{roko23}
\mathbb{P}\left(\sum_{i=l+1}^m w_{v(i)} \geq \sum_{i=l+1}^m \mathbb{E}[w_{v(i)}]+2y\right) \leq A\exp\left(\frac{-y^2}{A\left(yn^{1/3}+(m-l))\right)}\right),
\end{equation}
where $A >0$ is a large enough constant.
By Equations \eqref{roko},  \eqref{roko22} and \eqref{roko23} and by Bernstein's inequality for martingales (Theorem $2.1$ in \cite{F75}) we obtain:
\begin{equation}
\begin{aligned}
\label{rororo2}
D_1
&\leq A'\exp\left(\frac{-y^2}{A'(yn^{1/3}+m-l)}\right).
\end{aligned}
\end{equation} \par
In order to bound $D_2$ notice that the sum inside $D_2$ is equal to the one in Lemma \ref{inf_s_0} when $h=0$ by definition. This finishes the proof.
\end{proof}

Since $L^0$ is always greater than $L'$ deterministically, Theorem \ref{inf_s_0_3} gives us the following theorem.
\begin{Theorem}
\label{sup_l}
Suppose that Conditions \ref{cond_nodes} hold. Let $\frac{4f\ell_n^{2/3}}{C} \geq m  \geq \frac{f\ell_n^{2/3}}{C}$, then 
there exists $A >0$ and $A' > 0$ such that for any $\epsilon >0$:
\begin{equation*}
\begin{aligned}
\mathbb{P}\left(\sup_{1 \leq i \leq m}( L_i) \geq \frac{10f^2\ell_n^{1/3}}{C}\right) &\leq A\exp\left(\frac{-f}{A}\right).
\end{aligned}
\end{equation*}
\end{Theorem}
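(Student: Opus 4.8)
The strategy is to bound the reflected process $L$ by the auxiliary process $L^0$, for which control is already in place: its mean is given by Corollary \ref{S_h} with $h=0$, and its fluctuations by Theorem \ref{inf_s_0_3}.

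First I would establish the deterministic comparison. Since $L_i\ge 1$ at every step, the vertices counted in the increment $X_{i+1}$ of $L'$ form a subset of those counted in $X^0_{i+1}$, so $X^0_k\ge X_k$ for all $k$, whence $L^0_w-L^0_u\ge L'_w-L'_u$ for all $u\le w$. On the other hand the reflection at $1$ yields the Skorokhod identity $L_i=L'_i-\min_{0\le j\le i}L'_j+1=\max_{0\le j\le i}(L'_i-L'_j)+1$, checked by an easy induction on $i$. Combining the two,
\[
\sup_{1\le i\le m}L_i\ \le\ 1+\sup_{0\le u\le w\le m}\bigl(L^0_w-L^0_u\bigr).
\]
The crucial point is that passing through the \emph{range} of $L^0$, rather than through $L'_i-\min_jL'_j$ directly, means we never pay for the number of components explored (which is of order $m$, far too large); the reflections cost nothing here.

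Next comes the mean. As noted in the proof of Theorem \ref{inf_s_0_3}, $\mathbb{E}[L^0_w-L^0_u]=\mathbb{E}[\tilde L^0_w-\tilde L^0_u]$, and Corollary \ref{S_h} (with $h=0$, $l=u+1$, ``$m$''$=w$) gives, uniformly over $0\le u\le w\le m$,
\[
\mathbb{E}\bigl[\tilde L^0_w-\tilde L^0_u\bigr]=(w-u)f\ell_n^{-1/3}-\frac{C(w^2-u^2)}{2\ell_n}+O(1)+o\!\bigl(f^2\ell_n^{1/3}\bigr).
\]
For the leading term, taking $u=0$ is optimal, and $v\mapsto vf\ell_n^{-1/3}-\frac{Cv^2}{2\ell_n}$ is a concave parabola with maximum $\frac{f^2\ell_n^{1/3}}{2C}$ attained at $v=f\ell_n^{2/3}/C$, which lies in $[0,m]$ because $m\ge f\ell_n^{2/3}/C$. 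Hence $\sup_{0\le u\le w\le m}\mathbb{E}[L^0_w-L^0_u]\le \frac{f^2\ell_n^{1/3}}{C}$ for $n$ large, comfortably below $\frac{10f^2\ell_n^{1/3}}{C}$.

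Finally I would add the fluctuations. Pick $y=\frac{4f^2\ell_n^{1/3}}{C}$ (any fixed sufficiently large multiple of $f^2\ell_n^{1/3}$ works). Using $\ell_n=\Theta(n)$, $f=o(n^{1/3})$ and $f\ge F$, one checks that $(m,0,0,y)$ verifies Conditions \ref{cond_3}: indeed $\sqrt{(m-0)(m+0)}=m$, and for $(m,y)$ the binding requirements of Conditions \ref{cond_2}, namely $m=O(y\ell_n^{1/3})$ and $m^2=O(y\ell_n)$, both reduce to $f=\Omega(1)$, while the first requirement holds once $F$ is large enough. Theorem \ref{inf_s_0_3} then gives
\[
\mathbb{P}\!\left(\sup_{0\le u\le w\le m}\bigl|L^0_w-L^0_u-\mathbb{E}[L^0_w-L^0_u]\bigr|\ge y\right)\le A\exp\!\left(\frac{-y^2}{A(yn^{1/3}+m)}\right),
\]
and on the complement of this event the two previous displays give $\sup_{1\le i\le m}L_i\le 1+\frac{f^2\ell_n^{1/3}}{C}+y<\frac{10f^2\ell_n^{1/3}}{C}$. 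Since $y^2=\Theta(f^4\ell_n^{2/3})$, $yn^{1/3}=\Theta(f^2\ell_n^{2/3})$ and $m=\Theta(f\ell_n^{2/3})=O(yn^{1/3})$, the exponent satisfies $\frac{y^2}{yn^{1/3}+m}=\Theta(f^2)\ge f$, giving the claimed $A\exp(-f/A)$; the standing convention that the increments of $L'$ and $L^0$ are truncated at $2n^{1/3}$ costs only an extra $A\exp(-n^{1/3}/A)\le A\exp(-f/A)$. The main obstacle is the deterministic step: recognizing that the supremum of the reflected walk over $[1,m]$ is dominated by the range of the non-reflected dominating walk $L^0$, which is exactly what absorbs the reflections for free; everything after that is Corollary \ref{S_h}, Theorem \ref{inf_s_0_3}, and constant-chasing through Conditions \ref{cond_2}--\ref{cond_3}.
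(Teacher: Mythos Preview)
Your proof is correct and follows essentially the same route as the paper: dominate $\sup_i L_i$ by the range of $L^0$, control the mean of $L^0_w-L^0_u$ via Corollary \ref{S_h} with $h=0$, and bound the fluctuations with Theorem \ref{inf_s_0_3}. Your treatment of the mean step is in fact cleaner than the paper's, which bounds $\min_{u\le v}\mathbb{E}[L^0_v-L^0_u]$ from below when what is needed is an upper bound on $\max_{u\le v}\mathbb{E}[L^0_v-L^0_u]$; your parabola computation gives exactly that.
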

\begin{proof}
By definition: 
$$
\sup_{1 \leq i \leq m}( L_i)  \leq \sup_{1 \leq u \leq v \leq m}(L^0_v-L^0_u).
$$
Hence
\begin{equation}
\begin{aligned}
\label{Z_m}
&\mathbb{P}\left(\sup_{1 \leq i \leq m}( L_i) \geq \frac{10f^2\ell_n^{1/3}}{C} \right) \leq \mathbb{P}\left(\sup_{1 \leq u \leq v \leq m}(L^0_v-L^0_u) \geq \frac{10f^2\ell_n^{1/3}}{C} \right).
\end{aligned}
\end{equation}
From Corollary \ref{S_h}: 
\begin{equation}
\begin{aligned}
\label{wasali}
\min_{ 1 \leq u \leq v \leq m}(\mathbb{E}[L^0_v-L^0_h])  &= \min_{ 1 \leq u \leq v \leq m}\left((v-u)\left(f\ell_n^{-1/3}-\frac{C(v+u)}{2\ell_n}\right)(1+o(1))+1\right) \\
&\geq \min_{ 1 \leq u \leq v \leq m}\left(-\frac{C(v+u)(v-u)}{2\ell_n}(1+o(1))+1\right) \\
&\geq \frac{-9f^2\ell_n^{1/3}}{C}.
\end{aligned}
\end{equation} 
We finish by injecting Equation \eqref{wasali} in \eqref{Z_m} and using Theorem \ref{inf_s_0_3} with $\left(m,1,0,\frac{f^2\ell_n^{1/3}}{C}\right)$.
\end{proof}
The same method that we used to bound the term $D_1$ in the proof of Theorem \ref{inf_s_0_3} directly yields
\begin{Theorem}
\label{inf_s_1}
Suppose that Conditions \ref{cond_nodes} hold. There exists a constant $A >0$ such that, if $(m,l,0,y)$ verifies Conditions \ref{cond_3}, then the following holds:
\begin{equation*}
\mathbb{P}\left(\sup_{l \leq u \leq v \leq m}\left|L'_v-L'_u-\mathbb{E}[\tilde{L}_v-\tilde{L}_u]\right| \geq y \right)\leq  A\exp\left(\frac{-y^2}{A(yn^{1/3}+m-l)}\right).
\end{equation*}
\end{Theorem}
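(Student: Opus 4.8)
The plan is to imitate the proof of Theorem \ref{inf_s_0_3}, replacing $L^0$ by $L'$ and its compensator $\tilde L^0$ by $\tilde L$. First, by Fact \ref{fact1}, for every fixed pair $u\le v$ the process $(L'(i)-L'(u)-(\tilde L_i-\tilde L_u))_{i\ge u}$ is a martingale started at $0$, so $\mathbb{E}[\tilde L_v-\tilde L_u]=\mathbb{E}[L'_v-L'_u]$; hence the quantity to control equals $\sup_{l\le u\le v\le m}|L'_v-L'_u-\mathbb{E}[L'_v-L'_u]|$, and by the triangle inequality it suffices (up to a factor $2$ in $y$) to bound $\sup_{l\le w\le m}|L'_w-L'_l-\mathbb{E}[\tilde L_w-\tilde L_l]|$. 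I would then split the quantity inside this last supremum into the martingale term $M_w:=L'_w-L'_l-(\tilde L_w-\tilde L_l)$ and the compensator deviation $\Delta_w:=(\tilde L_w-\tilde L_l)-\mathbb{E}[\tilde L_w-\tilde L_l]$.

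For the martingale term $M$ I would run exactly the argument used to bound $D_1$ in the proof of Theorem \ref{inf_s_0_3}. By Fact \ref{fact1}, $M$ is an $(\mathcal{F}_i)_{i\ge l}$-martingale; its increments are $O(n^{1/3})$ by the truncation convention ($X_i\le 2n^{1/3}$, under which Fact \ref{fact1} still holds); and since $X_i+1$ is, conditionally on $\mathcal{F}_{i-1}$, a sum of independent Bernoulli variables whose total conditional mean is $\sum_k w_{v(i)}w_{v(k)}p_f\le w_{v(i)}\ell_n p_f\le 2w_{v(i)}$, the predictable quadratic variation of $M$ up to $w$ is at most $\sum_{i=l+1}^w (w_{v(i)}+4w_{v(i)}^2)$. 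This sum concentrates: $\sum w_{v(i)}$ by Theorem \ref{concentration_final} (applicable since $(m,l,0,y)$ verifies Conditions \ref{cond_3}) and $\sum w_{v(i)}^2$ by Theorem \ref{replacement} together with Lemma \ref{mean_pow_weights_2} and Bernstein's inequality, exactly as in \eqref{roko}. Freedman's martingale Bernstein inequality (\cite{F75}) then yields $\mathbb{P}(\sup_{l\le w\le m}|M_w|\ge y/4)\le A\exp(-y^2/(A(yn^{1/3}+m-l)))$.

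For the compensator deviation $\Delta$, write $\tilde L_w-\tilde L_l=(\tilde L^0_w-\tilde L^0_l)-R_w$ with $R_w:=\sum_{i=l+1}^w\sum_{i<k\le i+L_{i-1}-1}(1-e^{-w_{v(i)}w_{v(k)}p_f})\ge 0$, using $L_{i-1}\ge 1$. The centered fluctuation $\tilde L^0_w-\tilde L^0_l-\mathbb{E}[\tilde L^0_w-\tilde L^0_l]$ is bounded directly by Lemma \ref{inf_s_0} with $h=0$. For $R_w$, fix $h^*=o(m)$ with $(m,l,h^*,y)$ still verifying Conditions \ref{cond_3} and such that $\mathbb{P}(\sup_{i\le m}L_i\ge h^*)\le A\exp(-y^2/(A(yn^{1/3}+m-l)))$; this a priori bound follows, in the spirit of Theorem \ref{sup_l}, from the martingale estimate of the previous paragraph applied to $L^0\ge L'$ together with the mean estimate of Corollary \ref{S_h}. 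On $\{\sup_{i\le m}L_i\le h^*\}$ one has $\tilde L^{h^*}_w-\tilde L^{h^*}_l\le \tilde L_w-\tilde L_l\le \tilde L^0_w-\tilde L^0_l$ for all $l\le w\le m$; since Corollary \ref{S_h} shows $\mathbb{E}[\tilde L^0_w-\tilde L^0_l]$ and $\mathbb{E}[\tilde L^{h^*}_w-\tilde L^{h^*}_l]$ differ only by $O((m-l)h^*/\ell_n)=o(y)$, and $R_w$ is deterministically small enough off the good event to contribute negligibly to $\mathbb{E}[R_w]$, the quantity $\Delta_w$ is, on the good event, squeezed between the centered fluctuations of $\tilde L^0$ and of $\tilde L^{h^*}$ up to an $o(y)$ shift — and both fluctuations are handled by Lemma \ref{inf_s_0}. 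A union bound over these pieces bounds $\mathbb{P}(\sup_{l\le w\le m}|\Delta_w|\ge y/2)$, and adding the martingale estimate finishes the proof (the two sides of the absolute value are symmetric).

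I expect the compensator deviation $\Delta$ to be the real obstacle. In Theorem \ref{inf_s_0_3} the compensator of $L^0$ is the clean process $\tilde L^0$, already covered by Lemma \ref{inf_s_0}; here the compensator $\tilde L$ of $L'$ depends on the reflected walk $L$ through the lower summation limit $i+L_{i-1}-1$, so it must be pinned between $\tilde L^{h^*}$ and $\tilde L^0$ using an a priori bound on $\sup_i L_i$, and one must check carefully that every resulting error term — the $h^*$-gap from Corollary \ref{S_h}, the auxiliary $\tilde L^{h^*}$-fluctuations from Lemma \ref{inf_s_0}, the off-event contribution of $R_w$, and the admissible choice of $h^*$ — stays $o(y)$ and of the correct exponential order throughout the range described by Conditions \ref{cond_3}. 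If one only needs $L'$ close to its random compensator $\tilde L$ rather than to $\mathbb{E}[\tilde L]$, the $D_1$-type martingale argument of the second paragraph already suffices.
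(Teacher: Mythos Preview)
Your martingale paragraph is exactly what the paper does: its entire proof is the one sentence ``the same method that we used to bound the term $D_1$ in the proof of Theorem \ref{inf_s_0_3} directly yields'' the result. That $D_1$ argument (Freedman's inequality after bounding the predictable quadratic variation through $\sum w_{v(i)}$ and $\sum w_{v(i)}^2$, just as in \eqref{roko}--\eqref{rororo2}) controls only $\sup_{l\le u\le v\le m}|L'_v-L'_u-(\tilde L_v-\tilde L_u)|$, the deviation of $L'$ from its \emph{random} compensator $\tilde L$. This is also the only form ever invoked downstream: in the proof of Theorem \ref{lower_bound_1} and in \eqref{manal1} the bounds are on $L'_j-\tilde L_j$, never on $L'_j-\mathbb{E}[\tilde L_j]$. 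Your closing sentence is therefore the correct reading of both what the paper actually proves and what it needs.

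Your third paragraph, on the compensator deviation $\Delta_w=(\tilde L_w-\tilde L_l)-\mathbb{E}[\tilde L_w-\tilde L_l]$, goes beyond the paper and would be required only if the $\mathbb{E}$ in the displayed statement is taken literally. The squeeze $\tilde L^{h^*}\le \tilde L\le \tilde L^0$ on $\{\sup_{i\le m}L_i\le h^*\}$ together with Lemma \ref{inf_s_0} is the natural idea, but the existence of an admissible $h^*$ is not automatic under Conditions \ref{cond_3} alone: bounding $\sup_{i\le m}L_i$ via $L^0$ and Corollary \ref{S_h} produces an $h^*$ of order $\sup_{u\le v\le m}\mathbb{E}[L^0_v-L^0_u]+y$, and for the resulting mean gap $(m-l)h^*/\ell_n$ to be $o(y)$ one needs constraints (on the relative sizes of $m$, $f$ and $y$) that Conditions \ref{cond_3} do not impose. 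Theorem \ref{sup_l}, which you invoke ``in spirit'', only covers $m\asymp f\ell_n^{2/3}$. So the literal $\mathbb{E}$ version, if it is really intended, would require either additional hypotheses or a different treatment of $\Delta$; the paper sidesteps this entirely by only ever needing and proving the random-compensator version that your second paragraph already delivers.
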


\section{The structure of the giant component}
The bounds in the previous section will allow us to determine the structure of the giant component of $G(\textbf{W},p_f)$. We write  $H^*_f$ for the component of $G(\textbf{W},p_f)$ being explored at time $\frac{f\ell_n^{2/3}}{C}$. We will prove that this component is the largest one with high enough probability. Informally, the BFW has a random unbiased part plus a drift (its expectation). Corollary \ref{S_h} shows that the drift of $L^0$ is a parabola that has its maximum at $\frac{f\ell_n^{2/3}}{C}$. Given the concentration of  $L^0$, and if we  also assume that it behaves like $L$,  it follows that $L$ also has its maximum around $\frac{f\ell_n^{2/3}}{C}$. Now recall that $L$ corresponds to the number of nodes discovered but not yet explored. It is then naturally  maximal when the exploration process is in a large connected component. Hence $H^*_f$ should be the largest component. In this section we will prove this rigorously. Then we will prove in the following section that the other connected components are small enough. \footnote{In the rest of the proof, and in order to ease notations we do not use integer part notations for the indices and instead abuse notation by using real indices in our sums sometimes.}

\begin{figure}[!htbp]
\centering
\includegraphics[width=0.9\textwidth]{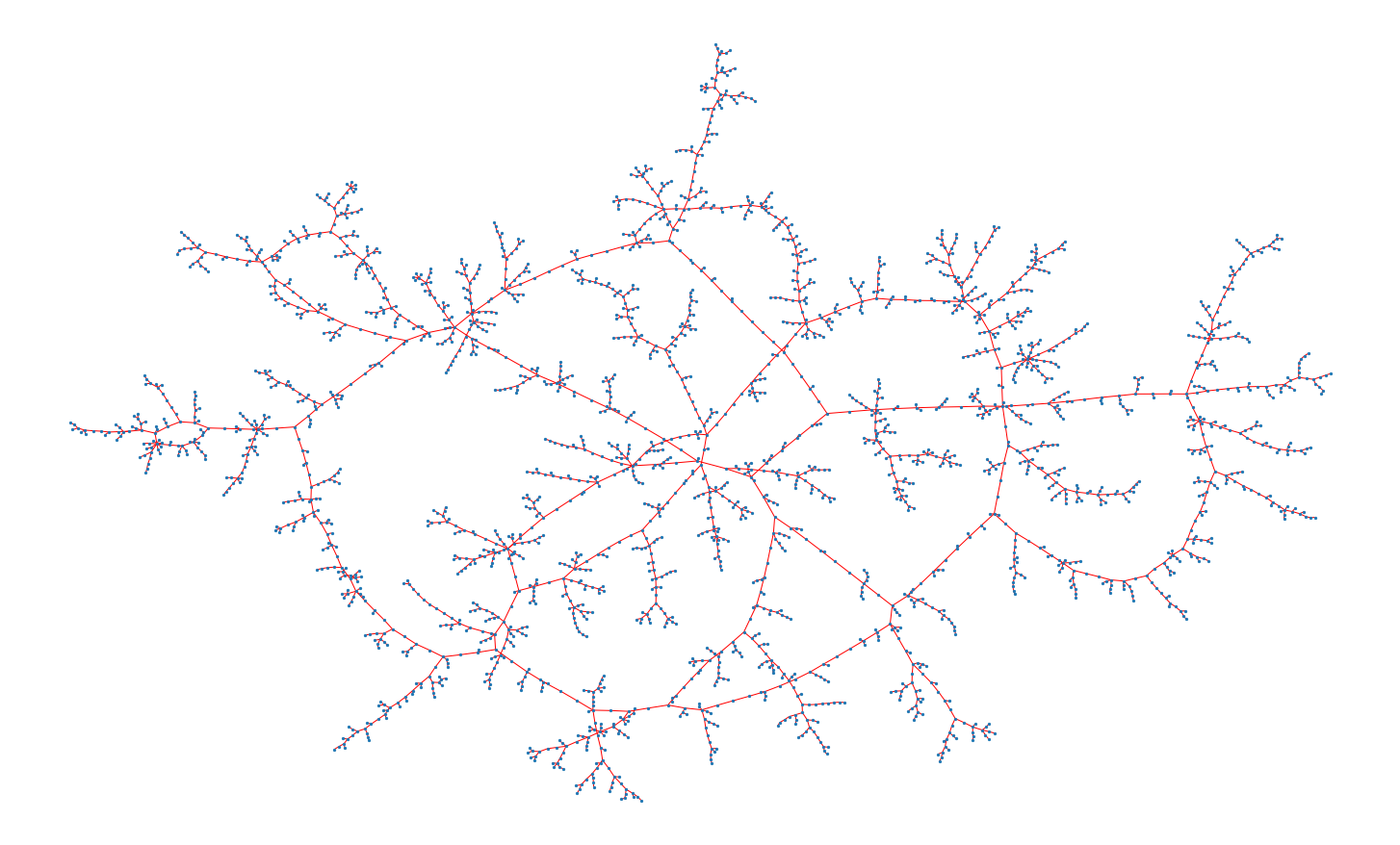}
\caption{The largest connected component of the graph in Figure \ref{figg1}. Its size is $2654$.}
\end{figure}

\subsection{The size of the giant component}
\begin{Theorem}
\label{lower_bound_1}
Suppose that Conditions \ref{cond_nodes} hold. Let $1 > \epsilon' > 0$. For $f$ large enough and for any $1 \geq \epsilon > 0$ consider the following event:  \\
The exploration of $H_f^*$ starts before time $\frac{\ell_n^{2/3}}{f^{1-\epsilon}C}$ and ends between times $\frac{2(1-\epsilon')f\ell_n^{2/3}}{C}$ and $\frac{2(1+\epsilon')f\ell_n^{2/3}}{C}$. \\
Then there exists a positive constant $A > 0$ such that  the probability of this event not happening is at most
$$A\exp\left(\frac{-f^{\epsilon}}{A}\right).$$
\end{Theorem}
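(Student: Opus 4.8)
The plan is to pass to the walk $L'$ and its running minimum $\underline L'_i:=\min_{j\le i}L'_j$, using $L_i=L'_i-\underline L'_i+1$: a component is completed exactly when $L$ is reflected ($L_i=1$), i.e.\ when $L'$ attains a new record minimum, and $H^*_f$ is the excursion $[S,E-1]$ of $L$ above $1$ containing $t_2:=f\ell_n^{2/3}/C$, on which $\underline L'_i\equiv\underline L'_{S-1}$ and $L'_i\ge\underline L'_{S-1}+1$. Writing $t_1:=\ell_n^{2/3}/(f^{1-\epsilon}C)$, the statement reduces, up to irrelevant additive constants in the indices, to three claims, each to fail with probability $\le Ae^{-f^\epsilon/A}$: \textbf{(i)} $S<t_1$, equivalently $L_i\ge 2$ for all $i\in[t_1,t_2]$; \textbf{(ii)} $E\le 2(1+\epsilon')f\ell_n^{2/3}/C$; \textbf{(iii)} $E>2(1-\epsilon')f\ell_n^{2/3}/C$.

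The engine is a one–sided control of $L'$ (and of $L^0$) against the parabolic drift of Corollary~\ref{S_h}. One works on $\mathcal E:=\{\sup_{i\le 4f\ell_n^{2/3}/C}L_i\le h\}$ with $h:=10f^2\ell_n^{1/3}/C$, which by Theorem~\ref{sup_l} has probability $\ge 1-Ae^{-f/A}$; on $\mathcal E$ one has $L_{k-1}-1\le h$, hence $\tilde L_j-\tilde L_i\ge\tilde L^h_j-\tilde L^h_i$ for $i\le j\le 4f\ell_n^{2/3}/C$, which disposes of the circular dependence between $L$ and its compensator. Since $L'-\tilde L$ is a martingale (Fact~\ref{fact1}) whose predictable quadratic variation is $\le\sum(w_{v(i)}+w_{v(i)}^2)$ — concentrated via Theorem~\ref{replacement} and Theorem~\ref{concentration_final} — Bernstein's inequality for martingales together with the concentration of $\tilde L^h$ around $\mathbb E[\tilde L^h]$ (Lemma~\ref{inf_s_0}) gives: on $\mathcal E$, off an event of probability $\le A\exp(-y^2/(A(yn^{1/3}+m)))$,
\[
L'_i\ \ge\ \mathbb E[\tilde L^h_i]-y\quad\text{for all }i\le m\le 4f\ell_n^{2/3}/C ,
\]
and likewise $L^0_m\le\mathbb E[L^0_m]+y$ by Theorem~\ref{inf_s_0_3}; the only restriction is that $(\sqrt{m(m+h)},y)$ verify Conditions~\ref{cond_2}. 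By Corollary~\ref{S_h} these expectations all equal $if\ell_n^{-1/3}-Ci^2/(2\ell_n)+1+o(\cdot)$ (the $h$–term being negligible), a concave parabola which is nonnegative and increasing on $[0,t_2]$, peaks at $\sim f^2\ell_n^{1/3}/(2C)$, and equals $\sim\pm 2(1\mp\epsilon')\epsilon'f^2\ell_n^{1/3}/C$ at $2(1\mp\epsilon')f\ell_n^{2/3}/C$.

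Parts (ii) and (iii) are then short. For $m\asymp f\ell_n^{2/3}$ the constraint above allows $y=\gamma f^2\ell_n^{1/3}$ with $\gamma$ a small constant (depending on $C,\epsilon'$), at probability cost $\le Ae^{-f^2/A}$. Taking $m=2(1+\epsilon')f\ell_n^{2/3}/C$ gives $L^0_m\le-2(1+\epsilon')\epsilon'f^2\ell_n^{1/3}/C+\gamma f^2\ell_n^{1/3}<0$; running $i$ over $(t_2,\,2(1-\epsilon')f\ell_n^{2/3}/C]$ gives $L'_i\ge 2(1-\epsilon')\epsilon'f^2\ell_n^{1/3}/C-\gamma f^2\ell_n^{1/3}\ge 2$ uniformly. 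On the event of part (i) one has $\underline L'_{S-1}=\underline L'_{t_1}\in[-o(f^2\ell_n^{1/3}),1]$, so $L'_m\le L^0_m<\underline L'_{S-1}+1$ forces $m\notin[S,E-1]$, i.e.\ $E\le m$ (claim (ii)), while $L'_i>\underline L'_{t_1}$ on $(t_2,\,2(1-\epsilon')f\ell_n^{2/3}/C]$ keeps $\underline L'$ constant there, whence $L_i\ge 2$ and $H^*_f$ is not yet completed (claim (iii)).

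Part (i) is the main obstacle: the uniform bound above over all of $[0,t_2]$ forces $y\asymp f^2\ell_n^{1/3}$, far larger than $\mathbb E[\tilde L^h_{t_1}]\asymp f^\epsilon\ell_n^{1/3}$, so $L'$ at time $t_1$ is not high enough to absorb, in one shot, the worst–case downward deviation allowed at confidence $e^{-f^\epsilon}$ over a window of length $t_2$. I would instead use a dyadic block decomposition. First, if $L$ is reflected somewhere in $[t_1,t_2]$ then $\min_{i\in[t_1,t_2]}L'_i\le\underline L'_{t_1}$, so it suffices to bound $\mathbb P(\exists\,i\in[t_1,t_2],\ j\le t_1:\ L'_i\le L'_j)$; on the short window $[0,t_1]$, Lemma~\ref{inf_s_0} gives $\underline L'_{t_1}\ge 1-o(f^\epsilon\ell_n^{1/3})$ off an $e^{-f^\epsilon/A}$–event (this is also the estimate for $\underline L'_{S-1}$ used above). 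Then split $[t_1,t_2]$ into blocks $[2^kt_1,2^{k+1}t_1]$, $k\ge 0$: on block $k$ the drift has accumulated $\mathbb E[\tilde L^h_{2^kt_1}]\gtrsim 2^kf^\epsilon\ell_n^{1/3}$, and controlling the fluctuation of $L'-\mathbb E[\tilde L^h]$ over $[0,2^{k+1}t_1]$ at level $2^kf^\epsilon\ell_n^{1/3}$ costs only $\exp(-c\,2^kf^\epsilon)$, since $(2^kf^\epsilon\ell_n^{1/3})^2/(2^kf^\epsilon\ell_n^{1/3}n^{1/3}+2^kt_1)\asymp 2^kf^\epsilon$; hence $L'_i\ge 2>\underline L'_{t_1}$ on block $k$ off an event of probability $\le Ae^{-2^kf^\epsilon/A}$. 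Summing $\sum_{k\ge 0}Ae^{-2^kf^\epsilon/A}\le A'e^{-f^\epsilon/A'}$ and adding $\mathbb P(\mathcal E^c)$ proves (i). A final union bound over (i)–(iii) and $\mathcal E^c$, with $\gamma$ small and $\bar A,f$ large (depending on $C,\epsilon,\epsilon'$), yields the bound $A\exp(-f^\epsilon/A)$.
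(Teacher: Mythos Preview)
Your proof is correct and follows essentially the same route as the paper: condition on $\mathcal E=\{\sup_{i\le 4f\ell_n^{2/3}/C}L_i\le h\}$ via Theorem~\ref{sup_l}, use $\tilde L\ge\tilde L^h$ on $\mathcal E$ together with martingale Bernstein (Theorem~\ref{inf_s_1}) and the concentration of $\tilde L^h$ (Lemma~\ref{inf_s_0}) against the parabolic mean of Corollary~\ref{S_h}, and handle the delicate interval near $t_1$ by a dyadic block decomposition summing to $\sum_k e^{-c\,2^kf^\epsilon}$. The only cosmetic differences are that the paper runs its dyadic all the way to $2(1-\epsilon')f\ell_n^{2/3}/C$ with a special last block, whereas you stop the dyadic at the parabola's peak $f\ell_n^{2/3}/C$ and treat $(f\ell_n^{2/3}/C,\,2(1-\epsilon')f\ell_n^{2/3}/C]$ separately with a single deviation $y=\gamma f^2\ell_n^{1/3}$, and that the paper phrases the end--time argument through $Z(t_2)=-\underline L'_{t_2}+1$ rather than $\underline L'_{S-1}$; these are equivalent once one is on the event of part~(i).
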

\begin{proof}
Let  $t_1 = \frac{\ell_n^{2/3}}{f^{1-\epsilon}C}$,  $t_2 = \frac{2(1-\epsilon')f\ell_n^{2/3}}{C}$ and $t_3 = \frac{2(1+\epsilon')f\ell_n^{2/3}}{C}$. \\
In order to prove this theorem we need to bound the probability that $L$ visits zero between times $t_1$ and $t_2$ and also the probability that $L$ does not visit $0$ between times $t_2$ and $t_3$. Recall that for any $i$:
$$
\tilde{L}_i = \sum_{k=1}^i\mathbb{E}[X_k|\mathcal{F}_{k-1}].
$$
We start by the probability of the first event. Recall that by definition $L \geq L'$. We will thus focus on $L'$. For any $h > 0$, $\tilde{L}$ is at least $\tilde{L}^h$ until the first time $i$ when $L_i \geq h$. \par
Let $h=\frac{10f^2\ell_n^{1/3}}{C}$.
Then by Theorem \ref{sup_l} and Conditions \ref{cond_nodes}: \begin{equation}
\label{eq611}
\mathbb{P}\left(\sup_{1 \leq j \leq t_2} L_j \geq h\right) \leq A\exp\left(\frac{-f}{A}\right).
\end{equation}
Now divide the interval $[t_1,t_2]$ by introducing intervals of the form $[t'_i,t'_{i+1}]$ with
$$
t'_i=t_1+\frac{2^{i+1}\ell_n^{2/3}}{f^{1-\epsilon}C}.
$$
This subdivision is necessary in order to respect Conditions \ref{cond_3} when we apply our concentration theorems. We stop at $t'_{\bar{i}}=t_2$ by truncating the last interval. By Corollary \ref{S_h} and a straightforward calculation, for $i < \bar{i}-1$:
\begin{equation}
\begin{aligned}
\label{L^hh}
\min_{t'_i \leq j \leq t'_{i+1}} \mathbb{E}(\tilde{L}_j^{h})
&\geq \frac{2^i\epsilon'f^{\epsilon}\ell_n^{1/3}}{2C},
\end{aligned}
\end{equation}
and:
\begin{equation}
\begin{aligned}
\label{L^hhh}
\min_{t'_{\bar{i}-1} \leq j \leq t'_{\bar{i}}} \mathbb{E}(\tilde{L}_j^{h})
&\geq \frac{\epsilon'f^{2}\ell_n^{1/3}}{2C}.
\end{aligned}
\end{equation}
A simple computation shows that we can apply Theorem \ref{inf_s_0} to $\tilde{L}^h$ between $1$ and $t_{i+1}$ in order to obtain the following inequalities for $i < \bar{i}-1$ and for $\bar{i}$:  
\begin{equation}
\begin{aligned}
\label{eq35}
&\mathbb{P}\left( \inf_{t'_i \leq j \leq t'_{i+1}} (\tilde{L}_j^{h}-\mathbb{E}(\tilde{L}_j^{h}))\leq  - \frac{2^{i-1}\epsilon'f^{\epsilon}\ell_n^{1/3}}{2C}\right) \leq A\exp\left(\frac{-2^{i-1}f^{\epsilon}}{A}\right),\\
&\mathbb{P}\left( \inf_{t'_{\bar{i}-1} \leq j \leq t'_{\bar{i}}} (\tilde{L}_j^{h}-\mathbb{E}(\tilde{L}_j^{h}))\leq  - \frac{\epsilon'f^{2}\ell_n^{1/3}}{4C}\right) \leq A\exp\left(\frac{-f}{A}\right).
\end{aligned}
\end{equation}
By the union bound using Equations  \eqref{L^hh}, \eqref{L^hhh} and \eqref{eq35}, we get:
\begin{equation}
\begin{aligned}
\label{eq61}
&\mathbb{P}\left( \inf_{t_1 \leq j \leq t_2} L_j \leq  0\right) \\
&\leq \sum_{i=0}^{\bar{i}-1}\mathbb{P}\left( \inf_{t'_i \leq j \leq t'_{i+1}} \tilde{L}_j \leq  \frac{2^{i-1}\epsilon'f^{\epsilon}\ell_n^{1/3}}{2C}\right)+ \sum_{i=0}^{\bar{i}-1}\mathbb{P}\left( \inf_{t'_i \leq j \leq t'_{i+1}} (L'_j-\tilde{L}_j ) \leq  -\frac{2^{i-1}\epsilon'f^{\epsilon}\ell_n^{1/3}}{2C}\right)\\
&+ \mathbb{P}\left( \inf_{t'_{\bar{i}-1} \leq j \leq t'_{\bar{i}}} \tilde{L}_j \leq   \frac{\epsilon'f^{2}\ell_n^{1/3}}{4C}\right)+\mathbb{P}\left( \inf_{t'_{\bar{i}-1} \leq j \leq t'_{\bar{i}}} (L'_j-\tilde{L}_j)\leq  - \frac{\epsilon'f^{2}\ell_n^{1/3}}{4C}\right) \\
&\leq \sum_{i=0}^{\bar{i}-1}\mathbb{P}\left( \inf_{t'_i \leq j \leq t'_{i+1}} \tilde{L}^h_j \leq  \frac{2^{i-1}\epsilon'f^{\epsilon}\ell_n^{1/3}}{2C}\right)+ \sum_{i=0}^{\bar{i}-1}\mathbb{P}\left( \inf_{t'_i \leq j \leq t'_{i+1}} (L'_j-\tilde{L}_j ) \leq  -\frac{2^{i-1}\epsilon'f^{\epsilon}\ell_n^{1/3}}{2C}\right)\\
&+ \mathbb{P}\left( \inf_{t'_{\bar{i}-1} \leq j \leq t'_{\bar{i}}} \tilde{L}^h_j \leq   \frac{\epsilon'f^{2}\ell_n^{1/3}}{4C}\right)+\mathbb{P}\left( \inf_{t'_{\bar{i}-1} \leq j \leq t'_{\bar{i}}} (L'_j-\tilde{L}_j)\leq  - \frac{\epsilon'f^{2}\ell_n^{1/3}}{4C}\right)\\
&+ \mathbb{P}\left(\sup_{1 \leq j \leq t_2} L_j \geq h\right) \\
&\leq \sum_{i=0}^{\bar{i}-1}\mathbb{P}\left( \inf_{t'_i \leq j \leq t'_{i+1}} (\tilde{L}^h_j-\mathbb{E}[\tilde{L}^h_j]) \leq  -\frac{2^{i-1}\epsilon'f^{\epsilon}\ell_n^{1/3}}{2C}\right)+ \sum_{i=0}^{\bar{i}-1}\mathbb{P}\left( \inf_{t'_i \leq j \leq t'_{i+1}} (L'_j-\tilde{L}_j ) \leq  -\frac{2^{i-1}\epsilon'f^{\epsilon}\ell_n^{1/3}}{2C}\right)\\
&+ \mathbb{P}\left( \inf_{t'_{\bar{i}-1} \leq j \leq t'_{\bar{i}}} (\tilde{L}^h_j-\mathbb{E}[\tilde{L}^h_j])  \leq  - \frac{\epsilon'f^{2}\ell_n^{1/3}}{4C}\right)+\mathbb{P}\left( \inf_{t'_{\bar{i}-1} \leq j \leq t'_{\bar{i}}} (L'_j-\tilde{L}_j)\leq  - \frac{\epsilon'f^{2}\ell_n^{1/3}}{4C}\right)\\
&+ \mathbb{P}\left(\sup_{1 \leq j \leq t_2} L_j \geq h\right) \\
&\leq \sum_{i=0}^{\infty}A\exp\left(\frac{-2^{i-1}f^{\epsilon}}{A}\right) + A\exp\left(\frac{-f}{A}\right) \\
&\leq A'\exp\left(\frac{-f^{\epsilon}}{A'}\right),
\end{aligned}
\end{equation}
here the constant $A'>0$ is large enough and of course these inequalities only hold for $n$ large enough. \\
We now show that $L$ visits $0$ between times $t_2$ and $t_3$. 
Recall that $(Z(i))_{i \leq n}$ is defined by $Z(i) = L_i-L'_i$. Then if $L_{t_3}' \leq -Z(t_2)$, it means that $L'$ attained a new minimum between $t_2$ and $t_3$ i.e $L$ visited $0$ between $t_2$ and $t_3$. Also, by construction, $Z(i) = -\min\limits_{j \leq i}(L'_j)+1$. Since $L'$ is deterministically  smaller than $L^0$, if $L_{t_3}' \geq -Z(t_2)$ then $L_{t_3}^{0} \geq -Z(t_2)$. Therefore, it is sufficient to bound  $\mathbb{P}(L_{t_3}^{0} \geq -Z(t_2))$. We do so by introducing an intermediate term:
\begin{equation}
\begin{aligned}
\label{couv}
\mathbb{P}(L_{t_3}^{0} \geq -Z_{t_2}) &\leq \mathbb{P}\left(L_{t_3}^{0} \geq -\frac{\epsilon'f^{2}\ell_n^{1/3}}{C}\right) + \mathbb{P}\left(Z(t_2) \geq \frac{\epsilon'f^{2}\ell_n^{1/3}}{C}\right) \\
&\leq \mathbb{P}\left(L_{t_3}^{0} \geq -\frac{\epsilon'f^{2}\ell_n^{1/3}}{C}\right) + \mathbb{P}\left(Z(t_2) \geq \frac{\epsilon'f^{\epsilon}\ell_n^{1/3}}{C}\right),
\end{aligned}
\end{equation}
we bound each one of the two terms of the right-hand side of \eqref{couv} separately. First:
$$
\mathbb{P}\left(Z(t_2) \geq \frac{\epsilon'f^{\epsilon}\ell_n^{1/3}}{C}\right) \leq \mathbb{P}\left(Z(t_1) \geq \frac{\epsilon'f^{\epsilon}\ell_n^{1/3}}{C}\right)+ \mathbb{P}\left(Z(t_2) > Z_{t_1}\right).
$$
Since $Z(t_2) > Z(t_1)$ occurs precisely if $L$ visits $0$ between $t_1$ and $t_2$ we already know by Equation \eqref{eq61} that:
\begin{equation}
\begin{aligned}
\label{toutou}
\mathbb{P}(Z(t_2) > Z(t_1)) \leq  A'\exp\left(\frac{-f^{\epsilon}}{A'}\right).
\end{aligned}
\end{equation}
By definition $Z(t_1) \geq r$ precisely if $L'_i < 1-r$  for some $i \leq t_1$. By Corollary \ref{S_h}, for any $i \leq t_1$:
$$
\mathbb{E}(L_i^{h}) \geq 0.
$$
Using this inequality alongside Inequality \eqref{eq611} and Theorems  \ref{inf_s_0} and \ref{inf_s_1} yields:
\begin{equation}
\begin{aligned}
\label{toutou_2}
&\mathbb{P}\left(Z(t_1) \geq \frac{\epsilon'f^{\epsilon}\ell_n^{1/3}}{C}\right) \\
&= \mathbb{P}\left(\inf_{i \leq t_1}(  L_i') \leq 1 - \frac{\epsilon'f^{\epsilon}\ell_n^{1/3}}{C}\right) \\
&\leq \mathbb{P}\left( \inf_{1\leq j \leq t_1} (\tilde{L}^h_j) \leq  - \frac{\epsilon'f^{\epsilon}\ell_n^{1/3}}{4C}\right)+\mathbb{P}\left( \inf_{1 \leq j \leq t_1} (L'_j-\tilde{L}_j)\leq  - \frac{\epsilon'f^{\epsilon}\ell_n^{1/3}}{4C}\right)
+ \mathbb{P}\left(\sup_{1 \leq j \leq t_1} L_j \geq h\right) \\
&\leq A\exp\left(\frac{-f^{\epsilon}}{A}\right).
\end{aligned}
\end{equation}
By the union bound between Equations \eqref{toutou} and \eqref{toutou_2} we get:
\begin{equation}
\label{eq46}
\mathbb{P}\left(Z(t_2) \geq \frac{\epsilon'f^{\epsilon}\ell_n^{1/3}}{C}\right)\leq A\exp\left(\frac{-f^{\epsilon}}{A}\right).
\end{equation}
Furthermore, by Corollary \ref{S_h}:
$$
\mathbb{E}[L^0_{t_3}] \leq -\frac{2\epsilon'f^{2}\ell_n^{1/3}}{C}.
$$
By this fact and Theorem \ref{inf_s_0_3} we obtain:  
\begin{equation}
\begin{aligned}
\label{eq45}
\mathbb{P}\left(L^0(t_3) \geq -\frac{\epsilon'f^{2}\ell_n^{1/3}}{C}\right) &\leq 
\mathbb{P}\left(L^0(t_3) -\mathbb{E}[L^0_{t_3}]\geq \frac{\epsilon'f^{2}\ell_n^{1/3}}{C}\right) \\
&\leq A'\exp\left(\frac{-f}{A'}\right).
\end{aligned}
\end{equation}
Injecting Inequalities \eqref{eq46} and \eqref{eq45} in Inequality \eqref{couv} yields:
\begin{equation*}
\mathbb{P}(L_{t_3}^{0} \geq -Z(t_2)) \leq A\exp\left(\frac{-f^{\epsilon}}{A}\right),
\end{equation*}
and this finishes the proof.
\end{proof}

The following  theorem gives a lower and upper bound on the total weight of $H_f^*$.
\begin{Theorem}
\label{Big_weight}
Suppose that Conditions \ref{cond_nodes} hold.  Let $1 > \epsilon' > 0$. For $f$ large enough and for any $1 \geq \epsilon > 0$,
let $t_1 = \frac{\ell_n^{2/3}}{f^{1-\epsilon}C}$, $t_2 = \frac{2(1-\epsilon')f\ell_n^{2/3}}{C}$ and $t_3 = \frac{2(1+\epsilon')f\ell_n^{2/3}}{C}$. \\
There exists a constant $A > 0$ such that the probability that the total weight of $H_f^*$ is less than $t_2-t_1-\epsilon'(t_2-t_1)$ or more than $t_3+\epsilon' t_3$ is at most 
$$ 
A\exp\left(\frac{-f^{\epsilon}}{A}\right).
$$
\end{Theorem}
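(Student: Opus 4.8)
The plan is to reduce the statement to two applications of the size-biased concentration estimate, feeding in the localization of the exploration window already proved in Theorem~\ref{lower_bound_1}. Write $a$ for the step at which $H_f^*$ is discovered and $b$ for the step at which it is explored; by the breadth-first construction the vertex set of $H_f^*$ is exactly $\{v(a),\dots,v(b-1)\}$, so its total weight equals $\sum_{k=a}^{b-1}w_{v(k)}$. Let $G$ be the event of Theorem~\ref{lower_bound_1}, on which $a\le t_1$ and $t_2\le b\le t_3$, and recall $\mathbb{P}(G^c)\le A\exp(-f^\epsilon/A)$. Since the weights are positive, on $G$ one has the deterministic sandwich
$$
\sum_{k=t_1}^{t_2}w_{v(k)}\ \le\ \sum_{k=a}^{b-1}w_{v(k)}\ \le\ \sum_{k=1}^{t_3}w_{v(k)},
$$
(using, as elsewhere in this section, the convention that $t_1,t_2,t_3$ are integers). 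It thus suffices to show that the left-hand sum exceeds $(1-\epsilon')(t_2-t_1)$ and the right-hand sum is below $(1+\epsilon')t_3$, off an event of probability $A\exp(-f^\epsilon/A)$.

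First I would estimate the expectations of the two extreme sums via Lemma~\ref{esperance}: summing $\mathbb{E}[w_{v(k)}]=1+\tfrac{k}{\ell_n}(1-C)+o\big(\tfrac{k+n^{2/3}}{n}\big)$ gives, since $t_3=\Theta(f\ell_n^{2/3})$ and $f=o(n^{1/3})$, that both the quadratic term $\tfrac{m^2}{2\ell_n}(1-C)$ and the error are $o(f\ell_n^{2/3})$; together with $t_1=o(t_2)$ this yields $\mathbb{E}\big[\sum_{k=1}^{t_3}w_{v(k)}\big]=t_3(1+o(1))$ and $\mathbb{E}\big[\sum_{k=t_1}^{t_2}w_{v(k)}\big]=(t_2-t_1)(1+o(1))$. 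Moreover $1-C\le 0$ by Cauchy--Schwarz and $\mathbb{E}[W^2]=\mathbb{E}[W]$, so the quadratic correction only lowers the means, which is the favorable direction for the upper bound. Hence for $n$ large enough $\mathbb{E}\big[\sum_{1}^{t_3}w_{v(k)}\big]\le(1+\tfrac{\epsilon'}{2})t_3$ and $\mathbb{E}\big[\sum_{t_1}^{t_2}w_{v(k)}\big]\ge(1-\tfrac{\epsilon'}{2})(t_2-t_1)$.

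It then remains to bound the deviations of the two sums from their means by $\tfrac{\epsilon'}{2}t_3$ and $\tfrac{\epsilon'}{2}(t_2-t_1)$. I would apply Theorem~\ref{concentration_final} with $(l,m,y)=(1,t_3,\tfrac{\epsilon'}{2}t_3)$ and with $(l,m,y)=(t_1,t_2,\tfrac{\epsilon'}{2}(t_2-t_1))$; in both cases $m-l=\Theta(f\ell_n^{2/3})\to\infty$, $y=O(m-l)$, and one checks directly that $(\sqrt{m(m-l)},y)$ verifies Conditions~\ref{cond_2} (all six requirements reduce to $f=o(n^{1/3})$). Here $yn^{1/3}$ dominates $m-l$, so the exponent $\tfrac{y^2}{A(yn^{1/3}+(m-l))}$ is of order $f\ell_n^{1/3}$, giving a deviation probability at most $A\exp(-f\ell_n^{1/3}/A)\le A\exp(-f^\epsilon/A)$ for each sum. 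Combining the sandwich, the two expectation bounds, and a union bound over $G^c$ and the two deviation events then gives $t_2-t_1-\epsilon'(t_2-t_1)\le\sum_{k=a}^{b-1}w_{v(k)}\le t_3+\epsilon' t_3$ off an event of probability $A\exp(-f^\epsilon/A)$. The only delicate part is the bookkeeping in verifying Conditions~\ref{cond_2} for the chosen parameters and tracking the sign of $1-C$; conceptually everything rests on Theorem~\ref{lower_bound_1}.
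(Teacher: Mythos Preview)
Your proposal is correct and follows essentially the same approach as the paper: both invoke Theorem~\ref{lower_bound_1} to obtain the sandwich $\sum_{t_1}^{t_2}w_{v(k)}\le \text{weight}(H_f^*)\le\sum_{1}^{t_3}w_{v(k)}$, compute the means via Lemma~\ref{esperance}, and control the deviations with Theorem~\ref{concentration_final}, getting an exponent of order $f\ell_n^{1/3}$ which is swallowed by the $f^\epsilon$ coming from the localization event. Your write-up is in fact slightly more careful than the paper's in checking Conditions~\ref{cond_2} and in noting the sign of $1-C$, but the structure and the key inputs are identical.
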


\begin{proof}
Let $E$ be the event that $L_i$ visists $0$ for an  $t_1 \leq i \leq t_2$ or $L_i$ does not visit $0$ for any $t_2 \leq i \leq t_3$. For $n$ large enough, Theorem \ref{lower_bound_1} states that there exists $A > 0$ such that:
$$\mathbb{P}(E) \leq A\exp\left(\frac{-f^{\epsilon}}{A}\right).$$ \\
If $E$ does not hold, the total weight of $H_f^*$ is larger than: 
$$T = \sum_{i=t_1}^{t_2}w_{v(i)}.$$
By Lemma \ref{esperance}
$$\mathbb{E}[T] = (t_2-t_1) + o(t_2-t_1).$$
By  Theorem \ref{concentration_final}, there exist positive constants $A'',A'''$ such that: 
\begin{equation*}
\begin{aligned}
\label{brr}
\mathbb{P}\left[T \leq \mathbb{E}(T) - \epsilon'(t_2-t_1) \right] &\leq A''\exp\left(\frac{-\epsilon'f\ell_n^{1/3}}{A''}\right),
\end{aligned}
\end{equation*}
hence by the union bound the total weight of  $H_f^*$ is less than $t_2-t_1-\epsilon'(t_2-t_1)$ with probability at most:
\begin{equation*}
\begin{aligned}
\mathbb{P}\left[T \leq  (t_2-t_1) -\epsilon'(t_2-t_1)\right]  + \mathbb{P}(E) 
\leq A'\exp\left(\frac{-f^{\epsilon}}{A'}\right),
\end{aligned}
\end{equation*}
where $A > 0$ is a large constant.  
Moreover when $E$  does not hold the total weight of $H_f^*$ is less than:
$$T' = \sum_{i=0}^{t_3}w_{v(i)}.$$
By the same arguments  $H_f^*$ is  more than $t_3+\epsilon' t_3$ with probability at most:
\begin{equation*}
\begin{aligned}
\mathbb{P}\left[T' \geq t_3 + \epsilon' t_3 \right] + \mathbb{P}(E)\leq  A'\exp\left(\frac{-f^{\epsilon}}{A'}\right).
\end{aligned}
\end{equation*}
\end{proof}

\subsection{The excess of the giant component.}
The previous theorems give us information about the size of $H^*_f$. We now turn to its surplus. Recall that the surplus (or excess) is the number of edges we need to remove from a connected graph in order to make it a tree. The excess of a general graph is the sum of excesses of its connected components. \par

\begin{Theorem}
\label{exc_big}
Suppose that Conditions \ref{cond_nodes} hold. Let $Exc$ be the excess of $H^*_f$, there exists a positive constant $A >0$ such that:
\begin{equation*}
\mathbb{P}(Exc \geq Af^3) \leq A\exp\left(\frac{-f}{A}\right).
\end{equation*}
\end{Theorem}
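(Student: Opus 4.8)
The plan is to bound $Exc$ by the total number of surplus edges inspected up to time $t_3:=\frac{2(1+\epsilon')f\ell_n^{2/3}}{C}$ (say with $\epsilon'=\tfrac12$) and then to exploit the conditional independence of the surplus edges recorded in Fact \ref{fact2}. Recall from the discussion preceding Fact \ref{fact2} that the surplus edges of a connected component explored as $v(a),\dots,v(b)$ are exactly the present edges among the pairs $\{v(i),v(j)\}$ with $a\le i$ and $i+1\le j\le i+L_i-1$, so that $Exc=\sum_{v(i)\in H^*_f}\sum_{j=i+1}^{i+L_i-1}Y(v(i),v(j))$. By Theorem \ref{lower_bound_1} applied with $\epsilon=1$ there is an event $E$ with $\mathbb{P}(E^{c})\le A\exp(-f/A)$ on which the whole exploration of $H^*_f$ takes place within $[1,t_3]$; since all the summands are nonnegative, on $E$ one has
\[
Exc\ \le\ S\ :=\ \sum_{i=1}^{t_3}\ \sum_{j=i+1}^{i+L_i-1}Y(v(i),v(j)).
\]

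First I would record that, by Fact \ref{fact2}, conditionally on $\mathcal F_n$ the variables $Y(v(i),v(j))$ ($i+1\le j\le i+L_i-1$) are independent Bernoulli with parameters $1-e^{-w_{v(i)}w_{v(j)}p_f}$, and the index set $\{(i,j):i\le t_3,\ i+1\le j\le i+L_i-1\}$ is $\mathcal F_n$-measurable; hence, given $\mathcal F_n$, $S$ is a sum of independent Bernoulli variables with conditional mean
\[
\mu\ :=\ \sum_{i=1}^{t_3}\sum_{j=i+1}^{i+L_i-1}\bigl(1-e^{-w_{v(i)}w_{v(j)}p_f}\bigr)\ \le\ p_f\sum_{i=1}^{t_3}w_{v(i)}\sum_{j=i+1}^{i+L_i-1}w_{v(j)},
\]
using Equation \eqref{expx}. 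The heart of the proof is to show $\mu=O(f^3)$ off an event of probability $A\exp(-f/A)$. Set $h:=\frac{10f^2\ell_n^{1/3}}{C}$. Since $\frac{f\ell_n^{2/3}}{C}\le t_3\le\frac{4f\ell_n^{2/3}}{C}$, Theorem \ref{sup_l} gives an event of probability $\ge 1-A\exp(-f/A)$ on which $L_i\le h$ for all $i\le t_3$, whence $\sum_{j=i+1}^{i+L_i-1}w_{v(j)}\le\sum_{j=i+1}^{i+h}w_{v(j)}$. A direct check shows $(t_3+h,\,h)$ verifies Conditions \ref{cond_2}, so Theorem \ref{concentration}, together with $\mathbb{E}\bigl[\sum_{j=1}^{k}w_{v(j)}\bigr]=k(1+o(1))$ for $k=O(f\ell_n^{2/3})$ (Lemma \ref{esperance}), provides an event of probability $\ge 1-A\exp(-f^{2}/A)$ on which simultaneously $\sum_{j=i+1}^{i+h}w_{v(j)}\le 4h$ for every $i\le t_3$ and $\sum_{i=1}^{t_3}w_{v(i)}\le 2t_3$. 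All three events are $\mathcal F_n$-measurable, and on their intersection, using $p_f\le 2/\ell_n$,
\[
\mu\ \le\ p_f\cdot 4h\cdot 2t_3\ \le\ \frac{A_0}{C^{2}}\,f^{3}
\]
for a universal constant $A_0$.

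Finally I would apply a conditional Chernoff bound. On the $\mathcal F_n$-measurable good event above, $\mathbb{P}(S\ge t\mid\mathcal F_n)\le e^{-t}\,\mathbb{E}[e^{S}\mid\mathcal F_n]\le e^{-t+(e-1)\mu}\le e^{-t+2A_0C^{-2}f^{3}}$; taking $t=3A_0C^{-2}f^{3}$ makes this $\le e^{-A_0C^{-2}f^{3}}\le e^{-f}$ for $f$ large. Integrating over the good event and adding the $A\exp(-f/A)$-probabilities of $E^{c}$, of $\{\sup_{i\le t_3}L_i> h\}$, and of the weight-concentration failure, we obtain $\mathbb{P}\bigl(Exc\ge 3A_0C^{-2}f^{3}\bigr)\le A\exp(-f/A)$, and enlarging $A$ beyond $3A_0C^{-2}$ yields the statement. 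The one genuinely delicate point is the estimate $\mu=O(f^{3})$: it requires combining, on a \emph{single} high-probability event, the uniform control of $\sup_{i\le t_3}L_i$ (Theorem \ref{sup_l}) with the uniform weight concentration of Section 2 — a pointwise estimate plus a union bound over the $\Theta(f\ell_n^{2/3})$ windows would be far too weak when $f$ is only a large constant — and it matters that all these events lie in $\mathcal F_n$ so that the conditional Bernoulli-sum structure of $S$ can be used. Everything else is a routine Chernoff computation.
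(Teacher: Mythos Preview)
Your proof is correct and follows the same overall architecture as the paper's: localise $H^*_f$ to $[1,t_3]$ via Theorem \ref{lower_bound_1}, bound $\sup_{i\le t_3}L_i$ by $h=\frac{10f^2\ell_n^{1/3}}{C}$ via Theorem \ref{sup_l}, invoke Fact \ref{fact2} to view the surplus as a conditional sum of independent Bernoullis, show its conditional mean $\mu=O(f^3)$ on an $\mathcal F_n$-measurable good event, and finish with a Chernoff-type bound.

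The one genuine difference is in how $\mu$ is controlled. The paper pads the random window $[i+1,i+L_i-1]$ to the deterministic $[i+1,i+R]$ via the auxiliary variables $U(R,i)$, then bounds $\sum_i\sum_{j=i+1}^{i+R}w_{v(i)}w_{v(j)}p_f$ by Cauchy--Schwarz over blocks of length $R$, reducing to $\sum w_{v(i)}^2$, and finally applies Theorem \ref{replacement} plus Bernstein to the squares. You instead keep the random window, bound each inner sum by $\sum_{j=i+1}^{i+h}w_{v(j)}\le 4h$ uniformly in $i$ using the $\sup$-concentration of Theorem \ref{concentration}, and combine with $\sum_{i\le t_3}w_{v(i)}\le 2t_3$. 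Your route is slightly cleaner: it avoids the $U(R,i)$ device and the Cauchy--Schwarz detour, at the price of relying on the heavier Theorem \ref{concentration} rather than the more elementary Theorem \ref{replacement}. Both yield the same $\mu=O(f^3)$ with the same exponential tail, so the final bounds coincide.
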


\begin{proof}
By construction, if a component is discovered between times $t_1$ and $t_2$ of the process,  then its excess is precisely
$$
\sum\limits_{i=t_1}^{t_2}\sum\limits_{j=i+1}^{L_i+i-1}Y(v(i),v(j)).
$$
Let $m = \frac{3f\ell_n^{2/3}}{2C}$. By  Theorem \ref{sup_l}:  
\begin{equation}
\begin{aligned}
\label{eq43}
\mathbb{P}\left(\sup_{ 1 \leq i \leq m}(L_i) \geq \frac{10f^2\ell_n^{1/3}}{C}\right)
&\leq A''\exp\left(\frac{-f}{A''}\right).
\end{aligned}
\end{equation}
By  Theorem \ref{lower_bound_1},  there exists  a constant $A' > 0$ such that the probability that $H^*_f$ has size more than $m$ is at most:  
\begin{equation}
\begin{aligned}
\label{eq44}
A'\exp\left(\frac{-f}{A'}\right).
\end{aligned}
\end{equation}
Let $E$ be the event that $H^*_f$ has size less than $m$ and  $L_i \leq \frac{10f^2\ell_n^{1/3}}{C} \text{ for all } 1 \leq i \leq m$. By the union bound between Inequalities \eqref{eq43} and \eqref{eq44} we get:
\begin{equation}
\label{fin_2_bzzz}
\mathbb{P}(\bar{E}) \leq A''\exp\left(\frac{-f}{A''}\right),
\end{equation}
for some large constants $A'' >0$. Let $R = \frac{10f^2\ell_n^{1/3}}{C}$ and:
$$
U(R,i) = \sum_{j=i+1}^{L_{i-1}+i-1}Y(v(i),v(j))+\sum_{j=L_{i-1}+i}^{R+i}Y'(v(i),v(j)),
$$
with $Y'(i,j)$ being a Bernoulli random variable independent of everything else and having the same distribution as $Y(i,j)$ for $i \neq j$.
 We have thus by the union bound for any $l \geq 0$:
\begin{equation}
\begin{aligned}
\label{baz_1}
\mathbb{P}\left(\textnormal{Exc} \geq l  \right) &\leq \mathbb{P}\left(\left(\sum\limits_{i=1}^{|H^*_f|}\sum_{j=i+1}^{L_{i-1}+i-1}Y(v(i),v(j)) \geq l\right), E\right) + \mathbb{P}\left(\bar{E}\right) \\
&\leq \mathbb{P}\left(\sum\limits_{i=1}^{m}U(R,i)\geq l\right) + \mathbb{P}\left(\bar{E}\right)
\end{aligned}
\end{equation}
Conditionally on $\mathcal{F}_n$ the $U(R,i)$'s are sums of independent Bernoulli random variables. This is true because the first sum in the definition of $U(R,i)$ consists on independent Bernoulli random variables as stated in Fact \ref{fact2}. Moreover, for any $(i,j)_{1 \leq i, \, 1+i \leq j \leq L_i+i-1}$ by Equation \ref{expx}:
$$
\mathbb{E}[Y(v(i),v(j)) | \mathcal{F}_n ] \leq w_{v(i)}w_{v(j)}p_f,
$$
and 
$$
\mathbb{E}[Y(v(i),v(j))^2 | \mathcal{F}_n] \leq w_{v(i)}w_{v(j)}p_f.
$$
The first inequality yields:
$$
\mathbb{E}\left[\sum\limits_{i=1}^{m}U(R,i) \middle| \mathcal{F}_n \right] \leq  \sum\limits_{i=1}^{m} \sum\limits_{j=i+1}^{(R+i)}w_{v(i)}w_{v(j)}p_f.
$$
Hence, by Bernstein's inequality: 
\begin{equation}
\label{lahlah}
\mathbb{P}\left(\sum\limits_{i=1}^{m}U(R,i)\geq l+\sum\limits_{i=1}^{m} \sum\limits_{j=i+1}^{(R+i)}w_{v(i)}w_{v(j)}p_f \middle| \mathcal{F}_n  \right) \leq \exp\left(\frac{-l^2}{2l+2\sum\limits_{i=1}^{m} \sum\limits_{j=i+1}^{(R+i)}w_{v(i)}w_{v(j)}p_f}\right).
\end{equation}
Denote by $J_1,J_2, ..,J_n$ i.i.d. copies of $v(1)$. From Lemma \ref{mean_pow_weights_2}, there exists a constant $A' >0$ such that:
\begin{equation}
\begin{aligned}
\label{suu_1}
\mathbb{E}\left[p_f\sum\limits_{k=0}^{\lceil \frac{m}{R}\rceil} 2R \left( \sum\limits_{j=kR+1}^{(k+2)R}{w_{J_i}}^2\right)\right] \leq A'mRp_f.
\end{aligned}
\end{equation}
Moreover, by Cauchy-Schwarz's inequality:
\begin{equation*}
\begin{aligned}
 \mathbb{P}\left(\sum\limits_{i=1}^{m} \sum\limits_{j=i+1}^{(R+i)}w_{v(i)}w_{v(j)}p_f  \geq (A'+1)mRp_f   \right) 
&\leq 
\mathbb{P} \left( p_f\sum\limits_{k=0}^{\lceil \frac{m}{R}\rceil} \left(\sum\limits_{i=kR+1}^{(k+2)R}w_{v(i)}\right)^2 \geq (A'+1)mRp_f  \right) \\
&\leq \mathbb{P} \left(p_f\sum\limits_{k=0}^{\lceil \frac{m}{R}\rceil} 2R \left( \sum\limits_{i=kR+1}^{(k+2)R}w_{v(i)}^2 \right)\geq (A'+1)mRp_f  \right).
\end{aligned}
\end{equation*}
Hence, by Theorem \ref{replacement} applied on the $(w_{v(i)}^2)_{1 \leq i \leq m}$'s and Inequality \eqref{suu_1} we have the following Chernoff bound which yields  a Bernstein's inequality (\cite{B24}, \cite{BML13}):
\begin{equation}
\begin{aligned}
\label{suu_3}
 \mathbb{P}\left(\sum\limits_{i=1}^{m} \sum\limits_{j=i+1}^{(R+i)}w_{v(i)}w_{v(j)}p_f  \geq (A'+1)mRp_f   \right) 
&\leq \mathbb{P} \left(\sum\limits_{k=0}^{\lceil \frac{m}{R}\rceil} 2 \left( \sum\limits_{i=kR+1}^{(k+2)R}w_{v(i)}^2 \right)\geq (A'+1)m  \right) \\
&\leq \mathbb{E} \left[\exp\left(\sum\limits_{k=0}^{\lceil \frac{m}{R}\rceil} 2 \left( \sum\limits_{i=kR+1}^{(k+2)R}w_{v(i)}^2 \right)\right)\exp\left(- (A'+1)m  \right)\right] \\
&\leq  \mathbb{E} \left[\exp\left(\sum\limits_{k=0}^{\lceil \frac{m}{R}\rceil} 2 \left( \sum\limits_{i=kR+1}^{(k+2)R}w_{J_i}^2 \right)\right)\exp\left(- (A'+1)m  \right)\right] \\
&\leq A\exp\left(\frac{-m^2}{An^{2/3}m}\right) \\
&\leq A\exp\left(\frac{-f}{A}\right).
\end{aligned}
\end{equation}
Here the penultimate inequality uses the fact that $\mathbb{E}[w_{v(1)}^4] \leq n^{2/3}\mathbb{E}[w_{v(1)}^2]$ and Lemma \ref{mean_pow_weights_2}. 
We have that $mRp_f =  Af^3$ for some $A > 0$. By Equations \ref{fin_2_bzzz}, \ref{baz_1}, \ref{lahlah} and \ref{suu_3}, the union bound yields:
\begin{equation*}
\begin{aligned}
\mathbb{P}\left(Exc \geq (A'+2)mRp_f \right) &\leq \mathbb{P}\left(\sum\limits_{i=1}^{m}U(R,i)\geq (A'+2)mRp_f\right) + \mathbb{P}[\bar{E}] \\
&\leq \exp\left(\frac{-(mRp_f)^2}{2mRp_f+2 (A'+1)mRp_f}\right)  + A''\exp\left(\frac{-f}{A''}\right)\\
&+\mathbb{P}\left(\sum\limits_{i=1}^{m} \sum\limits_{j=i+1}^{(R+i)}w_{v(i)}w_{v(j)}p_f  \geq (A'+1)mRp_f   \right)\\
&\leq A'''\exp\left(\frac{-(mRp_f)^2}{A'''(mRp_f)}\right)+A'''\exp\left(\frac{-f}{A'''}\right) \\
&\leq A\exp\left(\frac{-f}{A}\right),
\end{aligned}
\end{equation*}
where $A>0$ is a large enough constant.
\end{proof}

\subsection{The excess of the components discovered before the largest connected component.}
\begin{Theorem}
\label{small_exc_1}
Suppose that Conditions \ref{cond_nodes} hold. Let $\textnormal{Exc}_0$ be the total excess of the components discovered before the largest component.  There exists $A > 0$ such that for any $0 < \epsilon \leq 1$:
$$
\mathbb{P}\left(\textnormal{Exc}_0 \geq Af^{\epsilon}\right) \leq A\exp\left(\frac{-f^{\epsilon/2}}{A}\right).
$$
\end{Theorem}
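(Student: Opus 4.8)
\noindent\emph{Proof strategy.} The idea is to confine the exploration of every component preceding $H^*_f$ to an initial segment of the breadth-first walk, to bound the height of the walk on that segment, and then to estimate the surplus edges it produces by a conditional Bernstein argument, as in the proof of Theorem~\ref{exc_big}. Fix $t_1=\tfrac{\ell_n^{2/3}}{f^{1-\epsilon}C}$, the time scale appearing in Theorem~\ref{lower_bound_1}. By that theorem, except on an event of probability at most $A\exp(-f^{\epsilon}/A)$ the exploration of $H^*_f$ starts before step $t_1$; since the walk explores components consecutively, on this event every component discovered before $H^*_f$ is fully explored within the first $t_1$ steps, and hence $\textnormal{Exc}_0\le\sum_{i=1}^{t_1}\sum_{j=i+1}^{i+L_i-1}Y(v(i),v(j))$.

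Next I would bound $\sup_{1\le i\le t_1}L_i$. As in the proof of Theorem~\ref{sup_l}, $\sup_{1\le i\le t_1}L_i\le\sup_{0\le u\le v\le t_1}(L^0_v-L^0_u)$; by Corollary~\ref{S_h} with $h=0$ the quadratic drift term is negligible on $[0,t_1]$, so $\sup_{u\le v\le t_1}\mathbb{E}[L^0_v-L^0_u]\le\tfrac{2f^{\epsilon}\ell_n^{1/3}}{C}$ for $n$ large, and Theorem~\ref{inf_s_0_3} applied to $\bigl(t_1,1,0,\tfrac{2f^{\epsilon}\ell_n^{1/3}}{C}\bigr)$ (whose Conditions~\ref{cond_3} are readily checked since $f=o(n^{1/3})$) controls the fluctuation. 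This yields $\sup_{1\le i\le t_1}L_i\le R:=\tfrac{4f^{\epsilon}\ell_n^{1/3}}{C}$ off an event of probability $\le A\exp(-f^{\epsilon}/A)$. On the event $E$ on which both these good events hold, I would, exactly as in Theorem~\ref{exc_big}, dominate $\textnormal{Exc}_0$ by $\sum_{i=1}^{t_1}U(R,i)$ with $U(R,i)=\sum_{j=i+1}^{L_{i-1}+i-1}Y(v(i),v(j))+\sum_{j=L_{i-1}+i}^{R+i}Y'(v(i),v(j))$, the $Y'$ being fresh independent Bernoulli copies; by Fact~\ref{fact2} these are, conditionally on $\mathcal F_n$, independent Bernoulli with conditional mean at most $M:=p_f\sum_{i=1}^{t_1}w_{v(i)}\bigl(\sum_{j=i+1}^{R+i}w_{v(j)}\bigr)$, so Bernstein's inequality gives $\mathbb{P}\bigl(\sum_iU(R,i)\ge M+t\mid\mathcal F_n\bigr)\le\exp\bigl(\tfrac{-t^2}{2t+2M}\bigr)$.

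The core of the argument is to show $M=O(f^{\epsilon})$ off an event of probability $\le A\exp(-f^{\epsilon/2}/A)$. The Chernoff-on-squares device of Theorem~\ref{exc_big} is not available here: there the window length $m\asymp f\ell_n^{2/3}$ made $m/n^{2/3}\asymp f$ large, whereas now $t_1/n^{2/3}\asymp f^{\epsilon-1}$, so that route would only produce the useless $\exp(-f^{\epsilon-1}/A)$. Instead I would write $M\le p_f\bigl(\sup_{0\le i\le t_1}\sum_{j=i+1}^{i+R}w_{v(j)}\bigr)\bigl(\sum_{i=1}^{t_1}w_{v(i)}\bigr)$ and bound both factors with the partial-sum concentration of Theorem~\ref{concentration_final}, using $\mathbb{E}\bigl[\sum_{j=i+1}^{i+R}w_{v(j)}\bigr]=R(1+o(1))$ and $\mathbb{E}\bigl[\sum_{i=1}^{t_1}w_{v(i)}\bigr]=t_1(1+o(1))$ from Lemmas~\ref{mean_weights} and~\ref{esperance}. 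With probability $1-A\exp(-f^{\epsilon/2}/A)$ both factors are at most twice their means, whence $M\le 4Rt_1p_f\le A'f^{2\epsilon-1}\le A'f^{\epsilon}$ since $\epsilon\le1$ and $f\ge1$. Plugging $t=Af^{\epsilon}$ into the Bernstein bound above, and taking a union bound over the (few) bad events — late start of $H^*_f$, large $\sup_{i\le t_1}L_i$, large $M$ — then gives $\mathbb{P}(\textnormal{Exc}_0\ge Af^{\epsilon})\le A\exp(-f^{\epsilon/2}/A)$; in fact the estimates sketched above appear to give the stronger $A\exp(-f^{\epsilon}/A)$.

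I expect the main obstacle to be step three: estimating the random surplus mean $M$ on a window of length only $\asymp n^{2/3}f^{\epsilon-1}$ (so that the crude square-summing argument must be replaced by the weight concentration of Section~2), and, relatedly, verifying that every parameter tuple fed into Theorems~\ref{inf_s_0_3} and~\ref{concentration_final} satisfies Conditions~\ref{cond_2}--\ref{cond_3}, and that all the $o(\cdot)$ error terms produced by Lemmas~\ref{mean_weights}--\ref{esperance} are dominated by $f^{\epsilon}$, \emph{uniformly} over $\epsilon\in(0,1]$.
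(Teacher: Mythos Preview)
Your overall architecture matches the paper exactly: confine to $[0,t_1]$ via Theorem~\ref{lower_bound_1}, bound $\sup_{i\le t_1}L_i\le R$ via Theorem~\ref{inf_s_0_3} and Corollary~\ref{S_h}, dominate $\textnormal{Exc}_0$ by $\sum_i U(R,i)$, and apply conditional Bernstein. The genuine difference is how you bound the random mean $M=p_f\sum_{i\le t_1}\sum_{j=i+1}^{i+R}w_{v(i)}w_{v(j)}$.

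The paper keeps a free parameter $\bar\epsilon$ (your $\epsilon$), takes $m=\ell_n^{2/3}/(f^{1-\bar\epsilon}C)$, $R=2f^{\bar\epsilon}\ell_n^{1/3}/C$, and bounds $M$ by the Cauchy--Schwarz/Chernoff-on-squares device from Theorem~\ref{exc_big}; that device only controls $\mathbb{P}(M\ge A'mRf^{\lambda\bar\epsilon}p_f)$ by $\exp(-f^{(\lambda+1)\bar\epsilon-1}/A)$, so an extra multiplier $f^{\lambda\bar\epsilon}$ must be inserted in the threshold. Optimising over $\lambda$ and $\bar\epsilon$ under the constraint $(2+\lambda)\bar\epsilon-1=\epsilon$ forces $\bar\epsilon=\epsilon/2$, $\lambda=2/\epsilon$, and yields exactly the stated $\exp(-f^{\epsilon/2}/A)$. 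Your alternative factorisation $M\le p_f\bigl(\sup_{i\le t_1}\sum_{j=i+1}^{i+R}w_{v(j)}\bigr)\bigl(\sum_{i\le t_1}w_{v(i)}\bigr)$, with each factor controlled by Theorem~\ref{concentration_final}, avoids the square device altogether: both factors are at most twice their means off events of probability $\le A\exp(-f^{\epsilon}/A)$ (the relevant exponents are $R/n^{1/3}\asymp f^{\epsilon}$ and $t_1/n^{1/3}\asymp n^{1/3}/f^{1-\epsilon}\gg f^{\epsilon}$), so $M\le 4Rt_1p_f\asymp f^{2\epsilon-1}\le f^{\epsilon}$ directly. Plugged into Bernstein this gives, as you suspected, the strictly sharper $\mathbb{P}(\textnormal{Exc}_0\ge Af^{\epsilon})\le A\exp(-f^{\epsilon}/A)$; the paper's $f^{\epsilon/2}$ is an artefact of its coarser treatment of $M$. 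Your route is both simpler (no auxiliary parameter $\lambda$, no optimisation) and stronger; the only cost is one extra call to Theorem~\ref{concentration_final} in place of the squares argument, and the verification of Conditions~\ref{cond_2}--\ref{cond_3} for $(t_1,R)$, which you correctly flagged and which indeed go through uniformly in $\epsilon\in(0,1]$ since $f=o(n^{1/3})$.
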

\begin{proof}
We know from Theorem \ref{lower_bound_1} that for any $0 < \bar{\epsilon} \leq 1$ the exploration of the largest component starts before time $m=\frac{\ell_n^{2/3}}{f^{1-\bar{\epsilon}}C}$ with probability at least:
\begin{equation}
\label{sousou'}
   1-A\exp\left(\frac{-f^{\bar{\epsilon}}}{A}\right).
\end{equation}
In that case the total excess of components discovered before the largest one is at most:
$$
\sum\limits_{i=0}^{m}\sum\limits_{j=i+1}^{L_{i-1}+i-1}Y(v(i),v(j)).
$$
By Corollary \ref{S_h} and Conditions \ref{cond_nodes}, for any $0 \leq i \leq j \leq m$:
$$
\mathbb{E}(L^{0}(j)-L^{0}(i)) \leq \frac{f^{\bar{\epsilon}}\ell_n^{1/3}}{C}.
$$
By this fact and Theorem \ref{inf_s_0_3} applied on $(m,0,0,y)$, there exists an $A>0$ such that:
\begin{equation*}
\mathbb{P}\left(\sup_{ 0 \leq i \leq j \leq m} (L^{0}(j)-L^{0}(i)) \geq \frac{2f^{\bar{\epsilon}}\ell_n^{1/3}}{C} \right) \leq A\exp\left(\frac{-f^{\bar{\epsilon}}}{A}\right),
\end{equation*}
Remark that, deterministically, $$\sup\limits_{0\leq k\leq m} L(k) \leq \sup\limits_{0\leq i\leq j \leq m} (L'(j)-L'(i)) \leq \sup\limits_{0\leq i\leq j \leq m} (L^{0}(j)-L^{0}(i)),$$ hence:
\begin{equation}
\begin{aligned}
\label{sousou}
\mathbb{P}\left(\sup_{0 \leq i\leq m}L_i \geq \frac{2f^{\bar{\epsilon}}\ell_n^{1/3}}{C}\right) &\leq \mathbb{P}\left(\sup_{ 0 \leq i \leq j \leq m} (L^{0}(j)-L^{0}(i)) \geq \frac{2f^{\bar{\epsilon}}\ell_n^{1/3}}{C}\right), \\
&\leq  A\exp\left(\frac{-f^{\bar{\epsilon}}}{A}\right).
\end{aligned}
\end{equation}
Let $R = \frac{2f^{\bar{\epsilon}}\ell_n^{1/3}}{C}$. Let $E$ be the event $\{\max_{0 \leq i\leq m}L_i \leq R\}$ and the exploration of the largest component starts before time $m$.
Recall the definition of $U(R,i)$ from Theorem \ref{exc_big}. We have for any $l \geq 0$ by the union bound:
\begin{equation}
\begin{aligned}
\label{bez_1}
\mathbb{P}\left(\textnormal{Exc}_0 \geq l \right) \leq \mathbb{P}\left(\sum\limits_{i=0}^{m}U(R,i) \geq l \right) + \mathbb{P}[\bar{E}].
\end{aligned}
\end{equation}
We use the same idea as in Theorem \ref{exc_big}. 
By Bernstein's inequality (\cite{B24}):
\begin{equation}
\label{lahlahlaah}
\mathbb{P}\left(\sum\limits_{i=1}^{m}U(R,i)\geq l + \sum\limits_{i=1}^{m} \sum\limits_{j=i+1}^{(R+i)}w_{v(i)}w_{v(j)}p_f\middle| \mathcal{F}_n  \right) \leq \exp\left(\frac{-l^2}{2l+2\sum\limits_{i=1}^{m} \sum\limits_{j=i+1}^{(R+i)}w_{v(i)}w_{v(j)}p_f}\right).
\end{equation}

Denote by $J_1,J_2, ..,J_n$ i.i.d. copies of $v(1)$. Similarly to Equation \eqref{suu_1}, there exists a constant $A' >0$ such that:
\begin{equation}
\begin{aligned}
\label{suuu_2}
\mathbb{E}\left[p_f\sum\limits_{k=0}^{\lceil \frac{m}{R}\rceil} 2R \left( \sum\limits_{j=kR+1}^{(k+2)R}{w_{J_i}}^2\right)\right] \leq A'mRp_f.
\end{aligned}
\end{equation}
And similarly to Equation \eqref{suu_3} we have for any $\lambda \geq 0$:
\begin{equation}
\begin{aligned}
\label{suuu_3}
 &\mathbb{P}\left(\sum\limits_{i=1}^{m} \sum\limits_{j=i+1}^{(R+i)}w_{v(i)}w_{v(j)}p_f  \geq (A'+1)mRf^{\lambda \bar\epsilon}p_f   \right) \\
\leq 
 &\mathbb{E} \left[\exp\left(\sum\limits_{k=0}^{\lceil \frac{m}{R}\rceil} 2 \left( \sum\limits_{i=kR+1}^{(k+2)R}w_{J_i}^2 \right)\right)\exp\left(- (A'+1)mf^{\lambda \bar\epsilon}  \right)\right] \\
 \leq  &A\exp\left(\frac{-m^2f^{2\lambda\bar{\epsilon}}}{A(m\ell_n^{2/3}f^{\lambda\bar{\epsilon}}+m\ell_n^{2/3})}\right) \\
\leq  &A''\exp\left(\frac{-f^{(\lambda+1)\bar{\epsilon}-1}}{A''}\right).
\end{aligned}
\end{equation}
And also, Equations \eqref{sousou'} and \eqref{sousou} yield:
\begin{equation}
\label{eq79}
    \mathbb{P}(\bar{E}) \leq A'\exp\left(\frac{-f^{\bar{\epsilon}}}{A'}\right).
\end{equation}
By Equations \ref{bez_1}, \ref{lahlahlaah}, \ref{suuu_3}, and \ref{eq79} the union bound yields for $A'' >0$ large enough:
\begin{equation*}
\begin{aligned}
\mathbb{P}\left(Exc_0 \geq (A'+2)mRf^{\lambda\bar{\epsilon}}p_f \right) &\leq \mathbb{P}\left(\sum\limits_{i=1}^{m}U(R,i)\geq (A'+2)mRf^{\lambda \bar\epsilon}p_f \right) + \mathbb{P}[\bar{E}] \\
&\leq \exp\left(\frac{-(mRf^{\lambda\bar{\epsilon}}p_f)^2}{ A'''(mRf^{\lambda\bar{\epsilon}}p_f}\right)  + A''\exp\left(\frac{-f^{\bar{\epsilon}}}{A''}\right)\\
&+\mathbb{P}\left(\sum\limits_{i=1}^{m} \sum\limits_{j=i+1}^{(R+i)}w_{v(i)}w_{v(j)}p_f  \geq (A'+1)mRf^{\lambda\bar{\epsilon}}p_f  \right)\\
&\leq A'''\exp\left(\frac{-(mRf^{\lambda\bar{\epsilon}}p_f)^2}{A'''(mRf^{\lambda\bar{\epsilon}}p_f)}\right)+A''\exp\left(\frac{-f^{\bar{\epsilon}}}{A''}\right) \\ &+A'''\exp\left(\frac{-f^{(\lambda+1)\bar{\epsilon}-1}}{A'''}\right),
\end{aligned}
\end{equation*}
where $A>0$ is a large enough constant. Moreover, we have for $n$ large enough:
$$
mRf^{\lambda\bar{\epsilon}}p_f \geq  \frac{1}{C^2}f^{(2+\lambda)\bar{\epsilon}-1}
$$
for some large constant $A > 0$.
Hence, if we take:
$$
\lambda = \frac{2}{\epsilon},
$$
and:
$$
\epsilon = (2+\lambda)\bar{\epsilon}-1.
$$
We obtain $\bar{\epsilon} = \epsilon/2$ and:
$$
(1+\lambda)\bar{\epsilon}-1 = \frac{\epsilon}{2}.
$$
This proves the inequality of the theorem.
\end{proof}
\section{The structure of the tail's components}

\subsection{Preliminaries}
We call tail of the exploration process the part of it that starts after $H^*_f$ is fully explored and ends at $n$.
In order to get bounds on the size, weight and excess of the tail, we will use two main ideas. Firstly we use an appropriate division of the interval that start after the exploration of $H^*_f$, and ends in $n$. Secondly we make use of the fact that the further we go in the exploration the smaller the weights we discover. These two ideas are formalized below. The rest of the proofs uses similar techniques to the ones presented in Section $4$, but with the added complexity of incorporating these two ideas. \par
For $i \geq 1$, write: 
$$\bar{k}_i = i^2f((i+1)^2-i^2).$$
For $\bar{k}_i> k \geq 0$, and as long as $t^i_k < \ell_n^{11/12}$, write:
$$t_k^i = t + \frac{(i^2-1)f\ell_n^{2/3}}{C} + \frac{k\ell_n^{2/3}}{Ci^2f},$$
with $t = \frac{2(1-\epsilon')f\ell_n^{2/3}}{C}$ and where $1/2 >\epsilon' >0$ is fixed from here on.
Moreover, let $(\tilde{i},\tilde{k})$ be the first time when $t_{\tilde{k}}^{\tilde{i}} \geq \ell_n^{11/12}$. For any $k > \tilde{k}$ let:
$$
t_k^{\tilde{i}} = t +  \frac{(\tilde{i}^2-1)f\ell_n^{2/3}}{C} + \frac{k\ell_n^{2/3}}{C\tilde{i}^2f}.
$$
$(\tilde{i},\tilde{k})$ depends implicitly on $\epsilon'$. Moreover, by construction $\tilde{i}^2f = o(n^{1/3})$.
We are only interested in $t_k^i \leq n$, and for simplicity, since there is no real difficulty in dealing with the boundaries, we assume everything is well truncated. \par
This construction gives a division of the interval between $t$ and $n$ in the following way: Take intervals of the form $\left[t^{i}_0,t^{i+1}_0\right)$. Such intervals get larger and larger. Divide each one of them into small intervals of the form $\left[t^{i}_k,t^{i}_{k+1}\right)$ that get smaller with $i$.  The main idea here is that the large intervals, those where $i$ changes, represent phases of the exploration where we will find connected components that are of size at most the size of small intervals $\left[t^{i}_k,t^{i}_{k+1}\right)$. Moreover Conditions \ref{cond_3} will be verified inside the small intervals for good enough deviation values, which will allow us to use all our concentration theorems. 
We start by showing that the maximum weight gets smaller the further we explore the tail.
\begin{Lemma}
\label{weight_tail}
Suppose that Conditions \ref{cond_nodes} hold. There exists a constant $A >0$ such that:\par
For any $1 \leq i \leq \tilde{i}$, the probability of discovering a weight larger than $\frac{\ell_n^{1/3}}{i\sqrt{f}}$ in the BFW after time $t^i_0$ is less than:
$$
A\exp\left(\frac{-i\sqrt{f}}{A}\right).
$$
\end{Lemma}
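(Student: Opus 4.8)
The plan is to bound, for each fixed $i$, the probability that some node of weight larger than $\frac{\ell_n^{1/3}}{i\sqrt f}$ is discovered after time $t^i_0$, and then union over $i \le \tilde i$. First I would count the ``heavy'' nodes: let $N_i$ be the number of vertices with $w_j > \frac{\ell_n^{1/3}}{i\sqrt f}$. Since $\sum_{k=1}^n w_k^3 = \mathbb{E}[W^3]n + o(n)$ by Conditions \ref{cond_nodes}, Markov's inequality at the level of the deterministic third-moment sum gives $N_i \le \frac{\sum_k w_k^3}{(\ell_n^{1/3}/(i\sqrt f))^3} \le A' i^3 f^{3/2}$ for $n$ large (using $\ell_n = \Theta(n)$); this is a \emph{deterministic} bound, so no probabilistic cost here. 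So it suffices to show that with the stated probability, all of these at most $A' i^3 f^{3/2}$ heavy vertices have already been discovered by time $t^i_0$ in the size-biased exploration order.

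The key mechanism is that heavy vertices are discovered early because sampling is size-biased. Recall $t^i_0 = t + \frac{(i^2-1)f\ell_n^{2/3}}{C}$, which is of order $i^2 f \ell_n^{2/3}$. I would estimate the probability that a specific heavy vertex $v$ (with $w_v > \frac{\ell_n^{1/3}}{i\sqrt f}$) is \emph{not} among $v(1),\dots,v(\lfloor t^i_0\rfloor)$. Conditionally on the first $k$ vertices explored (whose total weight is $o(\ell_n)$ as long as $k = o(n)$, by Lemma \ref{sum_weights}), the chance that step $k+1$ picks $v$ is $\frac{w_v}{\ell_n - \sum_{j\le k} w_{v(j)}} \ge \frac{w_v}{\ell_n}(1+o(1)) \ge \frac{1}{2}\,\frac{\ell_n^{1/3}}{i\sqrt f\,\ell_n} = \frac{1}{2 i \sqrt f\, \ell_n^{2/3}}$. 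Hence the probability that $v$ survives undiscovered through all of the first $\lfloor t^i_0 \rfloor$ steps is at most
\[
\prod_{k=0}^{\lfloor t^i_0\rfloor - 1}\left(1 - \frac{1}{2 i \sqrt f\, \ell_n^{2/3}}\right) \le \exp\left(-\frac{t^i_0}{2 i \sqrt f\, \ell_n^{2/3}}\right) \le \exp\left(-\frac{c\, i^2 f}{i\sqrt f}\right) = \exp(-c\, i \sqrt f),
\]
for some constant $c>0$ (absorbing the constant $C$ and the factor from $t^i_0 \gtrsim i^2 f\ell_n^{2/3}/C$). A subtlety is that one should either condition on the largest component $H^*_f$ having already been explored before $t^i_0$ (so that ``after time $t^i_0$'' is unambiguous) or simply phrase the event directly in terms of the exploration order $v(1),v(2),\dots$; I would do the latter to keep the argument clean, since the statement is about discovery times in the BFW.

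Then a union bound over the at most $A' i^3 f^{3/2}$ heavy vertices gives a failure probability at most $A' i^3 f^{3/2} \exp(-c\, i \sqrt f)$, and since $i^3 f^{3/2} = (i\sqrt f)^3$ is polynomial in $i\sqrt f$ while the exponential dominates, this is at most $A\exp(-\frac{i\sqrt f}{A})$ for a suitable constant $A>0$ depending only on the distribution of $W$ (here one uses $f \ge F$ large enough and $i \ge 1$). The main obstacle I anticipate is being careful about the conditioning: the bound $\frac{w_v}{\ell_n - \sum_{j \le k} w_{v(j)}} \ge \frac{w_v}{\ell_n}(1+o(1))$ requires $\sum_{j\le k} w_{v(j)} = o(\ell_n)$ uniformly over $k \le t^i_0$, and $t^i_0$ can be as large as order $\tilde i^2 f \ell_n^{2/3}$; since $\tilde i^2 f = o(n^{1/3})$ by construction, we have $t^i_0 = o(n)$, so Lemma \ref{sum_weights} applies and the $(1+o(1))$ is uniform — but this needs to be spelled out, together with the truncation conventions for $t^i_k \le n$ already mentioned in the text. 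Once that uniformity is in hand, the rest is the routine product-to-exponential estimate and union bound above.
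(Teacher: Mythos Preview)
Your proof is correct and follows the same high-level strategy as the paper: count the heavy vertices via the deterministic third-moment sum, show each heavy vertex has probability at most $\exp(-c\,i\sqrt f)$ of surviving past time $t^i_0$ in the size-biased order, then union-bound over the at most $A' i^3 f^{3/2}$ heavy vertices. The mechanics differ in one place. The paper bounds the survival probability via the exponential-clock representation of size-biased sampling (the $T_k$'s from Section 2): it sets $x = t^i_0/2$, uses $\mathbb{P}(T_k \ge x) = \exp(-w_k x/\ell_n)$ for each heavy $k$, and separately controls the auxiliary event $\{N(x) \ge t^i_0\}$ via Lemma \ref{bernstinou}. Your sequential-conditioning product bound is more direct and avoids that auxiliary event entirely; the paper's route, on the other hand, is consistent with the machinery already set up in Section 2, which is presumably why it was chosen.

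Two minor simplifications for you. First, the inequality $\frac{w_v}{\ell_n - \sum_{j\le k} w_{v(j)}} \ge \frac{w_v}{\ell_n}$ is trivial (the denominator only shrinks when you remove mass), so you do not need the $(1+o(1))$ correction or the appeal to Lemma \ref{sum_weights}; the ``main obstacle'' you anticipate about uniformity is not actually an obstacle. Second, the lemma is stated for a fixed $i$, not a union over $i \le \tilde i$; your argument does treat a fixed $i$, so this is only a wording slip in your opening sentence.
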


\begin{proof}
Recall that $(T_i)_{i \leq n}$ is a sequence of independent exponential variables with rates $(w_i/\ell_n)_{i\leq n}$. And that for any $x>0$: 
$$
N(x) = \sum_{k=1}^n\mathbbm{1}(T_k \leq x),
$$
Moreover, recall that by the properties of exponential random variables, the order statistic indices $(\tilde{v}(1),\tilde{v}(2),...\tilde{v}(n))$ of the $(T_k)_{k \leq n}$ have the same distribution as $(v(1),v(2),...v(n))$. \par
Let $x = t^i_0/2$, then by Lemma \ref{bernstinou}, Conditions \ref{cond_nodes} and obvious bounds:
\begin{equation}
\label{Nx}
\mathbb{P}(N(x) \geq t^i_0) \leq A\exp\left(\frac{-t^i_0}{A}\right).
\end{equation}
This equation shows that at time $x$, the weights with indices $(\tilde{v}(t^i_0),\tilde{v}(t^i_0+1),...\tilde{v}(n))$ will not be picked yet with high probability. 
Denote the event $\{N(x) \geq t^i_0\}$ by $E$. For any $k$ such that $w_k \geq \frac{\ell_n^{1/3}}{i\sqrt{f}}$, we have:
\begin{equation*}
\begin{aligned}
\mathbb{P}(T_k \geq x  , \, \bar{E})
&\leq \mathbb{P}(T_k \geq x) \\
&\leq A\exp\left(\frac{-i\sqrt{f}}{A}\right),
\end{aligned}
\end{equation*}
this equation shows that a large weight has a large probability of being picked before time $x$.\par
Recall that by Conditions \ref{cond_nodes}:
$$
\sum_{k=1}^n w_k^3 = (\mathbb{E}[W^3]+o(1))n.
$$
Hence, the total number of weights larger than $\frac{\ell_n^{1/3}}{i\sqrt{f}}$ is less than $A'i^3f^{3/2}$, where $A' >0$ is a large enough constant. \par
This yields:
\begin{equation}
\begin{aligned}
\mathbb{P}\left(\sup_{k \geq t^i_0}(w_{v(k)}) \geq \frac{\ell_n^{1/3}}{i\sqrt{f}}\right) &\leq \mathbb{P}(E) + \sum_{k=1}^n\mathbb{P}(T_k \geq x  , \, \bar{E})\mathbbm{1}\left(w_k \geq \frac{\ell_n^{1/3}}{i\sqrt{f}}\right)\\
&\leq \exp\left(\frac{-t^i_0}{A}\right) + A'Ai^3f^{3/2}\exp\left(\frac{-i\sqrt{f}}{A}\right) \\
&\leq A''\exp\left(\frac{-i\sqrt{f}}{A''}\right),
\end{aligned}    
\end{equation}
whith $A'' >0$ a large constant and $f$ large enough.
\end{proof}
We now use the same notations as in the proof above. For $0 \leq i \leq \tilde{i}$. Let $B$ be the event that no weight larger than $\frac{\ell_n^{1/3}}{i\sqrt{f}}$ is present after time $t^i_0$. Then for any $t^i_0 \leq x$, with the notation of Section $2$ when $B$ holds we have:
$$
X(x)-X(u) = \sum_{k=1}^n w_k\mathbbm{1}(u \leq T_k \leq x)\mathbbm{1}\left(w_k \leq \frac{\ell_n^{1/3}}{i\sqrt{f}}\right),
$$
And:
$$
 N(x)-N(u)=\sum_{k=1}^n \mathbbm{1}(u \leq T_k \leq x)\mathbbm{1}\left(w_k \leq \frac{\ell_n^{1/3}}{i\sqrt{f}}\right).$$ 
Moreover, clearly:
$$
\mathbb{E}\left[\sum_{k=1}^n w_k\mathbbm{1}(u \leq T_k \leq x)\mathbbm{1}\left(w_k \leq \frac{\ell_n^{1/3}}{i\sqrt{f}}\right)\right] \leq\mathbb{E}[X(x)-X(u)],
$$
and 
$$
\mathbb{E}\left[\sum_{k=1}^n \mathbbm{1}(u \leq T_k \leq x)\mathbbm{1}\left(w_k \leq \frac{\ell_n^{1/3}}{i\sqrt{f}}\right)\right] \leq\mathbb{E}[N(x)-N(u)].
$$
By those remarks, when $B$ holds one can redo the proofs of Theorems \ref{concentration} by only taking nodes with weights smaller than $\frac{\ell_n^{1/3}}{i\sqrt{f}}$. Then use the union bound with Lemma \ref{weight_tail} to obtain the following theorem which is in the spirit of Theorem \ref{concentration}.
\begin{Theorem}
\label{concentration_2}
Suppose that Conditions \ref{cond_nodes} hold. There exists a constant $A > 0$ such that the following holds: \\
If  $(m,l,0,y)$ verify Conditions \ref{cond_3}, and there exists $i \leq \tilde{i}$ such that $l \geq t^i_0$, and $m \leq t^{\tilde{i}}_0$ then:
$$
\mathbb{P}\left[ \sup_{l \leq u \leq w \leq m}  \; \sum_{k=u}^w w_{v(k)} - \mathbb{E}\left[\sum_{k=u}^w w_{v(k)}\right]  \geq y\right] \leq  A\exp\left(\frac{-y^2}{A\left(y\frac{\ell_n^{1/3}}{i\sqrt{f}}+m-l\right)}\right)+A\exp\left(\frac{-i\sqrt{f}}{A}\right).
$$
\end{Theorem}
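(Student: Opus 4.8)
The plan is to reduce Theorem \ref{concentration_2} to Theorem \ref{concentration} by conditioning on the event that no ``large'' weight appears in the tail. Concretely, fix $i \leq \tilde{i}$ with $l \geq t^i_0$ and $m \leq t^{\tilde{i}}_0$, and let $B$ be the event that no weight exceeding $\frac{\ell_n^{1/3}}{i\sqrt{f}}$ is discovered after time $t^i_0$. By Lemma \ref{weight_tail}, $\mathbb{P}(\bar B) \leq A\exp(-i\sqrt{f}/A)$, which already accounts for the second term in the bound. So by the union bound it suffices to control
$$
\mathbb{P}\left(B, \; \sup_{l \leq u \leq w \leq m}  \sum_{k=u}^w w_{v(k)} - \mathbb{E}\left[\sum_{k=u}^w w_{v(k)}\right]  \geq y\right).
$$

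First I would set up the randomized (Poissonized) picture from Section $2$: the $(T_k)_{k\leq n}$ are independent exponentials of rate $w_k/\ell_n$, the order-statistic indices $(\tilde v(k))_k$ have the law of $(v(k))_k$, and $X(x) = \sum_k w_k \mathbbm 1(T_k \leq x)$, $N(x) = \sum_k \mathbbm 1(T_k \leq x)$ are sums of independent variables. On $B$, the truncated processes
$$
X^{\le}(x) = \sum_{k=1}^n w_k\mathbbm 1(T_k \leq x)\mathbbm 1\!\left(w_k \leq \tfrac{\ell_n^{1/3}}{i\sqrt f}\right), \qquad N^{\le}(x) = \sum_{k=1}^n \mathbbm 1(T_k \leq x)\mathbbm 1\!\left(w_k \leq \tfrac{\ell_n^{1/3}}{i\sqrt f}\right)
$$
agree with $X(x), N(x)$ for all $x \geq t^i_0$, and in particular on the time range relevant to $l \leq u \leq w \leq m$. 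The point of the truncation is that the truncated processes have maximal jump $\frac{\ell_n^{1/3}}{i\sqrt f}$ instead of $n^{1/3}$, so Bernstein's inequality (Lemma \ref{bernstinou} and its chaining consequences, Lemmas \ref{lema102}--\ref{lema103}) now produces the improved variance proxy $y\frac{\ell_n^{1/3}}{i\sqrt f} + (m-l)$ in place of $yn^{1/3}+(m-l)$. Since $\mathbb{E}[X^{\le}(x)-X^{\le}(u)] \leq \mathbb{E}[X(x)-X(u)]$ and similarly for $N$ (as already noted in the excerpt), replacing the centering of the truncated process by that of the untruncated one only moves the estimate in the favorable direction, and the means $\mathbb{E}[w_{v(k)}]$ used in the statement coincide with the true means.

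With these ingredients, I would rerun the argument leading to Theorem \ref{concentration} verbatim, with three bookkeeping changes: (i) every appeal to Lemma \ref{bernstinou} is made for the truncated sums, yielding $\exp(-t^2/(2(t\frac{\ell_n^{1/3}}{i\sqrt f}+x)))$; (ii) the chaining Lemmas \ref{lema102} and \ref{lema103} (the ones involving $X$) are reproved with $w_{2^j}$ replaced by $\min(w_{2^j}, \frac{\ell_n^{1/3}}{i\sqrt f})$, so the geometric sum $\sum_j (j+1)w_{2^j}$ is bounded by $A'\frac{\ell_n^{1/3}}{i\sqrt f}$ rather than $A'n^{1/3}$; (iii) Lemma \ref{eqfff} (comparing $\mathbb{E}[X]-\sum\mathbb{E}[w_{v(k)}]$ to deviations of $N$) goes through unchanged since it only uses Lemma \ref{esperance} and Conditions \ref{cond_3}, which are assumed here. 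Assembling via the same event decomposition $\mathcal{C} \subset \mathcal{B} \cup \mathcal{D}$ as in the proof of Theorem \ref{concentration}, and adding $\mathbb{P}(\bar B)$, gives exactly the claimed bound. The one-sided version (infimum) follows by the same symmetry remark made in the proof of Theorem \ref{concentration}, and the two-sided statement by a final union bound.

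The main obstacle — really the only non-routine point — is checking that Conditions \ref{cond_2} are still satisfied for the relevant pair when $n^{1/3}$ is replaced by $\frac{\ell_n^{1/3}}{i\sqrt f}$ in the variance proxy. The subtlety is that $(m,l,0,y)$ verifying Conditions \ref{cond_3} was calibrated to the $n^{1/3}$ scale; one must verify that the smaller jump size only relaxes the constraints (it does: $\frac{\ell_n^{1/3}}{i\sqrt f} \leq \ell_n^{1/3} \le n^{1/3}$, so $y\frac{\ell_n^{1/3}}{i\sqrt f} + (m-l) \leq yn^{1/3}+(m-l)$, and the threshold condition $\exp(-y^2/(\bar A(\ldots))) < 1/4$ is easier to meet), while the constraint that ties $t^i_0 \leq l$ and $m \leq t^{\tilde i}_0$ ensures the truncation threshold $\frac{\ell_n^{1/3}}{i\sqrt f}$ is valid on the whole window $[l,m]$ (this is where the hypothesis $l \geq t^i_0$ is used — earlier intervals would need a larger, less useful threshold). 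Everything else is a transcription of Section $2$ with a different maximal-jump bound.
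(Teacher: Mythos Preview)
Your proposal is correct and follows essentially the same approach as the paper: condition on the event $B$ from Lemma \ref{weight_tail}, observe that on $B$ the truncated Poissonized processes $X^{\le},N^{\le}$ coincide with $X,N$ on the relevant window and have smaller means, then rerun the proof of Theorem \ref{concentration} with the maximal-jump bound $n^{1/3}$ replaced by $\frac{\ell_n^{1/3}}{i\sqrt f}$, and finish by a union bound with $\mathbb{P}(\bar B)$. If anything, you have spelled out more of the bookkeeping (the chaining constants, the verification of Conditions \ref{cond_2} under the smaller jump size) than the paper does, which simply says ``redo the proofs of Theorem \ref{concentration} by only taking nodes with weights smaller than $\frac{\ell_n^{1/3}}{i\sqrt f}$''.
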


Moreover we have by Bernstein's inequality:
\begin{equation}
\begin{aligned}
\label{facts2}
\mathbb{P}\left(\exists (h,j), \, j \geq t^i_0, \, X^0_j \geq \frac{2\ell_n^{1/3}}{i\sqrt{f}}\right) &\leq \sum_{k=t^i_0}^n \mathbb{P}\left(d_{v(k)} \geq \frac{2\ell_n^{1/3}}{i\sqrt{f}}\right) \\
&\leq \mathbb{P}(\bar{B}) +  \sum_{k=0}^n \mathbbm{1}\left(w_k \leq \frac{\ell_n^{1/3}}{i\sqrt{f}}\right) \mathbb{P}\left(d_k\geq \frac{2\ell_n^{1/3}}{i\sqrt{f}}\right) \\
&\leq \mathbb{P}(\bar{B}) + A'\exp\left(\frac{-\ell_n^{1/3}}{A'i\sqrt{f}}\right) \\
&\leq A\exp\left(\frac{-i\sqrt{f}}{A}\right)
\end{aligned}
\end{equation}
where $A$ is a large constant. This shows that, similarly to what we did in Section $2$, one can assume that $L^0$ and $L$ have increments of size at most $\frac{2\ell_n^{1/3}}{i\sqrt{f}}$ after time $t^i_0$. Using this fact, one can redo the proofs of Theorems \ref{inf_s_0_3} after time $t^i_0$. Then use the union bound with Lemma \ref{weight_tail} and Equation \eqref{facts2} to obtain the following theorem which is in the spirit of Theorem \ref{inf_s_0_3}.
\begin{Theorem}
\label{concentration_L}
Suppose that Conditions \ref{cond_nodes} hold. There exists a constant $A > 0$ such that the following holds: \\
Let  $(m,l,y)$ be such that $(m,l,0,y)$ verifies Conditions \ref{cond_3}, and there exists $i \leq \tilde{i}$ such that $l \geq t^i_0$, and $m \leq t^{\tilde{i}}_0$. We have:
$$
\mathbb{P}\left(\sup_{l \leq u \leq w \leq m} \; L^0_w-L^0_u-\mathbb{E}[L^0_w-L^0_u] \geq y\right) \leq  A\exp\left(\frac{-y^2}{A\left(y\frac{\ell_n^{1/3}}{i\sqrt{f}}+m-l\right)}\right)+A\exp\left(\frac{-i\sqrt{f}}{A}\right).
$$
\end{Theorem}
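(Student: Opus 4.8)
The plan is to mimic the proof of Theorem \ref{inf_s_0_3}, but restricted to the part of the exploration after time $t^i_0$, replacing everywhere the crude increment bound $2n^{1/3}$ by the much smaller bound $2\ell_n^{1/3}/(i\sqrt f)$ available there. First I would introduce the event $B$ of Lemma \ref{weight_tail} that no weight larger than $\ell_n^{1/3}/(i\sqrt f)$ is discovered after time $t^i_0$, and the event $B'$ of Equation \eqref{facts2} that $X^0_j \leq 2\ell_n^{1/3}/(i\sqrt f)$ for all $j \geq t^i_0$. Since $\mathbb{P}(\bar B)$ and $\mathbb{P}(\bar B')$ are each at most $A\exp(-i\sqrt f/A)$, it suffices to bound the probability of the bad supremum event intersected with $B\cap B'$; this accounts for the second term in the claimed bound. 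On $B$ the process $L^0$ after $t^i_0$ coincides with the process in which only weights $\le \ell_n^{1/3}/(i\sqrt f)$ are kept, and the mean of the latter is $\le \mathbb{E}[L^0_w-L^0_u]$; this is precisely why only the upper tail is asserted, and it lets us work with the truncated, bounded-increment process throughout.

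Next I would reproduce the decomposition used for Theorem \ref{inf_s_0_3}: with $(\mathcal F^0_j)$ the filtration generated by $\mathcal V_{j+1}$ and $(X^0_k)_{k\le j}$, write
$$
L^0_w - L^0_u - \mathbb{E}[L^0_w - L^0_u] = \Big(L^0_w - L^0_u - \sum_{j=u+1}^{w}\mathbb{E}[X^0_j\mid\mathcal F^0_{j-1}]\Big) + \Big(\sum_{j=u+1}^{w}\mathbb{E}[X^0_j\mid\mathcal F^0_{j-1}] - \mathbb{E}[L^0_w - L^0_u]\Big),
$$
reduce the double supremum $\sup_{l\le u\le w\le m}$ to two one-sided running suprema started at $l$ by a union bound, and treat the two pieces separately. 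For the martingale piece, $\big(L^0_w-L^0_l-\sum_{j=l+1}^{w}\mathbb{E}[X^0_j\mid\mathcal F^0_{j-1}]\big)_{w\ge l}$ is an $(\mathcal F^0_j)$-martingale whose increments are bounded by $2\ell_n^{1/3}/(i\sqrt f)$ on $B'$, and whose predictable quadratic variation is bounded by $\sum_{j=l+1}^{w}\mathbb{E}[(X^0_j)^2\mid\mathcal F^0_{j-1}]\le\sum_{j=l+1}^{w}(w_{v(j)}+w_{v(j)}^2)$; the latter sum exceeds $A(m-l)$ only with probability at most the right-hand side of the theorem, using Theorem \ref{concentration_2} for $\sum w_{v(j)}$ and a Chernoff bound for $\sum w_{v(j)}^2$ coming from Theorem \ref{replacement} together with $\mathbb{E}[w_{J(j)}^4]\le (\ell_n^{1/3}/(i\sqrt f))^2\,\mathbb{E}[w_{J(j)}^2]$ on $B$. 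Bernstein's inequality for martingales (\cite{F75}) then yields, for this piece, a bound of the form $A\exp\!\big(-y^2/A(y\ell_n^{1/3}/(i\sqrt f)+m-l)\big)$. For the drift piece, the sum $\sum_{j=u+1}^{w}\mathbb{E}[X^0_j\mid\mathcal F^0_{j-1}]$ is exactly $\tilde L^0_w-\tilde L^0_u$ (the $h=0$ case), so one needs the tail analogue of Lemma \ref{inf_s_0} with $h=0$: its proof reduces the fluctuation of $\tilde L^0$ to fluctuations of partial sums $\sum w_{v(k)}$ and of weighted partial sums $\sum w_{v(k)}\sum_{k'\le k}w_{v(k')}p_f$, and since all the sub-intervals produced there stay inside $[t^i_0,t^{\tilde i}_0]$ under our hypotheses, every use of Theorem \ref{concentration_final} can be replaced by Theorem \ref{concentration_2}, producing exactly the improved variance term $y\ell_n^{1/3}/(i\sqrt f)$.

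Finally I would collect the martingale bound, the drift bound, and $\mathbb{P}(\bar B)+\mathbb{P}(\bar B')$ by a union bound, absorbing constants into a single $A$. Along the way one must check that $(m,l,0,y)$ verifying Conditions \ref{cond_3} with $t^i_0\le l\le m\le t^{\tilde i}_0$ indeed forces Conditions \ref{cond_2} and \ref{cond_3} for every auxiliary couple fed into the concentration theorems — e.g. the couples of type $(m,\cdot)$ and $(m-l,\cdot)$ appearing in the tail version of Lemma \ref{inf_s_0} — which is routine given the explicit form of the $t^i_k$ and the constraint $\tilde i^2 f = o(n^{1/3})$. I expect the only real difficulty to be this bookkeeping, namely keeping the truncation-induced shift of the mean on the favourable side (which works because the statement is one-sided) and verifying that the interval subdivision never violates Conditions \ref{cond_2}; no new probabilistic idea beyond those already used in Sections 2 and 4 should be required.
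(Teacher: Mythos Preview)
Your proposal is correct and follows essentially the same approach as the paper. The paper does not write out a proof but only indicates, just before the statement, that ``one can redo the proofs of Theorems \ref{inf_s_0_3} after time $t^i_0$'' using the smaller increment bound $2\ell_n^{1/3}/(i\sqrt f)$ furnished by Equation \eqref{facts2}, and then take a union bound with Lemma \ref{weight_tail} and Equation \eqref{facts2}; your write-up is precisely a fleshed-out version of that sketch, including the correct replacement of Theorem \ref{concentration_final} by Theorem \ref{concentration_2} in the drift piece and the one-sidedness observation for handling the truncation-induced mean shift.
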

We will also need the following lemma. It states that the weights get smaller in probability the further we go in the exploration. For $1 \leq k \leq n$ let $w^{i}_k = w_k$ if $w_k \leq \frac{\ell_n^{1/3}}{i\sqrt{f}}$ and $w^{i}_k=\frac{\ell_n^{1/3}}{i\sqrt{f}}$ otherwise. 
\begin{Lemma}
\label{decreasing}
Let $1 \leq u \leq w \leq n$, then for any $x \geq 0$ and $1 \leq i \leq \tilde{i}$:
$$
\mathbb{P}(w^{i}_{v(u)} \geq x) \leq \mathbb{P}(w^{i}_{v(w)} \geq x).
$$
\end{Lemma}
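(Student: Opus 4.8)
The plan is to remove the truncation, reduce the statement to a one-step comparison for size-biased sampling without replacement, and then carry out an explicit computation. Write $\theta_i=\ell_n^{1/3}/(i\sqrt f)$, so that $w^{i}_k=\min(w_k,\theta_i)$. If $x>\theta_i$ then $w^{i}_{v(l)}\le\theta_i<x$ for every $l$ and both probabilities vanish; if $0\le x\le\theta_i$ then $\min(w_{v(l)},\theta_i)\ge x$ is equivalent to $w_{v(l)}\ge x$, hence $\{w^{i}_{v(l)}\ge x\}=\{w_{v(l)}\ge x\}$. So it suffices to prove, for each fixed $x\ge0$, that $u\mapsto\mathbb P(w_{v(u)}\ge x)$ is monotone along the exploration, i.e. that $w_{v(w)}$ is stochastically dominated by $w_{v(u)}$ whenever $u\le w$ — this is exactly the ``decreases all the time'' assertion announced just after Lemma~\ref{esperance}. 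By transitivity of the stochastic order I would then only need to treat $w=u+1$.

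First I would condition on $\mathcal V_{u-1}$. Given $\mathcal V_{u-1}$, the pair $(v(u),v(u+1))$ consists of the first two draws of a size-biased sample without replacement from $S=\mathcal V'\setminus\mathcal V_{u-1}$ with weights $(w_j)_{j\in S}$; set $\sigma=\sum_{j\in S}w_j$, $A=\{j\in S:\ w_j\ge x\}$, $\sigma_A=\sum_{j\in A}w_j$ and $\sigma_{A^c}=\sigma-\sigma_A$. A direct computation gives
$$\mathbb P\!\left(w_{v(u)}\ge x\mid\mathcal V_{u-1}\right)=\frac{\sigma_A}{\sigma},\qquad \mathbb P\!\left(w_{v(u+1)}\ge x\mid\mathcal V_{u-1}\right)=\frac1\sigma\sum_{j\in S}w_j\,\frac{\sigma_A-w_j\mathbbm 1(j\in A)}{\sigma-w_j}.$$
Using $w_j\dfrac{\sigma_A-w_j}{\sigma-w_j}=w_j-\sigma_{A^c}\dfrac{w_j}{\sigma-w_j}$ for $j\in A$, the difference of the two displayed quantities equals $\dfrac1\sigma\Big(\sigma_{A^c}\sum_{j\in A}\dfrac{w_j}{\sigma-w_j}-\sigma_A\sum_{j\in A^c}\dfrac{w_j}{\sigma-w_j}\Big)$; note $\sigma-w_j>0$ for all $j\in S$ because $|S|\ge2$ (since $u+1\le n$), so every term is well defined.

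The key step will be to show this difference is $\ge0$. Dividing by $\sigma_A\sigma_{A^c}$ — the degenerate cases $A=\emptyset$ and $A=S$ being trivial since both probabilities are then $0$, resp.\ both $1$ — this amounts to
$$\frac1{\sigma_A}\sum_{j\in A}\frac{w_j}{\sigma-w_j}\ \ge\ \frac1{\sigma_{A^c}}\sum_{j\in A^c}\frac{w_j}{\sigma-w_j},$$
i.e.\ the $w$-weighted average of $j\mapsto(\sigma-w_j)^{-1}$ over $A$ dominates the corresponding average over $A^c$. Since $w\mapsto(\sigma-w)^{-1}$ is increasing on $[0,\sigma)$ and every weight in $A$ is $\ge x$ while every weight in $A^c$ is $<x$, the left-hand side is at least $(\sigma-\min_{j\in A}w_j)^{-1}\ge(\sigma-\max_{j\in A^c}w_j)^{-1}$, which in turn bounds the right-hand side; so the difference is nonnegative. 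Taking $\mathbb E[\cdot]$ over $\mathcal V_{u-1}$ gives $\mathbb P(w_{v(u)}\ge x)\ge\mathbb P(w_{v(u+1)}\ge x)$, and iterating over $u,u+1,\dots,w$ together with the first paragraph yields the Lemma. The only non-routine ingredient is the rearrangement above followed by this Chebyshev-type comparison of the two weighted averages; apart from that one only has to be careful with the degenerate families $A=\emptyset$, $A=S$ and with the strict positivity of $\sigma-w_j$, which is where the bound $|S|\ge2$ is used.
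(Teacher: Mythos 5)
Your proof is correct and follows essentially the same route as the paper's: reduce to the one-step comparison $w=u+1$, condition on $\mathcal V_{u-1}$, and compute the two conditional probabilities explicitly. The only difference is the final algebraic step — the paper bounds each term of the sum by substituting $x$ for $w_k$ in $\frac{U-w_k\mathbbm 1(w^i_k\ge x)}{V-w_k}$ so that the sum collapses exactly to $U/V$, whereas you rearrange the difference into a Chebyshev-type comparison of weighted averages of $(\sigma-w_j)^{-1}$ over $A$ and $A^c$; both are valid, and your preliminary removal of the truncation is a clean simplification.
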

\begin{proof}
Recall that $\mathcal{V}_{u} = (v(1),v(2),...,v(u))$ for any $n \geq i \geq 1$. It is sufficient to prove the lemma for $w = u+1$. In that case we have:
$$
\mathbb{P}(w^{i}_{v(u)} \geq x | \mathcal{V}_{u-1}) = \frac{\sum_{k \not\in \mathcal{V}_{u-1}} w_{k}\mathbbm{1}(w^{i}_k \geq x)}{\sum_{k' \not\in \mathcal{V}_{u-1}} w_{k'}}.
$$
Let: 
$$U = \sum_{k \not\in \mathcal{V}_{i-1}} w_{k}\mathbbm{1}(w^i_k \geq x),$$
and
$$V = \sum_{k \not\in \mathcal{V}_{i-1}} w_{k}.$$
Since $V \geq U$ we have: \begin{equation*}
\begin{aligned}
\mathbb{P}(w^i_{v(u+1)} \geq x | \mathcal{V}_{u-1}) &=\sum_{k\not\in \mathcal{V}_{u-1}} \mathbb{P}(v(u)=k | \mathcal{V}_{u-1})\mathbb{P}(w^i_{v(u+1)} \geq x | \mathcal{V}_{i-1} , \, v(i)=k) \\
&= \sum_{k\not\in \mathcal{V}_{u-1}}\frac{w_k}{V}\left(\frac{U-w_k\mathbbm{1}(w^i_k \geq x)}{V-w_k}\right) \\
&= \sum_{k\not\in \mathcal{V}_{u-1}  , \, w^i_k \geq x}\frac{w_k}{V}\left(\frac{U-w_k}{V-w_k}\right) + \sum_{k\not\in \mathcal{V}_{u-1}  , \, w^i_k < x}\frac{w_k}{V}\left(\frac{U}{V-w_k}\right) \\
&\leq \sum_{k\not\in \mathcal{V}_{u-1}  , \, w^i_k \geq x}\frac{w_k}{V}\left(\frac{U-x}{V-x}\right) + \sum_{k\not\in \mathcal{V}_{u-1}  , \, w^i_k < x}\frac{w_k}{V}\left(\frac{U}{V-x}\right) \\
&=\frac{U}{V}\left(\frac{U-x}{V-x}\right) + \left(\frac{V-U}{V}\right)\left(\frac{U}{V-x}\right) \\
&= \frac{U}{V}\\
&=\mathbb{P}(w_{v(u)} \geq x | \mathcal{V}_{i-1}).
\end{aligned}
\end{equation*}
\end{proof}
With this lemma in hand we can deal with the case when $m > t^{\tilde{i}}_0$.
\begin{Theorem}
\label{concentration_3}
Suppose that Conditions \ref{cond_nodes} hold. There exists a constant $A > 0$ such that the following holds: \\
For $t^{\tilde{i}}_0 < u \leq w $ and for any $y \geq 0$:
$$
\mathbb{P}\left[ \sum_{k=u}^w (w_{v(k)} - 1)  \geq y\right] \leq  A\exp\left(\frac{-y^2}{A\left(y\frac{\ell_n^{1/3}}{\tilde{i}\sqrt{f}}+w-u\right)}\right)+A\exp\left(\frac{-\tilde{i}\sqrt{f}}{A}\right)
$$
\end{Theorem}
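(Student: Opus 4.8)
The plan is to follow the truncation device behind Theorems \ref{concentration_2} and \ref{concentration_L}: past time $t^{\tilde i}_0$ the weights are small, so after that time all increments are at most $M:=\ell_n^{1/3}/(\tilde i\sqrt f)$ and a direct Chernoff bound (no chaining, hence no hypothesis of the type of Conditions \ref{cond_2}) will do. The genuinely new point is that $u$ and $w$ may be of order $n$, so Lemma \ref{esperance} no longer controls $\mathbb{E}[w_{v(k)}]$; I would replace it by the stochastic monotonicity of Lemma \ref{decreasing}, which lets me bound the first and second moments of the truncated weights by their values at the fixed index $t^{\tilde i}_0=\Theta(\ell_n^{11/12})=o(n)$, where Lemmas \ref{esperance} and \ref{mean_pow_weights_2} do apply. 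Since the statement is one-sided, the crude estimate $\mathbb{E}[w_{v(t^{\tilde i}_0)}]\le 1$ is all that is needed on the mean.

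First I would introduce the event $B$ that no weight exceeding $M$ is discovered after time $t^{\tilde i}_0$; Lemma \ref{weight_tail} with $i=\tilde i$ gives $\mathbb{P}(\bar B)\le A\exp(-\tilde i\sqrt f/A)$, which is the second term of the claimed bound. On $B$ one has $w_{v(k)}=w^{\tilde i}_{v(k)}$ for all $k\in[u,w]$ (recall $u>t^{\tilde i}_0$), so it suffices to bound $\mathbb{P}\big(\sum_{k=u}^w(w^{\tilde i}_{v(k)}-1)\ge y\big)$, now without reference to $B$. By Lemma \ref{decreasing}, for $k\ge u>t^{\tilde i}_0$ the variable $w^{\tilde i}_{v(k)}$ is stochastically dominated by $w^{\tilde i}_{v(t^{\tilde i}_0)}$, whence, applying the domination to the variables and to their squares and using $w^{\tilde i}_j\le w_j$,
\[
\mathbb{E}[w^{\tilde i}_{v(u)}]\le\mathbb{E}[w_{v(t^{\tilde i}_0)}],\qquad \mathbb{E}[(w^{\tilde i}_{v(u)})^2]\le\mathbb{E}[w_{v(t^{\tilde i}_0)}^2].
\]
Since $t^{\tilde i}_0=o(n)$ and $t^{\tilde i}_0\gg n^{2/3}$, Lemma \ref{esperance} gives $\mathbb{E}[w_{v(t^{\tilde i}_0)}]=1+\frac{t^{\tilde i}_0(1-C)}{\ell_n}+o\big(\tfrac{t^{\tilde i}_0+n^{2/3}}{n}\big)$ with a drift term of order $-\ell_n^{-1/12}$ that dominates the error (we may assume $C>1$, since $C=1$ forces $W\equiv1$, the Erd\H{o}s--R\'enyi case), so $\mathbb{E}[w^{\tilde i}_{v(u)}]\le1$ for $n$ large; and Lemma \ref{mean_pow_weights_2} gives $\mathbb{E}[(w^{\tilde i}_{v(u)})^2]\le C+o(1)$. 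Letting $J(u),\dots,J(w)$ be i.i.d.\ copies of $w^{\tilde i}_{v(u)}$, the bound $\mathbb{E}[w^{\tilde i}_{v(u)}]\le1$ yields $\mathbb{E}\big[\sum_{k=u}^w J(k)\big]\le w-u+1$, hence
\[
\sum_{k=u}^w(w^{\tilde i}_{v(k)}-1)\ge y\ \Longrightarrow\ \sum_{k=u}^w w^{\tilde i}_{v(k)}-\mathbb{E}\Big[\sum_{k=u}^w J(k)\Big]\ge y .
\]

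Then I would run the Chernoff argument exactly as in the proof of Theorem \ref{replacement}. That proof only uses that a larger weight has both a larger value and a larger chance of being sampled earlier, which is preserved by truncation, so with $g(x)=e^{\lambda x}$, for $0\le\lambda<3/M$,
\[
\mathbb{E}\Big[e^{\lambda\sum_{k=u}^w w^{\tilde i}_{v(k)}}\Big]\le\mathbb{E}\big[e^{\lambda w^{\tilde i}_{v(u)}}\big]^{\,w-u+1}.
\]
Combining this with Markov's inequality, the classical Bernstein bound on $\mathbb{E}\big[e^{\lambda(w^{\tilde i}_{v(u)}-\mathbb{E}w^{\tilde i}_{v(u)})}\big]$ (valid since $0\le w^{\tilde i}_{v(u)}\le M$ and $\mathbb{E}[(w^{\tilde i}_{v(u)})^2]\le C+o(1)$), and the usual optimization in $\lambda$, gives
\[
\mathbb{P}\Big(\sum_{k=u}^w w^{\tilde i}_{v(k)}-\mathbb{E}\Big[\sum_{k=u}^w J(k)\Big]\ge y\Big)\le A\exp\!\Big(\frac{-y^2}{A\,(My+(w-u))}\Big),
\]
with $M=\ell_n^{1/3}/(\tilde i\sqrt f)$. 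Together with the reduction to $B$ above, this gives the theorem.

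The step I expect to be the main obstacle is the centering: for indices of order $n$ there is no mean estimate at all, so everything hinges on Lemma \ref{decreasing} pushing the required first and second moments back to the fixed index $t^{\tilde i}_0$, and on the quantitative fact that $\mathbb{E}[w_{v(t^{\tilde i}_0)}]$ is genuinely at most $1$ — which is precisely why $t^{\tilde i}_0$ was taken of size $\ell_n^{11/12}$, well above $n^{2/3}$, so that the negative drift in Lemma \ref{esperance} swamps the $o\big(\tfrac{t^{\tilde i}_0+n^{2/3}}{n}\big)$ error. The only other thing worth stating carefully is that the moment-generating-function comparison behind Theorem \ref{replacement} (and hence behind Theorems \ref{concentration} and \ref{concentration_2}) is unaffected by the truncation.
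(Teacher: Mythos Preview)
Your proposal is correct and takes essentially the same approach as the paper: truncate via Lemma~\ref{weight_tail}, pass to i.i.d.\ sampling via the \cite{AYS16} comparison behind Theorem~\ref{replacement}, use the stochastic monotonicity of Lemma~\ref{decreasing} to pull the needed moment estimates from index $u$ (possibly of order $n$) back to the fixed index $t^{\tilde i}_0+1=o(n)$ where Lemmas~\ref{esperance} and~\ref{mean_pow_weights_2} apply, and finish with Bernstein. The only cosmetic difference is the order of the two middle steps: the paper first applies Theorem~\ref{replacement} to get i.i.d.\ copies of $v(u)$ and then uses Lemma~\ref{decreasing} as a coupling to replace them by i.i.d.\ copies of $v(t^{\tilde i}_0+1)$ before running Bernstein there, whereas you use Lemma~\ref{decreasing} upfront to bound the first two moments of $w^{\tilde i}_{v(u)}$ and then run Bernstein directly on i.i.d.\ copies of $v(u)$.
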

\begin{proof}
Let $\mathcal{A}$ be the even that no weight discovered after time $t^{\tilde{i}}_0$ is larger than $\frac{\ell_n^{1/3}}{\tilde{i}\sqrt{f}}$. Let $(J(i))_{i \geq u}$ be i.i.d with the distribution of $v(u)$. Theorem $1$ from \cite{AYS16} still applies for the $(w^{\tilde{i}}_v)_{v\geq 1}$'s and we get similarly to Theorem \ref{replacement}:
\begin{equation}
\begin{aligned}
\label{die1}
\mathbb{P}\left(\sum_{k=u}^w (w_{v(k)} - 1)  \geq y ,\, \mathcal{A}\right) 
&\leq \mathbb{P}\left(\sum_{k=u}^w (w^{\tilde{i}}_{v(k)} - 1)  \geq y\right) \\
&\leq \mathbb{E}\left[\exp\left(\sum_{k=u}^w (w^{\tilde{i}}_{v(k)} - 1)\right)\exp(-y)\right] \\
&\leq \mathbb{E}\left[\exp\left(\sum_{k=u}^w (w^{\tilde{i}}_{J(k)} - 1)\right)\exp(-y)\right].
\end{aligned}
\end{equation}
By Lemma \ref{decreasing} we can apply an ordered coupling argument (Theorem $7.1$ of \cite{D12}) in order to obtain:
\begin{equation}
\begin{aligned}
\label{die2}
&\mathbb{E}\left[\exp\left(\sum_{k=u}^w (w^{\tilde{i}}_{J(k)} - 1)\right)\exp(-y)\right]
\leq \mathbb{E}\left[\exp\left(\sum_{k=u}^w (w^{\tilde{i}}_{J'(k)} - 1)\right)\exp(-y)\right]
\end{aligned}
\end{equation}
where the $J'(k)$'s are i.i.d random variables with the distribution of $v(t^{\tilde{i}}_0+1)$. Moreover by Lemma \ref{esperance} we have for any $k \geq u$:
\begin{equation*}
\mathbb{E}[w^{\tilde{i}}_{J'(k)}] \leq 1
\end{equation*}
and by Lemma \ref{mean_pow_weights_2} :
\begin{equation*}
\mathbb{E}[{w^{\tilde{i}}_{J'(k)}}^2] \leq C(1+o(1))
\end{equation*}
Hence, by Equation \ref{die1} and the Chernoff bound in Equation \ref{die2} we obtain the following Bernstein's inequality:
\begin{equation*}
\begin{aligned}
\mathbb{P}\left(\sum_{k=u}^w (w_{v(k)} - 1)  \geq y ,\, \mathcal{A}\right) 
&\leq 2\exp\left(\frac{-y^2}{A\left(y\frac{\ell_n^{1/3}}{C\tilde{i}\sqrt{f}}+\sum_{k=u}^w \mathbb{E}\left[({w^{\tilde{i}}_{J'(k)}})^2\right]\right)}\right) \\
&\leq A\exp\left(\frac{-y^2}{A\left(y\frac{\ell_n^{1/3}}{\tilde{i}\sqrt{f}}+w-u\right)}\right).
\end{aligned}
\end{equation*}
Moreover, by Theorem \ref{weight_tail}:
$$
\mathbb{P}(\mathcal{A}) \leq A\exp\left(\frac{-\tilde{i}\sqrt{f}}{A}\right).
$$
We finish the proof by union bound between these last two inequalities.
\end{proof}
By the same method, we obtain the following theorem which deals with the $w_{v(i)}^2$'s.
\begin{Theorem}
\label{concentration4}
Suppose that Conditions \ref{cond_nodes} hold. There exists a constant $A > 0$ such that the following holds: \\
For $ i < \tilde{i}$ and $t^{i}_0 \leq u \leq w  \leq t^{\tilde{i}}_0$ and for any $y \geq 0$:
$$
\mathbb{P}\left[ \sum_{k=u}^w (w_{v(k)}^2 - \mathbb{E}[w_{v(u)}^2])  \geq y\right] \leq  A\exp\left(\frac{-y^2}{A\frac{\ell_n^{2/3}}{\tilde{i^2}f}\left(y+w-u\right)}\right)+A\exp\left(\frac{-\tilde{i}\sqrt{f}}{A}\right).
$$
And for $t^{\tilde{i}}_0 < u \leq w $ and for any $y \geq 0$:
$$
\mathbb{P}\left[ \sum_{k=u}^w (w_{v(k)}^2 - \mathbb{E}[w_{v(t^{\tilde{i}}_0)}^2])  \geq y\right] \leq  A\exp\left(\frac{-y^2}{A\frac{\ell_n^{2/3}}{\tilde{i^2}f}\left(y+w-u\right)}\right)+A\exp\left(\frac{-\tilde{i}\sqrt{f}}{A}\right).
$$
\end{Theorem}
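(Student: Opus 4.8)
The plan is to adapt, almost verbatim, the proof of Theorem \ref{concentration_3}, replacing the linear quantity $w_{v(k)}$ by the square $w_{v(k)}^2$ and the truncation level $\ell_n^{1/3}/(i\sqrt f)$ by its square $\ell_n^{2/3}/(i^2f)$. Fix first $i<\tilde i$ and $t^i_0\le u\le w\le t^{\tilde i}_0$, and let $\mathcal{A}$ be the event that no weight larger than $\ell_n^{1/3}/(i\sqrt f)$ is discovered after time $t^i_0$. On $\mathcal{A}$ one has $w_{v(k)}=w^i_{v(k)}$ for every $k\ge u$, so, since $\mathbb{E}[w_{v(u)}^2]\ge\mathbb{E}[(w^i_{v(u)})^2]$, the event $\{\sum_{k=u}^w(w_{v(k)}^2-\mathbb{E}[w_{v(u)}^2])\ge y\}$ is, on $\mathcal{A}$, contained in $\{\sum_{k=u}^w((w^i_{v(k)})^2-\mathbb{E}[(w^i_{v(u)})^2])\ge y\}$. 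The summands $(w^i_{v(k)})^2$ are bounded by $\ell_n^{2/3}/(i^2f)$, which is precisely the scale that will appear in the Bernstein denominator.

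On $\mathcal{A}$ I would then run the same Chernoff comparison as in Theorem \ref{concentration_3}. Because the array $(w_1^2,\dots,w_n^2)$ is still non-increasing, hence ranked consistently with the size-bias $\propto w_k$, Theorem~1 of \cite{AYS16} (applied exactly as in Theorem \ref{replacement}) gives, for every $\lambda\ge 0$, $\mathbb{E}[\exp(\lambda\sum_{k=u}^w(w^i_{v(k)})^2)]\le\mathbb{E}[\exp(\lambda\sum_{k=u}^w(w^i_{J(k)})^2)]$ with $(J(k))$ i.i.d.\ of law $v(u)$; the ordered coupling of Lemma \ref{decreasing} (via Theorem~7.1 of \cite{D12}) then lets me replace $v(u)$ by $v(t^i_0+1)$, whose index is $o(n)$ since $\tilde i^2f=o(n^{1/3})$ forces $t^{\tilde i}_0=o(n)$. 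Writing $(J'(k))$ for i.i.d.\ variables of this last law, Lemma \ref{mean_pow_weights_2} gives $\mathbb{E}[(w^i_{J'(k)})^2]\le C(1+o(1))\le\mathbb{E}[w_{v(u)}^2]+o(1)$, and the crude bound $(w^i_k)^2\le\ell_n^{2/3}/(i^2f)$ gives $\mathbb{E}[(w^i_{J'(k)})^4]\le(\ell_n^{2/3}/(i^2f))\,C(1+o(1))$. Feeding these into the usual passage from the exponential bound to Bernstein's inequality (as in \cite{B24}) yields
$$\mathbb{P}\left(\sum_{k=u}^w\big((w^i_{v(k)})^2-\mathbb{E}[(w^i_{v(u)})^2]\big)\ge y,\ \mathcal{A}\right)\le A\exp\left(\frac{-y^2}{A\frac{\ell_n^{2/3}}{i^2f}(y+w-u)}\right).$$

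It then remains to control $\mathbb{P}(\bar{\mathcal{A}})$, which is at most $A\exp(-i\sqrt f/A)$ by Lemma \ref{weight_tail}, and a union bound finishes the first case; the second case, $t^{\tilde i}_0<u\le w$, is handled identically with $i$ replaced by $\tilde i$ throughout, the only genuine difference being that the coupling to $v(t^{\tilde i}_0+1)$ is now indispensable, since $u$ itself may be of order $n$ and $\mathbb{E}[w_{v(u)}^2]$ need not be $C+o(1)$. The step I expect to be the most delicate is exactly this bookkeeping of the centering: one must check that the deterministic constant $\mathbb{E}[w_{v(u)}^2]$ (resp.\ $\mathbb{E}[w_{v(t^{\tilde i}_0)}^2]$) dominates, up to an $o(1)$ that is absorbed by the error terms, the truncated second moment of the i.i.d.\ reference variables — which is where Lemma \ref{decreasing} (monotonicity in the index) and the identity $t^{\tilde i}_0=o(n)$ (so that Lemma \ref{mean_pow_weights_2} still applies at that index) are used. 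Everything else is a routine repetition of the computations already carried out for Theorem \ref{concentration_3}.
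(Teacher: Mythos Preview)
Your proposal is correct and is precisely the approach the paper takes: the paper's entire proof is the single sentence ``By the same method, we obtain the following theorem which deals with the $w_{v(i)}^2$'s,'' referring back to Theorem~\ref{concentration_3}, and you have spelled that out faithfully (squares in place of linear terms, truncation level squared, fourth-moment bound $(w^i_k)^4\le(\ell_n^{2/3}/(i^2f))(w^i_k)^2$ feeding Bernstein). One small simplification: in the first case ($t^i_0\le u\le w\le t^{\tilde i}_0$) the coupling step from $v(u)$ to $v(t^i_0+1)$ is unnecessary, since $u\le t^{\tilde i}_0=o(n)$ already lets you apply Lemma~\ref{mean_pow_weights_2} at index $u$; staying with i.i.d.\ copies of $v(u)$ makes the centering exact ($\mu'=c$) and removes the ``delicate bookkeeping'' you flagged.
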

\subsection{The size of connected components discovered after the largest connected component}
We can now prove the main theorem on the concentration of the sizes of the components discovered after $H^*_f$. In order to do that we will once again study the event that $L$ visits $0$ in some intervals.
\begin{Theorem}
\label{localize}
Suppose that Conditions \ref{cond_nodes} are verified. Let $i^* \in \mathbb{N}$ be the time at which the exploration of $H^*_f$ ends.There exists a constant $A > 0$ such that the following is true:\\
The probability that there exists an $\tilde{i} \geq i\geq 1$ and $\bar{k}_i > k \geq 0$, such that $L$ does not visit $0$ between times $t^{i}_k-t+i^*$ and $t^{i}_{k+1}-t+i^*$,  or times  $t^{i}_{\bar{k}_i}-t+i^*$ and $t^{i+1}_{0}-t+i^*$ is at most:
\begin{equation*}
A\exp\left(\frac{-\sqrt{f}}{A}\right)+A\exp\left(\frac{-n^{1/8}}{A}\right).
\end{equation*}
\end{Theorem}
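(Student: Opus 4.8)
The basic idea is to transfer everything to the process $L^0$, using that the increments of $L'$ are dominated by those of $L^0$: since $\mathcal V(i)\subseteq\mathcal V(i+L_{i-1})$ we have $X_i\le X^0_i$ deterministically, so $L'_l-L'_a\le L^0_l-L^0_a$ for all $l\ge a$. Hence if $\min_{a<l\le b}(L^0_l-L^0_a)\le -L_a$, then $L'$ attains a new running minimum in $(a,b]$, i.e. $L$ visits $0$ between $a$ and $b$. So, writing $c_0>0$ for a small constant depending only on $W$ and $\epsilon'$, for every interval $[a,b]$ of the subdivision it is enough to bound $\mathbb P(L_a> c_0\ell_n^{1/3})$ and $\mathbb P\big(\min_{a<l\le b}(L^0_l-L^0_a)>- c_0\ell_n^{1/3}\big)$. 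The first is handled by the observation that on the event that $L$ visited $0$ during the preceding window of the subdivision, $L_a$ is at most $1$ plus the oscillation of $L^0$ over that short window.

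\textbf{First steps.} I would start on the event $\{t_2\le i^*\le t_3\}$ (with $t_2=\tfrac{2(1-\epsilon')f\ell_n^{2/3}}{C}$, $t_3=\tfrac{2(1+\epsilon')f\ell_n^{2/3}}{C}$), which by Theorem~\ref{lower_bound_1} (used with $\epsilon=1$) fails with probability $\le A\exp(-f/A)$ and anchors the first window of the subdivision at time $i^*$. Via Corollary~\ref{S_h} with $h=0$ this fixes the drift of $L^0$ at step $m$ in phase $i$ at $-(i^2+O(\epsilon'))f\ell_n^{-1/3}+o(f\ell_n^{-1/3})$, whose cumulative error over a small interval $[a,b]$ (of length $b-a=\tfrac{\ell_n^{2/3}}{Ci^2f}$) is $o(\ell_n^{1/3})$; since $\epsilon'<1/2$ this gives $\mathbb E[L^0_b-L^0_a]\le-2c_0\ell_n^{1/3}$ over such an interval, and $\mathbb E[L^0_b-L^0_a]\le-\Theta(i^3f^2\ell_n^{1/3})$ over the big interval $[t^i_{\bar k_i}-t+i^*,\,t^{i+1}_0-t+i^*]$. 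I would also work on the events of Lemma~\ref{weight_tail} and of~\eqref{facts2}: after time $t^i_0$ all weights are $\le\ell_n^{1/3}/(i\sqrt f)$ and all increments of $L^0$ are $\le 2\ell_n^{1/3}/(i\sqrt f)$ (each failing with probability $\le A\exp(-i\sqrt f/A)$), on which Theorems~\ref{concentration_L}, \ref{concentration_3}, \ref{concentration4} become available. The role of the increment bound is that, taking a deviation $y=\Theta(\ell_n^{1/3})$ in Theorem~\ref{concentration_L} over a phase-$i$ window, the quantity $y\,\ell_n^{1/3}/(i\sqrt f)$ dominates the window length in the denominator, so the exponent becomes a constant multiple of $i\sqrt f$.

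\textbf{Induction and union bound.} I would then prove by induction along the windows --- the small intervals of phase $i$, and then the big interval of phase $i$ cut into sub-windows of length $\tfrac{\ell_n^{2/3}}{C(i+1)^2f}$ --- that $L$ visits $0$ in each, the base case being $L_{i^*}=1$. Assuming $L$ visited $0$ in every window up to the one before $[a,b]$, with last visit at $a''\le a$, one has $L_a\le 1+(L^0_a-L^0_{a''})$ with $a-a''$ at most a window length and $\max_{a''<l\le a}\mathbb E[L^0_l-L^0_{a''}]\le 0$ by negativity of the drift; so Theorem~\ref{concentration_L} with deviation $\Theta(\ell_n^{1/3})$ gives $\mathbb P(L_a>c_0\ell_n^{1/3})\le A\exp(-i\sqrt f/A)$ conditionally on the past. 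Combined with $\mathbb P\big(L^0_b-L^0_a-\mathbb E[L^0_b-L^0_a]>c_0\ell_n^{1/3}\big)\le A\exp(-i\sqrt f/A)$ from Theorem~\ref{concentration_L} (over the big interval this is even easier, the drop $\Theta(i^3f^2\ell_n^{1/3})$ dwarfing $c_0\ell_n^{1/3}$, and its size also forces $L$ back to $0$ within its last sub-window so the induction passes to phase $i+1$), the reduction of the first paragraph gives a conditional failure probability $\le A\exp(-i\sqrt f/A)$ per window. For $m>t^{\tilde i}_0$ I would replace Theorem~\ref{concentration_L} by Theorems~\ref{concentration_3}, \ref{concentration4} and Lemma~\ref{decreasing}, with $i$ replaced by $\tilde i$; since $\tilde i^2f=\Theta(\ell_n^{1/4})$ we get $\tilde i\sqrt f=\Theta(\ell_n^{1/8})=\Theta(n^{1/8})$. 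As phase $i$ has $\bar k_i+1=\Theta(i^3f)$ windows and the part of the tail beyond $t^{\tilde i}_0$ is covered by $O(n^{7/12})$ windows, the union bound produces $\sum_{i\ge1}\Theta(i^3f)A\exp(-i\sqrt f/A)+O(n^{7/12})A\exp(-n^{1/8}/A)\le A'\exp(-\sqrt f/A')+A'\exp(-n^{1/8}/A')$ (the sum being dominated by its $i=1$ term); adding the $A\exp(-f/A)$ from the conditioning and the $\sum_iA\exp(-i\sqrt f/A)$ from the weight and increment events leaves the claim.

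\textbf{The main obstacle.} The delicate part is the induction of the third step: propagating the height bound ``$L\le c_0\ell_n^{1/3}$'' through the whole tail, and especially across the long big intervals where $L^0$ collapses, is self-referential --- an a priori bound on $L$ is needed to show $L$ returns to $0$ in a window, and that return is exactly what keeps the height low. This forces the subdivision to be fine enough that the drop of $L^0$ over one window always beats the oscillation of $L^0$ over the previous one (which is why the window length in phase $i$ scales like $\ell_n^{2/3}/(Ci^2f)$), and requires keeping careful track of the last visit to $0$. Checking uniformly that every pair (window length, $\Theta(\ell_n^{1/3})$) appearing in the argument satisfies Conditions~\ref{cond_3} is the other genuinely fiddly ingredient.
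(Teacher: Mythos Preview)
Your overall strategy matches the paper's: reduce to $L^0$, use the negative drift from Corollary~\ref{S_h}, apply Theorem~\ref{concentration_L} window by window with deviation $\Theta(\ell_n^{1/3})$, switch to Theorems~\ref{concentration_3}--\ref{concentration4} past $t^{\tilde i}_0$, and sum. There is one real gap, however. Once you condition on $\{t_2\le i^*\le t_3\}$, every window endpoint $a=t^i_k-t+i^*$ is random, sitting somewhere in a deterministic strip of width $t_3-t_2=\tfrac{4\epsilon'f\ell_n^{2/3}}{C}$; the last zero $a''$ is likewise random (and not a stopping time). Theorem~\ref{concentration_L} applies only to \emph{deterministic} intervals $[l,m]$, so your appeal to it ``conditionally on the past'' on $[a'',a]$ or $[a,b]$ is not justified as stated --- and applying it directly to the whole strip fails Conditions~\ref{cond_3}, since for $i=1$ the strip has length $\Theta(f\ell_n^{2/3})$ while $y=\Theta(\ell_n^{1/3})$. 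The paper fixes this by bounding the bad event by $\sup_{u\in[t^i_k,\,t^i_{k+1}+(t_3-t_2)]}\bigl(L^0_{u+\text{window}}-L^0_u\bigr)\ge0$ and then subdividing this $\Theta(i^2f^2)$-windows-long strip into pieces of one window each (the $t'_j$-grid, Equations~\eqref{eq888}--\eqref{eq91}), applying Theorem~\ref{concentration_L} piecewise. This costs an extra $\Theta(i^2f^2)$ factor in the union-bound count --- harmless for the final estimate --- but the subdivision is what makes the concentration step legitimate.

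A second, smaller point: the paper uses a one-step reduction --- if $L$ last hit $0$ at $a^{**}$ in the previous window and misses the current one, then since $a^{**}+\text{window}$ lands in the current window one gets $L^0_{a^{**}+\text{window}}-L^0_{a^{**}}\ge L'_{a^{**}+\text{window}}-L'_{a^{**}}>0$. This packages your two separate bounds (on $L_a$ and on the drop over $[a,b]$) into a single increment of $L^0$ over one window length. With that formulation the ``self-referential'' difficulty you flag as the main obstacle largely disappears: one never needs an a-priori height bound on $L_a$.
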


\begin{proof}
By Theorem \ref{lower_bound_1}:
\begin{equation}
\label{condition_largest}
\mathbb{P}\left(\frac{2(1+\epsilon')f\ell_n^{2/3}}{C} \geq i^* \geq \frac{2(1-\epsilon')f\ell_n^{2/3}}{C}\right) \geq 1-A\exp\left(\frac{-\sqrt{f}}{A}\right).
\end{equation}
Define $E^i_k$ as the event that $L$ does not visit $0$ between times $t^{i}_k-t+i^*$ and time $t^{i}_{k+1}-t+i^*$, or $t^{i}_{\bar{k}_i}-t+i^*$ and $t^{i+1}_{0}-t+i^*$ if $k=\bar{k}_i$. \par
Deterministically, for any $0 \leq u \leq w \leq n$:
\begin{equation}
\label{eq100}
\mathbb{P}\left(L'_{w}-L'_{u} \geq 0\right) \leq \mathbb{P}\left(L^0_w-L^{0}_u \geq 0\right),
\end{equation}
so it is sufficient to focus on $L^0$.\par
We start by dealing with $(i,k) =(1,0)$, then the rest of the proof consists in repeating the arguments we will give for $(i,k) =(1,0)$ with an induction. \par 
In order to show that $L$ visits $0$ between $i^*$ and $i^*+\frac{\ell_n^{2/3}}{Cf}$, recall that $t = \frac{2(1-\epsilon')f\ell_n^{2/3}}{C}$ and let $E$ be the event $t + \frac{2\epsilon'f\ell_n^{2/3}}{C} \geq i^* \geq t$. Then:
\begin{equation}
\begin{aligned}
\label{eq88}
\mathbb{P}\left(L^{0}_{i^*+\frac{\ell_n^{2/3}}{Cf}}-L^{0}_{i^*}  \geq 0  \right)  &=\mathbb{P}\left(E  , \, \left \{L^{0}_{i^*+\frac{\ell_n^{2/3}}{Cf}}-L^{0}_{i^*}  \geq 0 \right\} \right) + \mathbb{P}(\bar{E}) \\
&\leq  \mathbb{P}\left(\sup_{t \leq u  \leq t+\frac{2\epsilon'f\ell_n^{2/3}}{C} } L^{0}_{u+\frac{\ell_n^{2/3}}{Cf}}-L^{0}_{u}  \geq 0  \right) + \mathbb{P}(\bar{E}).
\end{aligned}
\end{equation}
Divide the interval between $t$ and $t+\frac{2\epsilon'f\ell_n^{2/3}}{C}$ by introducing intermediate terms of the form:
$t'_j= t + \frac{j\ell_n^{2/3}}{fC}$.
Let $\bar{j}$ be the largest integer such that $t'_{\bar{j}} \leq t+\frac{2\epsilon'f\ell_n^{2/3}}{C}$, and suppose everything is well truncated i.e $t'_{\bar{j}} = t+\frac{2\epsilon'f\ell_n^{2/3}}{C}$. Equation \eqref{eq88} then yields:
\begin{equation}
\begin{aligned}
\label{eq888}
&\mathbb{P}\left(\sup_{t \leq u  \leq t+\frac{2\epsilon'f\ell_n^{2/3}}{C} } L^{0}_{u+\frac{\ell_n^{2/3}}{Cf}}-L^{0}_{u}  \geq 0  \right) 
\leq &\sum_{j=1}^{\bar{j}}\mathbb{P}\left(\sup_{t'_{j-1} \leq u  \leq t'_j } L^{0}_{u+\frac{\ell_n^{2/3}}{Cf}}-L^{0}_{u}  \geq 0  \right).
\end{aligned}
\end{equation}
For $\bar{j} \geq j \geq 1$ let:
$$
y_j = \frac{\ell_n^{1/3}(1-2\epsilon')}{2C}+\frac{\ell_n^{1/3}(j-1)}{2f^2C}.
$$
By Corollary \ref{S_h} and straightforward calculations:
\begin{equation*}
\begin{aligned}
\label{eq189}
\sup_{t'_{j-1} \leq k  \leq t'_j }\mathbb{E}\left[L^0_{k+\frac{\ell_n^{2/3}}{fC}}-L^0_k\right] &\leq \frac{3}{4}\mathbb{E}\left[L^0_{t'_{j}}-L^0_{t'_{j-1}}\right]\\
&\leq \frac{-3y_j}{2}.
\end{aligned}
\end{equation*}
Moreover, for any $\bar{j} \geq j \geq 1$, $(t'_j,t'_{j-1},0,y_j)$ verify Conditions \ref{cond_3}. Hence, by Theorem \ref{concentration_L} and the fact that, by definition, $\bar{j} \leq 2f^2$:
\begin{equation}
\begin{aligned}
\label{eq91}
\sum_{j=1}^{\bar{j}}\mathbb{P}\left(\sup_{t'_{j-1} \leq u  \leq t'_j } L^{0}_{u+\frac{\ell_n^{2/3}}{Cf}}-L^{0}_{k}  \geq 0  \right)
&\leq \sum_{j=1}^{\bar{j}}A\exp\left(\frac{-y_j^2}{A\left(y_j\frac{\ell_n^{1/3}}{\sqrt{f}}+f^{-1}\ell_n^{2/3}\right)}\right)+A\exp\left(\frac{-\sqrt{f}}{A}\right). \\
&\leq A'f^2\exp\left(\frac{-\sqrt{f}}{A'}\right) \\
&\leq  A''\exp\left(\frac{-\sqrt{f}}{A''}\right),
\end{aligned}
\end{equation}
we finish the initialization by injecting Inequalities \eqref{condition_largest} and \eqref{eq91} in \eqref{eq88}. \par
We now move to the heredity property.
Write $$\mathcal{E}_{i,k}:= \cup_{(u,v) \leq (i,k)} E^u_v \cup \bar{E}.$$
Suppose that the following inequality holds for $(i,k)$:
\begin{equation}
\label{recurs}
\mathbb{P}\left( \mathcal{E}_{i,k}\right) \leq A\exp\left(\frac{-\sqrt{f}}{A}\right) + A\sum_{j=0}^{i}(i+1)^2\exp\left(\frac{-i\sqrt{f}}{A}\right)+Ak\exp\left(\frac{-i\sqrt{f}}{A}\right),
\end{equation}
where $A >0$ is a large enough constant that does not depend on $(i,k)$. \par   For now suppose that $(i,k) \leq (\tilde{i},\tilde{k})$.
we want to prove a similar inequality for $(i,k+1)$ if $k+1 < \bar{k}_i$, or $(i+1,0)$ if not.
Suppose we are in the case $k+1 <\bar{k}_i$, the other case is similar. Write $t_{0} = t^{i}_{k}$,    $t_{1} = t^{i}_{k+1}+\frac{2\epsilon'f\ell_n^{2/3}}{C}$.
 By definition of $\mathcal{E}_{(i,k)}$:
\begin{equation}
\begin{aligned}
\label{fes}
\mathbb{P}\left(\mathcal{E}_{(i,k+1)}\right) &\leq \mathbb{P}\left(\sup_{ t_0 \leq u \leq t_1}\,  \left(L^{0}_{u+\frac{\ell_n^{2/3}}{Ci^2f}}-L^{0}_u\right)  \geq 0\right) + \mathbb{P}\left(\mathcal{E}_{(i,k)}\right).
\end{aligned}
\end{equation}
By using a similar division to the one used in Inequality \eqref{eq91} we get again:
\begin{equation*}
\begin{aligned}
\label{eq92}
\mathbb{P}\left(\sup_{ t_0 \leq u \leq  t_1}\,  \left(L^{0}_{u+\frac{\ell_n^{2/3}}{Ci^2f}}-L^{0}_u\right)  \geq 0\right) \leq A\exp\left(\frac{-i\sqrt{f}}{A}\right).
\end{aligned}
\end{equation*}
This finishes the induction in the case where $(i,k) \leq (\tilde{i},\tilde{k})$. \par
Now suppose that $(i,k) > (\tilde{i},\tilde{k})$, we cannot directly use Theorem \ref{concentration_L} because $t^i_k$ might be of order $n$. Thus, we use will use the coupling argument of Theorem \ref{concentration_3}. Similarly to Equation \eqref{eq888} we need to bound: 
$$
\sum_{j=1}^{\bar{j}}\mathbb{P}\left(\sup_{t'_{j-1} \leq u  \leq t'_j } L^{0}_{u+\frac{\ell_n^{2/3}}{Cf}}-L^{0}_{u}  \geq 0  \right),
$$
with $t'_j = t^{\tilde{i}}_k+\frac{j\ell_n^{2/3}}{\tilde{i}^2fC}$ and $\bar{j}$ the largest integer such that $t'_{\bar{j}} \leq t^{\tilde{i}}_k+\frac{2\epsilon'f\ell_n^{2/3}}{C}$.
Let
$$
y=\frac{\ell_n^{1/3}(\tilde{i}^2-1)}{8\tilde{i}^2C^2}.
$$
We have for any $u > t^{\tilde{i}}_{\tilde{k}}$:
\begin{equation}
\begin{aligned}
\label{facct}
\tilde{L}^{0}_{u+\frac{\ell_n^{2/3}}{C\tilde{i}^2f}}-\tilde{L}^{0}_u &=  \sum_{r=u+1}^{u+\frac{\ell_n^{2/3}}{C\tilde{i}^2f}}\sum_{r'>u}\left(1-\exp\left(-w_{v(r)}w_{v(r')}p_f\right)\right)-\frac{\ell_n^{2/3}}{C\tilde{i}^2f}\\
&\leq \left(\sum_{r=u}^{u+\frac{\ell_n^{2/3}}{C\tilde{i}^2f}}w_{v(r)}\right)\left(1+\frac{f}{\ell_n^{1/3}}-\sum_{r'=1}^{u}w_{v(r')}p_f\right)-\frac{\ell_n^{2/3}}{C\tilde{i}^2f} \\
&\leq \left(\sum_{r=u}^{u+\frac{\ell_n^{2/3}}{C\tilde{i}^2f}}w_{v(r)}\right)\left(1+\frac{f}{\ell_n^{1/3}}-\sum_{r'=1}^{t^{\tilde{i}}_{\tilde{k}}}w_{v(r')}p_f\right)-\frac{\ell_n^{2/3}}{C\tilde{i}^2f}
\end{aligned}
\end{equation}
for $u \geq 0$ let $\mathcal{A}_1(u)$ be the event: 
$$
\left\{ \sum_{r=u+1}^{u+\frac{\ell_n^{2/3}}{C\tilde{i}^2f}}w_{v(r)}< \frac{\ell_n^{2/3}}{C\tilde{i}^2f}+y/2\right\} \cap \left\{\left(\frac{\sum_{r'=1}^{t^{\tilde{i}}_{\tilde{k}}}w_{v(r')}}{\ell_n}\right)  > \frac{t^{\tilde{i}}_{\tilde{k}}}{2}\right\}.
$$
Then, if $\mathcal{A}_1(u)$ holds, then Equation \eqref{facct} yields:
$$
\tilde{L}^{0}_{u+\frac{\ell_n^{2/3}}{C\tilde{i}^2f}}-\tilde{L}^{0}_u \leq -\frac{y}{2}.
$$
Let also $\mathcal{A}_2(u)$ be the event:
$$
\left\{ \sum_{r=u}^{u+\frac{\ell_n^{2/3}}{C\tilde{i}^2f}}(w_{v(r)}^2) \leq  8y\frac{\ell_n^{1/3}}{\tilde{i}\sqrt{f}}+2\frac{\ell_n^{2/3}}{C\tilde{i}^2f}\right\}.
$$
Then by Bernstein's inequality for martingales (\cite{F75}):
\begin{equation}
\begin{aligned}
\label{eqrrrr1}
&\mathbb{P}\left(\sup_{t'_{j-1} \leq u  \leq t'_j }\left(L^0_{u+\frac{\ell_n^{2/3}}{C\tilde{i}^2f}}-L^0_u-(\tilde{L}^{0}_{u+\frac{\ell_n^{2/3}}{C\tilde{i}^2f}}-\tilde{L}^{0}_u)\right) \geq \frac{y}{2}, \, \cap_{t'_{j-1} \leq u  \leq t'_j }\mathcal{A}_2(u)\right) \\
\leq &\mathbb{P}\left(\sup_{t'_{j-1} \leq u  \leq t'_j }\left(L^0_{u+\frac{\ell_n^{2/3}}{C\tilde{i}^2f}}-L^0_{t'_{j-1}}-(\tilde{L}^{0}_{u+\frac{\ell_n^{2/3}}{C\tilde{i}^2f}}-\tilde{L}^{0}_{t'_{j-1}})\right) \geq \frac{y}{4}, \, \cap_{t'_{j-1} \leq u  \leq t'_j }\mathcal{A}_2(u)\right) \\
&+ \mathbb{P}\left(\sup_{t'_{j-1} \leq u  \leq t'_j }\left(L^0_{u}-L^0_{t'_{j-1}}-(\tilde{L}^{0}_{u+\frac{\ell_n^{2/3}}{C\tilde{i}^2f}}-\tilde{L}^{0}_{t'_{j-1}})\right) \leq \frac{-y}{4}, \, \cap_{t'_{j-1} \leq u  \leq t'_j }\mathcal{A}_2(u)\right) \\
&\leq \exp\left(\frac{-\tilde{i}\sqrt{f}}{A}\right),
\end{aligned}
\end{equation}
where the last inequality uses the fact that $y^2 = \Theta(\ell_n^{2/3})$. \par
By Theorem \ref{concentration_3}, for any $u >t^{\tilde{i}}_{\tilde{k}}$:
\begin{equation}
\begin{aligned}
\label{eqrr1}
\mathbb{P}\left(  \sum_{r=u}^{u+\frac{\ell_n^{2/3}}{C\tilde{i}^2f}}(w_{v(r)}-1)\geq y/2\right) &\leq A\exp\left(\frac{-y^2}{A\left(y\frac{\ell_n^{1/3}}{\tilde{i}\sqrt{f}}+\frac{\ell_n^{2/3}}{C\tilde{i}^2f}\right)}\right)+A\exp\left(\frac{-\tilde{i}\sqrt{f}}{A}\right)\\
&\leq A'\exp\left(\frac{-\tilde{i}\sqrt{f}}{A'}\right),
\end{aligned}
\end{equation}
with $A'>0$ a large constant.  
By Theorem \ref{concentration4} we also get:
\begin{equation}
\label{eqrr3}
\mathbb{P}\left(   \bar{\mathcal{A}}_2(u)\right)\leq A'\exp\left(\frac{-\tilde{i}\sqrt{f}}{A'}\right).
\end{equation}
By Theorem \ref{concentration_final} and straightforward computations  we obtain:
\begin{equation}
\begin{aligned}
\label{eqrr2}
\mathbb{P}\left(\sum_{r'=1}^{t^{\tilde{i}}_{\tilde{k}}}w_{v(r')}\leq \frac{t^{\tilde{i}}_{\tilde{k}}}{2}\right)
&\leq A'\exp\left(\frac{-\tilde{i}\sqrt{f}}{A'}\right).
\end{aligned}
\end{equation}
By the union bound between inequalities \eqref{eqrrrr1}, \eqref{eqrr1},  \eqref{eqrr3} and \eqref{eqrr2} we obtain
\begin{equation}
\begin{aligned}
\label{ineq999}
    \mathbb{P}\left(\sup_{ t'_{j-1} \leq u \leq t'_j}\,  \left(L^{0}_{u+\frac{\ell_n^{2/3}}{Ci^2f}}-L^{0}_u\right)\geq 0\right) 
    &\leq \exp\left(\frac{-\tilde{i}\sqrt{f}}{A}\right) + \sum_{u=t'_{j-1}}^{t'_j}\mathbb{P}(\bar{\mathcal{A}}_1)+ \sum_{u=t'_{j-1}}^{t'_j}\mathbb{P}(\bar{\mathcal{A}}_2) \\
    &\leq A''(t'_j-t'_{j-1})\exp\left(\frac{-\tilde{i}\sqrt{f}}{A''}\right),
\end{aligned}
\end{equation}
where $ A'' > 0$ is a large constant. 
Since
$t_{\tilde{k}}^{\tilde{i}} > \ell_n^{11/12}$:
\begin{equation*}
\begin{aligned}
\ell_n^{11/12} &\leq t +  \frac{(\tilde{i}^2-1)f\ell_n^{2/3}}{C} + \frac{k\ell_n^{2/3}}{C\tilde{i}^2f} \\
&\leq \frac{3\tilde{i}^2f\ell_n^{2/3}}{C},
\end{aligned}
\end{equation*}
equation \ref{ineq999} yields for $n$ large enough:
\begin{equation*}
\begin{aligned}
    \sum_{j=1}^{\bar{j}}\mathbb{P}\left(\sup_{ t'_{j-1} \leq u \leq t'_j}\,  \left(L^{0}_{u+\frac{\ell_n^{2/3}}{Ci^2f}}-L^{0}_u\right)\geq 0\right) 
    &\leq A''(t'_{\bar{j}}-t'_{0})\exp\left(\frac{-\tilde{i}\sqrt{f}}{A''}\right), \\
    &\leq A\exp\left(\frac{-\tilde{i}\sqrt{f}}{A}\right),
\end{aligned}
\end{equation*}
where $A > 0$ is a large constant.
This finishes the proof of the induction of Equation \eqref{recurs}.
By that same equation  we obtain for $n$ and $f$ large enough:
\hfill
\begin{equation*}
\begin{aligned}
\mathbb{P}\left(\cup_{(u,v) \leq (\tilde{i},n)} E^u_v\cup \bar{E}\right) &\leq A\exp\left(\frac{-\sqrt{f}}{A}\right) + A\sum_{i=1}^{\tilde{i}}(i+1)^2\exp\left(\frac{-i\sqrt{f}}{A}\right) + An\exp\left(\frac{-\tilde{i}\sqrt{f}}{A}\right) \\
&\leq A\exp\left(\frac{-\sqrt{f}}{A}\right) + A\sum_{i=1}^{\infty}(i+1)^2\exp\left(\frac{-i\sqrt{f}}{A}\right) + A'n\exp\left(\frac{-n^{1/8}}{A'}\right) \\
&\leq A''\exp\left(\frac{-\sqrt{f}}{A''}\right)+A''\exp\left(\frac{-n^{1/8}}{A''}\right).
\end{aligned}
\hfill
\end{equation*}
\end{proof}

This theorem shows that, after exploring the largest connected component, we discover small connected components that  become smaller and smaller the further the exploration process goes.
From that, one can get multiple corollaries. A first one is that the total weights of the components also gets smaller and smaller. The proof is the same as that of Theorem \ref{Big_weight} and is omitted. \par
\begin{Corollary}
\label{chrenn}
Suppose that Conditions \ref{cond_nodes} hold. There exists a constant $A > 0$ such that the following holds:\par
For any $\epsilon >0$, the probability that there exists an $i\geq 0$ and $\bar{k}_i \geq k \geq 0$, such that a connected component discovered between times $t^{i}_k-t+i^*$ and $t^{i}_{k+1}-t+i^*$ (or times  $t^{i}_{\bar{k}_i}-t+i^*$ and $t^{i+1}_{0}-t+i^*$) in the exploration process has total weight larger than $(1+\epsilon)(t^{i}_{k+1}-t^{i}_{k})$ (or $(1+\epsilon)(t^{i+1}_{0}-t^{i}_{\bar{k}_i})$), where $i^* \in \mathbb{N}$ is the time when the exploration of $H^*_f$ ends, is at most:
\begin{equation*}
A\exp\left(\frac{-\sqrt{f}}{A}\right)+A\exp\left(\frac{-n^{1/8}}{A}\right).
\end{equation*}
\end{Corollary}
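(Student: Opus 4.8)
The plan is to replay the argument of Theorem \ref{Big_weight} one interval at a time along the division $\{t^i_k\}$, using the tail concentration estimates of Section $5$, and then to close with a union bound calibrated to the lengths of the sub-intervals. First I would condition on the intersection of the good event of Theorem \ref{localize} (that $L$ visits $0$ inside every sub-interval $[t^i_k-t+i^*,t^i_{k+1}-t+i^*]$ and every transition interval $[t^i_{\bar k_i}-t+i^*,t^{i+1}_0-t+i^*]$) with the event of Theorem \ref{lower_bound_1} placing $i^*$ in $\left[\tfrac{2(1-\epsilon')f\ell_n^{2/3}}{C},\tfrac{2(1+\epsilon')f\ell_n^{2/3}}{C}\right]$; by those two results the complementary event already has probability at most $A\exp(-\sqrt f/A)+A\exp(-n^{1/8}/A)$, so it may be discarded. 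On this good event $L$ returns to $0$ inside each of these intervals, so the exploration of any connected component discovered in a given interval is confined to that interval together with (at most) the one after it; hence, exactly as in the upper-bound step of Theorem \ref{Big_weight}, its total weight is bounded above by $\sum_j w_{v(j)}$ over a window whose length is a bounded multiple of the stated interval length $t^i_{k+1}-t^i_k=\tfrac{\ell_n^{2/3}}{Ci^2f}$ (resp.\ $t^{i+1}_0-t^i_{\bar k_i}$).

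Next I would estimate the mean of $\sum_j w_{v(j)}$ over such a window and concentrate it. For windows contained below $t^{\tilde i}_0\le\ell_n^{11/12}=o(n)$, Lemma \ref{esperance} (and the monotonicity of $l\mapsto\mathbb{E}[w_{v(l)}]$ recorded after it, which also follows from Lemma \ref{decreasing}) gives that the mean is $(1+o(1))$ times the window length; past $t^{\tilde i}_0$ the comparison $\mathbb{E}[w_{v(k)}]\le 1$ is exactly what is built into Theorem \ref{concentration_3}. I would then take the deviation $y$ to be a small fixed multiple of the interval length, $y\asymp\epsilon\,\tfrac{\ell_n^{2/3}}{Ci^2f}$; the purpose of subdividing the tail into blocks of this size is precisely that the quadruple $(m,l,0,y)$, with $l=t^i_k-t+i^*$ and $m$ the right endpoint of the window, satisfies Conditions \ref{cond_3} (using $i^2f=o(n^{1/3})$, $\ell_np_f\le 2$, and $i^*\ge t$ so that $l\ge t^i_0$). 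Applying Theorem \ref{concentration_2} when the window ends before $t^{\tilde i}_0$ and Theorem \ref{concentration_3} when it extends past it, and combining with the mean estimate, shows that on the good event the probability that a component discovered in interval $(i,k)$ has weight exceeding $(1+\epsilon)(t^i_{k+1}-t^i_k)$ (resp.\ $(1+\epsilon)(t^{i+1}_0-t^i_{\bar k_i})$) is at most $A\exp(-\tfrac{\epsilon\ell_n^{1/3}}{Ai\sqrt f})+A\exp(-\tfrac{i\sqrt f}{A})$, the second term coming from the auxiliary ``no weight larger than $\ell_n^{1/3}/(i\sqrt f)$ after time $t^i_0$'' events of Lemma \ref{weight_tail} and Equation \eqref{facts2} that underlie those two theorems. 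Since $i\sqrt f\le\tilde i\sqrt f\asymp\ell_n^{1/8}\ll\ell_n^{1/6}$ for $f\ge F$ large, the first term is dominated by the second, so each interval contributes at most $2A\exp(-i\sqrt f/A)$, with no residual dependence on $\epsilon$.

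Finally I would carry out the union bound. For a fixed $i$ there are $\bar k_i=i^2f((i+1)^2-i^2)=O(i^3f)$ sub-intervals, so summing the per-interval bound over $0\le k\le\bar k_i$ and over $1\le i\le\tilde i$ yields a series $\sum_i i^3f\,\exp(-i\sqrt f/A)$, which for $f$ large is bounded by $A'\exp(-\sqrt f/A')$; the blocks with $i=\tilde i$ whose windows lie beyond $t^{\tilde i}_0$ contribute at most $A'n\exp(-\tilde i\sqrt f/A')$, and since $t^{\tilde i}_{\tilde k}\ge\ell_n^{11/12}$ forces $\tilde i^2f\asymp\ell_n^{1/4}$, hence $\tilde i\sqrt f\asymp\ell_n^{1/8}\asymp n^{1/8}$, this is $\le A''\exp(-n^{1/8}/A'')$. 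Adding back the probability of the complement of the good event gives the claimed bound. I expect the main obstacle to be the bookkeeping near the threshold $t^{\tilde i}_0$: verifying Conditions \ref{cond_3} uniformly for the parameters arising from every sub-interval and every transition interval, and splicing together the estimate from Theorem \ref{concentration_2} (valid while the window stays below $t^{\tilde i}_0$) with that from Theorem \ref{concentration_3} (valid past it) without losing the $i$-dependence of the exponents that makes the final series summable down to $A\exp(-\sqrt f/A)+A\exp(-n^{1/8}/A)$.
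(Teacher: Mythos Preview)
Your proposal is correct and follows essentially the same approach the paper indicates: the paper omits the proof of Corollary \ref{chrenn}, stating only that ``the proof is the same as that of Theorem \ref{Big_weight}'' combined with Theorem \ref{localize}, and your outline is exactly that argument fleshed out---condition on the good events of Theorems \ref{lower_bound_1} and \ref{localize}, bound the weight of each component by the partial sum of $w_{v(j)}$ over the confining window, control the mean via Lemma \ref{esperance} (or the trivial bound $\mathbb{E}[w_{v(k)}]\le 1$ past $t^{\tilde i}_0$), concentrate via Theorems \ref{concentration_2}/\ref{concentration_3}, and close with the same $\sum_i i^3 f\exp(-i\sqrt f/A)$ union bound used at the end of Theorem \ref{localize}. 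Your identification of the verification of Conditions \ref{cond_3} near the threshold $t^{\tilde i}_0$ as the only genuine bookkeeping issue is also accurate; the paper handles the analogous splice in the proof of Theorem \ref{localize} (cf.\ the passage around Inequality \eqref{ineq999}), and the same device works here.
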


Another fact we can deduce from Theorem \ref{localize} is the following convergence in probability. Its proof is straightforward from Theorems \ref{lower_bound_1} and \ref{localize}.
\begin{Corollary}
\label{conv_cor}
Recall that $f = f(n)$ is such that $f(n) = o(n^{1/3})$. Suppose that $\lim\limits_{n\rightarrow \infty} f(n) = +\infty$. Let $(|C_1|,|C_2|,|C_3|,...)$ denote the sequence of sizes of the connected components of $G(n,\textbf{W},p_{f(n)})$ taken in decreasing order, with the convention $|C_i| = 0$ if there is no $i$-th largest component. We have the following convergence in probability for any $p > 7/3$ as $n \rightarrow \infty$:
$$
\left(\frac{|C_1|}{2f(n)\ell_n^{2/3}},\frac{|C_2|}{\ell_n^{2/3}},\frac{|C_3|}{\ell_n^{2/3}},\frac{|C_4|}{\ell_n^{2/3}},...\right) \xrightarrow{\mathbbm{p}} (C,0,0,..),
$$
in $\ell^p$, the usual $p$ norm.
\end{Corollary}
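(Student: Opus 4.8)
The plan is to establish separately the two ingredients of $\ell^p$-convergence in probability: convergence of the first coordinate, and a bound, valid on an event of probability tending to $1$, on the $p$-th power sum of all the other coordinates that vanishes as $n\to\infty$. Write $X^{(n)}=\big(\tfrac{|C_1|}{2f\ell_n^{2/3}},\tfrac{|C_2|}{\ell_n^{2/3}},\tfrac{|C_3|}{\ell_n^{2/3}},\dots\big)$ and $x=(C,0,0,\dots)$; since the asserted convergence means $\mathbb P\!\left(\|X^{(n)}-x\|_{\ell^p}>\delta\right)\to0$ for every $\delta>0$ and
\[
\big\|X^{(n)}-x\big\|_{\ell^p}^p=\left|\frac{|C_1|}{2f\ell_n^{2/3}}-C\right|^{p}+\sum_{i\ge 2}\left(\frac{|C_i|}{\ell_n^{2/3}}\right)^{p}
\]
(a finite sum, as $G(n,\textbf{W},p_{f(n)})$ has at most $n$ components), it is enough to control each term on the right.

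For the first term I would invoke Theorem \ref{lower_bound_1}: on the event it describes, whose complement has probability at most $A\exp(-f^{\epsilon}/A)$, the component $H_f^*$ explored at time $\tfrac{f\ell_n^{2/3}}{C}$ has size in an interval that, after dividing by $2f\ell_n^{2/3}$ and letting $\epsilon'\downarrow0$, shrinks to a single point; and on the same event every other component has size $o(\ell_n^{2/3})$ — those explored before $H_f^*$ by Theorem \ref{lower_bound_1} (the exploration of $H_f^*$ starts before time $\ell_n^{2/3}/(f^{1-\epsilon}C)$) or Theorem \ref{principal_3}, those explored after $H_f^*$ by Theorem \ref{localize} — so $|C_1|=|H_f^*|$ and, since $f=f(n)\to\infty$, the first coordinate converges in probability to the constant in the statement.

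The substance is the tail $\sum_{i\ge2}(|C_i|/\ell_n^{2/3})^p$, which I would split according to the position of each component in the exploration, using $\sum_j a_j^{p}\le\big(\sum_j a_j\big)^{p}$ when only a total-mass bound is available and $\sum_j a_j^{p}\le\big(\max_j a_j\big)^{p-1}\sum_j a_j$ when the largest term is also controlled. The components explored before $H_f^*$ occupy a time window of length at most $\ell_n^{2/3}/(f^{1-\epsilon}C)$, hence their normalized contribution is at most $\big(f^{1-\epsilon}C\big)^{-p}\to0$. For the components explored after $H_f^*$ I would use the phase decomposition of Theorem \ref{localize}: in the $i$-th phase, between the (rescaled) times $t_0^{i}$ and $t_0^{i+1}$, the guaranteed zeros of $L$ are spaced $O(\ell_n^{2/3}/(i^2 f))$ apart so every component there has that order of size, while the phase uses $O(i f\ell_n^{2/3})$ vertices; thus the phase-$i$ contribution to the normalized $p$-th power sum is $O\!\big(f^{-(p-2)}\,i^{-(2p-3)}\big)$, and $\sum_{i\ge2}i^{-(2p-3)}<\infty$ as soon as $p>2$, giving total $O(f^{-(p-2)})\to0$. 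The only components left are those explored after time $\ell_n^{11/12}$, i.e.\ in the phase $\tilde{i}$ where $\tilde{i}^2 f\asymp\ell_n^{1/4}$: each has size $O(\ell_n^{2/3}/(\tilde{i}^2 f))=O(\ell_n^{5/12})$ and jointly they use at most $n\asymp\ell_n$ vertices, so (by the $(\max)^{p-1}(\text{total})$ bound) their normalized contribution is $O\!\big(\ell_n^{5(p-1)/12}\cdot n\cdot\ell_n^{-2p/3}\big)=O\!\big(\ell_n^{(7-3p)/12}\big)$, which tends to $0$ precisely when $p>7/3$. Performing all these estimates on the intersection of the good events of Theorems \ref{lower_bound_1} and \ref{localize}, whose complement has probability $A\big(e^{-f^{\epsilon}/A}+e^{-\sqrt f/A}+e^{-n^{1/8}/A}\big)\to0$, finishes the argument.

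I do not anticipate a conceptual obstacle — the paper already flags this as a direct consequence of Theorems \ref{lower_bound_1} and \ref{localize} — so the real work is careful bookkeeping: choosing the auxiliary exponent $\epsilon$ and the fixed parameter $\epsilon'$ consistently, verifying that the per-phase size bound $O(\ell_n^{2/3}/(i^2 f))$ read off from Theorem \ref{localize} is genuinely uniform in $i$ (including across the transitions between consecutive phases and in the regime past $\ell_n^{11/12}$), and tracking the powers of $\ell_n$ sharply enough to see that the threshold $p>7/3$ is exactly what this method requires — indeed the pre-$H_f^*$ components and the phases $i\le\tilde{i}$ are already handled for any $p>2$, and it is only the coarse $n$-vertex bound on the final stretch of the exploration that costs the extra $1/3$ in the exponent.
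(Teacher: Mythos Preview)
Your proposal is correct and follows essentially the same route as the paper's proof: control the first coordinate via Theorem~\ref{lower_bound_1}, then split the tail into the pre-$H_f^*$ window, the phases $1\le i<\tilde{i}$ of Theorem~\ref{localize} (using the $(\max)^{p-1}(\text{total})$ bound to get $\sum_i i^{-(2p-3)}<\infty$ for $p>2$), and the post-$\ell_n^{11/12}$ stretch whose crude $n$-vertex mass yields the $\ell_n^{(7-3p)/12}$ term forcing $p>7/3$. Your closing remark that only this last piece costs the extra $1/3$ matches exactly the paper's own note following the corollary.
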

\begin{proof}
By Theorem  \ref{lower_bound_1}, for any $1 > \epsilon' >0$ there exists a constant $A > 0$ such that for $n$ large enough:
\begin{equation}
\label{salit}
\mathbb{P}\left(\middle|\left(\frac{|C_1|}{2f(n)\ell_n^{2/3}}-C\right)^p \middle|    \geq (3\epsilon')^p \right) \leq A\exp\left(\frac{-f(n)^{1/2}}{A}\right).
\end{equation}
Recall the definition of $\tilde{i}$ and let 
$$
\epsilon(f(n))= \frac{1}{\sqrt{f(n)}^p}+\frac{1}{(Cf(n))^{p-1}}\sum_{i=1}^{\tilde{i}-1}\frac{1}{i^{2p-3}}+\frac{\ell_n^{1/3}}{(\tilde{i}^2f)^{p-1}}.
$$
We know that for any $(x_1,x_2,...x_k)$ which are positive numbers:
$$
\sum_{u=1}^kx_u^p \leq \left(\sum_{u=1}^kx_u\right)^p.
$$
We showed in the end of the proof of Theorem \ref{localize} that $\ell_n^{1/4} = O(\tilde{i}^2f)$, this yields $\lim_{n}\epsilon(f(n))=0$.
Using those remarks alongside Theorems \ref{localize} and  \ref{lower_bound_1}, there exists a constant $A > 0$ such that:
\begin{equation}
\begin{aligned}
\label{salitt}
\mathbb{P}\left(\sum_{k\geq 2}\middle|\left(\frac{|C_k|}{\ell_n^{2/3}}\right)^p\middle| \geq A\epsilon(f(n)) \right) \leq A\exp\left(\frac{-\sqrt{f(n)}}{A}\right)+A\exp\left(\frac{-n^{1/8}}{A}\right).
\end{aligned}
\end{equation}
The corollary follows by the union bound Inequalities \eqref{salit} and \eqref{salitt}
\end{proof}
\textbf{Note} : If we change $\ell_n^{11/12}$ to $\ell_n^{1-\epsilon''}$ for $1/3 > \epsilon'' > 0$ arbitrarily small in the definition of $t^{\tilde{i}}_{\tilde{k}}$ then Theorem \ref{localize} will hold with the term $n^{1/8}$ being replaced by $n^{\frac{-1+3\epsilon''}{6}}$. And this shows that Corollary \ref{conv_cor} holds in fact for any $ p > 2$. Moreover,
with the same technique one can also obtain the same convergence for the sequence of weights of the connected components of $G(\textbf{W},p_{f(n)})$. It is also easy to show that if $f(n)$ is of order $n^{\epsilon}$ for some $\epsilon >0$ then this convergence will hold in expectation for any moment larger than $1$.

\subsection{The excess of the tail}
We showed that after discovering the giant component all the other components have size less than $\ell_n^{2/3}/f$ with high probability. We call excess of a discrete interval between $1$ and $n$, the number of excess edges discovered in that interval of time during the exploration process, regardless of which connected component they belong to. In the following theorem we will first focus on getting bounds on the excess of small intervals, then getting bounds on the excess of the tail will be straightforward by using Theorem \ref{localize}.

\begin{Theorem}
\label{small_exc_2}
Suppose that Conditions \ref{cond_nodes} hold. There exists a constant $A > 0$ such that the following is true: \par
  For $\tilde{i} \geq i \geq 1$, for  $\bar{k}_i \geq k \geq 0$ let $\textnormal{Exc}^i_k$ be the excess of the interval $[t^i_{k},t^i_{k+1})$.  For any $\epsilon >0$:
\begin{equation*}
\begin{aligned}
\mathbb{P}\left(\sup_{k_i > k \geq 0} \, (\textnormal{Exc}^i_k) \geq f^{\epsilon}\right) \leq  &A\exp\left(\frac{-f^{\epsilon}\ln(i\sqrt{f})}{A}\right)+A\exp\left(\frac{-i\sqrt{f}}{A}\right) + A\exp\left(\frac{-\sqrt{f}}{A}\right)\\
&+A\exp\left(\frac{-n^{1/8}}{A}\right).
\end{aligned}
\end{equation*}

\end{Theorem}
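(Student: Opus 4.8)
The plan is to bound the excess of a single small interval $[t^i_k,t^i_{k+1})$ by conditioning on a good event where $L$ stays controlled, then use a union bound over the (polynomially many) values of $k$. Recall from the construction that a small interval has length $t^i_{k+1}-t^i_k = \frac{\ell_n^{2/3}}{Ci^2f}$, and on the tail past time $t^i_0$ the increments of the exploration process (and of $L^0$) are at most $\frac{2\ell_n^{1/3}}{i\sqrt f}$ with high probability by Equation \eqref{facts2}, while by Theorem \ref{localize} the process $L$ visits $0$ inside each small interval with failure probability at most $A\exp(-\sqrt f/A)+A\exp(-n^{1/8}/A)$. Combining these, define the good event $E$ on which: (a) $i^*$ lies in the correct window (Theorem \ref{lower_bound_1}), (b) no weight larger than $\frac{\ell_n^{1/3}}{i\sqrt f}$ appears after $t^i_0$ (Lemma \ref{weight_tail}), (c) the increments of $L^0$ after $t^i_0$ are at most $\frac{2\ell_n^{1/3}}{i\sqrt f}$, and (d) $L$ visits $0$ inside each small interval of the $i$-th block. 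Then $\mathbb{P}(\bar E)$ contributes the last three terms in the statement, so it remains to bound $\mathbb{P}(\textnormal{Exc}^i_k \geq f^\epsilon, E)$.

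On $E$, since $L$ visits $0$ inside $[t^i_k,t^i_{k+1})$, every connected component that contributes an excess edge discovered in this interval has size at most the interval length, so $L_j \leq \frac{\ell_n^{2/3}}{Ci^2f}$ throughout, and moreover the weights being size-biased are all at most $\frac{\ell_n^{1/3}}{i\sqrt f}$. Exactly as in the proof of Theorem \ref{exc_big}, I would introduce the auxiliary random variables $U(R,j)$ with $R = \frac{\ell_n^{2/3}}{Ci^2f}$, so that $\textnormal{Exc}^i_k \leq \sum_{j=t^i_k}^{t^i_{k+1}} U(R,j)$ on $E$, and these are, conditionally on $\mathcal{F}_n$, sums of independent Bernoulli variables. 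The conditional mean is at most $p_f \sum_j \sum_{j'=j+1}^{j+R} w_{v(j)} w_{v(j')}$, and using Cauchy--Schwarz together with Theorem \ref{concentration4} (applied on the $w_{v(\cdot)}^2$'s, which are bounded by $\frac{\ell_n^{2/3}}{i^2 f}$ on $E$) one shows this mean is $O(R \cdot \frac{\ell_n^{2/3}}{Ci^2f} \cdot \frac{1}{\ell_n} \cdot (\textnormal{interval length}))$; a short computation gives that this is $O(1/(i^2 f))$ or at any rate $o(1)$, so the drift term is negligible compared to the threshold $f^\epsilon$. With a mean that small, Bernstein's inequality for a sum of Bernoulli variables forces a Poisson-type tail: $\mathbb{P}(\sum_j U(R,j) \geq f^\epsilon \mid \mathcal{F}_n) \leq \exp(-f^\epsilon \ln(f^\epsilon/\mu))$ for the relevant conditional mean $\mu$, and since on $E$ each increment is $O(\ell_n^{1/3}/(i\sqrt f))$ the "granularity" of the excess edges produces precisely the factor $\ln(i\sqrt f)$ in the exponent. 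Summing over the at most $\bar k_i = i^2 f((i+1)^2 - i^2) = O(i^4 f)$ values of $k$, the polynomial factor is absorbed into the exponential, giving the stated $A\exp(-f^\epsilon \ln(i\sqrt f)/A)$ term after also absorbing the event that the conditional mean $\mu$ is atypically large (which is controlled by Theorem \ref{concentration4}, contributing the $A\exp(-i\sqrt f/A)$ term).

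I would carry out the steps in this order: first fix $i$ and set up the good event $E$ with its four ingredients and bound $\mathbb{P}(\bar E)$ using Theorems \ref{lower_bound_1}, \ref{localize}, Lemma \ref{weight_tail} and Equation \eqref{facts2}; second, on $E$, reduce $\textnormal{Exc}^i_k$ to $\sum_j U(R,j)$ with $R$ the small-interval length; third, bound the conditional mean of this sum using Cauchy--Schwarz and Theorem \ref{concentration4}, showing it is $o(f^\epsilon)$ outside an event of probability $A\exp(-i\sqrt f/A)$; fourth, apply the Bernstein/Chernoff bound for independent Bernoullis to get the Poisson-type tail with the $\ln(i\sqrt f)$ gain; fifth, union-bound over $k \leq \bar k_i$ and simplify. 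The main obstacle I anticipate is making the logarithmic improvement rigorous: the naive Bernstein bound only gives $\exp(-f^\epsilon)$-type decay, and extracting the extra $\ln(i\sqrt f)$ requires exploiting that the summands $U(R,j)$ take values that are themselves at most $O(\ell_n^{1/3}/(i\sqrt f))$ (so to accumulate an excess of $f^\epsilon$ one needs many "successes", each of small individual probability) — this is the place where one must be careful to use the bounded-increment property from Equation \eqref{facts2} rather than just the bound on the conditional mean, and to track the constants so that the union bound over $\bar k_i = O(i^4 f)$ intervals and, ultimately, over $i \leq \tilde i$ does not destroy the bound.
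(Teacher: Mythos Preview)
Your outline has the right architecture, but the choice of $R$ is wrong and this breaks the argument. You set $R=\frac{\ell_n^{2/3}}{Ci^2f}$, the small-interval length, on the grounds that if $L$ visits $0$ in each small interval then the current component has size at most that. That is true, but it only gives $L_j\le\frac{\ell_n^{2/3}}{Ci^2f}$, and with this $R$ the conditional mean is of order
\[
\tilde t\cdot R\cdot p_f \;=\; \frac{\ell_n^{2/3}}{Ci^2f}\cdot\frac{\ell_n^{2/3}}{Ci^2f}\cdot\frac{1}{\ell_n}\;=\;\frac{\ell_n^{1/3}}{C^2 i^4 f^2},
\]
which for $i=1$ is $\ell_n^{1/3}/(C^2f^2)\to\infty$ whenever $f=o(\ell_n^{1/6})$. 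So your ``short computation gives $O(1/(i^2f))$'' is not correct with this $R$: the Poisson tail $\exp(-f^\epsilon\ln(f^\epsilon/\mu))$ is useless when $\mu$ is large. The paper does not bound $L$ by the component size; it bounds $L$ by the \emph{fluctuation} of $L^0$ over a double small interval. Concretely, on the event that $L$ visits $0$ in $[t^i_{k-1},t^i_k)$ one has
\[
\sup_{t^i_k\le r\le t^i_{k+1}} L_r \;\le\; \sup_{t^i_{k-1}\le u\le w\le t^i_{k+1}}\bigl(L^0_w-L^0_u\bigr),
\]
and Theorem~\ref{concentration_L} (with the $\frac{\ell_n^{1/3}}{i\sqrt f}$ increment bound) shows this is at most $\ell_n^{1/3}$ with failure probability $A\exp(-i\sqrt f/A)$. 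With $R=\ell_n^{1/3}$ the mean becomes $\tilde t R p_f=\frac{1}{Ci^2f}$, which is the $O(1/(i^2f))$ you wanted.

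The second point is how the $\ln(i\sqrt f)$ appears. It does not come from the increments of $U(R,j)$ being bounded by $\ell_n^{1/3}/(i\sqrt f)$ as you suggest; each excess edge is a single Bernoulli success, and the gain is simply that the total mean $\mu$ is of order $1/(i\sqrt f)$ (after adding the deviation term from Theorem~\ref{concentration4}). The paper does not use Bernstein here at all: it uses the elementary subset union bound
\[
\mathbb{P}\Bigl(\sum Y_{r,u}\ge f^\epsilon\,\Big|\,\mathcal F_n\Bigr)\;\le\;\Bigl(\sum_{r,u} w_{v(r)}w_{v(u)}p_f\Bigr)^{f^\epsilon+1},
\]
which immediately gives $\bigl(A/(i\sqrt f)\bigr)^{f^\epsilon}=\exp\bigl(-f^\epsilon\ln(i\sqrt f)/A'\bigr)$. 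If you fix $R=\ell_n^{1/3}$ and replace your Bernstein step by this moment/subset bound, the rest of your plan (good event, union over $k\le\bar k_i$) goes through as you wrote it.
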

\begin{proof}
Let $k < k_i$. If $t^i_{k} \leq \ell_n^{11/12}$, by Theorem \ref{concentration_L}:
\begin{equation}
\label{eq1000}
\mathbb{P}\left(\sup_{t^i_{k-1} \leq u \leq w \leq t^i_{k+1}}( L^{0}_w-L^{0}_u- \mathbb{E}[ L^{0}_w-L^{0}_u ]) \geq \ell_n^{1/3}\right) \leq A\exp\left(\frac{-i\sqrt{f}}{A'}\right).
\end{equation}
By  Corollary \ref{S_h}, for any $t^i_{k-1} \leq u \leq w \leq t^i_{k+1}$: 
\begin{equation*}
\begin{aligned}
\mathbb{E}[L^{0}_{w}-L^{0}_{u}] \leq 0.
\end{aligned}
\end{equation*}
With the above inequality, Equation \ref{eq1000} yields:
\begin{equation}
\label{lem101}
\mathbb{P}\left(\sup_{t^i_{k-1} \leq u \leq w \leq t^i_{k+1}}( L^{0}_w-L^{0}_u) \geq \ell_n^{1/3}\right) \leq A\exp\left(\frac{-i\sqrt{f}}{A}\right).
\end{equation}
And in fact, notice that this inequality also holds for $t^i_k > \ell_n^{11/12}$ by the method used to obtain Inequality \eqref{ineq999}.
Denote the event "no connected component discovered after time $t^i_0$ has size larger $\frac{\ell_n^{2/3}}{i^2fC}$" by $\mathcal{G}$.    When $\mathcal{G}$ holds, $L$ visits $0$ in any interval of size $\frac{\ell_n^{2/3}}{i^2fC}$ after $t^i_0$. In that case: 
$$\sup\limits_{t^i_{k} \leq r \leq t^i_{k+1}}L(r) \leq \sup\limits_{t^i_{k-1} \leq u\leq w \leq t^i_{k+1}} (L^{0}_{w}-L^{0}_{u}).$$ 
This fact and Equation \eqref{lem101} yield:
\begin{equation}
\label{lem102}
\mathbb{P}\left(\sup_{t^i_{k} \leq r\leq t^i_{k+1}}L_r \geq \ell_n^{1/3}\right) \leq A'\exp\left(\frac{-i\sqrt{f}}{A'}\right)+\mathbb{P}(\bar{\mathcal{G}}).
\end{equation}

Let $\mathcal{M}=\left\{\sup_{t^i_{k} \leq r\leq t^{i}_{k+1}}L_r \leq \ell_n^{1/3}\right\}$. By Equation \eqref{lem102} and Theorem \ref{localize} we obtain:
\begin{equation}
\begin{aligned}
\label{sousouu}
\mathbb{P}\left(\bar{\mathcal{M}}\right) &\leq 
A\exp\left(\frac{-\sqrt{f}}{A}\right)+A\exp\left(\frac{-n^{1/8}}{A}\right)+A'\exp\left(\frac{-i\sqrt{f}}{A'}\right).
\end{aligned}
\end{equation}
By the union bound:
\begin{equation}
\begin{aligned}
\label{bez_1u}
\mathbb{P}\left(\textnormal{Exc}^i_k \geq l + \mathbb{E}[\textnormal{Exc}^i_k] \right) \leq \mathbb{P}\left(\textnormal{Exc}^i_k \geq l + \mathbb{E}[\textnormal{Exc}^i_k] , \, \mathcal{M}\right) + \mathbb{P}(\bar{\mathcal{M}}).
\end{aligned}
\end{equation}
Now we use the same method we used in Lemma \ref{exc_big}.
Let $R = \ell_n^{1/3}$ and define $\tilde{t} = t^i_{k+1}-t^i_{k}$.
By Lemma \ref{mean_pow_weights_2}:
\begin{equation}
\begin{aligned}
\label{suu_1'u}
\mathbb{E}\left[p_f\sum\limits_{r= \frac{t^i_{k-1}}{R}}^{ \frac{t^i_{k}}{R}} 2R \left( \sum\limits_{u=rR+1}^{(r+2)R}{w_{v(t^i_{k-1})}}^2\right)\right] 
&\leq A\tilde{t}Rp_f,
\end{aligned}
\end{equation}
Hence, by Equation \eqref{suu_1'u} and Theorem \ref{concentration4}: 
\begin{equation}
\begin{aligned}
\label{eq87}
 &\mathbb{P}\left(\sum\limits_{r=t^i_{k-1}}^{t^i_{k}} \sum\limits_{u=r+1}^{(R+r)}w_{v(u)}w_{v(r)}p_f  \geq 2A\tilde{t}Rp_f + \frac{1}{i\sqrt{f}} \right) \\
&\leq 
\mathbb{P}\left( p_f\sum\limits_{r= \frac{t^i_{k-1}}{R}}^{ \frac{t^i_{k}}{R}} \left(\sum\limits_{u=rR+1}^{(r+2)R}w_{v(u)}\right)^2 \geq 2A\tilde{t}Rp_f + \frac{1}{i\sqrt{f}} \right) \\
&\leq \mathbb{P}\left( \sum\limits_{r= \frac{t^i_{k-1}}{R}}^{ \frac{t^i_{k}}{R}}  \left( \sum\limits_{u=rR+1}^{(r+2)R}w_{v(u)}^2 \right)\geq A\tilde{t} + \frac{1}{2i\sqrt{f}Rp_f}\right)\\
&\leq A''\exp\left(\frac{-i\sqrt{f}}{A''}\right).
\end{aligned}
\end{equation}
By the union bound between Equation  \eqref{suu_1'u} and Equation \eqref{sousou}:
\begin{equation}
\begin{aligned}
\label{suu_3'u}
 &\mathbb{P}\left(\sum\limits_{r=t^i_{k-1}}^{t^i_{k}} \sum\limits_{u=r+1}^{(L_r+i)}w_{v(r)}w_{v(u)}p_f  \geq 2A\tilde{t}Rp_f+\frac{1}{i\sqrt{f}}  \right) \\
&\leq  \mathbb{P}\left(\sum\limits_{r=t^i_{k-1}}^{t^i_{k}} \sum\limits_{u=r+1}^{(R+i)}w_{v(r)}w_{v(u)}p_f  \geq 2A\tilde{t}Rp_f+\frac{1}{i\sqrt{f}}  \right)+\mathbb{P}(\bar{\mathcal{M}})  \\
&\leq A\exp\left(\frac{-\sqrt{f}}{A}\right)+A\exp\left(\frac{-n^{1/8}}{A}\right)+A'\exp\left(\frac{-i\sqrt{f}}{A'}\right).
\end{aligned}
\end{equation}
We know that  for any $\epsilon >0$:
\begin{equation}
\begin{aligned}
\label{imp_1}
\mathbb{P}(\textnormal{Exc}^i_k \geq f^{\epsilon} | \mathcal{F}_n) &\leq 
\mathbb{P}\left(\sum\limits_{r=t^i_{k-1}}^{t^i_k}\sum\limits_{u=r+1}^{(L_r+r-1)}Y(v(r),v(u)) \geq f^{\epsilon}  \middle | \mathcal{F}_n\right)\mathbbm{1}(\mathcal{M}) + \mathbbm{1}({\bar{\mathcal{M}}}).
\end{aligned}
\end{equation}
Since we are dealing with a sum of Bernoulli random variables, this sum is larger than $f^{\epsilon}$ if and only if there are more than $f^{\epsilon}$ Bernoulli variables equal to $1$. Let $S$ be the random set of subsets of size $f^{\epsilon}$ (suppose that $f^{\epsilon}$  is an integer for simplicity) composed of couples $(r,u)$ that appear as indices in the sum in the right-hand side of Equation \eqref{imp_1}, and let $S'$ be the deterministic  set of subsets of size $f^{\epsilon}$ composed of couples $(r,u)$ that appear as indices in the sum in the right-hand side of Equation \eqref{imp_1} when we replace $L_u$ by $R$ for all $t^i_{k-1} \leq r \leq t^i_k$. Then for $f$ large enough:
\begin{equation*}
\begin{aligned}
\mathbb{P}\left(\sum\limits_{r=t^i_{k-1}}^{t^i_k}\sum\limits_{u=r+1}^{(L_u+r-1)}Y(v(r),v(u)) \geq f^{\epsilon}\middle | \mathcal{F}_n\right)\mathbbm{1}({\mathcal{M}})  &= \mathbb{P}\left(\bigcup_{M \in S}\bigcap_{(r,u) \in M} \{Y(v(r),v(u))=1\} \middle | \mathcal{F}_n \right)\mathbbm{1}({\mathcal{M}}) \\
&\leq \sum_{M\in S'}\prod_{(r,u) \in M}\left(1-e^{-w_{v(r)}w_{v(u)}p_f}\right) \\
&\leq \sum_{M\in S'}\prod_{(r,u) \in M}\left(w_{v(r)}w_{v(u)}p_f\right) \\
&\leq \left(\sum\limits_{r=t_{k-1}}^{t_{k}} \sum\limits_{u=r+1}^{(R+r)}w_{v(r)}w_{v(u)}p_f \right)^{f^{\epsilon}+1}.
\end{aligned}
\end{equation*}
By this fact and Equation \eqref{suu_3'u}:
\begin{equation}
\begin{aligned}
\label{almost_f}
\mathbb{P}(\textnormal{Exc}^i_k \geq f^{\epsilon}) &\leq \left(A\tilde{t}Rp_f + \frac{1}{i\sqrt{f}} \right)^{f^{\epsilon}+1} + A'\exp\left(\frac{-i\sqrt{f}}{A'}\right)+A\exp\left(\frac{-n^{1/8}}{A}\right)+A'\exp\left(\frac{-\sqrt{f}}{A'}\right)
\\ &\leq \left(\frac{A''}{i^2f} + \frac{1}{i\sqrt{f}} \right)^{f^{\epsilon}+1} + A'\exp\left(\frac{-i\sqrt{f}}{A'}\right)+A\exp\left(\frac{-n^{1/8}}{A}\right)+A'\exp\left(\frac{-\sqrt{f}}{A'}\right) \\
&\leq \exp\left((f^{\epsilon}+1)\left(\ln\left(\frac{A''}{i^2f}\right)+\ln\left(1+\frac{i\sqrt{f}}{A''}\right)\right)\right)+ A'\exp\left(\frac{-i\sqrt{f}}{A'}\right)\\
&+A\exp\left(\frac{-n^{1/8}}{A}\right)+A'\exp\left(\frac{-\sqrt{f}}{A'}\right)\\
&\leq \exp\left(\frac{-f^{\epsilon}\ln(i\sqrt{f})}{A'''}\right)+ A'\exp\left(\frac{-i\sqrt{f}}{A'}\right)+A\exp\left(\frac{-n^{1/8}}{A}\right)+A'\exp\left(\frac{-\sqrt{f}}{A'}\right).
\end{aligned}
\end{equation}
If $t^i_{k} \geq \ell_n^{11/12}$, then by definition $i = \tilde{i}$. And we obtain similarly:
\begin{equation*}
\begin{aligned}
\mathbb{P}(\textnormal{Exc}^{\tilde{i}}_k \geq f^{\epsilon}) &\leq A\exp\left(\frac{-f^{\epsilon}\ln(\tilde{i}\sqrt{f})}{A}\right)+A\exp\left(\frac{-\tilde{i}\sqrt{f}}{A}\right) + A\exp\left(\frac{-\sqrt{f}}{A}\right)+A\exp\left(\frac{-n^{1/8}}{A}\right).
\end{aligned}
\end{equation*}
This finishes the proof.
\end{proof}
In Theorem \ref{small_exc_2} the term $A\exp\left(\frac{-\sqrt{f}}{A}\right)$ comes from applying Theorem \ref{localize}, and that theorem gives a bound for all the connected components discovered after the giant connected component. Using this remark, we can sum over $i$. And using simple computations, we obtain the concentration of the total surplus of the tail.
\begin{Theorem}
\label{excess_small}
Suppose that Conditions \ref{cond_nodes} hold. There exists $A>0$, such that for any $\epsilon>0$, for $f$ and $n$ large enough, the probability that a connected component discovered after $H^*_f$ has excess more than $f^{\epsilon}$ is at most:
$$
A\exp\left(\frac{-f^{\epsilon}\ln(\sqrt{f})}{A}\right) + A\exp\left(\frac{-\sqrt{f}}{A}\right)+A\exp\left(\frac{-n^{1/8}}{A}\right).
$$
\end{Theorem}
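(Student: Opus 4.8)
The plan is to deduce Theorem \ref{excess_small} from Theorem \ref{small_exc_2} by a union bound over the indices $i$ of the ``phases'' of the tail exploration, exploiting the decay in $i$ of the right-hand side of Theorem \ref{small_exc_2}. First I would recall that, once $H^*_f$ is fully explored at time $i^*$, the construction of Section $5.1$ partitions the remaining exploration interval $[i^*,n]$ into the nested intervals $[t^i_k-t+i^*,t^i_{k+1}-t+i^*)$ for $1\le i\le\tilde i$ and $0\le k<\bar k_i$, plus the boundary pieces $[t^i_{\bar k_i}-t+i^*,t^{i+1}_0-t+i^*)$. By construction of the BFW, a connected component discovered after $H^*_f$ is entirely explored within a single such phase $i$ once Theorem \ref{localize} guarantees (with the stated failure probability) that $L$ visits $0$ inside every one of these sub-intervals; in particular each component discovered during phase $i$ has its excess bounded by $\textnormal{Exc}^i_k$ for some $k<\bar k_i$ (or the analogous boundary excess, which is handled identically). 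So on the event of Theorem \ref{localize}, the event ``some tail component has excess $\ge f^\epsilon$'' is contained in $\bigcup_{i=1}^{\tilde i}\{\sup_{k<\bar k_i}\textnormal{Exc}^i_k\ge f^\epsilon\}$.

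Next I would apply Theorem \ref{small_exc_2} to each $i$ and sum. Writing $q_i$ for the bound in Theorem \ref{small_exc_2},
\[
q_i= A\exp\!\left(\frac{-f^{\epsilon}\ln(i\sqrt{f})}{A}\right)+A\exp\!\left(\frac{-i\sqrt{f}}{A}\right)+A\exp\!\left(\frac{-\sqrt{f}}{A}\right)+A\exp\!\left(\frac{-n^{1/8}}{A}\right),
\]
the union bound gives a total of $\sum_{i=1}^{\tilde i}q_i$. The first term is bounded by $\sum_{i\ge1}A\exp(-f^\epsilon\ln(i\sqrt f)/A)$: the $i=1$ summand is $A\exp(-f^\epsilon\ln\sqrt f/A)$, and for $i\ge2$ the factor $\exp(-f^\epsilon\ln i/A)=i^{-f^\epsilon/A}$ makes the tail geometrically small compared with the $i=1$ term once $f$ is large enough, so the whole sum is $\le A'\exp(-f^\epsilon\ln(\sqrt f)/A')$. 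The second term sums to a convergent geometric series $\le A'\exp(-\sqrt f/A')$. The third and fourth terms are constant in $i$, so summing them over $i\le\tilde i$ multiplies by $\tilde i$; since $\tilde i^2 f=o(n^{1/3})$ (shown at the end of the proof of Theorem \ref{localize}), $\tilde i$ is at most polynomial in $n$, and $\tilde i A\exp(-\sqrt f/A)\le A'\exp(-\sqrt f/A')$ as long as $f$ is, say, at least a suitable power of $\log n$ — but in fact we do not even need that refinement here: absorbing $\tilde i$ into the exponential $\exp(-n^{1/8}/A)$ only is not possible because $\sqrt f$ may be tiny, so I would instead keep the third term as $A\exp(-\sqrt f/A)$ times $\tilde i$ and note that it is dominated by the $\exp(-n^{1/8}/A)$-type contribution only when $f\gtrsim (\log n)^2$; more carefully, since Theorem \ref{localize}'s own bound already contains the single, non-summed term $A\exp(-\sqrt f/A)$, I would route the phase-independent contributions through the event of Theorem \ref{localize} rather than summing them, so that they appear only once.

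Concretely, I would organize the final estimate as: $\mathbb P(\text{some tail component has excess}\ge f^\epsilon)\le \mathbb P(\text{failure event of Theorem \ref{localize}})+\sum_{i=1}^{\tilde i}\mathbb P(\sup_{k<\bar k_i}\textnormal{Exc}^i_k\ge f^\epsilon,\ \text{Theorem \ref{localize} holds})$, and for the conditional probabilities use the versions of the bounds in Theorem \ref{small_exc_2} in which the terms $A\exp(-\sqrt f/A)$ and $A\exp(-n^{1/8}/A)$ — which, inspecting that proof, arise precisely from invoking Theorem \ref{localize} — are dropped. That leaves $\sum_{i=1}^{\tilde i}\big(A\exp(-f^\epsilon\ln(i\sqrt f)/A)+A\exp(-i\sqrt f/A)\big)\le A'\exp(-f^\epsilon\ln(\sqrt f)/A')+A'\exp(-\sqrt f/A')$, and adding back the single Theorem \ref{localize} contribution $A\exp(-\sqrt f/A)+A\exp(-n^{1/8}/A)$ yields
\[
A\exp\!\left(\frac{-f^{\epsilon}\ln(\sqrt{f})}{A}\right) + A\exp\!\left(\frac{-\sqrt{f}}{A}\right)+A\exp\!\left(\frac{-n^{1/8}}{A}\right),
\]
as claimed. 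The main obstacle — and the one point that needs genuine care rather than bookkeeping — is making the summation over $i$ legitimate, i.e. confirming that the $\exp(-f^\epsilon\ln(i\sqrt f)/A)$ factor really does produce a summable series uniformly for all admissible $f\ge F$ (it does, since for $i\ge2$ one has $\ln(i\sqrt f)\ge\ln i$, and $\sum_i i^{-f^\epsilon/A}$ converges and tends to $0$ once $f^\epsilon/A>1$, which holds for $f$ large), and in checking that the $i$-independent pieces of Theorem \ref{small_exc_2} are exactly the ones coming from Theorem \ref{localize} so that they are not multiplied by $\tilde i$. Everything else is a routine union bound and geometric-series estimate.
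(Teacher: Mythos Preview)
Your proposal is correct and follows essentially the same route as the paper: the paper's entire argument is the remark preceding Theorem \ref{excess_small} that the terms $A\exp(-\sqrt{f}/A)$ and $A\exp(-n^{1/8}/A)$ in Theorem \ref{small_exc_2} come from a single application of Theorem \ref{localize} and therefore need not be summed over $i$, after which one sums the remaining $i$-dependent terms exactly as you do. Your careful handling of the summability of $\exp(-f^{\epsilon}\ln(i\sqrt{f})/A)$ and $\exp(-i\sqrt{f}/A)$, and your explicit routing of the phase-independent contributions through the event of Theorem \ref{localize}, are precisely what the paper means by ``using this remark, we can sum over $i$''.
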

As a Corollary of the work done here we obtain a natural global upper bound on $L$.
\begin{Corollary}
\label{corr36}
Suppose that Conditions \ref{cond_nodes} hold. There exists a constant $A > 0$ large enough, such that:
\begin{equation*}
\begin{aligned}
\mathbb{P}\left(\sup_{t^1_{0} \leq l \leq n}( L_l) \geq \ell_n^{1/3} \right) &\leq A\exp\left(\frac{-\sqrt{f}}{A}\right)+A\exp\left(\frac{-n^{1/8}}{A}\right).
\end{aligned}
\end{equation*}
\end{Corollary}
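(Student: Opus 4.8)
This corollary is obtained by assembling, over the whole tail, the fluctuation estimate for $L^0$ that was already established interval by interval in the proof of Theorem~\ref{small_exc_2}. Recall the subdivision of $[t^1_0,n]$ into the small intervals $[t^i_k,t^i_{k+1})$, with $1\leq i\leq\tilde{i}$ and $0\leq k<\bar{k}_i$, together with the boundary intervals $[t^i_{\bar{k}_i},t^{i+1}_0)$, and let $\mathcal{G}$ be the event of Theorem~\ref{localize} that $L$ visits $0$ in each of these intervals. On $\mathcal{G}$, for every such interval $[t^i_k,t^i_{k+1})$ the process $L$ has started a fresh excursion from $0$ somewhere inside the preceding interval, and since $L'\leq L^0$ deterministically one has
\[
\sup_{t^i_k\leq r\leq t^i_{k+1}}L_r\ \leq\ \sup_{t^i_{k-1}\leq u\leq w\leq t^i_{k+1}}\left(L^0_w-L^0_u\right).
\]
Thus it is enough to bound, for each interval, the probability that the oscillation of $L^0$ over it reaches $\ell_n^{1/3}$, take a union bound over all intervals and over $1\leq i\leq\tilde{i}$, and add $\mathbb{P}(\bar{\mathcal{G}})$, which Theorem~\ref{localize} controls by $A\exp(-\sqrt{f}/A)+A\exp(-n^{1/8}/A)$.

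\textbf{The per-interval estimate.} Since $\epsilon'<1/2$, the time $t^1_0=t=\frac{2(1-\epsilon')f\ell_n^{2/3}}{C}$ lies beyond the maximum $\frac{f\ell_n^{2/3}}{C}$ of the drift parabola of $L^0$ given by Corollary~\ref{S_h}; hence $\mathbb{E}[L^0_w-L^0_u]\leq 0$ for all $t^1_0\leq u\leq w\leq n$, in particular on each interval of the subdivision. When $t^i_k\leq\ell_n^{11/12}$ one checks, using $\tilde{i}^2 f=o(n^{1/3})$ for the requirement $\ell_n^{1/3}=O(t^i_{k+1}-t^i_{k-1})$, that $(t^i_{k+1},t^i_{k-1},0,\ell_n^{1/3})$ verifies Conditions~\ref{cond_3}. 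Theorem~\ref{concentration_L} together with the non-positivity of the drift then gives, exactly as in Equation~\eqref{lem101},
\[
\mathbb{P}\left(\sup_{t^i_{k-1}\leq u\leq w\leq t^i_{k+1}}\left(L^0_w-L^0_u\right)\geq\ell_n^{1/3}\right)\ \leq\ A\exp\left(\frac{-i\sqrt{f}}{A}\right),
\]
because $t^i_{k+1}-t^i_{k-1}$ is of order $\ell_n^{2/3}/(i^2f)$, so the first exponent in Theorem~\ref{concentration_L} is of order $i\sqrt{f}$. When $t^i_k\geq\ell_n^{11/12}$, so $i=\tilde{i}$, the interval may be of order $n$ and Theorem~\ref{concentration_L} no longer applies; one then repeats the martingale-plus-ordered-coupling argument used around Equation~\eqref{ineq999} in the proof of Theorem~\ref{localize}, feeding in Theorems~\ref{concentration_3} and~\ref{concentration4}, to obtain the same bound $A\exp(-\tilde{i}\sqrt{f}/A)$ for that interval.

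\textbf{Summation and the main obstacle.} For fixed $i$ there are $\bar{k}_i+1=O(i^3f)$ small intervals, and the boundary interval $[t^i_{\bar{k}_i},t^{i+1}_0)$ is handled by the same two arguments after, if necessary, subdividing it. Summing $A\exp(-i\sqrt{f}/A)$ over $k$ and over $1\leq i<\tilde{i}$ absorbs the polynomial prefactors and leaves $\sum_i O(i^3f)A\exp(-i\sqrt{f}/A)\leq A'\exp(-\sqrt{f}/A')$; for $i=\tilde{i}$ there are at most $n$ intervals, each contributing $A\exp(-\tilde{i}\sqrt{f}/A)$, and since $\tilde{i}^2f\geq\frac{C}{3}\ell_n^{1/4}$ (as shown at the end of the proof of Theorem~\ref{localize}) the quantity $\tilde{i}\sqrt{f}=\sqrt{\tilde{i}^2f}$ is of order at least $\ell_n^{1/8}$, so this part is at most $An\exp(-n^{1/8}/A)\leq A'\exp(-n^{1/8}/A')$. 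Adding $\mathbb{P}(\bar{\mathcal{G}})$ yields the claimed bound. There is no genuinely new difficulty here: the only delicate points, and they are the same as in Sections~4 and~5, are verifying that Conditions~\ref{cond_3} hold uniformly for every interval of the subdivision with the \emph{fixed} deviation level $y=\ell_n^{1/3}$ (this is exactly what forces the split at $\ell_n^{11/12}$ and the use of the coupling argument on the last stretch), and keeping the polynomial count of intervals under control so that the summed exponential bounds retain the stated order.
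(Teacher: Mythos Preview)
Your proposal is correct and follows essentially the same route as the paper: reduce, on the event of Theorem~\ref{localize}, the control of $\sup L_r$ over each small interval $[t^i_k,t^i_{k+1})$ to the oscillation of $L^0$ over $[t^i_{k-1},t^i_{k+1}]$, invoke the per-interval bound~\eqref{lem101} (Theorem~\ref{concentration_L}) together with the coupling argument of~\eqref{ineq999} for $i=\tilde{i}$, sum over all intervals, and union-bound with the complement of the localization event. Your write-up is in fact somewhat more explicit than the paper's about where the $n^{1/8}$ term in the summation comes from (the at most $n$ intervals at level $i=\tilde{i}$, combined with $\tilde{i}\sqrt{f}\gtrsim \ell_n^{1/8}$).
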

\begin{proof}
Let $1 \leq i \leq \tilde{i}$, and denote the event "no connected component discovered after time $t^i_0$ has size larger $\frac{\ell_n^{2/3}}{i^2fC}$" by $G_i$. when $G_i$ holds, $L$ visits $0$ in any interval of size $\frac{\ell_n^{2/3}}{i^2fC}$ after $t^i_0$. In that case: 
$$\sup\limits_{t^i_{k} \leq r \leq t^i_{k+1}}L_r \leq \sup\limits_{t^i_{k-1} \leq u\leq w \leq t^i_{k+1}} (L^{0}_{w}-L^{0}_{u}).$$ 
Moreover, by Equation \eqref{lem101}:
\begin{equation*}
\mathbb{P}\left(\sup_{t^i_{k-1} \leq u\leq w   \leq t^i_{k+1}}( L^{0}_w-L^{0}_u) \geq \ell_n^{1/3}\right) \leq A\exp\left(\frac{-i\sqrt{f}}{A}\right),
\end{equation*}
with $A > 0$ a large constant independent of $i$. By summing this equation over $1 \leq k < \bar{k}_i-1$ for every $i$, and then over $1 \leq i \leq \tilde{i}$ we obtain directly:
\begin{equation}
\label{eq9898}
\mathbb{P}\left(\sup_{t^1_{0} \leq r \leq n}( L_r) \geq \ell_n^{1/3}, \, \cap_{i \leq \tilde{i}} G_i \right) \leq A'\exp\left(\frac{-\sqrt{f}}{A'}\right)+A\exp\left(\frac{-n^{1/8}}{A}\right).
\end{equation}
With $A' >0$ a large constant. By Theorem \ref{localize} there exists a large constant $A >0$ such that:
\begin{equation}
\label{eq9897}
\mathbb{P}(\cup_{i \leq \tilde{i}} \bar{G}_i) \leq A\exp\left(\frac{-\sqrt{f}}{A}\right)+A\exp\left(\frac{-n^{1/8}}{A}\right).
\end{equation}
By Equations \eqref{eq9898} and \eqref{eq9897} there exists a large constant $A>0$ such that:
\begin{equation*}
\begin{aligned}
\mathbb{P}\left(\sup_{t^1_{0} \leq r \leq n}( L_r) \geq \ell_n^{1/3} \right) &\leq  \mathbb{P}(\cup_{i \leq \tilde{i}} G_i) 
+ \mathbb{P}\left(\sup_{t^1_{0} \leq r \leq n}( L_r) \geq \ell_n^{1/3}, \, \cap_{i \leq \tilde{i}} G_i \right)  \\ &\leq A\exp\left(\frac{-\sqrt{f}}{A}\right)+A\exp\left(\frac{-n^{1/8}}{A}\right),
\end{aligned}
\end{equation*}
which finishes the proof.
\end{proof}
This upper bound alongside Theorem \ref{sup_l} gives an upper bound for the whole process $L$. However, it can be refined, and it is not hard to show that $L$ gets smaller the further we advance in the exploration.  We elect to stop here and as a last result we use this upper bound on $L$ and the theorems we showed in this paper to give an upper bound on the number of connected components discovered in parts of the exploration of the graph.
\begin{Corollary}
\label{comp_num}
Suppose that Conditions \ref{cond_nodes} hold. Recall that $i^* \in \mathbb{N}$ is the time at which the exploration of $H^*_f$ ends. There exists a constant $A > 0$ such that the following is true:\\
The probability that there exists an $\tilde{i} > i\geq 0$ and $\bar{k}_i > k \geq 0$, such that the number of connected components discovered between times $t^{i}_0-t+i^*$ and time $t^{i}_{\bar{k}}-t+i^*$, is more than $100i^3f^2\ell_n^{1/3}$, is at most:
\begin{equation*}
A\exp\left(\frac{-\sqrt{f}}{A}\right)+A\exp\left(\frac{-n^{1/8}}{A}\right).
\end{equation*}
\end{Corollary}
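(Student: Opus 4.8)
The plan is to relate the number of components discovered in the $i$-th phase to the total decrease of the reflected-minimum process $Z(k)=L_k-L'_k$, and to bound that decrease by the (deterministic) drift of $\tilde L^h$ plus a fluctuation term, exactly in the spirit of the proof of Theorem \ref{lower_bound_1}. Write $a_i=t^i_0-t+i^*$ and $b_i=t^i_{\bar k}-t+i^*$, so that $b_i\le t^i_{\bar k_i}-t+i^*$ and it suffices to treat the full phase, $\bar k=\bar k_i$. Since $Z$ is nondecreasing and $Z(k)$ counts exactly the number of components whose exploration is complete by step $k$, and since the BFW discovers and completes components in order, the number of components discovered in $[a_i,b_i]$ is at most $Z(b_i)-Z(a_i)+1=(L_{b_i}-L_{a_i})+(L'_{a_i}-L'_{b_i})+1$. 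On the event of Theorem \ref{lower_bound_1} we have $b_i\ge t^1_0$, so Corollary \ref{corr36} gives $L_k\le\ell_n^{1/3}$ for every $k\in[a_i,b_i]$; in particular $L_{b_i}\le\ell_n^{1/3}$, and it remains to bound $L'_{a_i}-L'_{b_i}$ by roughly $i^3f^2\ell_n^{1/3}$.

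For this I would reuse the decomposition $L'_k=\tilde L_k+M_k$ from Fact \ref{fact1}, with $M$ an $(\mathcal F_k)$-martingale, together with the auxiliary process $\tilde L^h$ for $h=\ell_n^{1/3}$. On the event $\{L_k\le\ell_n^{1/3}\text{ for all }k\ge t^1_0\}$ each increment $\mathbb E[X_k\mid\mathcal F_{k-1}]$ of $\tilde L$ dominates the corresponding increment of $\tilde L^h$, because fewer indices are then excluded from $\sum_{j>k+L_{k-1}-1}(1-e^{-w_{v(k)}w_{v(j)}p_f})$; hence $L'_{a_i}-L'_{b_i}\le(\tilde L^h_{a_i}-\tilde L^h_{b_i})+(M_{a_i}-M_{b_i})$ on that event. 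Corollary \ref{S_h}, applied after replacing the random endpoints by their deterministic bounds valid on the event of Theorem \ref{lower_bound_1} ($a_i\ge\frac{(i^2-1)f\ell_n^{2/3}}{C}$, $b_i=O(i^2f\ell_n^{2/3})$, $b_i-a_i\le\frac{(2i+1)f\ell_n^{2/3}}{C}$), gives $\mathbb E[\tilde L^h_{a_i}-\tilde L^h_{b_i}]\le A'\,i^3f^2\ell_n^{1/3}$, the $h$-term contributing only $O(if)$. Since $i^2f\le\tilde i^2f=o(\ell_n^{1/3})$, the quadruple $(b_i,a_i,\ell_n^{1/3},i^3f^2\ell_n^{1/3})$ satisfies Conditions \ref{cond_3} over the whole $i$-th phase, so Lemma \ref{inf_s_0} bounds the centered deviation of $\tilde L^h$ over $[a_i,b_i]$ by $i^3f^2\ell_n^{1/3}$ outside an event of probability $A\exp(-\mathrm{poly}(i,f)/A)$, and $M_{a_i}-M_{b_i}$ is controlled by Bernstein's inequality for martingales exactly as the term $D_1$ in the proof of Theorem \ref{inf_s_0_3} (using $\mathbb E[X_k^2\mid\mathcal F_{k-1}]\le w_{v(k)}+w_{v(k)}^2$, Theorem \ref{replacement}, and the weight concentration of Theorem \ref{concentration_final}); because the target deviation is much larger than $b_i-a_i\asymp if\ell_n^{2/3}$, the resulting bound is again $A\exp(-\mathrm{poly}(i,f)/A)$. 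Collecting these with generous constants gives $\#\{\text{components discovered in }[a_i,b_i]\}\le 100\,i^3f^2\ell_n^{1/3}$ off an event of probability $A\exp(-\sqrt f/A)+A\exp(-n^{1/8}/A)+A\exp(-\mathrm{poly}(i,f)/A)$, the first two terms coming from Theorem \ref{lower_bound_1} and Corollary \ref{corr36}.

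Finally I would union bound over $1\le i<\tilde i$ and over the $k<\bar k_i$ indexing the sub-phase endpoints (partial phases only produce smaller intervals and hence are dominated by the full-phase estimate, at the cost of a polynomial factor absorbed in the $\mathrm{poly}(i,f)$ exponent). The fluctuation contributions $\sum_{i\ge 1}A\exp(-\mathrm{poly}(i,f)/A)$ sum to a term $\le A\exp(-f^{2}/A)$, negligible against $A\exp(-\sqrt f/A)$, while the single applications of Theorem \ref{lower_bound_1} and Corollary \ref{corr36} contribute $A\exp(-\sqrt f/A)+A\exp(-n^{1/8}/A)$ without multiplying out, which is the claimed bound.

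The main obstacle, as elsewhere in the paper, is the \emph{random} drift term $\tilde L^h_{a_i}-\tilde L^h_{b_i}$: one must simultaneously condition on $L\le\ell_n^{1/3}$ throughout the tail (so that $\tilde L$ is controlled from above by $\tilde L^h$), feed the deterministic endpoint bounds of Theorem \ref{lower_bound_1} into Corollary \ref{S_h}, and verify that Conditions \ref{cond_2}--\ref{cond_3} hold uniformly across the $i$-th phase (which is where $\tilde i^2f=o(\ell_n^{1/3})$ is used). The bookkeeping identifying ``number of components discovered'' with $Z(b_i)-Z(a_i)$ up to an additive constant is routine, and all the concentration inputs are precisely those established in Sections 3 and 5.
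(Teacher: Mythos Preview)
Your argument is correct and is essentially the paper's own proof: both identify the number of components in a phase with the increase of $Z=L-L'$, bound the drop $L'_{a_i}-L'_{b_i}$ via the martingale decomposition $L'=\tilde L+M$ together with the comparison $\tilde L\ge\tilde L^{h}$ on $\{L\le h\}$, and then invoke Corollary~\ref{S_h}, Lemma~\ref{inf_s_0}, Theorem~\ref{inf_s_1}, Theorem~\ref{lower_bound_1} and Corollary~\ref{corr36} before summing over $i$. The only differences are cosmetic---the paper takes $h=\tfrac{10f^{2}\ell_n^{1/3}}{C}$ and bounds $-\inf_{l\le m}(L'_m-L'_l)$ over the deterministic enlargement $[t^i_0,\,t^i_{\bar k_i}+\tfrac{2f\ell_n^{2/3}}{C}]$ rather than fixing the random endpoints---and your remark that ``the target deviation is much larger than $b_i-a_i$'' is actually false (since $i^{2}f=o(\ell_n^{1/3})$) but harmless, because the Bernstein exponent $y^{2}/(yn^{1/3}+(m-l))\asymp i^{3}f^{2}$ regardless.
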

\begin{proof}
Let $r = \frac{2f\ell_n^{2/3}}{C}$, $t_1 = t^i_0$, and  $t_2 = t^i_{\bar{k}_i}+r$. \\
In order to prove this theorem we need to bound the number of times a new minima of $L'$ is reached in the interval $[t_1,t_2]$. Since $L'$ can only go down by $1$, the number of new minimums created in the interval  $[t_1,t_2]$ is smaller than $$\inf_{t_1 \leq l \leq m \leq t_2} L'_m-L'_l.$$
Choose $x = -50i^3f^2\ell_n^{1/3}$ and. Then $(t_2,t_1,0,x)$ verifies Conditions \ref{cond_3}. Hence, by Theorem \ref{inf_s_1} we have:
\begin{equation}
\begin{aligned}
\label{manal1}
\mathbb{P}\left(\sup_{t_1 \leq u \leq w \leq t_2}\left|L'_w-L'_u-\tilde{L}_w-\tilde{L}_u\right| \geq -x \right)\leq  &A\exp\left(\frac{-x^2}{A(xn^{1/3}+(t_2-t_1))}\right) \\
\leq &A'\exp\left(\frac{-i^3f^2}{A'}\right).
\end{aligned}
\end{equation}
For any $h > 0$, if $L_k < h$ for any $k \leq t_2$ then deterministically $\tilde{L}_m-\tilde{L}_l \geq \tilde{L}^h_m-\tilde{L}^h_l$ for any $1 \leq l \leq m \leq t_2$. \par
Hence, if $\tilde{L}_m-\tilde{L}_l \leq x$ for some $t_1 \leq l \leq m \leq t_2$ then one of the following events happens :
\begin{itemize}
\item There exists $ 0 \leq j \leq t_2$ such that $L_j \geq h$.
\item There exists $ t_1 \leq l \leq m \leq t_2$  such that $\tilde{L}^h_m-\tilde{L}^h_l \leq L'_m-L'_l \leq x$.
\end{itemize} 
Let $h=\frac{10f^2\ell_n^{1/3}}{C}$. 
Then for the first event, by Theorem \ref{sup_l} and Corollary \ref{corr36} : 
\begin{equation}
\label{eq612}
\mathbb{P}\left(\sup_{1 \leq j \leq t_2} L_j \geq h\right) \leq A\exp\left(\frac{-\sqrt{f}}{A}\right)+A\exp\left(\frac{-n^{1/8}}{A}\right).
\end{equation}

For the second event,  Conditions \ref{cond_3} are verified for $(t_2,t_1,h,-x)$. By Corollary \ref{S_h} and a quick computation, for any $t_2 \geq w \geq u \geq t_1$, we have 
$$ -\mathbb{E}[\tilde{L}^h_w-\tilde{L}^h_u] \leq -\mathbb{E}[\tilde{L}^h_{t_2}-\tilde{L}^h_{t_1}] \leq x/2.$$
 We can thus apply Lemma \ref{inf_s_0} to obtain:
\begin{equation}
\begin{aligned}
\label{thendbro}
\mathbb{P}\left(\inf_{t_1 \leq u \leq w \leq t_2}\tilde{L}^h_w-\tilde{L}^h_u \leq x\right) \leq &\mathbb{P}\left(\inf_{t_1 \leq u \leq w \leq t_2}\tilde{L}^h_w-\tilde{L}^h_u-\mathbb{E}[\tilde{L}^h_w-\tilde{L}^h_u] \leq \frac{x}{2}\right)  \\ \leq &A\exp\left(\frac{-x^2}{A(xn^{1/3}+(t_2-t_1))}\right) \\
\leq &A'\exp\left(\frac{-i^3f^2}{A'}\right),
\end{aligned}
\end{equation}
with $A' > 0$ a large constant that does not depend on $i$. \par
Recall that $i^* \in \mathbb{N}$ is the time at which the exploration of $H^*_f$ ends. 
By Theorem \ref{lower_bound_1}:
\begin{equation}
\label{thendbro3}
\mathbb{P}\left(\frac{3f\ell_n^{2/3}}{C} \geq i^* \geq \frac{f\ell_n^{2/3}}{C}\right) \geq 1-A\exp\left(\frac{-\sqrt{f}}{A}\right).
\end{equation}
When this event holds, we have $[t^i_0-t+i^*,t^i_{\bar{k}_i}-t+i^*] \subset [t_1,t_2]$. Hence,  summing Equations \eqref{thendbro} and \eqref{manal1} for $\tilde{i} > i \geq 1$, and using the union bound with Equation \eqref{eq612} and \eqref{thendbro3} finishes the proof. 
\end{proof}
\newpage
\renewcommand\bibname{Bibliography}
\bibliographystyle{abbrvnat} 
\bibliography{bibli.bib}
\end{document}